\documentclass[reqno,11pt]{amsart}

\usepackage{amssymb,amsmath,amsthm,amsfonts,color} 

\usepackage{etoolbox}
\usepackage[toc,page]{appendix}

\usepackage{mathrsfs,dsfont, comment,mathscinet}

\usepackage{enumerate,esint,accents}
\usepackage[left=4.0cm,right=4.0cm,top=2.0cm,bottom=2.0cm]{geometry}

\parskip1mm
\usepackage{mathtools}
\usepackage{multirow}
\usepackage{graphicx}
\usepackage{enumitem}
\usepackage{setspace}

\usepackage[colorlinks=true, pdfstartview=FitV, linkcolor=blue, 
            citecolor=blue, urlcolor=blue]{hyperref}

\tolerance=7000
\allowdisplaybreaks
\numberwithin{equation}{section}

\theoremstyle{plain}
\newtheorem{theorem}{Theorem}[section]
\newtheorem{proposition}[theorem]{Proposition}
\newtheorem{lemma}[theorem]{Lemma}
\newtheorem{corollary}[theorem]{Corollary}

\theoremstyle{definition}
\newtheorem{definition}[theorem]{Definition}
\newtheorem{remark}[theorem]{Remark}

\usepackage{amssymb,amsmath}

\newcommand{\C}{\mathbb{C}}
\newcommand{\R}{\mathbb{R}}
\newcommand{\N}{\mathbb{N}}
\renewcommand{\S}{\mathbb{S}}


\newcommand\EEE{\color{black}}
\newcommand\BBB{\color{black}}

\newcommand{\black}{\color{black}}
\newcommand{\red}{\color{black}}


\newcommand{\fA}{\mathcal{A}}
\newcommand{\fm}{\mathtt{v}}

\newcommand{\cH}{\mathcal{H}}
\newcommand{\cE}{\mathcal{E}}
\newcommand{\cK}{\mathcal{K}}
\newcommand{\cS}{\mathcal{S}}
\newcommand{\cF}{\mathcal{F}}
\newcommand{\cW}{\mathcal{W}}
\newcommand{\cJ}{\mathcal{J}}

\newcommand{\p}{\partial}
\newcommand{\wk}{\rightharpoonup}
\newcommand{\cl}[1]{\overline{#1}}

\newcommand{\strictlyincluded}{\subset\subset}
\newcommand{\res}{\mathop{\hbox{\vrule height 7pt width 0.5pt depth 0pt
\vrule height 0.5pt width 3pt depth 0pt}}\nolimits}


\renewcommand{\tilde}{\widetilde}

\newcommand{\dist}{\mathrm{dist}}
\newcommand{\sdist}{\mathrm{sdist}}

\newcommand{\loc}{\mathrm{loc}}
\newcommand{\Int}[1]{\mathrm{Int}(#1)}

\renewcommand{\hat}{\widehat}
\newcommand{\co}[1]{{co}(#1)}
\newcommand{\tr}{\mathrm{tr}}

\newcommand{\openset}{P}
\newcommand{\anyset}{Q}

\newcommand{\no}{\nonumber}
\newcommand{\str}[1]{e(#1)}

\newcommand{\mtwo}{\mathbb{M}^{2\times 2}_{\rm sym}}
\newcommand{\admissible}{\mathcal{C}}
\newcommand{\substrate}{S}
\newcommand{\Ins}[1]{{\rm Int}{(#1\cup \substrate\cup \Sigma)}}

\title[A unified model for SDRI]{A unified model for stress-driven
rearrangement instabilities}

\author[Sh. Kholmatov] {Shokhrukh Yu. Kholmatov} 
\address[Shokhrukh Yu. Kholmatov]{Fakult\"at f\"ur Mathematik\\ 
Universit\"at Wien\\ Oskar-Morgenstern Platz 1\\1090 Wien 
(Austria)}
\email[Sh. Kholmatov]{shokhrukh.kholmatov@univie.ac.at}

\author[P. Piovano] {Paolo Piovano} 
\address[Paolo Piovano]{Fakult\"at f\"ur Mathematik\\ 
Universit\"at Wien\\ Oskar-Morgenstern Platz 1\\1090 Wien 
(Austria)}
\email[P. Piovano]{paolo.piovano@univie.ac.at}

\subjclass[2010]{49G45,35R35,74G65}

\keywords{SDRI, interface instabilities, thin films, crystal cavities, fracture, elastic energy, surface energy, lower semicontinuity, existence of minimal configurations}

\date{\today}
\begin{document} 

\begin{abstract}
A variational model to simultaneously treat Stress-Driven Rearrangement
Instabilities,  such as boundary  discontinuities, internal cracks, external filaments, edge delamination, wetting, and brittle fractures, is  introduced. The model is characterized by an energy displaying both elastic and surface terms, and allows for a unified treatment of a wide range of settings, from  epitaxially-strained thin films to crystalline cavities, and from capillarity problems to fracture models.  
 
Existence of minimizing configurations is established by adopting the direct method of the Calculus of Variations. Compactness of energy-equibounded sequences and energy lower semicontinuity are shown with respect to a proper selected topology in a class of admissible configurations that extends  the classes previously considered in the literature. In particular, graph-like constraints previously considered for the setting of thin films and crystalline cavities are substituted by the more general assumption that the free crystalline interface is the boundary, consisting of an at most fixed finite number $m$ of
connected components, of sets of finite perimeter.  
 
Finally, it is shown that, as $m\to\infty$, the energy of minimal admissible configurations  tends to the minimum energy in the general class of configurations without the bound on the number of connected components for the free interface.  

\end{abstract}
\maketitle

\tableofcontents

\newpage 

\section{Introduction}
 
Morphological destabilizations of crystalline interfaces are often referred to as Stress-Driven Rearrangement Instabilities (SDRI) from the seminal paper \cite{Gr:1993} (see also Asaro-Grinfeld-Tiller instabilities \cite{AT:1972,D:2001}). SDRI consist in various mechanisms of mass rearrangements that take place at crystalline boundaries  
because of the strong stresses originated by the mismatch between the parameters of adjacent crystalline lattices. Atoms move from their crystalline order and different modes of stress relief may  co-occur, 
such as deformations of the bulk materials with storage of \emph{elastic energy}, and boundary instabilities that contribute to the \emph{surface energy}.

In this paper we introduce a variational model displaying both  elastic and surface energy that simultaneously takes into account the various possible SDRI, such as \emph{boundary discontinuities}, internal \emph{cracks}, external \emph{filaments}, \emph{wetting} and \emph{edge delamination} with respect to a substrate, and \emph{brittle fractures}.   In particular, the model provides a unified mathematical treatment of 
epitaxially-strained thin films \cite{DP:2018_1, FFLM:2007, FG:2004,   KP:2019, S:1999}, crystal cavities \cite{FFLM:2011, SMV:2004,WL:2003}, 
capillary droplets \cite{CM:2007,GB-W:2004,DPhM:2015},  as well as
Griffith and failure models \cite{Bourdin:2008,CCI:2017,ChC:2017arma,Gr:1921,Xia:2000}, which were previously treated separately in the literature. Furthermore, the possibility of delamination and debonding, i.e.,  crack-like modes of interface failure at the interface with the substrate \cite{Deng:1995,HS:1991}, is treated in accordance with the models in \cite{Babadjian:2016,Baldelli:2014,Baldelli:2013}, that were introduced by revisiting  in the variational perspective of 
fracture mechanics the model first described in \cite{Xia:2000}. 
Notice that as a consequence the surface energy depends on 
the admissible deformations and cannot be decoupled from the elastic energy.
As a byproduct of our analysis, we extend previous results 
for the existence of minimal configurations to
anisotropic surface and elastic energies, and   
we relax constraints previously  
assumed on admissible configurations 
in the thin-film  and crystal-cavity settings.
For thin films we avoid  the reduction considered in 
\cite{DP:2018_1,DP:2018_2,FFLM:2007} to only  
film profiles parametrizable by  thickness 
functions, and for crystal cavities 
the restriction in \cite{FFLM:2011} to cavity sets consisting 
of only one connected starshaped void.  
 
The class  of   interfaces that we consider 
is given by all the boundaries, that consists of 
connected components whose number is arbitrarily 
large but not  exceeding a  fixed number $m$,  
of  sets of finite perimeter $A$. We refer to 
the class of sets of finite perimeter associated 
to the free interfaces as \emph{free crystals} 
and we notice that  free crystals  $A$ may present 
an infinite number of components. The assumption on 
the number of components for the boundaries of 
free crystals  is needed to 
apply an adaptation to our setting 
of the generalization of Golab's Theorem 
proven in \cite{Gi:2002} that allows to
establish  in dimension 2, to which we restrict, compactness 
with respect to a proper selected  topology.  
To the best of our knowledge presently no variational 
framework able to guarantee the existence of 
minimizers  in  dimension 3  in the settings of  thin films and 
crystal cavities is available  in the literature.

Furthermore, also the class of admissible 
deformations is enlarged with respect to 
\cite{DP:2018_1,DP:2018_2,FFLM:2011,FFLM:2007} 
to allow debonding and edge delamination to occur  along the 
\emph{contact surface} $\Sigma:=\partial S\cap \partial \Omega$  
between the fixed substrate $S$ and the fixed bounded region $\Omega$ 
containing the admissible free crystals  (see Figure 
\ref{free_crystal}). In the following we refer to $\Omega$ as the \emph{container} 
in analogy with capillarity problems. 
Notice that  the obtained 
results can be easily applied also for unbounded 
containers in the setting of thin films
with the graph constraint (see Subsection \ref{subsec:applications}). 
Mathematically this is modeled by considering admissible 
deformations $u$ that are Sobolev functions only in the interior 
of the free crystals $A$ and the substrate $S$, and $GSBD$, i.e., 
generalized special functions of bounded deformation
(see \cite{D:2013} for more details), on $A\cup S\cup \Sigma$. 
Thus, jumps $J_u$ that represent edge delamination can develop 
at the contact surface $\Sigma$, i.e., 
$J_u\subset\Sigma$.  

\begin{figure}[htp] 
\begin{center}
\includegraphics[scale=0.55]{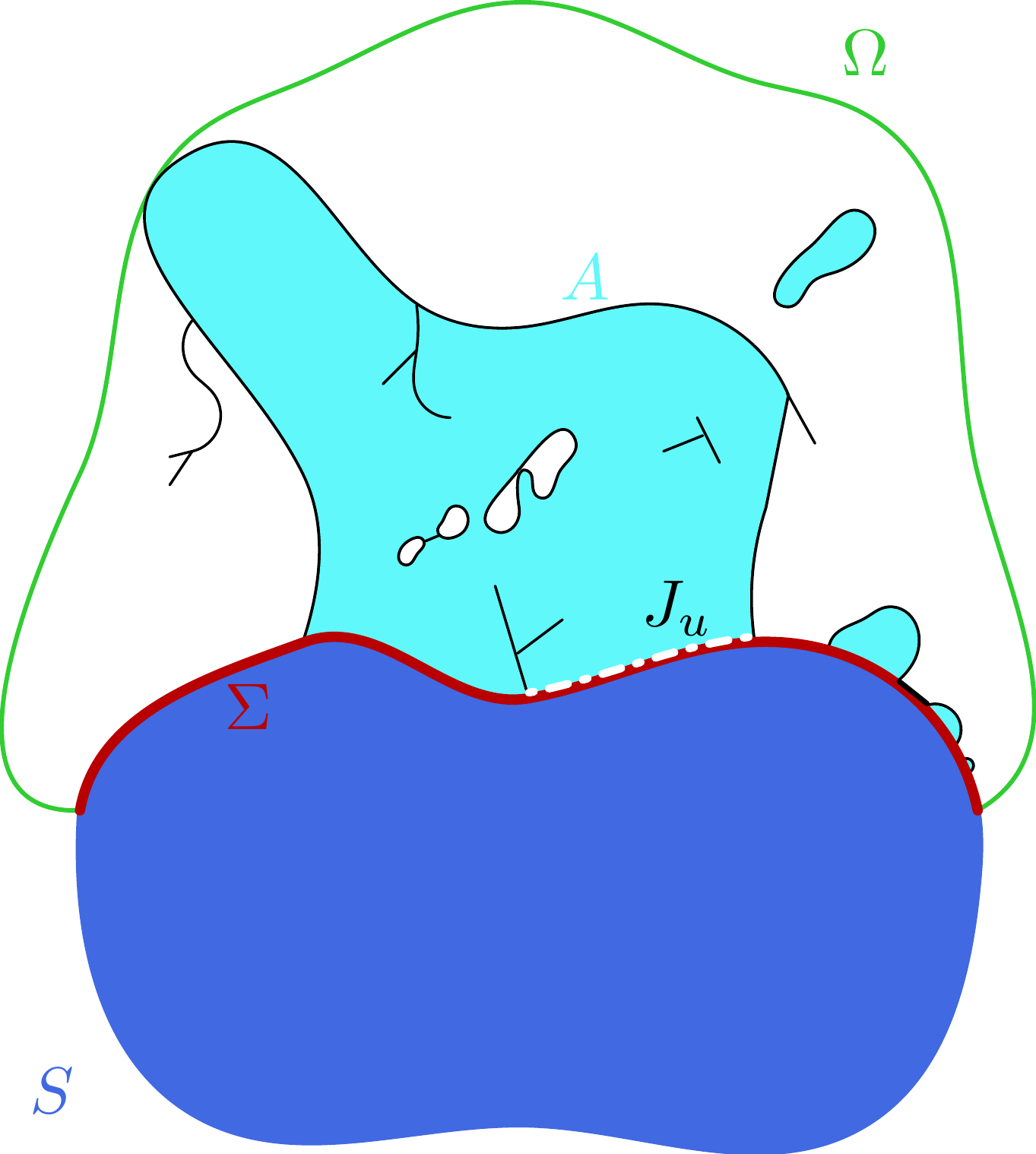} 
\caption{\BBB An admissible free (disconnected)
crystal $A$ is displayed in light blue 
in the container $\Omega$, while the substrate $S$ is represented
 in dark blue. The boundary of $A$
(with the cracks) is depicted in black, the container boundary in 
 green, the contact surface
 $\Sigma$ in red (thicker line) while the delamination 
 region $J_u$ with a white dashed line. }
\label{free_crystal}
\end{center}
\end{figure}

The energy $\cF$  that characterizes our model  is defined for 
every admissible configuration $(A,u)$ in the configurational space
$\mathcal{C}_m$ of free crystals and deformations by 
\begin{equation*}
 \cF(A,u):=\cS(A,u) +  \mathcal{W}(A,u),
\end{equation*}
where $\cS$ denotes the \emph{surface energy} and $\mathcal{W}$ the 
\emph{bulk elastic energy}.
The bulk elastic energy is given  by
$$
\mathcal{W}(A,u)=\int_{A\cup S} W(z,\BBB\str{u}-E_0)\,d z
$$
for an elastic  density $W(z,M):=\mathbb{C}(z) M:M$ 
defined with respect to a positive-definite 
elasticity tensor $\mathbb{C}$ and a 
{\it mismatch strain} $E_0$. The mismatch strain is introduced to 
represent the fact that the lattice of the  free crystal 
generally  does not match the substrate lattice.
We notice that the tensor $\mathbb{C}$ is assumed to 
be only $L^\infty(\Omega\cup S)$, therefore not 
only allowing for different elastic 
properties between the material of 
the free crystals in $\Omega$ and the one of 
the substrate, but also for non-constant 
properties in each material extending previous results.  The surface 
energy $\cS$ is defined as 
  $$
\cS(A,u)= \int_{\partial A} \psi(z,u,\nu)\,d\mathcal{H}^{d-1}
  $$
with surface tension $\psi$ defined by
 \begin{equation}\label{surface_tension}
 \psi(z,u,\nu):=\begin{cases}
 \varphi(z,\nu_A(z)) & z\in\Omega\cap\partial^*A,\\
2 \varphi(z,\nu_A(z)) & z\in \Omega\cap (A^{(1)}\cup A^{(0)})\cap\partial A,\\
\varphi(z,\nu_S(z)) + \beta(z) & \Sigma\cap A^{(0)}\cap \partial A,\\
\beta(z) & z\in\Sigma\cap \partial^*A\setminus J_u,\\
 \varphi(z,\nu_S(z)) &  J_u,
\end{cases}
\end{equation}
where $\varphi\in C(\cl{\Omega}\times\R^d;[0,+\infty))$ 
is a Finsler norm representing the
\emph{material anisotropy} with 
$c_1|\xi| \le \varphi(x,\xi) \le c_2|\xi|$ 
for some $c_1,c_2>0$, $\beta\in L^\infty(\Sigma)$ 
is the \emph{relative adhesion
coefficient} on $\Sigma$ with 
\begin{equation}\label{beta_restrictions}
|\beta(z)|\leq\varphi(z,\nu_S(z))
\end{equation}
for $z\in\Sigma$,  $\nu$ is the exterior 
normal on the reduced boundary $\partial^*A$, 
and $A^{(\delta)}$ denotes the 
set of points of $A$ with density $\delta\in[0,1]$. Notice that the 
anisotropy $\varphi$ is  counted double on the sets 
$A^{(1)}\cap\partial A\cap\Omega$ 
and $A^{(0)}\cap\partial A\cap\Omega$,  that represent 
the set of cracks and  the 
set of external filaments, respectively. 
 On the free profile
$\partial^*A$ the anisotropy is weighted the same as on 
the delamination region $J_u$, since delamination involves  
debonding between the adjacent materials by definition.  Furthermore,  the adhesion coefficient $\beta$ is considered  on the contact surface $\Sigma$, alone on the reduced boundary $\Sigma\cap \partial^*A\setminus J_u$ and together with $\varphi$ on those external filaments  $A^{(0)}\cap\partial A\cap\Sigma$, to which we refer as \emph{wetting layer}.
 
We refer the Reader to Subsection \ref{subsec:main_results} for the rigorous mathematical setting and the main results of the paper, among which we recall here the following existence result:  

\noindent 
{\textbf{Main Theorem.}}
{\it If $\fm\in (0,|\Omega |)$ or $\substrate=\emptyset,$  
then for every $m\ge1$ the volume-constrained minimum problem 
$$
\inf\limits_{(A,u)\in \admissible_m,\,\,|A| = \fm} \cF(A,u)   
$$
admits a solution and 
\begin{equation}\label{intro_zur_tenglik}
\inf\limits_{(A,u)\in \admissible,\,\,|A|=\fm} \cF(A,u) 
= \lim\limits_{m\to\infty} \inf\limits_{(A,u)\in \admissible_m,
\,\,|A|=\fm} \cF(A,u).
\end{equation} 
$ $}  
\noindent 
This existence result is accomplished in Theorem \ref{teo:existence}, 
where we also solve the related unconstraint problem with energy 
$\cF^\lambda$ given by $\cF$ plus a \emph{volume penalization} depending on the parameter $\lambda>0.$ 

The proof is based on the  \emph{direct method} of the Calculus of Variations, i.e., it consists in determining a  suitable topology $\tau_\admissible$ in $\admissible_m$ sufficiently weak  to establish the compactness of  energy-equibounded sequences in Theorem
\ref{teo:compactness_Ym} and strong enough to prove that the energy is lower semicontinuous in Theorem \ref{teo:lsemico_film}. We notice here, that Theorem \ref{teo:compactness_Ym} and Theorem \ref{teo:lsemico_film}  can also be  seen as an extension, under the condition on the maximum 
admissible number $m$ of connected components  for the boundary,  of the compactness and lower semicontinuity   results in   \cite{ChC:2018jems}  to anisotropic surface tensions   and to the other SDRI settings.

The topology $\tau_{\admissible}$ selected in $\admissible$ corresponds,  under the uniform bound on the length  of the free-crystal  boundaries,   to the convergence of both the free crystals and the free-crystal complementary sets  with respect to the Kuratowski convergence and to the pointwise  convergence of the displacements.  In \cite{DP:2018_1,DP:2018_2,FFLM:2007} 
the weaker convergence $\tau_{\admissible}'$ 
consisting of only the Kuratowski
convergence of  complementary sets of free-crystals (together 
with the $\overline{S}$) was 
considered, which in our setting without 
graph-like assumptions on the free boundary is 
not enough because not closed in $\admissible_m$. 
Working with the topology $\tau_{\admissible}$ 
also allows  to maintain track in the surface energy of the possible 
external filaments of the admissible free crystals, 
which were in previous 
results not considered. However, to establish 
compactness with respect to 
$\tau_{\admissible}$ the Blaschke Selection Theorem 
employed in \cite{DP:2018_1,DP:2018_2,FFLM:2011,FFLM:2007} 
is not enough, and a version 
for the signed distance functions from the free 
boundaries is obtained   
(see Proposition \ref{prop:compactness_wrt_sdist}).   
Furthermore, 
in order to take in consideration the situation in which connected components  of $A_k$ separates in the limit in multiple connected components of $A$, e.g., in the case of neckpinches, we need to introduce extra boundary in $A_k$ in order to divide their components accordingly (see Proposition \ref{prop:sets_changhe}). Otherwise, adding to $u_k$ different rigid displacements with respect to the components in $A$ (which are needed for compactness of $u_k$) would results in jumps for the displacements in $A_k$, which are not allowed in our setting with $H^1_{\rm loc}$-displacements. Therefore, we pass from the sequence $A_k$ to a sequence $D_k$ with such extra boundary for which we can prove compactness. Passing to $D_k$ is not a problem in the existence in view of property   \eqref{liminfga_ut_eshmat} that relates the $liminf$ of the energy with respect to $A_k$ to the one with respect to $D_k$. However, in case $\substrate\ne\emptyset,$ in order to prove \eqref{liminfga_ut_eshmat}, we need to further modify the sequence $D_k$ from the original $A_k$ by cutting out the portion  converging to delamination regions (e.g., portion containing accumulating cracks and voids at the boundary with $S$) using Proposition \ref{prop:jump_estimate}, and, in order to maintain the volume constraint, by replacing them with an extra set that does not contribute to the overall elastic energy. 
\black

The lower semicontinuity of the energy 
with respect to $\tau_{\admissible}$  is established 
for the elastic energy as in \cite{FFLM:2007} 
by convexity, and  for the surface energy
 in Proposition \ref{prop:lsc_surface_energy} in several steps by 
adopting a blow-up method (see, e.g., \cite{ADT:2017,BFM:1998}).
More precisely, 
given a sequence of configurations
$(A_k,u_k)\in\mathcal{C}_m$ converging to 
$(A,u)\in\mathcal{C}_m$ we consider a converging 
subsequence of the Radon  measures $\mu_k$ 
associated to the surface energy and $(A_k,u_k)$, and 
we estimate from below
the Radon-Nikodym derivative of their limit denoted by $\mu_0$ with respect 
to the Hausdorff measure restricted to the 5 portions of 
$\partial A$ that appear 
in the definition of the surface anisotropy 
$\psi$ in \eqref{surface_tension}. 
We overcome the fact that in general  $\mu_0$ is 
not a non-negative measure 
due to the presence  of the contact  term in the energy 
with $\beta$, by  adding 
to $\mu_k$ and  $\mu_0$ the positive measure
$$
\mu_\Sigma(B) = \int_{B\cap \Sigma} 
\varphi(x,\nu_\Sigma(x))d\cH^1
$$
defined for every Borel set $B\subset\mathbb{R}^2$ 
and using \eqref{beta_restrictions}.
The estimates for the Radon-Nikodym 
derivative related to the free boundary  
$\Omega\cap \partial^*A$ and the contact region
$(\Sigma\cap \partial^*A)\setminus J_{u_k}$ follow 
from \cite[Lemma 3.8]{ADT:2017}.  
For  the estimates related to  
exterior filaments and interior cracks we first 
separately  reduce to the case of  flat 
filaments and cracks, and then we adapt 
some  arguments from \cite{Gi:2002}. Extra care is needed to treat  the exterior 
filament lying on $\Sigma$ to which we refer as wetting layer in
analogy to the thin-film setting. The 
estimate related to the delamination region on 
$\Sigma$  follows by blow-up  under condition
\eqref{condition_J_A} that ensures that the
delamination regions between the limiting free crystal $A$ 
and the substrate $\substrate$ can be originated from 
delamination regions between $A_k$  
and $\substrate$ and  from portions of free 
boundaries $\partial^*A_k$ or interior cracks collapsing on $\Sigma$, 
as well as from accumulation of interior cracks 
starting from $(\Sigma\cap\partial^*A_k)\setminus J_{u_k}$.  

{\red }
A challenging point is to prove 
that  condition  \eqref{condition_J_A} is satisfied 
by $(A_k,u_k)$. 
In order to do that,  in Theorem \ref{teo:lsemico_film} 
we   first extend 
the displacements  $u_k$ to the set $\Omega\setminus (A_k\cup \substrate)$ using Lemma \ref{lem:extension_desparate}.  
%
The extension of the $u_k$  
is performed without creating extra jump at the interface on the exposed surface of the substrate, i.e.,  the jump set of the extensions is approximately $J_{u_k}\cup(\Omega\cap \partial A_k)$. We point out that as a consequence we obtain also in Proposition \ref{prop:lsemico_film_voids} the lower semicontinuity, with respect to the topology $\tau_\admissible'$, of a version of our energy without exterior filaments (but with wetting layer) extending the lower semicontinuity results of \cite{DP:2018_1,FFLM:2011,FFLM:2007}.

Finally, we prove  \eqref{intro_zur_tenglik}, that in particular entails the existence of a minimizing  sequence $(A_m,u_m)\in\mathcal{C}_m$ for the minimum problem of $\cF$ in  $\mathcal{C}$. This is obtained by 
considering a minimizing sequence $(A_{\varepsilon},u_{\varepsilon})\in\mathcal{C}$ for $\cF^{\lambda}$, and then by modifying 
it into a new minimizing sequence 
$(E_{\varepsilon,\lambda},v_{\varepsilon,\lambda})\in\mathcal{C}_m$  such that  
$\cF^{\lambda}(A_{\varepsilon},u_{\varepsilon})+\delta_{\varepsilon}\geq
\cF^{\lambda}(E_{\varepsilon,\lambda},v_{\varepsilon,\lambda})$ for 
some $\delta_{\varepsilon}\to0$ as $\varepsilon\to0$.  The construction of $(E_{\varepsilon,\lambda},v_{\varepsilon,\lambda})\in\mathcal{C}_m$
requires 2 steps. In  the first step we eliminate the external filaments, we remove sufficiently small connected components of $A_{\varepsilon}$, and we fill in sufficiently small holes till we reach a finite number of connected components with a finite number of holes  (see Figure \ref{step1existence}). In the second step we redefine
the deformations in the free crystal  by employing \cite[Theorem 1.1]{ChC:2017arma} in order to obtain a deformation with jump set consisting of at most finitely many components, and such that the difference in the elastic energy and the length of the jump sets with respect to $u_{\varepsilon}$ remains small.

The paper is organized as follows. In Section \ref{sec:setting_results} we introduce the model and the topology $\tau_{\mathcal{C}}$, we refer to various SDRI settings from the literature that are included in our analysis, and we state the main results.  In Section \ref{sec:compactness} we prove sequential compactness  for the free crystals with the bound $m$ on the boundary components in Proposition \ref{prop:compactness_A_m} and for $\admissible_m$ in Theorem \ref{teo:compactness_Ym}.  In Section \ref{sec:lsc_results}  
we prove the lower semicontinuity of the energy (Theorem \ref{teo:lsemico_film}) by first considering only the surface energy $\mathcal{S}$ under the condition  \eqref{condition_J_A} (see Proposition  \ref{prop:lsc_surface_energy}), and we conclude the section by showing the lower semicontinuity of the energy without the external filament and wetting-layer terms with respect to the topology 
$\tau_{\admissible}'$ (see Proposition \ref{prop:lsemico_film_voids}). In Section  \ref{sec:min_config} we prove the existence results
(Theorems \ref{teo:existence} and \ref{teo:existence_thin_void}) 
and property \eqref{intro_zur_tenglik}. The paper is concluded with an Appendix where results related to rectifiable sets and Kuratowski convergence are recalled for  Reader's convenience.

\section{Mathematical setting}\label{sec:setting_results}

We  start by introducing some notation. Since our model is two-dimensional, unless otherwise stated, all sets we consider are subsets of $\R^2.$ We choose the standard basis $\{{\bf e_1}=(1,0), {\bf e_2}=(0,1)\}$ in $\R^2$ and denote the coordinates of $x\in\R^2$  with respect to this basis by $(x_1,x_2).$ We denote by $\Int A$ the interior of $A\subset \R^2.$ Given a Lebesgue measurable set $E,$ we denote by $\chi_E$ its characteristic function and by $|E|$ its Lebesgue measure. The set 
$$
E^{(\alpha)}:=\Big\{x\in\R^2:\,\, \lim\limits_{r\to0} 
\frac{|E\cap B_r(x)|}{|B_r(x)|}=\alpha \Big\},
\qquad \alpha\in [0,1],
$$
where $B_r(x)$ denotes the ball in $\R^2$ centered at $x$ of radius $r>0,$ is called the set of points of density $\alpha$ of $E.$ Clearly, $E^{(\alpha)}\subset \p E$ for any $\alpha\in (0,1),$ where 
$$
\p E:=\{x\in \R^2:\,\,\text{$B_r(x)\cap E \ne \emptyset$ and 
$B_r(x)\setminus E \ne \emptyset$ for any $r>0$}\} 
$$
is the topological boundary. The set $E^{(1)}$ is the {\it Lebesgue 
set} of $E$ and $|E^{(1)}\Delta E|=0.$ We denote by $\p^*E$ the {\it reduced} boundary of a finite perimeter set $E$ \cite{AFP:2000,Gi:1984},
i.e.,
\begin{equation}\label{essential_boundary00}
\p^*E:=\Big\{x\in\R^2:\,\, \exists \nu_E(x):=-\lim\limits_{r\to0}
\frac{D\chi_E(B_r(x))}{|D\chi_E|(B_r(x))},\quad |\nu_E(x)|=1\Big\}.  
\end{equation}
The vector $\nu_E(x)$ is called the {\it measure-theoretic} normal to $\p E.$  

The symbol $\cH^s,$ $s\ge0,$ 
stands for the $s$-dimensional Hausdorff measure. 
An $\cH^1$-measurable set $K$ with  $0<\cH^1(K)<\infty$  is called 
$\cH^1$-{\it rectifiable} if 
$\theta^*(K,x)=\theta_*(K,x) =1$ for 
$\cH^1$-a.e.\ $x\in K,$  where 
$$
\theta^*(K,x):=\limsup\limits_{r\to0^+} \frac{\cH^1(B_r(x)\cap K)}{2r},
\qquad 
\theta_*(K,x):=\liminf\limits_{r\to0^+} \frac{\cH^1(B_r(x)\cap K)}{2r}.
$$
By \cite[Theorem 2.3]{Fa:1985} any $\cH^1$-measurable set 
$K$ with  $0<\cH^1(K)<\infty$
satisfies $\theta^*(K,x)=1$ for $\cH^1$-a.e.\ $x\in K.$

\begin{remark} \label{rem:essential_boundary}
If $E$ is a finite perimeter set, then 
\begin{itemize}
\item[(a)]  $\cl{\p ^*E} =\p E^{(1)}$  (see, e.g., \cite[Theorem 4.4]{Gi:1984} and \cite[Eq. 15.3]{Ma:2012});

\item[(b)] $\p^*E \subseteq E^{(1/2)}$ and $\cH^{1}(E^{(1/2)}\setminus \p ^*E) =0$  (see, e.g., \cite[Theorem 3.61]{AFP:2000} and \cite[Theorem 16.2]{Ma:2012}); 

\item[(c)]  $P(E,B) = \cH^{1}(B\cap \p^*E)= \cH^{1}(B\cap E^{(1/2)})$ for any  Borel set $B.$ 
\end{itemize}

\end{remark}

The notation $\dist(\cdot,E)$ stands for 
the distance function from the set $E\subset\R^2$
with the convention that $\dist(\cdot,\emptyset)\equiv+\infty.$
Given a set $A\subset\R^2,$ we consider also signed distance function
from $\p A,$ negative inside, defined as 
$$
\sdist(x,\p A):= 
\begin{cases}
\dist(x, A) &\text{if $x\in \R^2\setminus A,$}\\ 
-\dist(x,\R^2\setminus A) &\text{if $x\in  A.$}  
\end{cases}
$$

\begin{remark}\label{rem:kuratiwski_and_sdistance}
The following assertions are equivalent:
\begin{itemize}  
\item[(a)] $\sdist(x,\p E_k) \to \sdist(x,\p E)$ locally uniformly in $\R^2;$

\item[(b)] $E_k\overset{\cK}\to \cl{E}$ and 
$\R^2\setminus E_k\overset{\cK}\to \R^2\setminus \Int E,$
where $\cK$--Kuratowski convergence of sets \cite[Chapter 4]{D:1993}. 
\end{itemize}
Moreover, either assumption  implies $\p E_k \overset{\cK}{\to} \p E.$
\end{remark}

\subsection{The model} 
Given two open sets $\Omega \subset\R^2$
and $\substrate\subset\R^2\setminus\Omega,$ 
we define the family of admissible regions 
for the {\it free crystal} and 
the space of {\it admissible configurations} by
$$
\fA:=\{A\subset\overline{\Omega} :\,\,\text{$\p A$ is 
$\cH^1$-rectifiable and $\cH^1(\p A)<\infty$}\}
$$
and 
$$
\begin{aligned}
\admissible:=\big\{(A,u):\,\,& A\in \fA,\\
&u\in GSBD^2(\Ins{A};\R^2)
\cap H_\loc^1(\Int{A}\cup \substrate;\R^2)\big\},
\end{aligned}
$$
respectively, where $\Sigma:=\p \substrate\cap \p \Omega$
and 
$GSBD^2(E,\R^2)$ is the collection of all generalized 
special functions of bounded deformation
\cite{ChC:2018jems,D:2013}. Given a displacement 
field
$u\in GSBD^2(\Ins{A};\R^2)\cap H_\loc^1(\Int{A}\cup \substrate;\R^2)$
we denote by  
$\str{u(\cdot)}$ the density of  ${\bf e}(u)=(Du+(Du)^T)/2$ with respect to 
Lebesgue measure $\mathcal{L}^2$ and by $J_u$
the jump set of $u.$ Recall that 
$\str{u}\in L^2(A\cup \substrate)$ and $J_u$ is 
$\cH^1$-rectifiable. Notice  also 
that assumption $u\in H_\loc^1(\Int{A}\cup \substrate;\R^2)$ implies  
$J_u\subset\Sigma\cap \overline{\p^* A}.$  We denote the boundary trace of a function $u:A\to\R^n$ by $\tr_A$ (if exists).

\begin{remark}\label{rem:structure_of_pA} 
For any  $A\in \fA:$  
\begin{itemize}
\item[(a)]
$
\p A = N\cup (\Omega \cap \p^*A) \cup (\p\Omega \cap \p A) \cup 
(\Omega \cap A^{(0)}\cap \p A)\cup (\Omega \cap A^{(1)}\cap \p A),
$ 
where $N$ is an $\cH^1$-negligible set (see, e.g., \cite[page 184]{Ma:2012});\\

\item[(b)] $\Omega \cap \cl{\p^*A} = \Omega \cap \p A^{(1)}$ (see Remark \ref{rem:essential_boundary} (a) above);\\

\item[(c)] $\cH^1(\cl{\p^*A}\setminus \p^*A) = 0$ (since $\overline{\p^*A}\subset\p A$ is $\cH^1$-rectifiable);\\

\item[(d)] up to a $\cH^1$-negligible set, the trace  of $A\in\fA$ on $\p\Omega $ is defined as $\p\Omega \cap\p^*A$ (see, e.g., \cite[Lemma 2.10]{ADT:2017}).
\end{itemize}
\end{remark}
\black

Unless otherwise stated, in what follows $\Omega $ 
and $\substrate\subset\R^2\setminus \Omega $ are bounded Lipschitz 
open sets with finitely many connected components 
satisfying $\cH^1(\p \substrate)+\cH^1(\p\Omega)<\infty$ 
and $\Sigma\subseteq \p\Omega $ is a Lipschitz 1-manifold. 
 
We introduce in $\fA$ the following notion of convergence.

\begin{definition}[\textbf{$\tau_{\fA}$-Convergence}]
\label{def:film_convergence}
A sequence $\{A_k\}\subset\fA$ is said to $\tau_{\fA}$-converge
to $A\subset\R^2$  and  is written $A_k\overset{\tau_\fA}{\to} A$ if
\begin{itemize}
\item[--] $\sup\limits_{k\ge1} \cH^1(\p A_k) < \infty;$

\item[--] $\sdist(\cdot,\p A_k)\to \sdist(\cdot,\p A)$ 
locally uniformly in $\R^2$ as $k\to \infty.$
\end{itemize} 
\end{definition}

We endow $\admissible$ with the following notion of convergence.

\begin{definition}[\textbf{$\tau_{\admissible}$-Convergence}]
\label{def:admissible_convergence}
A sequence $\{(A_n,u_n)\}\subset \admissible$ is said to
$\tau_\admissible$-converge to $(A,u)\in \admissible$, and is written 
$(A_n,u_n)\overset{\tau_{\admissible}}{\to} (A,u)$  if

\begin{enumerate}
\item[--] $A_n\overset{\tau_{\fA}}{\to} A,$ 

\item[--]  $u_n\to u$  a.e.\ in\footnote{If $\R^2\setminus A_k \overset{K}{\to}\R\setminus \Int{A},$ then for any $x\in \Int{A}$  one has $x\in \Int{A_n}$ for all large $n.$}  $\Int{A}\cup\substrate.$ \black
\end{enumerate}

\end{definition}

The {\it energy} of admissible configurations is given by 
the functional $\cF:\admissible\to[-\infty,+\infty],$ 
\begin{equation*}
\cF:=\cS + \cW, 
\end{equation*}
where $\cS$ and $\cW$ are the surface and elastic energies of 
the configuration, respectively.
The surface energy of $(A,u)\in\admissible$ is defined as 
\begin{align}\label{func_surface_energy}
\cS(A,u):=& \int_{\Omega \cap\p^*A} \varphi(x,\nu_A(x))d\cH^1(x) \nonumber \\
&+\int_{\Omega \cap (A^{(1)}\cup A^{(0)})\cap\p A}
\big(\varphi(x,\nu_A(x)) + \varphi(x,-\nu_A(x))\big)d\cH^1(x)\nonumber\\  
& + \int_{\Sigma\cap A^{(0)}\cap \p A} \big(\varphi(x,\nu_\Sigma(x))
+ \beta(x)\big)d\cH^1(x) \nonumber \\
& + \int_{\Sigma\cap \p^*A\setminus J_u} \beta(x) d\cH^1(x) 
 + \int_{J_u} \varphi(x,-\nu_\Sigma(x))\,d\cH^1(x),
\end{align}
where
$\varphi:\overline\Omega\times\S^1\to[0,+\infty)$ and 
$\beta:\Sigma\to\R$  are  Borel 
functions  denoting the {\it anisotropy} \BBB of crystal \EEE and
the {\it relative adhesion} coefficient 
of the substrate, respectively, and  $\nu_\Sigma:=\nu_\substrate$.
In the following we refer to the first term in \eqref{func_surface_energy} as 
the {\it free-boundary energy}, to the second as the {\it energy of
internal cracks and external filaments},
to the third as the {\it wetting-layer energy}, 
to the fourth as the {\it contact energy},
and to the last as the {\it delamination energy}.
In applications instead of $\varphi(x,\cdot)$ it is more convenient to use 
its positively one-homogeneous extension $|\xi|\varphi(x,\xi/|\xi|).$
With a slight abuse of notation we denote this extension 
also by $\varphi.$

The elastic energy of $(A,u)\in\admissible$ is defined as
$$
{\mathcal W}(A,u):= \int_{A\cup  \substrate } W(x,\str{u(x)}  - E_0(x))dx,
$$
where the elastic density $W$ is determined
as the quadratic form 
\begin{equation}\label{asjkladkl}
W(x,M): =  \C(x)M:M, 
\end{equation}
by the so-called {\it stress-tensor}, a measurable function
$x\in\Omega\cup\substrate\to\C(x),$ where 
$\C(x)$ is a non-negative 
fourth-order tensor in the Hilbert space 
$\mtwo$ of all $2\times 2$-symmetric matrices with the natural 
inner product 
$$
M:N=\sum\limits_{i,j=1}^2 M_{ij}N_{ij}
$$
for  
$
M=(M_{ij})_{1\le i,j\le2},
N=(N_{ij})_{1\le i,j\le2}\in\mtwo.
$

The {\it mismatch strain}  
$x\in\Omega\cup\substrate\mapsto E_0(x)\in\mtwo$ 
is given by  
$$
E_0: = 
\begin{cases}
\str{u_0} & \text{in $\Omega ,$}\\
0 &  \text{in $\substrate,$}
\end{cases}
$$ 
for a fixed $u_0\in H^1(\Omega )$.

Given $m\ge1,$ let $\fA_m$ be a collection of all  
subsets $A$ of $\overline\Omega $ such that 
$\p A$  has  at most $m$ connected components. Recall that 
since $\p A$ is closed, it is $\cH^1$-measurable.
By Proposition \ref{prop:rectifiable_sets}, $\p A$ is 
$\cH^1$-rectifiable so that $\fA_m\subset \fA.$
We call the set 
$$
\admissible_m:=\Big\{(A,u)\in \admissible:\,\,A\in \fA_m \Big\} 
$$ 
the set of constrained admissible configurations.
We also consider a volume constraint with respect to $\fm\in (0,|\Omega |],$  i.e., 
$$
|A|=\fm
$$
for every $A\in \fA.$ 

\subsection{Applications} \label{subsec:applications}
The model introduced in this paper 
includes the settings of various free boundary problems, 
some of which are outlined below.

\begin{itemize}[leftmargin=\dimexpr+4mm]
\item[--] {\it Epitaxially-strained thin films}
\cite{BChS:2007,DP:2018_1,DP:2018_2,FFLM:2007,FM:2012}:
$\Omega :=(a,b)\times (0,+\infty),$  
$\substrate :=(a,b)\times(-\infty,0)$ 
for some $a<b,$  
free crystals in the subfamily
$$
\begin{aligned}
\quad \mathcal{A}_{\rm subgraph}:=\{A\subset\Omega:\, 
\text{$\exists h\in BV(\Sigma;[0,\infty))$ and 
l.s.c. such that $A=A_h$} \}\subset\fA_1,
\end{aligned}
$$ 
where $A_h:=\{(x^1,x^2)\,:\, 0<x^2<h(x^1)\}$,
and admissible configurations in the subspace 
$$
\mathcal{C}_{\rm subgraph}:=\{(A,u):\, A\in\fA_{\rm subgraph},\,
u\in H_\loc^1(\Ins{A};\R^2)\}\subset\admissible_1
$$
(see also \cite{BGZ:2015,GZ:2014}). 
Notice that the container $\Omega$ is not bounded, 
however, we can reduce to the situation of bounded containers
where we can apply 
Theorem \ref{teo:existence_thin_void} 
since every energy equibounded sequence 
in $\mathcal{A}_{\rm subgraph}$
is contained in an auxiliary bounded set
(see also Remark \ref{rem:thin_film_appl}).
\vspace{1mm}

\item[--] {\it Crystal cavities} \cite{FFLM:2011,FJM:2018,SMV:2004,WL:2003}: 
$\Omega \subset\R^2$ smooth set containing the origin, 
$\substrate :=\R^2\setminus\Omega,$ free crystals in the subfamily
$$
\mathcal{A}_{\rm starshaped}:=\{A\subset\Omega:\, 
\text{open, starshaped w.r.t. $(0,0)$,
and $\partial\Omega \subset\partial A$} \}\subset\fA_1,
$$ 
and the space of admissible configurations
$$
\mathcal{C}_{\rm starshaped}:=\{(A,u):\, \, A\in\fA_{\rm starshaped},\,
u\in H_\loc^1(\Ins{A};\R^2)\}\subset\admissible_1. 
$$
See Remark \ref{rem:thin_film_appl}.

\item[--] 
{\it Capillarity droplets, e.g.,} \cite{CM:2007,GB-W:2004,DPhM:2015}: 
$\Omega\subset\R^2$ is a bounded open set (or a cylinder),
$\C=0,$ $\substrate = \emptyset$, and 
admissible configurations in the collection
$$
\mathcal{C}_{\rm capillary}:=\{(A,0):\, \,A\in\fA\}\subset\admissible.
$$

\item[--]
{\it Griffith fracture model, e.g.,} \cite{CCF:2016,CCI:2017,CFI:2016,CFI:2018}:
$\substrate=\Sigma=\emptyset$  \BBB $E_0\equiv0$, and \EEE
the space of configurations
$$
\mathcal{C}_{\rm Griffith}:=\{(\Omega \setminus K,u):\,
\text{$K$ closed, $\mathcal{H}^1$-rectifiable},\,
u\in H_\loc^1(\Omega\setminus K;\R^2)\}\subset\admissible.
$$

\item[--]
{\it Mumford-Shah model \BBB(without fidelity term), \EEE e.g.,} \cite{AFP:2000,DMMS:1992,MS:2001}:
$\substrate=\Sigma=\emptyset,$ $E_0=0,$
$\C$ is such that the elastic energy $\mathcal{W}$ reduces 
to the Dirichlet energy, and
the space of configurations
$$
\admissible_{\rm Mumfard-Shah}: = 
\{
(\Omega \setminus K,u)\in \admissible_{\rm Griffith}:\,\,
u=(u_1,0)\}\subset\admissible.
$$

\item[--] 
{\it Boundary delaminations}  
\cite{Babadjian:2016,Deng:1995,HS:1991,Baldelli:2014,Baldelli:2013,Xia:2000}:  
the setting of our model finds applications to describe
debonding and edge delaminations in composites \BBB \cite{Xia:2000}. We notice that our perspective differs from 
\cite{Babadjian:2016,Baldelli:2014,Baldelli:2013} where reduced models for the  horizontal  interface between the film and the substrate are derived, since instead we focus on the 2-dimensional film and substrate vertical section. \EEE
\end{itemize}

\subsection{Main results}\label{subsec:main_results}

In this subsection we  state  the main results of the paper.
Let us  formulate  our main hypotheses:
\begin{itemize}
\item[(H1)] $\varphi\in C(\cl{\Omega }\times\R^2;[0,+\infty))$ and is a 
Finsler norm, i.e., there exist $c_2\ge c_1>0$ such that 
for every $x\in \cl{\Omega },$ $\varphi(x,\cdot)$ is a norm in $\R^2$
satisfying 
\begin{equation}\label{finsler_norm}
c_1|\xi| \le \varphi(x,\xi) \le c_2|\xi|\quad \text{ for any 
$x\in\cl{\Omega}$ and $\xi\in\R^2$};
\end{equation}

\item[(H2)] $\beta\in L^\infty(\Sigma)$ and 
satisfies
\begin{equation}\label{hyp:bound_anis}
-\varphi(x,\nu_\Sigma(x))\le \beta(x) \le \varphi(x,\nu_\Sigma(x))\qquad
\text{for $\cH^1$-a.e.\ $x\in \Sigma;$}
\end{equation}

\item[(H3)] $W$ is of the form \eqref{asjkladkl} with    
$\C\in L^{\infty}(\Omega\cup\substrate)$ such that 
\begin{equation}\label{hyp:elastic}
\C(x)M:M \ge 2c_3\,M:M\quad \text{for any $M\in\mtwo$}
\end{equation}
for some $c_3>0.$ 
\end{itemize}


\begin{theorem}[\textbf{Existence}]\label{teo:existence}
Assume {\rm (H1)-(H3)}. Let either $\fm\in (0,|\Omega |)$ or $\substrate=\emptyset.$  Then for every $m\ge1,$ $\lambda>0$ both the volume-constrained minimum problem 
\begin{equation}\label{minimum_prob_1}
\inf\limits_{(A,u)\in \admissible_m,\,\,|A| = \fm} \cF(A,u), \tag{CP}
\end{equation}
and the unconstrained minimum problem
\begin{equation}\label{minimum_prob_2}
\inf\limits_{(A,u)\in \admissible_m} \cF^\lambda(A,u), 
\tag{UP}
\end{equation}
have a solution, 
where $\cF^\lambda:\admissible_m\to\R$ is defined as 
$$
\cF^\lambda(A,u):=\cF(A,u) +\lambda\big||A| - \fm\big|.
$$
Furthermore,  there exists $\lambda_0>0$ such that 
for every $\fm\in (0,|\Omega |]$ and $\lambda>\lambda_0,$ 
\begin{equation}\label{zur_tenglik}
\inf\limits_{(A,u)\in \admissible,\,\,|A|=\fm} \cF(A,u) 
= \inf\limits_{(A,u)\in \admissible} \cF^\lambda(A,u) 
= \lim\limits_{m\to\infty} \inf\limits_{(A,u)\in \admissible_m,
\,\,|A|=\fm} \cF(A,u).  
\end{equation}

\end{theorem}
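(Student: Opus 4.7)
\textbf{Direct method for \eqref{minimum_prob_1} and \eqref{minimum_prob_2}.} For both problems I plan to take a minimizing sequence $\{(A_k,u_k)\}\subset\admissible_m$. The energy bound $\cF(A_k,u_k)\leq C$ (resp.\ $\cF^\lambda(A_k,u_k)\leq C$), combined with (H1)--(H3) and the bound $|\beta|\leq\varphi(\cdot,\nu_\Sigma)$ from (H2), gives $\sup_k\mathcal{H}^1(\p A_k)<\infty$ together with $\sup_k\int_{A_k\cup\substrate}|\str{u_k}-E_0|^2\,dx<\infty$. Theorem \ref{teo:compactness_Ym} then produces a subsequence $\tau_\admissible$-converging to some $(A,u)\in\admissible_m$, and Theorem \ref{teo:lsemico_film} yields $\cF(A,u)\leq\liminf_k\cF(A_k,u_k)$. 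The locally uniform convergence $\sdist(\cdot,\p A_k)\to\sdist(\cdot,\p A)$ forces $\chi_{A_k}\to\chi_A$ a.e.\ in $\Omega$, so dominated convergence gives $|A_k|\to|A|$: this preserves the constraint $|A|=\fm$ in \eqref{minimum_prob_1} and makes the penalty in \eqref{minimum_prob_2} continuous under $\tau_\admissible$-convergence. The feasible set is nonempty under the hypothesis $\fm\in(0,|\Omega|)$ or $\substrate=\emptyset$ (for instance a smooth set $A\Subset\Omega$ of volume $\fm$ with $u\equiv 0$), and (H3) together with the lower bound on $\cS$ coming from (H2) keeps the infima bounded below, so the minimizers are genuine.

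\textbf{Penalization equals constraint.} The inequality $\inf_\admissible\cF^\lambda\leq\inf_{\admissible,\,|A|=\fm}\cF$ is immediate since the two functionals agree on $\{|A|=\fm\}$. For the reverse inequality I would modify any admissible $(A,u)$ with $|A|\neq\fm$ into a competitor $(\widetilde A,\widetilde u)$ with $|\widetilde A|=\fm$ at cost linearly controlled by $\big||A|-\fm\big|$. The standard scaling device is to rescale $A$ by $t=(\fm/|A|)^{1/2}$ when geometrically feasible, changing the volume by $\fm-|A|$ and the perimeter by $(t-1)\mathcal{H}^1(\p A)$, both of order $\big||A|-\fm\big|$ on the relevant sublevel sets where $|A|\asymp\fm$ and $\mathcal{H}^1(\p A)$ is uniformly bounded; when rescaling is not feasible one locally attaches (or removes) a small region in $\Omega\setminus\cl{A}$ and extends $u$ by a Sobolev extension, with added elastic cost controlled by $\|\C\|_\infty\|E_0\|_{L^2}^2$ times the modified volume. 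Choosing $\lambda_0$ larger than the resulting constant $C=C(\Omega,c_2,\|\beta\|_\infty,\|\C\|_\infty,E_0)$ yields $\cF(\widetilde A,\widetilde u)\leq\cF^\lambda(A,u)$ for every $\lambda>\lambda_0$, and hence $\inf_\admissible\cF^\lambda\geq\inf_{\admissible,\,|A|=\fm}\cF$.

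\textbf{The limit in $m$ and the main obstacle.} Monotonicity $\admissible_m\subset\admissible$, together with the equality just proved, gives $\lim_{m\to\infty}\inf_{\admissible_m,\,|A|=\fm}\cF\geq\inf_\admissible\cF^\lambda$. For the converse I fix $\varepsilon>0$, choose $(A_\varepsilon,u_\varepsilon)\in\admissible$ with $\cF^\lambda(A_\varepsilon,u_\varepsilon)<\inf_\admissible\cF^\lambda+\varepsilon$, and construct $(E_{\varepsilon,\lambda},v_{\varepsilon,\lambda})\in\admissible_m$ for some $m=m(\varepsilon)\to\infty$ with $\cF^\lambda(E_{\varepsilon,\lambda},v_{\varepsilon,\lambda})\leq\cF^\lambda(A_\varepsilon,u_\varepsilon)+\delta_\varepsilon$, $\delta_\varepsilon\to 0$. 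The construction follows the two-step procedure outlined in the introduction: in step~1 I modify the free crystal by erasing the external filaments (counted with doubled anisotropy, so their removal strictly reduces $\cS$), discarding the at-most-countably-many connected components of $A_\varepsilon$ of mass or perimeter below an $\varepsilon$-threshold, and filling in the at-most-countably-many holes below a similar threshold; since $\mathcal{H}^1(\p A_\varepsilon)<\infty$, only finitely many components and holes survive, so the resulting set lies in $\fA_m$ for $m$ large, while the volume change is $o(1)$ and is absorbed by the $\lambda$-penalty. In step~2 I apply \cite[Theorem 1.1]{ChC:2017arma} to replace $u_\varepsilon|_{\Int E_{\varepsilon,\lambda}}$ by a $GSBD^2$-function $v_{\varepsilon,\lambda}$ whose jump set has only finitely many components, with surface and elastic cost of order $\varepsilon$. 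A final small volume adjustment as in the previous paragraph restores $|E_{\varepsilon,\lambda}|=\fm$ exactly. The hard part will be executing step~1 while controlling the wetting and delamination contributions on $\Sigma$, where $\beta$ may be negative and $\cS|_\Sigma$ is not separately nonnegative: the accounting has to be carried out through the positive shift $\mu_\Sigma$ used in the proof of Proposition \ref{prop:lsc_surface_energy}, and the displacement must be re-extended across the modified boundary near $\Sigma$ without producing spurious jumps, in the spirit of Lemma \ref{lem:extension_desparate}.
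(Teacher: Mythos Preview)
Your outline is broadly on target, but there are two places where it diverges from what actually works.

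\textbf{Direct method.} You write that Theorem~\ref{teo:compactness_Ym} ``produces a subsequence $\tau_\admissible$-converging to some $(A,u)\in\admissible_m$''. It does not. The compactness theorem only gives $A_{k_n}\overset{\tau_\fA}{\to}A$ for the \emph{sets}; convergence of the full configurations holds only for an auxiliary sequence $(D_n,v_n)$, where $v_n=(u_{k_n}+a_n)\chi_{D_n\cap A_{k_n}}+u_0\chi_{D_n\setminus A_{k_n}}$ with piecewise rigid $a_n$, together with the energy comparison $\liminf_n\cF(A_{k_n},u_{k_n})\ge\liminf_n\cF(D_n,v_n)$. The lower-semicontinuity Theorem~\ref{teo:lsemico_film} must then be applied to $(D_n,v_n)$, not to $(A_{k_n},u_{k_n})$. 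The paper's proof is exactly this two-step chain; your compressed statement skips the auxiliary sequence, which is the whole content of Theorem~\ref{teo:compactness_Ym}.

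\textbf{Penalization equals constraint.} Your ``rescale by $t=(\fm/|A|)^{1/2}$'' is not available here: admissible sets live in a fixed container $\overline\Omega$, so dilations are generally infeasible, and your fallback ``attach/remove a small region'' is too vague to yield a \emph{uniform} $\lambda_0$ independent of $m$ and of the competitor. The paper instead invokes Proposition~\ref{prop:fusco_extension}, which argues by contradiction in the style of Esposito--Fusco: assuming minimizers $(A_h,u_h)$ of $\cF^{\lambda_h}$ with $|A_h|\ne\fm$ and $\lambda_h\to\infty$, one uses a local bi-Lipschitz radial map $\Phi$ supported in a small ball to push mass in or out, sets $v_h=u_0$ on the modified part, and estimates the surface, elastic and volume variations to reach a contradiction. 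The key point is that this construction is applied to \emph{minimizers}, which already enjoy uniform perimeter bounds; your direct construction would have to work for arbitrary competitors.

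\textbf{Limit in $m$.} Here your plan matches the paper almost exactly (remove filaments, keep finitely many components, fill small holes, then apply \cite[Theorem~1.1]{ChC:2017arma}). One detail you should anticipate: before invoking the density result, the paper excises a thin Lipschitz neighbourhood $V\subset\Omega$ of $J_{u''}\subset\Sigma$ so that the resulting function has its jump set contained in $\Int{\overline{B}}$, away from $\Sigma$; this is what lets the Chambolle--Crismale approximation be carried out with covering cubes strictly inside $\Omega$ without generating spurious delamination. Your reference to Lemma~\ref{lem:extension_desparate} is in the right spirit but is used elsewhere (in the proof of lower semicontinuity), not here.
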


We notice that for $\lambda>\lambda_0$ solutions of \eqref{minimum_prob_1}
and \eqref{minimum_prob_2} coincide (see the proof of Theorem \ref{teo:existence})
for any $\red \fm \black \in(0,|\Omega|]$ and $m\ge1$.
Moreover, \eqref{zur_tenglik} shows that a minimizing sequence 
for $\cF$ in $\admissible$ can be chosen among the sets whose boundary 
have finitely many connected components. 

The proof of the existence part of Theorem \ref{teo:existence} is given mainly by the following two results in which we show that $\admissible_m$ is $\tau_\admissible$-compact and $\cF$ is $\tau_\admissible$-lower semicontinuous.
Recall that an (infinitesimal) rigid displacement in $\R^n$ is an affine transformation  $a(x)= Mx+b,$ where $M$ is a skew-symmetric (i.e.,  $M^T = -M$) $n\times n$-matrix and $b\in\R^n.$ Given $B\in \fA$ with $\Int{B} = \cup_{j} E_j,$ where $\{E_j\}$ are all  connected components of $\Int{B},$ we say the function 
$$
a=\sum\limits_{j\ge1} (M_jx+b_j)\chi_{E_j},
$$
a {\it piecewise rigid displacement} associated to $B,$ here $M_jx+b_j$ is a rigid displacement in $\R^2$. 

\begin{theorem}[\textbf{Compactness of $\admissible_m$}] \label{teo:compactness_Ym} 
Assume  {\rm (H1)-(H3)}. Let either $\fm\in(0,|\Omega|)$ or $\substrate=\emptyset.$ Let $\{(A_k,u_k)\}\subset\admissible_m$ be such that 
\begin{equation*}
\sup\limits_{k\ge1} \cF(A_k,u_k) < \infty 
\end{equation*}
and 
\begin{equation}\label{volume_constr}
|A_k|\le \fm 
\end{equation}
for every $k\ge1.$
Then there exist  $(A,u)\in\admissible_m$ of finite energy, a  subsequence $\{(A_{k_n},u_{k_n})\}$ and a sequence $\{(D_n,v_n)\}\subset \admissible_m$  \red with 
$$
v_n:=(u_{k_n}+a_n)\chi_{D_n\cap A_{k_n}} + u_0\chi_{D_n\setminus A_{k_n}}
$$
for some piecewise rigid displacements $a_n$ associated to $D_n,$
\black 
such that $A_{k_n} \overset{\tau_\fA}{\to} A,$
$(D_n,v_n) \overset{\tau_\admissible}{\to} (A,u)$, $|D_n|=|A_{k_n}|,$ 
 and  
\begin{equation}\label{liminfga_ut_eshmat}
\liminf \limits_{n\to \infty}  \cF(A_{k_n},u_{k_n}) \ge \liminf \limits_{n\to \infty}  \cF(D_n,v_n).  
\end{equation}
\black 
\end{theorem}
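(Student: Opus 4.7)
First, from hypothesis (H2) one has
$$
\cF(A_k,u_k) + \int_\Sigma \varphi(x,\nu_\Sigma(x))\,d\cH^1 \ge c_1\,\cH^1(\p A_k),
$$
so the energy bound together with $\cH^1(\Sigma)<\infty$ yields $\sup_k \cH^1(\p A_k) < \infty$. Since $A_k\in\fA_m$, I would apply Proposition \ref{prop:compactness_A_m} to extract a subsequence $A_{k_n}$ and a limit $A\in\fA_m$ with $A_{k_n}\overset{\tau_\fA}\to A$; by Remark \ref{rem:kuratiwski_and_sdistance} this also gives $A_{k_n}\overset{\cK}\to \cl A$ and $\R^2\setminus A_{k_n}\overset{\cK}\to \R^2\setminus\Int A$.

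\textbf{Modification from $A_{k_n}$ to $D_n$.} The construction proceeds in (up to) two phases motivated by two distinct obstructions to displacement compactness. (i)~A connected component of $\Int A_{k_n}$ can split in the limit into several components of $\Int A$ (neck-pinching); because $u_{k_n}\in H^1_\loc(\Int A_{k_n})$ cannot carry distinct rigid motions on each limiting piece without producing forbidden interior jumps, I would invoke Proposition \ref{prop:sets_changhe} to insert thin dividing cuts in $A_{k_n}$, producing an intermediate $\tilde D_n\in\fA_m$ whose interior components track those of $\Int A$. (ii)~If $\substrate\ne\emptyset$, portions of $A_{k_n}$ near $\Sigma$ --- free boundary or internal cracks collapsing onto $\Sigma$, and voids accumulating at $\Sigma$ --- may generate spurious delamination in the limit. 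Using Proposition \ref{prop:jump_estimate} to excise exactly these portions and restoring the lost mass by adjoining an auxiliary $T_n\subset\Omega$ with $|T_n|$ equal to the removed volume produces $D_n\in\fA_m$ with $|D_n|=|A_{k_n}|$; $T_n$ is placed so that $v_n:=u_0$ on $T_n\subset D_n\setminus A_{k_n}$ contributes nothing to the elastic energy, since $\str{u_0}-E_0\equiv 0$ on $\Omega$.

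\textbf{Compactness of $v_n$.} For each connected component $E_j$ of $\Int D_n$ matched to a component of $\Int A$, the $L^2$-bound on $\str{u_{k_n}}$ coming from (H3) and a Korn-type compactness result in $GSBD^2$ (as in \cite{ChC:2018jems}) provide a rigid displacement $a_n^j=M_{n,j}x+b_{n,j}$ such that $u_{k_n}+a_n^j$ is relatively compact a.e.\ on $E_j$. No rigid correction is required on $\substrate$, because $S$ is fixed and the anchoring of $u$ through $u_0$ on $\p S\setminus\Sigma$ removes the residual translation/rotation freedom. Setting $a_n:=\sum_j a_n^j\chi_{E_{n,j}}$ and defining $v_n$ as in the statement yields $v_n\to u$ a.e.\ on $\Int A\cup\substrate$, and hence $(D_n,v_n)\overset{\tau_\admissible}\to (A,u)\in\admissible_m$; the limit $u$ inherits $GSBD^2(\Ins A;\R^2)\cap H^1_\loc(\Int A\cup\substrate;\R^2)$ regularity from the uniform symmetrized-gradient bound and Kuratowski convergence of interiors.

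\textbf{Liminf inequality and main obstacle.} The elastic parts of $\cF(D_n,v_n)$ and $\cF(A_{k_n},u_{k_n})$ coincide: rigid motions leave $\str{\cdot}$ invariant, and the added slab $T_n$ carries $\str{v_n}-E_0=0$. The surface-energy comparison is the delicate core of the argument. The cuts inserted in phase (i) collapse in the $\tau_\fA$-limit onto $\p^*A$, so although they are not counted inside $\cF(A_{k_n},u_{k_n})$, they are dominated asymptotically by the double weighting $\varphi(x,\nu_A)+\varphi(x,-\nu_A)$ available on $\p^*A$. In phase (ii), the $\cH^1$-length of the excised piece near $\Sigma$ is, via Proposition \ref{prop:jump_estimate} and (H2), bounded below by its anisotropic trace on $\Sigma$; this makes the replacement by $T_n$ at worst neutral for the surface cost. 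Making this bookkeeping simultaneously compatible with the $\fA_m$-constraint on boundary components of $D_n$, the equality $|D_n|=|A_{k_n}|$, and the Finsler weighting through $\varphi$ and $\beta$ is the main obstacle, and is precisely what forces the three auxiliary propositions (\ref{prop:compactness_A_m}, \ref{prop:sets_changhe}, \ref{prop:jump_estimate}) to be used in tandem.
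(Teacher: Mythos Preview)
Your overall architecture—compactness of the sets via Proposition~\ref{prop:compactness_A_m}, the splitting modification via Proposition~\ref{prop:sets_changhe}, and the excision near $\Sigma$ via Proposition~\ref{prop:jump_estimate}—matches the paper's. However, there is a genuine gap in the displacement step, and it propagates into your understanding of why phase~(ii) is necessary.

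\smallskip
\textbf{The substrate carries no Dirichlet datum.} You write that ``no rigid correction is required on $\substrate$, because \dots\ the anchoring of $u$ through $u_0$ on $\p S\setminus\Sigma$ removes the residual translation/rotation freedom.'' There is no such anchoring in the model: $u_0\in H^1(\Omega)$ enters only through the mismatch strain $E_0=\str{u_0}$ in $\Omega$, and admissible displacements are merely in $GSBD^2(\Ins{A})\cap H^1_\loc(\Int A\cup\substrate)$ with no boundary condition on $\p\substrate\setminus\Sigma$. Consequently $u_k|_{S_j}$ need not converge a.e.\ without a rigid correction, and the paper explicitly applies Corollary~\ref{cor:add_rig_converge} with $\openset_k=\openset=S_j$ to produce rigid displacements $a_k^n$ on each connected component $S_j$ of $\substrate$. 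The piecewise rigid displacement $a_k$ is then built so that, whenever possible, the \emph{same} $a_k^n$ is used on a substrate component $S_j$ and on the film components $E_i^k$ adjacent to it; only when this coupling fails (i.e.\ $|u_k+a_k^n|\to\infty$ on some $E_i$) does one assign a different rigid motion there.

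\smallskip
\textbf{This is exactly why phase~(ii) is needed.} You attribute the near-$\Sigma$ excision to geometric phenomena (cracks/voids collapsing onto $\Sigma$). In the paper the motivation is different and sharper: once distinct rigid motions have been attached to a film component $E_i^k$ and to the neighbouring substrate component $S_j$, the function $u_k+a_k$ acquires an \emph{artificial} jump on $\Sigma\cap\p^*E_i$ that was not present in $J_{u_k}$. This is what threatens \eqref{liminfga_ut_eshmat}. Proposition~\ref{prop:jump_estimate} is invoked precisely on those portions of $\Sigma\cap\p^*A$ where the film--substrate rigid motions disagree, to show that the original surface energy of $A_k$ in a small square already dominates the delamination cost $\int\varphi(\cdot,\nu_\Sigma)$ there (up to $O(\delta)$); one may then carve out that square from $B_k$, replace the lost volume by a small auxiliary square carrying $u_0$, and pass to a diagonal sequence in $\delta$.

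\smallskip
\textbf{Minor point on phase~(i).} The cuts produced by Proposition~\ref{prop:sets_changhe} satisfy $\cH^1(\p B_k\setminus\p A_k)\to0$; they do not ``collapse onto $\p^*A$'' with positive length to be absorbed by a double-weighted term. The comparison $|\cF(A_k,u_k)-\cF(B_k,u_k)|\to0$ is therefore immediate from the vanishing length, not from any domination argument.
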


\begin{theorem}[\textbf{Lower semicontinuity of $\cF$}]\label{teo:lsemico_film}
Assume  (H1)-(H3) and let \allowbreak $\{(A_k,u_k)\}\subset\admissible_m$ and 
$(A,u)\in \admissible_m$ 
be such that  $(A_k,u_k)\overset{\tau_{\admissible}}\to (A,u).$
Then 
$$
\liminf\limits_{k\to\infty} \cF(A_k,u_k) \ge \cF(A,u).
$$
\end{theorem}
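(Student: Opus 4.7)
The plan is to decompose $\cF=\cS+\cW$ and establish $\tau_\admissible$-lower semicontinuity of each summand separately along the given sequence $(A_k,u_k)\overset{\tau_\admissible}{\to}(A,u)$. Passing to a subsequence, one may assume $\liminf_k\cF(A_k,u_k)$ is finite and attained as a limit, since otherwise the conclusion is trivial.

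\textbf{Elastic energy.} By (H3) and the uniform energy bound, $\|\str{u_k}-E_0\|_{L^2(A_k\cup\substrate)}$ is bounded. Since $A_k\overset{\tau_\fA}{\to} A$, Remark \ref{rem:kuratiwski_and_sdistance} guarantees that every compact $K\subset \Int{A}\cup\substrate$ lies inside $\Int{A_k}\cup\substrate$ for all large $k$. On each such $K$, combining $u_k\to u$ a.e.\ (which pins down rigid motion ambiguities) with a local Korn--Poincar\'e inequality on the connected components of $\Int{A}\cup\substrate$ gives, up to a further subsequence, $\str{u_k}\wk \str{u}$ weakly in $L^2(K;\mtwo)$. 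Convexity of $M\mapsto \C(x)M\!:\!M$ and standard weak $L^2$ lower semicontinuity yield $\int_K W(x,\str{u}-E_0)\,dx\le \liminf_k\int_K W(x,\str{u_k}-E_0)\,dx$. Exhausting $\Int{A}\cup\substrate$ by compacts and using that $|A\setminus \Int{A}|=0$ (as $\cH^1(\p A)<\infty$ forces $|\p A|=0$ in $\R^2$), one concludes $\cW(A,u)\le \liminf_k \cW(A_k,u_k)$, exactly along the lines of \cite{FFLM:2007}.

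\textbf{Surface energy.} Here the plan is to reduce to Proposition \ref{prop:lsc_surface_energy}, which delivers $\tau_\admissible$-lsc of $\cS$ under the structural condition \eqref{condition_J_A} tying the limit delamination set $J_u$ to the boundaries of the approximants. The first step is to extend each $u_k$ beyond $A_k\cup\substrate$ to a function defined on all of $\Omega\cup\substrate$ by means of Lemma \ref{lem:extension_desparate}, chosen so that no extra jump mass is generated on the exposed substrate surface, i.e., the jump set of the extension is contained, up to $\cH^1$-negligible sets, in $J_{u_k}\cup(\Omega\cap\p A_k)$. A blow-up analysis around $\cH^1$-a.e.\ point of $J_u\subset\Sigma$ then identifies the only possible origins of the limit delamination: (i) existing delamination sets $J_{u_k}\subset\Sigma$; (ii) portions of $\Omega\cap\p^*A_k$ or interior cracks in $A_k^{(1)}\cap \p A_k$ collapsing onto $\Sigma$; (iii) interior cracks emanating from $\Sigma\cap\p^*A_k\setminus J_{u_k}$ and accumulating towards $\Sigma$. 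These are precisely the alternatives catalogued in \eqref{condition_J_A}, so the hypothesis is verified and Proposition \ref{prop:lsc_surface_energy} applies, giving $\cS(A,u)\le\liminf_k\cS(A_k,u_k)$.

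\textbf{Main obstacle.} The subtle point is the verification of \eqref{condition_J_A}: near $\Sigma$ all five regimes appearing in \eqref{func_surface_energy} (free boundary, interior cracks, external filaments, wetting layer, delamination) can coexist and accumulate, so the extension produced by Lemma \ref{lem:extension_desparate} must be engineered to not introduce spurious $\cH^1$-mass on the exposed substrate surface. The necessity of controlling each Radon--Nikodym density appearing in the blow-up separately, rather than merely taking weak$^*$ limits of the associated surface measures (which need not be non-negative because of the adhesion coefficient $\beta$ allowed by (H2)), is the technically demanding core of the argument; this difficulty is however already absorbed into the statement of Proposition \ref{prop:lsc_surface_energy}, leaving the present proof to focus on the extension plus the three-case sourcing of $J_u$ described above.
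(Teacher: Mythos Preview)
Your proposal follows the paper's approach: split $\cF=\cW+\cS$, handle $\cW$ by convexity on compact exhaustions of $\Int{A}\cup\substrate$, and reduce $\cS$ to Proposition~\ref{prop:lsc_surface_energy} by constructing the auxiliary functions $w_k,w$ required in condition~\eqref{condition_J_A} via Lemma~\ref{lem:extension_desparate}. Two points of precision are worth flagging.

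First, the extension is \emph{local} and from the \emph{substrate side}: the paper fixes $x\in J_u$, picks a small ball $B_r(x)\subset\Omega\cup\Sigma\cup\substrate$, and applies Lemma~\ref{lem:extension_desparate} with $P=B_r(x)\cap\substrate$ (Lipschitz, connected) and $Q=B_r(x)$, extending $u_k|_{\substrate}\in H^1$ into the void $B_r(x)\setminus(A_k\cup\substrate)$ and setting $w_k:=u_k\chi_{B_r(x)\cap(A_k\cup\substrate)}+\cE u_k\chi_{B_r(x)\setminus(A_k\cup\substrate)}$. Your phrasing (``extend each $u_k$ beyond $A_k\cup\substrate$ to all of $\Omega\cup\substrate$'') suggests a global extension from $A_k\cup\substrate$, which cannot be obtained from Lemma~\ref{lem:extension_desparate} since that set is neither Lipschitz nor independent of $k$, and $u_k$ is only $H^1_{\loc}$ there. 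Extending from $\substrate$ is exactly what guarantees no jump on the exposed substrate surface, so that $J_{w_k}\subset B_r(x)\cap\big(J_{u_k}\cup(\Omega\cap\p A_k)\big)$ up to an $\cH^1$-null set.

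Second, the ``three-case sourcing of $J_u$'' you list is the mechanism behind Lemma~\ref{lem:creation_of_delamination}, which is invoked \emph{inside} the proof of Proposition~\ref{prop:lsc_surface_energy}; once the local $w_k,w$ satisfying \eqref{condition_J_A} are produced, Proposition~\ref{prop:lsc_surface_energy} applies as a black box and no further blow-up analysis is needed in the proof of Theorem~\ref{teo:lsemico_film}.
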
  

As a byproduct of our methods we obtain the 
following existence result in a subspace of $\admissible_m$ 
with respect to a weaker topology previously used in 
\cite{DP:2018_1,FFLM:2011,FFLM:2007} for thin films 
and crystal cavities. 

\begin{theorem}[\textbf{Existence for  weaker topology}]\label{teo:existence_thin_void}
Assume {\rm(H1)-(H3)} and fix $m\ge1$ and $\fm\in (0,|\Omega |].$ The
functional $\cF':\admissible\to\R$
%
defined as 
$$
\begin{aligned}
\cF'(A,u):=\cF(A,u) &- 2\int_{\Omega \cap A^{(0)}\cap\p A}
\varphi(x,\nu_A)d\cH^1
\\
&- \int_{\Sigma \cap A^{(0)} \cap \p A} \big(\phi(x,\nu_A) 
+ \beta\big) d\cH^1 
-\int_\Sigma \beta d\cH^1, 
\end{aligned}
$$
admits a minimizer $(A,u)$ in every $\tau_\admissible'$-closed subset 
of 
$$
\admissible_m':=\big\{(A,u)\in \admissible:\,\, \text{$A$ open, $|A| = \fm$,  
and $A\cup\Sigma\in \fA_m$}\big\},
$$
where 
$\{(A_k,u_k)\}\subset\admissible$ 
converges to $(A,u)\in\admissible$ in 
$\tau_\admissible'$-sense if 
\begin{itemize}
\item[--] $\sup\limits_{k\ge1}\,\,\cH^1(\p A_k)<\infty,$
\item[--]  $\R^2\setminus A_k 
\overset{\cK}{\to} \R^2\setminus  A,$
\item[--]  $u_k\to u$ a.e.\ in $\Int{A}\cup \substrate.$ \black 
\end{itemize}

\end{theorem}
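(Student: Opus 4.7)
My plan is to prove Theorem \ref{teo:existence_thin_void} by the direct method of the Calculus of Variations, leveraging the fact that the weaker topology $\tau_\admissible'$ is precisely the one under which Proposition \ref{prop:lsemico_film_voids} provides lower semicontinuity of $\cF'$ (the functional obtained from $\cF$ by dropping the external-filament and wetting-layer contributions together with the constant $\int_\Sigma \beta\,d\cH^1$). Let $\mathcal{K}\subseteq\admissible_m'$ be the given $\tau_\admissible'$-closed subset (nonempty, else there is nothing to prove), and let $\{(A_k,u_k)\}\subset\mathcal{K}$ be a minimizing sequence for $\cF'$, which we may assume has uniformly bounded energy.

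First I would extract uniform a priori bounds. The coercivity in (H1) yields $\varphi(\cdot,\nu)\ge c_1|\nu|$, so the surface terms remaining in $\cF'$ control $\cH^1(\Omega\cap\p^*A_k)$ and $\cH^1(\Omega\cap A_k^{(1)}\cap\p A_k)$, while the $\beta$-terms are bounded in absolute value by $c_2\cH^1(\Sigma)<\infty$ thanks to (H2). Combined with the constraint $A_k\cup\Sigma\in\fA_m$ and the fact that $\cH^1(\p\Omega)<\infty$, this yields $\sup_k\cH^1(\p(A_k\cup\Sigma))<\infty$. Hypothesis (H3) gives the standard uniform $L^2$ bound on $\str{u_k}-E_0$ needed for $GSBD$-compactness.

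Next I would invoke the compactness machinery of Section \ref{sec:compactness}. Applying Proposition \ref{prop:compactness_A_m} to $\{A_k\cup\Sigma\}\subset\fA_m$ produces, up to subsequence, a $\tau_\fA$-limit $B\in\fA_m$, which by Remark \ref{rem:kuratiwski_and_sdistance} implies Kuratowski convergence $\R^2\setminus A_{k_n}\overset{\cK}{\to}\R^2\setminus A$ after setting $A$ to be the open representative inherited from $B\setminus\overline{\substrate}$ (replacing by interior if necessary, which modifies $\p A$ only on an $\cH^1$-null set and hence does not affect the surface energy). For the displacements, the construction in Theorem \ref{teo:compactness_Ym} applies: adding suitable piecewise rigid displacements $a_n$ associated to the connected components of the limit yields $v_n:=u_{k_n}+a_n$ converging a.e.\ in $\Int{A}\cup\substrate$ to some $u\in GSBD^2(\Ins{A};\R^2)\cap H^1_\loc(\Int A\cup \substrate;\R^2)$; since rigid displacements leave both $J_u$ and $\str{u}$ invariant, $\cF'(A_{k_n},v_n)=\cF'(A_{k_n},u_{k_n})$.

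Finally I would verify that $(A,u)\in\admissible_m'$. Openness of $A$ holds by construction and $A\cup\Sigma\in\fA_m$ follows from $B\in\fA_m$; for the volume constraint $|A|=\fm$, the uniform perimeter bound yields $BV$-compactness of $\{\chi_{A_{k_n}}\}$, and the Kuratowski identification of the complements pins the $L^1$-limit to $\chi_A$, so that $|A|=\lim|A_{k_n}|=\fm$. Closedness of $\mathcal{K}$ under $\tau_\admissible'$ then places $(A,u)\in\mathcal{K}$, and Proposition \ref{prop:lsemico_film_voids} gives
$$
\cF'(A,u)\le\liminf_{n\to\infty}\cF'(A_{k_n},v_n)=\lim_{k\to\infty}\cF'(A_k,u_k)=\inf_{\mathcal{K}}\cF',
$$
completing the proof. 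The step I expect to be most delicate is the simultaneous verification that the Kuratowski limit $A$ is open, satisfies $A\cup\Sigma\in\fA_m$, and retains the prescribed volume $\fm$—especially reconciling the open representative of $A$ obtained from the complementary Kuratowski limit with the $\tau_\fA$-limit of the closed sets $A_k\cup\Sigma$—which should nonetheless go through by a careful combination of the $BV$ compactness argument with the structural properties of $\fA_m$ recorded in Remark \ref{rem:structure_of_pA}.
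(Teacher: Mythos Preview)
Your overall strategy—direct method via compactness plus the lower semicontinuity of $\cF'$ from Proposition~\ref{prop:lsemico_film_voids}—is exactly what the paper's one-line proof invokes, and the set-compactness part (working with $A_k\cup\Sigma\in\fA_m$ and Proposition~\ref{prop:compactness_A_m}) is correct.

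There is, however, a genuine gap in the displacement step. You assert that ``rigid displacements leave both $J_u$ and $\str{u}$ invariant'' and conclude $\cF'(A_{k_n},v_n)=\cF'(A_{k_n},u_{k_n})$. For a single rigid displacement this is true, but a \emph{piecewise} rigid displacement—different affine maps on different connected components—can create new jump on the interfaces between the pieces, here precisely on $\Sigma$ between a component of $\Int{A_{k_n}}$ and a component of $\substrate$ that receive distinct rigid motions. If $\Delta_n:=J_{v_n}\setminus J_{u_{k_n}}\subset\Sigma\cap\p^*A_{k_n}$ is this extra jump, then the only $u$-dependent surface terms in $\cF'$ change by
\[
\cF'(A_{k_n},v_n)-\cF'(A_{k_n},u_{k_n})=\int_{\Delta_n}\big(\varphi(x,\nu_\Sigma)-\beta\big)\,d\cH^1\ge 0
\]
by (H2), which is the wrong inequality for the direct method. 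This is exactly the obstruction that forces Steps~2--3 in the proof of Theorem~\ref{teo:compactness_Ym}: one must further modify the sets into $D_n$ (excising the portion of $A_{k_n}$ near $\Sigma$ where the spurious jump concentrates and compensating the volume) so as to obtain~\eqref{liminfga_ut_eshmat}. Citing ``the construction in Theorem~\ref{teo:compactness_Ym}'' but then retaining only $v_n=u_{k_n}+a_n$ skips precisely this part. To close the argument you should invoke Theorem~\ref{teo:compactness_Ym} in full for $E_k:=A_k\cup\Sigma$ and transfer back via the identity $\cF'(G,u)=\cF(G\cup\Sigma,u)-\int_\Sigma\beta\,d\cH^1$ of~\eqref{f_prime_dan_f_ga}, rather than claim that the energy is unchanged under piecewise rigid corrections.
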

\smallskip 

\begin{remark}\label{rem:thin_film_appl}
The sets
 $\admissible_{\rm subgraph}$ and  $\admissible_{\rm starshaped}$ 
defined in Subsection \ref{subsec:applications} are $\tau_\admissible'$-closed
in $\admissible_m'$ (see e.g., \cite[Proposition 2.2]{FFLM:2007}).
In the thin-film setting, we define 
$\varphi$ and $\beta$ as $\varphi: = \gamma_f$ and 
$$
\beta:=-\max\big\{\min\{\gamma_f,\gamma_s - \gamma_{fs}\},-\gamma_f\big\},
$$ 
where $\gamma_f,$ $\gamma_s,$ and $\gamma_{fs}$ denote 
the surface tensions of 
the film-vapor, substrate-vapor, and 
film-substrate interfaces,   
respectively. 
The energy $\cF'$ coincides (apart from the presence of delamination) \black  with the thin-film energy
in \cite{DP:2018_1,DP:2018_2}  in the case 
$\gamma_f,$ $\gamma_s,$ $\gamma_{fs}$  are constants, 
$\gamma_s - \gamma_{fs}\ge0,$ $\gamma_s>0,$ and $\gamma_f>0.$
Therefore, 
Theorem \ref{teo:existence_thin_void} 
extends  the existence results in \cite{DP:2018_1,FFLM:2007}
to all values of $\gamma_s$ and
$\gamma_s - \gamma_{fs}$,  as well as to anisotropic surface tensions
and anisotropic elastic densities.
\end{remark}

\begin{remark} 
All the results contained in this subsection hold true with essentially the same proofs by replacing (H3) with the more general assumption: 

\begin{itemize}
\item[(H3')]  $W:(\Omega\cup \substrate)\times \mtwo \to [0,\infty)$ is a function such that $M\mapsto W(x,M)$ is convex for any $x\in\Omega\cup\substrate$ and 
$$
c'|M|^p \le W(x,M) \le c''|M|^p +f(x)
$$ 
for some $p\ge2,$ $c''\ge c'>0$ and $f\in L^1(\Omega\cup\substrate).$
\end{itemize}
\end{remark}

\section{Compactness}\label{sec:compactness}

In this section we prove Theorem \ref{teo:compactness_Ym}. 
Convergence of sets with respect to the signed distance functions
has the following compactness property.

\begin{proposition}[\textbf{Blaschke-type 
selection principle}]\label{prop:compactness_wrt_sdist}
For every sequence $\{A_k\}$ of subsets $\R^2$  
there exist  a subsequence
$\{A_{k_l}\}$ and $A\subset\R^2$ such that 
$\sdist(\cdot,\p A_{k_l})\to \sdist(\cdot,\p A)$ 
locally uniformly in $\R^2$ as $l\to\infty.$
\end{proposition}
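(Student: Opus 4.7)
The plan is to reduce the statement to the sequential compactness of Kuratowski convergence for subsets of $\R^2$ together with the equivalence stated in Remark~\ref{rem:kuratiwski_and_sdistance}. First, by the classical compactness of Kuratowski convergence on $\R^2$ (recalled in the Appendix), I would pass to a subsequence along which $A_{k_l}\overset{\cK}{\to} B$ for some closed $B\subseteq\R^2$, and then extract a further subsequence, still denoted $\{A_{k_l}\}$, along which $\R^2\setminus A_{k_l}\overset{\cK}{\to} C$ for some closed $C\subseteq\R^2$.

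Next I would record two set-theoretic facts about the pair $(B,C)$ and use them to construct the limit set $A$. Since every point of $\R^2$ belongs either to $A_{k_l}$ or to its complement for infinitely many $l$, one has $B\cup C=\R^2$; moreover, if $x\notin B=\limsup_l A_{k_l}$, there exist an open $U\ni x$ and $L\ge 1$ with $U\subseteq \R^2\setminus A_{k_l}$ for every $l\ge L$, whence $U\subseteq C$ and $x\in\Int{C}$. Setting $V:=\R^2\setminus C$ (open), this last inclusion rewrites as $\overline V\subseteq B$. I would then let $N$ be a countable subset of $B\setminus\overline V$ dense therein (which exists by separability of $\R^2$) and put $A:=V\cup N$. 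From $\overline V\subseteq B$, $N\subseteq B$ and $B$ closed one gets $\overline A=\overline V\cup \overline N=B$. For the interior, $V\subseteq\Int{A}$ is immediate; conversely, if $x\in\Int{A}\setminus V$, then $x\in N\subseteq \R^2\setminus\overline V$, so there is an open neighborhood $W\ni x$ disjoint from $\overline V$, whence $W\cap A\subseteq N$ is countable --- incompatible with $W\cap A$ containing an open (hence uncountable) neighborhood of $x$ in $\R^2$. Thus $\Int{A}=V$, i.e.\ $\R^2\setminus\Int{A}=C$.

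Finally, the implication (b)$\Rightarrow$(a) of Remark~\ref{rem:kuratiwski_and_sdistance} applied to this $A$ yields $\sdist(\cdot,\p A_{k_l})\to \sdist(\cdot,\p A)$ locally uniformly in $\R^2$, which is the desired conclusion. The main obstacle is the construction of $A$: the naive choices $A=V$ or $A=B$ typically fail (the former may give $\overline A\subsetneq B$ when $\overline V\subsetneq B$, the latter may give $\Int{A}\supsetneq V$ when $B\cap\Int{C}\neq\emptyset$, as is the case for densely distributed $A_{k_l}$), and one must thicken $V$ by a countable set dense in $B\setminus\overline V$ to match $\overline A=B$ and $\Int{A}=V$ simultaneously.
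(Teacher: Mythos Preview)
Your proof is correct and takes a genuinely different route from the paper's. The paper first extracts a Kuratowski-convergent subsequence of the \emph{boundaries} $\partial A_k\overset{\cK}{\to}K$, then applies Arzel\`a--Ascoli directly to the $1$-Lipschitz functions $\sdist(\cdot,\partial A_k)$ to obtain a limit $f$ with $|f|=\dist(\cdot,K)$, and finally builds $A$ from the sublevel sets of $f$ together with a countable dense subset of $\Int{K}$. You instead extract Kuratowski limits of the \emph{sets and their complements}, $A_{k_l}\overset{\cK}{\to}B$ and $\R^2\setminus A_{k_l}\overset{\cK}{\to}C$, manufacture $A$ with $\overline A=B$ and $\Int A=\R^2\setminus C$ via the countable-thickening $A=V\cup N$, and then invoke Remark~\ref{rem:kuratiwski_and_sdistance}\,(b)$\Rightarrow$(a) as a black box. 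Your approach is arguably cleaner in that it avoids the Arzel\`a--Ascoli step and makes the role of Remark~\ref{rem:kuratiwski_and_sdistance} transparent; the paper's approach has the virtue of producing the limiting signed distance function $f$ explicitly before identifying $A$. Both arguments ultimately hinge on the same obstruction you highlight---that the limiting ``boundary'' set may have nonempty interior---and both resolve it by the same countable-dense-set device. One small remark: the Appendix records the equivalences for Kuratowski convergence but not the sequential compactness itself; the latter is what the paper cites as the Blaschke selection principle, so you may want to adjust your parenthetical reference accordingly.
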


\begin{proof}
Without loss of generality we suppose $A_k\notin\{\R^2,\emptyset\}.$ 
By the Blaschke selection principle 
\cite[Theorem 6.1]{AFP:2000},
there exists a not relabelled subsequence 
$\{A_k\}$ and a closed set $K\subset \R^2$
such that $\p A_k$ converges to $K$ in the Kuratowski sense 
as $k\to\infty.$ 
Notice that by Proposition \ref{prop:equivalent_kuratowski}, 
\begin{equation}\label{with_abs_values}
|\sdist(\cdot,\p A_k)|\to \dist(\cdot, K) 
\end{equation}
locally uniformly as $k\to\infty$
since $|\sdist(\cdot,\p A_k)| = \dist(\cdot,\p A_k).$ 
As $\sdist(\cdot,\p A_k)$ is 
1-Lipschitz, by the Arzela-Ascoli Theorem,
passing to  a further not relabelled subsequence 
one can find $f:\R^2\to[-\infty,+\infty]$ such that 
$$
\sdist(\cdot,\p A_k) \to f 
$$
locally uniformly in $\R^2$ as $k\to\infty.$
By \eqref{with_abs_values},
$|f(\cdot)|= \dist(\cdot,K).$ Recall 
that $K$ may have nonempty interior. Fix a countable set $Q\subset \Int{K}$
dense in $\Int{K},$
and define 
$$
A:=\{f< 0\}\cup(\Int{\overline{\{f>0\}}}\cap \p K) \cup Q.
$$
By construction, $\Int{A} = \{f<0\}$,  $\overline{A}=\{f\le 0\}\cup K$
and $\p A=\{f=0\}=K.$

Finally we show that 
$$
f(x) = \sdist(x,\p A).
$$
If $x\in A,$ by the definition of $A$ and $K,$
$f(x)\le 0$ so that
$$
f(x) = -\dist(x,K) = -\dist(x,\p A) = -\dist(x, \R^2\setminus A).
$$
Analogously, if $x\notin A,$ then $f(x)\ge0$ and hence
$$
f(x) = \dist(x,K) = \dist(x,\p A) = \dist(x, A).
$$
\end{proof}

In general, the collection $\fA$ is not closed under 
$\tau_\fA$-convergence.  Indeed, let $E:=\{x_k\}$ be a 
countable dense set in 
$B_1(0)$ and $E_k:=\{x_1,\ldots,x_k\}\in\fA.$ 
Then $\cH^1(\p E_k) = 0,$ and 
$E_k\overset{\tau_{\fA}}{\to} E$ as $k\to\infty$,
but $E\notin\fA$
since $\p E = \overline{B_1(0)}.$ 
However,  $\fA_m$ is closed with respect to
the $\tau_{\fA}$-convergence.

\begin{lemma}\label{lem:f_convergence}
Let  $A\subset\Omega $ and $\{A_k\}\subset\fA_m$ be
such that $A_k\overset{\tau_{\fA}}{\to} A$.
Then: 
\begin{itemize}
\item[\rm (a)] $A\in \fA_m$
and  
\begin{equation}\label{lsc_H1_measure}
\cH^1(\p A)\le \liminf\limits_{k\to\infty} \cH^1(\p A_k); 
\end{equation}
\item[\rm (b)] $A_k \to A$ in  $L^1(\R^2)$  as $k\to\infty.$   
\end{itemize}
\end{lemma}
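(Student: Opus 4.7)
The plan is to exploit the bound on the number of connected components to reduce the lower semicontinuity of $\cH^1$ to a component-wise application of Gol\c{a}b's theorem, and then to deduce $L^1$ convergence from a.e.\ convergence of the characteristic functions together with boundedness of the container.

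For part (a), I would first use Remark~\ref{rem:kuratiwski_and_sdistance} to upgrade $\tau_\fA$-convergence to Kuratowski convergence of the boundaries, $\partial A_k \overset{\cK}{\to} \partial A$. Since each $\partial A_k$ has at most $m$ connected components, after extracting a subsequence I may assume this number equals some constant $\tilde m\le m$, and write $\partial A_k = \bigcup_{j=1}^{\tilde m} C_k^{(j)}$ with each $C_k^{(j)}$ closed and connected. All these sets lie in the fixed compact set $\overline\Omega$, so by a diagonal application of the Blaschke selection principle I can extract a further subsequence along which $C_k^{(j)}\overset{\cK}{\to} C^{(j)}$ for every $j$, with each limit $C^{(j)}$ compact and connected. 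Because Kuratowski convergence commutes with finite unions, $\partial A = \bigcup_{j=1}^{\tilde m} C^{(j)}$, exhibiting $\partial A$ as a union of at most $m$ connected components. Applying Gol\c{a}b's theorem component-wise together with the superadditivity of $\liminf$,
$$
\cH^1(\partial A) \le \sum_{j=1}^{\tilde m}\cH^1(C^{(j)}) \le \sum_{j=1}^{\tilde m}\liminf_{k\to\infty}\cH^1(C_k^{(j)}) \le \liminf_{k\to\infty}\cH^1(\partial A_k),
$$
which proves \eqref{lsc_H1_measure}. Since each $C^{(j)}$ is connected with finite $\cH^1$-measure, it is $\cH^1$-rectifiable by Proposition~\ref{prop:rectifiable_sets}, and so is the finite union $\partial A$; together with $A\subset\overline\Omega$ this gives $A\in\fA_m$.

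For part (b), the locally uniform convergence $\sdist(\cdot,\partial A_k)\to \sdist(\cdot,\partial A)$ forces the sign of $\sdist(x,\partial A_k)$ to stabilize at every point where $\sdist(x,\partial A)\ne 0$, so $\chi_{\Int A_k}(x)\to\chi_{\Int A}(x)$ for every such $x$. The exceptional set $\{\sdist(\cdot,\partial A)=0\}=\partial A$ has finite $\cH^1$-measure and hence vanishing Lebesgue measure, and the same holds for each $\partial A_k$ by part (a). Therefore $\chi_{A_k}=\chi_{\Int A_k}$ and $\chi_A=\chi_{\Int A}$ up to Lebesgue null sets, and the pointwise a.e.\ convergence $\chi_{A_k}\to\chi_A$ follows. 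Since all these functions are supported in the bounded set $\overline\Omega$, Lebesgue's dominated convergence theorem yields the claimed convergence in $L^1(\R^2)$.

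The main obstacle is the component-wise passage to the limit in (a): Gol\c{a}b's theorem addresses only \emph{connected} limits, so one must exploit the uniform bound $m$ on the number of components to carry out a finite Blaschke-type extraction and then carefully identify the Kuratowski limit of $\bigcup_j C_k^{(j)}$ with $\bigcup_j C^{(j)}$. Once this identification is in place, both the lower semicontinuity of $\cH^1$ and the preservation of the $m$-component bound (hence rectifiability of $\partial A$) fall out in one stroke, and part (b) becomes routine once the $\cL^2$-negligibility of $\partial A$ is secured by part (a).
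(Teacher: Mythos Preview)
Your proof is correct and follows essentially the same approach as the paper. The only difference is in part (a): the paper directly invokes \cite[Theorem 2.1]{Gi:2002} (the generalization of Gol\c{a}b's theorem to sequences with a uniformly bounded number of connected components) to obtain both \eqref{lsc_H1_measure} and the bound on the number of components of $\partial A$, whereas you unpack that result by hand via the component-wise Blaschke extraction and the classical Gol\c{a}b theorem; part (b) is identical to the paper's argument.
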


\begin{proof}
(a) By Remark \ref{rem:kuratiwski_and_sdistance}, 
$\p A_k\overset{\cK}{\to}\p A$ as $k\to\infty.$
Thus, by \cite[Theorem 2.1]{Gi:2002} 
$\p A$ has at most $m$-connected components, and 
\eqref{lsc_H1_measure} holds.   

(b) As $\p A_k\overset{\cK}{\to}\p A,$ for any 
$x\in \Int{A}$ resp. $x\in \R^2\setminus\cl{A},$ there exists 
$k_x>0$ such that $x\in A_k$ resp. $x\in \R^2\setminus \cl{A_k}$ 
for all $k>k_x.$ Finally, by \eqref{lsc_H1_measure}, $|\p A| = 0,$ 
and therefore,
$$
\chi_{A_k} \to \chi_A\qquad\text{a.e.\ $x\in\R^2.$}
$$
Now (b) follows from the uniform boundedness of $\{A_k\}$
and the Dominated Convergence Theorem.
\end{proof}

Furthermore, sequences $\{A_k\}\subset\fA_m$ with equibounded
boundary lengths are compact with respect to the $\tau_\fA$-convergence.

\begin{proposition}[\textbf{Compactness of $\fA_m$}]\label{prop:compactness_A_m}
Suppose that $\{A_k\}\subset\fA_m$ is such that  
$$
\sup\limits_{k\ge1} \cH^1(\p A_k) <\infty.
$$
Then there exists a subsequence
$\{A_{k_l}\}$ and $A\in\fA_m$ such that 
$\cH^1(\p A)<\infty$ and  
$\sdist(\cdot,\p A_{k_l})\to \sdist(\cdot,\p A)$ 
locally uniformly in $\R^2$
as $l\to\infty.$
\end{proposition}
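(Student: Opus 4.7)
The proof is essentially an assembly of two preceding results, so my plan is to combine Proposition \ref{prop:compactness_wrt_sdist} with Lemma \ref{lem:f_convergence}(a) and verify the containment $A\subset\overline{\Omega}$.

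First, I would apply the Blaschke-type selection principle from Proposition \ref{prop:compactness_wrt_sdist} to the sequence $\{A_k\}$. This yields a subsequence $\{A_{k_l}\}$ and a set $A\subset\R^2$ such that
$$
\sdist(\cdot,\p A_{k_l})\to \sdist(\cdot,\p A)
$$
locally uniformly in $\R^2$ as $l\to\infty$. Combined with the standing assumption $\sup_{k\ge1}\cH^1(\p A_k)<\infty$, this says precisely that $A_{k_l}\overset{\tau_\fA}{\to} A$ in the sense of Definition \ref{def:film_convergence}.

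Next, I would verify that $A\in\fA_m$. Since $A_k\subset\overline{\Omega}$ for every $k$ and $\overline{\Omega}$ is closed, the Kuratowski convergence $\p A_{k_l}\overset{\cK}{\to}\p A$ (which follows from locally uniform convergence of signed distances via Remark \ref{rem:kuratiwski_and_sdistance}) forces $\p A\subset\overline{\Omega}$; moreover, by the construction of $A$ in the proof of Proposition \ref{prop:compactness_wrt_sdist}, we have $\overline{A}=\{f\le0\}\cup K$, and any point of $\overline{A}$ is a limit of points of $A_{k_l}\subset\overline{\Omega}$, so $\overline{A}\subset\overline{\Omega}$ and in particular $A\subset\overline{\Omega}$. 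Now Lemma \ref{lem:f_convergence}(a) applies directly: it delivers $A\in\fA_m$ together with the lower semicontinuity estimate
$$
\cH^1(\p A)\le \liminf\limits_{l\to\infty} \cH^1(\p A_{k_l}) \le \sup_{k\ge1}\cH^1(\p A_k) <\infty,
$$
which is the remaining conclusion.

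I expect no serious obstacle here: Proposition \ref{prop:compactness_wrt_sdist} supplies the candidate limit and the mode of convergence, while Lemma \ref{lem:f_convergence}(a) supplies both the rectifiability/component bound ($A\in\fA_m$) and the finiteness of $\cH^1(\p A)$. The only mild subtlety is the containment in $\overline{\Omega}$, which as explained above is automatic from Kuratowski convergence together with the fact that $\overline{\Omega}$ is closed, so no extra argument is required.
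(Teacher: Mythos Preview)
Your proposal is correct and follows exactly the paper's approach: apply Proposition~\ref{prop:compactness_wrt_sdist} to extract the convergent subsequence and limit set, then invoke Lemma~\ref{lem:f_convergence}(a) to conclude $A\in\fA_m$ with $\cH^1(\p A)<\infty$. The only difference is that you explicitly verify $A\subset\overline{\Omega}$, which the paper leaves implicit; this extra care is harmless and, if anything, makes the argument slightly more complete.
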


\begin{proof}
By Proposition \ref{prop:compactness_wrt_sdist}  
there exists a not relabelled subsequence 
$\{A_k\}$ and a set $A$  
such that $\p A_k\overset{\cK}{\to} \p A$ 
and $\sdist(\cdot,\p A_k)\to\sdist(\cdot,\p A)$ locally uniformly in $\R^2$
as $k\to\infty.$ 
By Lemma \ref{lem:f_convergence}, $A\in\fA_m$ and $\cH^1(\p A)<\infty.$
\end{proof}

\begin{proposition}\label{prop:good_division}
Let $\{A_k\}\subset\fA_m$ be such that $A_k\overset{\tau_\fA}{\to} A$ as $k\to\infty.$ 
Suppose that 
$$
\Int{A}=\bigcup\limits_{h\in I}E_h,\qquad F=\bigcup\limits_{i\in I_1} E_i\qquad \text{and} \qquad G=\bigcup\limits_{j\in I_2} E_j,
$$ 
where $E_h$ are disjoint connected components of $\Int{A},$ $I_1$ and $I_2$ are disjoint finite subsets of $I.$  
Then there exist  a subsequence $\{A_{k_l}\}$ and a sequence $\{\gamma_l\}$ of $\cH^1$-rectifiable sets in $\R^2$ such that
\begin{itemize}
\item[(a)] $\gamma_l\subset  \Int{A_{k_l}}$ and $\lim\limits_{l\to\infty} \cH^1(\gamma_l) =0;$

\item[(b)] $\sdist(\cdot,\p (A_{k_l}\setminus \gamma_l))\to \sdist(\cdot,\p A)$ as $l\to\infty$ locally uniformly in $\R^2;$


\item[(c)]  for any connected open sets $D'\strictlyincluded F$ and $D''\strictlyincluded G$ there exists $l'$ such that $D'$ and $D''$ belong to different connected components of $\Int{A_{k_l}\setminus \gamma_l}$ for any  $l>l'.$ 


\end{itemize}
\end{proposition}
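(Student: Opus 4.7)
The plan is to construct $\gamma_l$ as a finite union of short transversal arcs in $\Int{A_{k_l}}$ that sever the ``necks'' of $A_{k_l}$ connecting $F$-components and $G$-components. The driving intuition is that, since $A_k\overset{\tau_\fA}{\to}A$, every such neck must collapse onto $\p A$ as $k\to\infty$, so a transversal through it has vanishing length.

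First I would extract a subsequence $\{A_{k_l}\}$ for which the Hausdorff distance $d_l$ between $\p A_{k_l}$ and $\p A$ tends to $0$: this is available via Remark~\ref{rem:kuratiwski_and_sdistance} (the Kuratowski convergence $\p A_k\overset{\cK}{\to}\p A$ becomes Hausdorff convergence since everything lies in the bounded set $\cl{\Omega}$). Set the compact pinch set $P:=\cl{F}\cap\cl{G}\subset\p A$; if $P=\emptyset$ then $\cl{F}$ and $\cl{G}$ are at positive distance and (c) already follows from the signed-distance convergence with $\gamma_l:=\emptyset$, so assume $P\neq\emptyset$. Next, choose $\epsilon_l\to 0$ with $\epsilon_l\geq 2d_l$ and define the eroded core
\[
B_l:=\{x\in A_{k_l}:\,\dist(x,\p A_{k_l})\geq\epsilon_l\}.
\]
The inequality $\epsilon_l\geq 2d_l$ ensures $B_l\subset\Int{A}$, so every connected component of $B_l$ sits inside a single $E_h$ and can be labeled accordingly. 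Cover $P$ by a minimal family of open balls $\{B_{\epsilon_l}(p_j^l)\}_{j=1}^{N_l}$, and within each ball, using the bound $m$ on connected components of $\p A_{k_l}$, pick at most $m$ transversal Lipschitz arcs in $\Int{A_{k_l}}\cap B_{\epsilon_l}(p_j^l)$, each of length at most $4\epsilon_l$, whose removal locally disconnects $F$-labeled from $G$-labeled components of $B_l$. Let $\gamma_l$ be the union of these arcs.

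The verifications then run as follows. For (a), $\gamma_l\subset\Int{A_{k_l}}$ by construction; the key quantitative input is that only $O(|I_1|\cdot|I_2|)$ of the local ball-cuts are actually needed to separate the $F$-block from the $G$-block, so $\cH^1(\gamma_l)=O(m\cdot|I_1|\cdot|I_2|\cdot\epsilon_l)\to 0$. For (b), one has $\gamma_l\subset\bigcup_j B_{\epsilon_l}(p_j^l)$ with each $p_j^l\in P\subset\p A$, so $\p A_{k_l}\cup\gamma_l$ still Kuratowski-converges to $\p A$; running the argument of Proposition~\ref{prop:compactness_wrt_sdist} on the sequence $A_{k_l}\setminus\gamma_l$ (whose boundary is contained in $\p A_{k_l}\cup\gamma_l$) then yields the signed-distance convergence. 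For (c), any connected $D'\strictlyincluded F$ and $D''\strictlyincluded G$ are at positive distance from $P$, hence for $l$ large lie in distinct $E_h$-labeled components of $B_l$, and the local cuts block every bridge through the collar $A_{k_l}\setminus B_l$.

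The main obstacle is the length bound in the construction step: showing $\cH^1(\gamma_l)\to 0$ even when $\cH^1(P)>0$, since a naive ball-covering of $P$ gives only a bounded total length. The resolution relies on the observation that most of $P$ is already approximated by pieces of $\p A_{k_l}$ (hence providing a pre-existing separation), and only the finitely many defect regions where $A_{k_l}$ still bridges $F$ and $G$ need to be cut manually, with cross-section lengths comparable to $\epsilon_l$ by the $\tau_\fA$-convergence.
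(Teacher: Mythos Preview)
Your proposal has a genuine gap precisely where you flag it: the length bound $\cH^1(\gamma_l)\to 0$. The covering argument cannot close it. If $\cH^1(P)>0$, a minimal cover of $P$ by balls of radius $\epsilon_l$ needs on the order of $\cH^1(P)/\epsilon_l$ balls, and with up to $m$ arcs of length $\sim\epsilon_l$ in each, the total cut length is only $O(m\,\cH^1(P))$, not $o(1)$. Your ``resolution'' asserts that only $O(|I_1|\cdot|I_2|)$ balls actually require a cut because the rest of $P$ is already separated by $\p A_{k_l}$, but this is exactly the statement that needs proof, and you give no mechanism for it. The phrase ``pick at most $m$ transversal Lipschitz arcs\ldots whose removal locally disconnects'' is also not an argument: why do $m$ arcs suffice in each ball, and why is a local disconnection enough to produce the global separation in (c)?

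The paper avoids covering $P$ altogether. For a fixed pair $(i,j)\in I_1\times I_2$ it runs a global extremal argument (Lemma~\ref{lem:bal_cutting}): among all closed connected $D\strictlyincluded\Int{A_k}$ touching both $E_i^{n}$ and $E_j^{n}$, take one maximizing $\delta_k:=\dist(D,\p A_k)$; one shows $\delta_k\to 0$. At a point $z_k$ on this maximizer realizing the distance, the circle $\p B_{\delta_k}(z_k)$ must be crossed by every curve homotopic to the connecting curve, and---crucially---this circle touches at least two \emph{distinct} connected components of $\p A_k$ (otherwise one could push the connecting set off it and increase $\delta_k$). Removing $\p B_{\delta_k}(z_k)$ from $\Int{A_k}$ therefore drops the number of boundary components by one, so after at most $m$ iterations no connected set joins $E_i^{n}$ to $E_j^{n}$. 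This yields at most $|I_1|\,|I_2|\,m$ circles of radius $\le d_n\to 0$, hence $\cH^1(\gamma_n)\le 2\pi\,|I_1|\,|I_2|\,m\,d_n\to 0$. The finiteness of the number of cuts is thus a \emph{consequence} of the bound $m$ on boundary components via this topological decrement, not an input one can simply assume.
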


We postpone the proof after the following lemma.
Before we need to introduce some notation. 
Let $n_0>1$ be such that $E_h\cap \{\dist(\cdot,\p A) > \frac1n\}\ne \emptyset$ for every $h\in I_1\cup I_2$ and $n> n_0.$ Given $h\in I_1\cup I_2,$ let $\{E_h^n\}_{n> n_0}$ be an increasing sequence of connected open sets satisfying $E_h\cap \{\dist(\cdot,\p A) >\frac1n\} \subseteq E_h^n\strictlyincluded E_h$ and 
\begin{equation}\label{e_i_e_in}
E_h=\bigcup\limits_{n\ge n_0} E_h^n. 
\end{equation}
By the $sdist$-convergence and the finiteness of $I_1\cup I_2,$ for any $n\ge n_0$ there exist $k_n^0>0$  such that  $E_h^n\strictlyincluded\Int{A_k}$ for all $k>k_n^0$ and $h\in I_1 \cup I_2.$ Let 
\begin{equation}\label{def_dn}
2d_n:= \min\limits_{i\in I_1,\,j\in I_2}\{\dist(E_i^n,E_j^n),\dist(E_i^n,\p A),\dist(E_j^n,\p A)\}. 
\end{equation}
Note that $0<d_n<\frac{1}{2n}.$  

The idea of the proof of Proposition \ref{prop:good_division} is to ``partition'' the connecting components of $\Int{A_k}$ which in the limit break down into connected components $\{E_h\}_{h\in I'}$ of $\Int{A}$ such that $I'\cap I_1\ne\emptyset$ and $I'\cap I_2\ne\emptyset,$  for example in the case of neckpinches.  More precisely, we cut out at most $m$-circles from $\Int{A_k}$ such that for any $n>n_0,$ for all sufficiently large $k$ (depending only on $n$), any curve $\gamma\subset\Int{A_k}$ connecting a point of $E_i^n,$  $i\in I_1,$ to a point of $E_j^n,$ $j\in I_2,$ intersects at least one of these circles. The following lemma consists in performing this argument for fixed $i\in I_1$ and $j\in I_2$.

\begin{lemma}\label{lem:bal_cutting}
Under the assumptions of Proposition \ref{prop:good_division}, let $i\in I_1,$ $j\in I_2,$ and $n>n_0$ be such that the set 
\begin{align}\label{y_ij_n}
Y=Y_{ij}^n:=\Big\{k\in\N:\,  \exists D_k\strictlyincluded &\Int{A_k}\, \text{closed, connected,} \no \\
&\text{and such that } D_k\cap E_i^{n},D_k\cap E_j^{n}\ne\emptyset\Big\}
\end{align}
%
is infinite. Then, there exists  $k_n^{ij}>k_n^0$ such that for any $k\in Y$ with $k>k_n^{ij}$ there exists a collection $\{B_{r_k^l}(z_k^l)\}_l$  of at most $m$ balls contained in $A_k$ such that $r_k^l<d_n$ and any curve $\gamma\strictlyincluded\Int{A_k},$ connecting a point of $E_i^{n}$ to a point of $E_j^{n},$ intersects at least one of $B_{r_k^l}(z_k^l).$ 
\end{lemma}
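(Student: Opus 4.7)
The plan is to exploit the locally uniform $\sdist$-convergence of $\p A_k$ to $\p A$, combined with the bound $m$ on the number of connected components of $\p A_k$, in order to produce at most $m$ inscribed balls of $A_k$ of radius less than $d_n$ whose union meets every curve $\gamma\strictlyincluded\Int A_k$ connecting $E_i^n$ to $E_j^n$.

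First I will confine admissible paths to a tube around $\p A_k$. For $k\in Y$ sufficiently large, Remark \ref{rem:kuratiwski_and_sdistance} and the definition \eqref{def_dn} of $d_n$ give $\dist(E_i^n\cup E_j^n,\p A_k)>3d_n/2$. Moreover, since $E_i$ and $E_j$ are distinct connected components of $\Int{A}$, the deep interior $\{x\in A:\dist(x,\p A)\ge d_n/2\}$ separates $E_i^n$ from $E_j^n$ in $A$; the locally uniform convergence $\sdist(\cdot,\p A_k)\to\sdist(\cdot,\p A)$ then transfers this separation to $A_k$ for $k\in Y$ large, so that every curve $\gamma\strictlyincluded\Int{A_k}$ from $E_i^n$ to $E_j^n$ must meet the tube $T_k:=\{x\in A_k:\dist(x,\p A_k)<d_n/2\}$.

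Next I produce the balls using the components of $\p A_k$. For $k$ large, each connected component of $T_k$ lies in the $d_n/2$-neighborhood of exactly one connected component $C_k^l$ of $\p A_k$; grouping such components of $T_k$ according to the $C_k^l$ they are close to, I obtain at most $m$ groups $\mathcal{T}_k^1,\ldots,\mathcal{T}_k^{m_k}$ with $m_k\le m$. For every group $\mathcal{T}_k^l$ that is traversed by some admissible curve, I select a point $z_k^l\in\mathcal{T}_k^l$ at which the local width $2\dist(z_k^l,\p A_k)$ of $A_k$ is smaller than $2d_n$, and I set $r_k^l:=\dist(z_k^l,\p A_k)<d_n$. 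The ball $B_{r_k^l}(z_k^l)$ is then contained in $A_k$ and inscribed in the $l$-th neck. The existence of such $z_k^l$ is guaranteed because otherwise, if the local width along $\mathcal{T}_k^l$ stayed above $2d_n$ along a cofinal subset of $Y$, a compactness argument combined with the $\sdist$-convergence would yield a limiting curve in $\Int{A}$ from $E_i$ to $E_j$, contradicting that $E_i,E_j$ are distinct connected components of $\Int{A}$.

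The hard part will be the topological bookkeeping in the second step, namely showing that a single inscribed ball per relevant boundary component suffices to block all the admissible curves passing through the corresponding tube group. I plan to handle this by choosing $z_k^l$ at the \emph{tightest} pinch of $C_k^l$ and then invoking a planar connectivity argument: any admissible curve entering $\mathcal{T}_k^l$ and reaching the opposite side of the neck must cross the minimal-thickness cross section and hence the inscribed ball $B_{r_k^l}(z_k^l)$; curves entering and exiting $\mathcal{T}_k^l$ on the same side of the neck are homotopic in $\Int{A_k}$ to curves avoiding $\mathcal{T}_k^l$ altogether and therefore do not contribute any new connection between $E_i^n$ and $E_j^n$ across that component. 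Should several distinct pinches appear along a single $C_k^l$, each one corresponds, in the limit as $k\to\infty$ along $Y$, to a separate connected component of $\Int{A}$ produced by the splitting, so Lemma \ref{lem:f_convergence} together with $A_k\in\fA_m$ keeps the overall count of necessary balls within the bound $m$.
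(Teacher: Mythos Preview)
Your overall strategy has the right geometric intuition---every admissible curve must pass close to $\partial A_k$, and one should place inscribed balls at the ``pinches''---but the bookkeeping you propose is not the one that gives the bound $m$, and the argument has a genuine gap.

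The problematic claim is that a \emph{single} inscribed ball per boundary component of $\partial A_k$ suffices. First, a connected component of your tube $T_k$ need not be close to exactly one component of $\partial A_k$; different boundary components can come within $d_n/2$ of each other and of the same tube component. Second, and more seriously, even restricting to curves interacting with a fixed $C_k^l$, your homotopy statement ``curves entering and exiting $\mathcal T_k^l$ on the same side of the neck are homotopic in $\Int{A_k}$ to curves avoiding $\mathcal T_k^l$'' does not follow from anything you have set up: a single $C_k^l$ can create several topologically independent necks, and your fallback---that multiple pinches along $C_k^l$ correspond to multiple connected components of $\Int A$ and hence are bounded by $m$---is incorrect, since $m$ bounds the components of $\partial A$ (and of $\partial A_k$), not those of $\Int A$.

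The paper's proof does not try to assign balls to boundary components in one shot. It is \emph{iterative}: one first shows (Steps~1--3) that among all closed connected $D\strictlyincluded\Int{A_k}$ meeting $E_i^n$ and $E_j^n$ there is a maximizer $L_k$ of $\dist(\cdot,\partial A_k)$ with value $\delta_k\to 0$, and that along any curve $\gamma_k\subset L_k$ there exists $z_k$ such that every curve homotopic to $\gamma_k$ in $\Int{A_k}$ meets $B_{\delta_k}(z_k)$. One then removes the circle $\Int{A_k}\cap\partial B_{\delta_k}(z_k)$ from $A_k$ and checks whether $E_i^n$ and $E_j^n$ are still connectable. The key observation (Step~4) is that if they are, the circle must touch at least two distinct components of $\partial A_k$, so the new set lies in $\fA_{m-1}$. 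Repeating the argument at most $m$ times exhausts all homotopy classes of connecting curves. This ``merge two boundary components per ball'' mechanism is precisely what ties the number of balls to $m$; your proposal does not contain an analogue of it.
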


\begin{proof} 

We divide the proof into four steps.

{\it Step 1:} for any $k\in Y,$ let $C_k\strictlyincluded \Int{A_k}$  be any closed connected set intersecting both $E_i^{n_0}$ and $E_j^{n_0}.$ Then 
$$
\lim\limits_{k\in Y,\,k\to\infty}\, \dist(C_k,\p A_k) =0.
$$

By contradiction, assume that there exists $\epsilon>0$ such that 
\begin{equation}\label{adjjhsd}
\dist(C_k,\p A_k) \ge \epsilon 
\end{equation}
for infinitely many  $k\in Y.$ By the Kuratowski-compactness of closed sets there exist a closed connected set $C$  and a not relabelled subsequence $\{C_k\}_{k\in Y}$ satisfying \eqref{adjjhsd} for all $k\in Y$ such that $C_k\overset{K}{\to} C$ as $k\to\infty.$ 
Since $A_k\overset{\tau_\fA}\to A,$  in view of Remark \ref{rem:kuratiwski_and_sdistance}  $\p A_k\overset{K}\to\p A$ and $D\subset A.$ Let $x\in C$ and $y\in \p A$ be such that $|x-y| = \dist(C,\p A).$ Then by the definition of the Kuratowski convergence, there exist  sequences  $x_k\in C_k$ and $y_k\in \p A_k$ such that $x_k\to x$ and $y_k\to y.$ Since  $|x_k-y_k| \ge\dist(C_k,\p A_k)\ge\epsilon,$ it follows that 
\begin{equation}\label{dhakshjd}
\dist(C,\p A) = |x- y| =\lim\limits_{k\to\infty} |x_k-y_k|\ge\epsilon. 
\end{equation}
Thus, $C\strictlyincluded \Int{A}.$ In particular, \eqref{dhakshjd} implies that the non-empty connected open set $\{\dist(\cdot,C)<\frac{\epsilon}{4}\}$ is compactly contained in $\Int{A}$ and intersects both $E_i^{n_0}$ and $E_j^{n}$ so that $E_i^{n}\cup \{\dist(\cdot,C)<\frac{\epsilon}{4}\} \cup E_j^{n} \subset\Int{A}$ is connected. But this is a contradiction since $E_i^{n}$ and $E_j^{n}$ belong to different connected components of $\Int{A}.$

{\it Step 2:} for every $k\in Y$ there exists  a path-connected closed set $L_k\strictlyincluded\Int{A_k}$  intersecting both $E_i^{n}$ and $E_j^{n}$ such that
\begin{equation}\label{husha2}
\dist(L_k,\p A_k) =\delta_k:=  \sup\,\, \dist(D,\p A_k), 
\end{equation}
where $\sup$ is taken over all closed connected sets $D\strictlyincluded \Int{A_k},$ intersecting both $E_i^{n_0}$ and $E_j^{n_0}$ (such sets exist by definition of $Y$). Moreover, there exists $k_n^1>0$  such that $L_k$ contains  $E_i^{n}\cup E_j^{n}$ and $\delta_k<d_n$ for any $k>k_n^1.$  

Indeed, in view of the Kuratowski-compactness of closed sets and from the Kuratowski-continuity of $\dist(\cdot,\p A_k),$ \eqref{husha2} has a maximizer $L_k'.$ Applying Step 1 with $A_k$ and $C_k=L_k',$  we get $\delta_k\to0$ as $k\to\infty.$  
Let  $L_k$ be the connected component of $\{\dist(\cdot,\p A_k)\ge \delta_k\}$ containing $L_k'.$ 
Since $E_i^{n}\cup E_j^{n} \strictlyincluded \Int{A},$ the $sdist$-convergence and Remark \ref{rem:kuratiwski_and_sdistance}, $E_i^{n}\cup E_j^{n} \strictlyincluded \Int{A_k}$ for all large $k.$ More precisely, by the definition \eqref{def_dn} of $d_n,$ there exists $\bar k_n^1>0$ such that  
\begin{equation}\label{dahjkdakjad}
\min\{\dist(E_i^n,\p A_k),\dist(E_j^n,\p A_k)\}\ge d_n 
\end{equation}
for all $k>\bar k_n^1.$ By construction, $\dist(L_k,\p A_k)= \delta_k,$ and  since  
$\delta_k\to0,$  there exists $k_n^1>\bar k_n^1$ such that $\delta_k<d_n$ for any $k\ge  k_n^1.$ Note that by \eqref{dahjkdakjad} for such $k$ we have also $E_i^{n}\cup E_j^{n} \subset L_k.$ 

Let us show that $L_k$ is also path-connected. Indeed, given $x\in L_k,$ consider the ball $B_r(x)$ for small $r<\delta_k.$ Then $L_k\cap B_r(x)$ is path-connected, otherwise there would exist a curve in $B_r(x)$ with endpoints in $L_k$ containing a point $z\in B_r(x)\setminus L_k$ such that $\dist(z,\p A_k)>\delta_k$ contradicting to the definition of $L_k.$  Thus, $L_k$ is locally path-connected. Now the compactness and the connectedness of $L_k$ imply its path-connectedness.

{\it Step 3:} given $x\in E_i^{n_0}$ and $y\in E_j^{n_0},$ let $\gamma_k\subset L_k$ be a curve connecting $x$ to $y.$ Then for any $k>k_\epsilon^0$ there exists $z_k\in \gamma_k\setminus \overline{E_i^{n_0} \cup E_j^{n_0}}$ such that any curve $\gamma\strictlyincluded \Int{A_k}$ homotopic in $\Int{A_k}$ to $\gamma_k$ (with same endpoints)  intersects the ball $B_{\delta_k}(z_k).$  

Indeed, otherwise slightly perturbing the curve $\gamma_k$ around the points of the compact set $\gamma':=\{x\in\gamma_k:\, \dist(x,\p A_k)=\delta_k\}$ we would get a new curve $\tilde \gamma_k\strictlyincluded \Int{A_k}$ connecting $x$ to $y$ for which $\dist(x,\gamma_k)>\delta_k$ for all $x\in \tilde \gamma_k.$ Now the compactness of $\tilde \gamma_k$ implies $\dist(\tilde \gamma_k,\p A_k)>\delta_k,$ which contradicts to the definition \eqref{husha2} of $L_k.$

{\it Step 4:} now we prove the lemma.

Applying Steps 1-3 with $A_k,$ we find an integer $k_n^1>k_n^0,$ a curve $\gamma_k^1$ connecting a point of $E_i^{n_0}$ to a point $E_j^{n_0}$ such that 
\begin{equation}\label{step1aaa}
\dist(\gamma_k^1,\p A_k)=r_k^1=\sup \dist(D,\p A_k) < d_n 
\end{equation}
where $sup$ is taken over all connected and closed $D\strictlyincluded \Int{A_k}$ intersecting both  $E_i^{n_0}$ and $E_j^{n_0},$ and a ball $B_{r_k^1}(z_k^1)\subset A_k$ with $z_k^1\in\gamma_k^1$ such that 
any curve $\gamma\strictlyincluded \Int{A_k}$ homotopic to $\gamma_k^1$ intersects $B_{r_k^1}(z_k^1)$
for any $k\in Y$ with $k>k_n^1.$ 

For $k\in Y$ with $k>k_n^1$ set 
$$
A_k^1:=A_k\setminus(\Int{A_k}\cap \p B_{r_k^1}(z_k^1)).
$$
Now consider the set $Y_1$ of all $k\in Y$ for which there exists a closed connected set $C_k\strictlyincluded \Int{A_k}$ intersecting both $E_i^{n_0}$ and $E_j^{n_0}.$
If $Y_1$ is finite, we set $k_n^{ij}:= \max\{\max Y_1, k_n^1\}$ and we are done. 

Assume that $Y_1$ is infinite.  Note that for any $k\in Y_1,$ $\p B_{r_k^1}(z_k^1)$ touches at least two different connected components of $\p A_k$ and thus, $A_k^1\in \fA_{m-1}.$ Applying Steps 1-3 with $A_k^1$ and $Y_1,$ we find an integer $k_n^2>k_n^1,$ a curve $\gamma_k^2$ connecting a point of $E_i^{n_0}$ to a point $E_j^{n_0}$ such that 
$$
\dist(\gamma_k^2,\p A_k^1)=r_k^2=\sup \dist(D,\p A_k^1)  
$$
where $sup$ is taken over all connected and closed $D\strictlyincluded \Int{A_k^1}$ intersecting both  $E_i^{n_0}$ and $E_j^{n_0},$ and a ball $B_{r_k^2}(z_k^2)\subset A_k^1$ with $z_k^2\in\gamma_k^2$ such that  any curve $\gamma\strictlyincluded \Int{A_k}$ homotopic to $\gamma_k^2$ intersects $B_{r_k^2}(z_k^2)$ for any $k\in Y_1$ with $k>k_n^2.$  By \eqref{step1aaa}, $r_k^1\ge r_k^2.$  

For $k\in Y_1$ with $k>k_n^2$ set 
$$
A_k^2:=A_k\setminus(\Int{A_k}\cap (\p B_{r_k^1}(z_k^1) \cup \p B_{r_k^2}(z_k^2))) 
$$
and consider the set $Y_2$ of all $k\in Y_1$ for which there exists a closed connected set $C_k\strictlyincluded \Int{A_k^2}$ intersecting both $E_i^{n_0}$ and $E_j^{n_0}.$ Note that $Y_2$ is finite, setting $k_n^{ij}:= \max\{\max Y_2, k_n^2\}$ and we are done. If $Y_2$ is infinite, then $A_k^2\in \fA_{m-2},$ and we repeat the same procedure above. After at most $m$ steps  we obtain $k_n^{ij}>k_n^0$ such that  for any $k>k_n^{ij}$ there is a collection $\{B_{r_k^l}(z_k^l)\}$ of at most $m$ balls,  which satisfy the assertion of the lemma. 
\end{proof}

The assertions of  Proposition \ref{prop:good_division} follow by applying Lemma \ref{lem:bal_cutting} with all pairs $(i,j)\in I_1\times I_2.$ 

\begin{proof}[Proof of Proposition \ref{prop:good_division}]

Given $i\in I_1,$  $j\in I_2$ and $n>n_0,$ let $Y_{ij}^n$ be given by \eqref{y_ij_n}. 
If $Y_{ij}^n$ is infinite, let  $k_n^{ij}$ be given by Lemma \ref{lem:bal_cutting}, otherwise set
$k_n^{ij}:= 1+\max Y_{ij}^n.$  Let $k_n:=1+\max\limits_{i,j}\,k_n^{ij}$ and  
$$
\gamma_n^{ij}:=
\begin{cases}
\Int{A_{k_n}} \cap \bigcup\limits_l \p B_{r_{k_n}^l}^{ij}(z_{k_n}^l),&  \text{if $k_n\in Y_{ij}^n,$}\\
\emptyset, & \text{if $k_n\notin Y_{ij}^n,$}
\end{cases}
$$
where $\{B_{r_k^l}^{ij}(z_k^l)\}$ is the collection of balls given by Lemma \ref{lem:bal_cutting}.
Without loss of generality we assume that $\{k_n\}_n$ is strictly increasing and set 
$$
\gamma_n:=\bigcup\limits_{i,j} \gamma_n^{ij}.
$$
Being a union of at most $N_1N_2m$ circles, $\gamma_n$ is $\cH^1$-rectifiable, here $N_i$ is the cardinality of $I_i.$ By  Lemma \ref{lem:bal_cutting},
\begin{equation}\label{asdsjkjkasdkas}
\cH^1(\gamma_n) \le \sum\limits_{i,j,l} \cH^1(\p B_{r_{k_n}^l}^{ij}(z_{k_n}^l)) \le 2\pi N_1N_2m\,d_n.  
\end{equation}
Then $\lim\limits_{n\to\infty} \cH^1(\gamma_n)=0$ and therefore, $\gamma_n$ converges in the Kuratowski sense  to at most $N_1N_2m$ points on $\p A.$ 

We claim that the sequences $\{A_{k_n}\}$ and $\{\gamma_n\}$ satisfy assertions (a)-\red(c)\black. Indeed, by \eqref{asdsjkjkasdkas},  $\{\gamma_n\}$ satisfy (a). 
Since $\gamma_n$ converges to at most $N_1N_2m$ points on $\p A$ in the Kuratowski sense, \red(b) \black follows. To prove \red(c) \black, we take any connected open sets $D'\strictlyincluded E$ and $D''\strictlyincluded F.$ By connectedness and the definitions of $E_h$ and $E_h^n,$ there exist $i\in I_1$ and $j\in I_2$ and $\bar n>n_0$ such that $D'\strictlyincluded E_i^n$ and $D''\strictlyincluded E_j^n$ for all $n>\bar{n}.$ By the construction of $\gamma_n,$ the sets $E_i^n$ and $E_j^n$ (and hence, $D'$ and $D''$) belong to different connected components of $\Int{A_{k_n}}\setminus \gamma_n$ for all $n>\bar n.$ 
%
%
\end{proof} 

By inductively  applying Proposition \ref{prop:good_division} and by means of a diagonal argument we modify a sequence $\{A_k\}$ $\tau_\fA$-converging to a set $A$ into a sequence $\{B_k\}$ with  same $\tau_\fA$-limit and whose (open) connected components ``vanish'' or ``converge to the corresponding'' connected components of $A.$ This construction will be used in Step 1 of the proof of Theorem \ref{teo:compactness_Ym}.  We notice here that if $\substrate=\emptyset,$ then the sequence $\{D_n\}$ from Theorem \ref{teo:compactness_Ym} coincides with the sequence $\{B_n\}.$ Actually, if $\substrate=\emptyset,$ it would be enough to take $D_n=\tilde B_n,$ where $\tilde B_n$ is constructed in the Step 1 of the proof of the next proposition, since in this case we do not need properties (e) and (f) of the statement of the next proposition.

\begin{proposition}\label{prop:sets_changhe}
Let $A\in\fA_m$ and $\{A_k\}\subset\fA_m$ be such that $\sdist(\cdot,\p A_k)\to \sdist(\cdot,\p A)$ locally uniformly in $\R^2.$ 
Then  there exist  a subsequence $\{A_{k_l}\}$ and a sequence $\{B_l\}\subset\fA_m$ such that 
\begin{itemize}
\item[(a)]  $\p A_{k_l}\subset\p B_l$ and $\lim\limits_{l\to \infty} \cH^1(\p B_l\setminus \p A_{k_l})=0;$ 

\item[(b)]  $\sdist(\cdot,\p B_l)\to \sdist(\cdot,\p A)$ locally uniformly in $\R^2;$

\item[(c)] if $\{E_i\}$ is the set of all connected components of $\Int{A},$ we can choose a subfamily $\{E_i^l\}$ of connected components of $\Int{B_l}$ such  that for any $G\strictlyincluded E_i$ there exists $l_{i,G}>0$ with $G\strictlyincluded E_i^l$ for every $l>l_{i,G};$

\item[(d)] $|B_l| = |A_{k_l}|$ for every $l\ge1;$

\item[(e)]   
$$
\lim\limits_{l \to\infty}  \sup\limits_{x\in E_i^l\setminus E_i} \dist(x, E_i)=0 
$$
and 
$$
\lim\limits_{l \to\infty} \cH^1(\p\Omega\cap (\p E_i^l\setminus \p E_i)) =0.
$$

\item[(f)] the boundary of every connected component of $\Int{B_l}\setminus\bigcup_i E_i^l$ intersects the boundary of at most one connected component of $\substrate.$

\end{itemize}
\end{proposition}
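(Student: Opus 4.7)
My plan is to inductively apply Proposition \ref{prop:good_division} to separate the connected components of $\Int A$ inside the approximating sets, and then perform finitely many further small cuts to secure (e) and (f). Enumerating the (at most countably many) connected components $\{E_i\}_{i\in I}$ of $\Int A$ and setting $\tilde A_k^{(0)} := A_k$, at each step $n$ I would apply Proposition \ref{prop:good_division} to the sequence $\{\tilde A_k^{(n-1)}\}$ (which still $\tau_\fA$-converges to $A$) with $I_1 = \{n\}$ and $I_2 = \{1,\dots,n-1\}\cap I$, obtaining a further subsequence and a rectifiable set $\gamma_l^{(n)}\subset\Int \tilde A_{k_l}^{(n-1)}$ with $\cH^1(\gamma_l^{(n)})\to 0$ such that $\tilde A_l^{(n)}:=\tilde A_{k_l}^{(n-1)}\setminus\gamma_l^{(n)}$ separates $E_n$ from $\bigcup_{j<n,\,j\in I}E_j$. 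By Lemma \ref{lem:bal_cutting} each cut-circle in $\gamma_l^{(n)}$ touches at least two distinct components of $\p\tilde A_{k_l}^{(n-1)}$, so its removal only merges boundary components and keeps the modified set in $\fA_m$. A diagonal extraction then yields $\hat B_l\in\fA_m$ satisfying (a), (b), (d), and such that for every fixed $i\in I$ and all sufficiently large $l$, $E_i$ is separated in $\Int\hat B_l$ from $\bigcup_{j\in I\setminus\{i\},\,j\le l}E_j$; defining $\hat E_i^l$ as the unique component of $\Int\hat B_l$ containing a prescribed compact exhaustion of $E_i$ yields (c).

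The delicate point is (e), since $\hat E_i^l$ may still contain thin tentacles of width $O(\varepsilon_l)$, where $\varepsilon_l\to 0$ is the uniform modulus of $\sdist$-convergence on $\overline{\Omega}$, wrapping along $\p A$ near components $E_j$ whose closure touches $\overline{E_i}$. To trim them, I would fix a sequence $\delta_l\to 0$ with $\delta_l^{2}\gg\varepsilon_l$ and, by a coarea selection on the Lipschitz function $\dist(\cdot,E_i)$, pick $\delta_l'\in(\delta_l,2\delta_l)$ for which $\cH^1(\{\dist(\cdot,E_i)=\delta_l'\}\cap\hat E_i^l)$ vanishes with $l$; this is possible because the area of $\{\delta_l<\dist(\cdot,E_i)<2\delta_l\}\cap\hat E_i^l$ is controlled by the tentacle width $O(\varepsilon_l)$ times the uniformly bounded total tentacle length. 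Then I would cut $\hat E_i^l$ by placing small circles at the narrowest crossings of this slice as in Lemma \ref{lem:bal_cutting}, each tangent to $\p\hat B_l$ on both flanks of the corresponding tentacle so that $\fA_m$ is preserved. Calling the resulting set $\check B_l$ and its selected components $\check E_i^l\subset E_i\cup\{\dist(\cdot,E_i)\le\delta_l'\}$, the first part of (e) follows; the second follows because $\p\check E_i^l\cap\p\Omega$ lies in a $\delta_l'$-strip around $\p E_i\cap\p\Omega$, whose $\cH^1$ exceeds $\cH^1(\p E_i\cap\p\Omega)$ by only $O(\delta_l')$, while the Kuratowski convergence $\p\check B_l\to\p A$ forces the $\cH^1$-exhaustion of $\p E_i\cap\p\Omega$ by $\p\check E_i^l\cap\p E_i\cap\p\Omega$.

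Finally, for (f) I would exploit that $\substrate$ has only finitely many connected components $S^{(1)},\dots,S^{(N)}$: whenever a leftover component of $\Int\check B_l\setminus\bigcup_i\check E_i^l$ meets $\p S^{(j_1)}$ and $\p S^{(j_2)}$ with $j_1\ne j_2$, a further ball-cut near $\Sigma$ of the same type as in Lemma \ref{lem:bal_cutting}, placed at the narrowest neck between the two substrate components, separates them; since $N$ is finite only finitely many such cuts are needed for each $l$, their total $\cH^1$ can be made to vanish, properties (a)--(e) are preserved, and $B_l$ is defined as the final set. The main obstacle throughout is the trimming step above: one must adapt the ball-cutting argument of Lemma \ref{lem:bal_cutting} so that the ``target'' becomes an outer level set $\{\dist(\cdot,E_i)=\delta_l'\}$ rather than a compact subset of a distinct component $E_j$, and verify that both the coarea-based choice of $\delta_l'$ and the narrow-crossing cuts contribute only a vanishing total to $\cH^1(\p B_l\setminus\p A_{k_l})$ while keeping the boundary-component count bounded by $m$.
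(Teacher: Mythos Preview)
Your three–step skeleton (iterated application of Proposition~\ref{prop:good_division}, then a trimming to obtain (e), then one more cut for (f)) is exactly the architecture the paper uses. The differences are in the execution of each step, and in Steps~2--3 you are working much harder than necessary.

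\textbf{Step 1.} The paper applies Proposition~\ref{prop:good_division} with $I_1=\{1,\dots,N\}$ and $I_2=I_n^N:=\{i>N:\,E_i\cap\{\dist(\cdot,\p A)>1/n\}\ne\emptyset\}$, iterating first in $n$ (so that every ``visible'' remaining component is separated from the first $N$) and then in $N$, followed by a diagonal argument. Your choice $I_1=\{n\}$, $I_2=\{1,\dots,n-1\}$ also works, but note that the paper's ordering gives, at each diagonal level $N$, that every $E_i$ with $i\le N$ is already separated from \emph{all} other components $E_j$; this makes the volume estimate $|\tilde E_i^N\setminus E_i|\to0$ (needed below) immediate.

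\textbf{Steps 2 and 3.} Here your plan diverges substantially. You propose to adapt Lemma~\ref{lem:bal_cutting} so that balls are placed at ``narrowest crossings'' of a level set and are tangent to $\p\hat B_l$ on both flanks, and you justify the choice of level via a heuristic ``tentacle width $\times$ length'' estimate. This is the self-identified ``main obstacle'' in your proposal, and it is avoidable. The paper does something much simpler: from $B_l\overset{\tau_\fA}{\to}A$ and Lemma~\ref{lem:f_convergence}(b) one has $|\tilde E_i^N\setminus E_i|\to0$ directly, and then the Area Formula for $\dist(\cdot,E_i)$ gives
\[
|\tilde E_i^N\setminus E_i|=\int_0^\infty \cH^1\big((\tilde E_i^N\setminus E_i)\cap\{\dist(\cdot,E_i)=t\}\big)\,dt,
\]
so one can pick levels $t_s\searrow0$ with $\cH^1$ of the slice $<2^{-is}$, sum over $i$, and simply \emph{remove} the level-set arcs $\zeta_s:=\bigcup_i(\tilde E_i^{N_s}\setminus E_i)\cap\{\dist(\cdot,E_i)=t_s\}$ from $\tilde B_{N_s}$. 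No ball-cutting, no tangency argument, no ``width $\times$ length'' heuristic. Since these arcs lie in the open set $\tilde E_i^{N_s}$ and the level set does not meet $\overline{E_i}$, their closures meet $\p\tilde B_{N_s}$, so the boundary-component count is controlled. Property~(e) follows because the retained component $\hat E_i^s$ sits in $\{\dist(\cdot,E_i)\le t_s\}$ and $t_s\to0$. Step~3 is identical in spirit: the ``leftover'' set $C_s:=\Int{\hat B_s}\setminus\bigcup_i\hat E_i^s$ has $|C_s|\to0$, so the Area Formula for $\dist(\cdot,\substrate)$ yields a level $t_s'\in(0,d_0/4)$ (with $d_0$ the minimal distance between components of $\substrate$) at which the slice is short; removing that slice gives (f) for free.

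In short: your plan is correct in outline, but replace the ball-cut machinery in Steps~2--3 by direct level-set removal via the Area Formula and the $L^1$-convergence $|\tilde E_i^N\Delta E_i|\to0$. This eliminates what you flagged as the main obstacle.
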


\begin{proof}
Given $N,n\ge1,$ we define the index set $I_n^N$ by
$$
I_n^N:=\Big\{i>N:\,\, E_i\cap\{\dist(\cdot,\p A)>\frac{1}{n}\}\ne\emptyset\Big\}.
$$ 
We notice that $I_n^N$ is finite since $A$ is bounded.

{\it Step 1: Construction of $\{\tilde B_l\}$ and $\{A_{k_l}\}$ satisfying (a)-(d).}  This is done by using Proposition \ref{prop:good_division} iteratively in $N\in\N$ and a diagonal argument.

{\it Substep 1: Base of iteration.} By Proposition \ref{prop:good_division} applied with $\{A_k\}_{k\in Y^0}$ with $Y^0:=\N,$ $I_1=\{1\},$ and $I_2=I_n^1$ inductively with respect to $n\in\N,$ we find a decreasing sequence $Y^0\supset Y^1\supset \ldots$ of infinite subsets of $\N$ such that for the subsequence $\{A_k\}_{k\in Y^n}$ there exists a sequence $\{\gamma_k^n\}_{k\in Y^n}$ of $\cH^1$-rectifiable sets such that for any $n\ge1:$
\begin{itemize}
\item[--] $\gamma_k^n\subset \Int{A_k}$ for any $k\in Y^n$ and  $\lim\limits_{k\in Y^n,\, k\to\infty} \cH^1(\gamma_k^n) =0;$

\item[--] for any connected open sets $D\strictlyincluded E_1$ and $D'\strictlyincluded \cup_{j\in I_n^1} E_j$ there exists $k'>0$ such that $D$ and $D'$ belong to different connected components of $A_k\setminus \gamma_k^n$ for any $k\in Y^n$ with $k>k';$

\item[--]  $\sdist(\cdot,\p(A_k\setminus \gamma_k^n)) \to \sdist(\cdot,\p A)$ as $Y^n\ni k \to \infty$ locally uniformly in $\R^2.$
\end{itemize}
Then by a diagonal argument, we choose an increasing sequence $n\in\N\mapsto k_n^1\in Y^n$ such that  
$$
\tilde B_{1,n}:= A_{k_n}\setminus \gamma_{k_n^1}^n,\qquad n\in  \N,
$$
satisfies 
\begin{itemize}
\item[$a_{1n}$:\,\,] $\p A_{k_n}\subset \p \tilde B_{1,n}$  and $\cH^1(\p \tilde B_{1,n}\setminus \p A_{k_n})=\cH^1(\gamma_{k_n^1}^n)<2^{-n}$ for any $n\ge1;$

\item[$b_{1n}$:\,\,] $\sdist(\cdot,\p \tilde B_{1,n})\to \sdist(\cdot,\p A)$ as $n\to\infty$ locally uniformly in $\R^2;$

\item[$c_{1n}$:\,\,] for any connected open set  $D\strictlyincluded E_1$ there exist $n_D^1>1$ and a unique connected component denoted by $E_1^{1,n}$ of $\Int{\tilde B_{1,n}}$ such that $D\strictlyincluded E_1^{1,n}$ for all $n>n_D^1.$
\end{itemize}

{\it Substep 2: Iterative argument.} Repeating Substep 1 and applying Proposition \ref{prop:good_division} inductively in $N=1,2,\dots,$ with $A_k:=\tilde B_{N,k},$ $I_1:=\{1,\ldots,N\}$ and $I_2:=I_n^N$ for $n\in\N,$  we obtain  $\{\tilde B_{N+1,n}\}_n\subset \fA_m$ and and increasing sequence $n\in \N \mapsto k_n^{N+1}$ with $\{k_n^{N}\}_n\supset \{k_n^{N+1}\}_n$ such that for any $N\ge1:$
\begin{itemize}
\item[$a_{Nn}$:\,\,] $\p A_{k_n^N}\subset \p \tilde B_{N,n},$  $\p \tilde B_{N,n}\subset \p \tilde B_{N+1,n}$  and $\cH^1(\p \tilde B_{N+1,n}\setminus \p \tilde B_{N,n})<2^{-(N+1)n}$ for any $n\ge1;$

\item[$b_{Nn}$:\,\,] $\sdist(\cdot,\p \tilde B_{N,n})\to \sdist(\cdot,\p A)$ as $n\to\infty$ locally uniformly in $\R^2;$

\item[$c_{Nn}$:\,\,] for any connected open set  $D\strictlyincluded E_i$ for some $i\in\{1,\ldots,N\}$ there exist $n_D^i>1$ and a unique connected component denoted by $E_i^{N,n}$ of $\Int{\tilde B_{N,n}}$ such that $D\strictlyincluded E_i^{N,n}$ for all $n>n_D^i.$
\end{itemize}

 By condition $b_{Nn}$ in Substep 2  and by the uniform boundedness of $\{\tilde B_{N,n}\},$ there exists an increasing sequence $N\in \N\mapsto n_N\in\N$ such that the sequence 
$
\tilde B_N:=\tilde B_{N,n_N}
$ 
satisfies $\sdist(\cdot,\p \tilde B_N)\to \sdist(\cdot,\p A)$ as $N\to\infty$ locally uniformly in $\R^2.$
By condition $a_{Nn_N}$ of Substep 2, 
$$
\p A_{k_{n_N}^N}\subset \p \tilde B_{1,n_N}\subset\ldots \subset \p \tilde B_{N,n_N}=\p \tilde B_N  
$$
and 
$$
\cH^1(\p \tilde B_N\setminus \p A_{k_{n_N}^N}) \le \sum\limits_{i=1}^N \cH^1(\p \tilde B_{i,n_N}\setminus \p \tilde B_{i-1,n_N}) \le \sum\limits_{i=1}^N 2^{-in_N}<2^{1-n_N},
$$
where $\tilde B_{0,n_N}:=A_{k_{n_N}^N}.$ 

Furthermore, given $i\in \N,$ if $D\strictlyincluded E_i$ is any connected open set, then by condition $c_{Nn_N},$  there exists a unique connected component $\tilde E_i^N:=E_i^{N,n_N}$ of $\Int{\tilde B_N}$ such that $D\strictlyincluded E_i^N$ for all sufficiently large $N$ (depending only $D$ and $i$). Moreover, it is clear that  $|\tilde B_N|=|A_{k_{n_N}^N}|$ for any $N.$
Hence, the sequence $\{\tilde B_N\}_N$ and the subsequence $\{A_{k_{n_N}^N}\}_N$ satisfy assertions (a)-(d). 

{\it Step 2: Construction of $\{\hat B_l\}$ and $\{A_{k_l}\}$ satisfying (a)-(e).}  Notice that $\Int{\tilde B_N}\subset\Int{A_{k_{n_N}^N}}$ and by $\tilde B_N\overset{\tau_\fA}{\to} A$ and Lemma \ref{lem:f_convergence} (b),
$
\lim\limits_{N\to\infty} |\tilde B_N\Delta A|\to0.
$ 
 In particular, for any $i,$  
\begin{equation}\label{volume_get}
\lim\limits_{N \to\infty} |\tilde E_i^N\Delta E_i|=0.
\end{equation}
By the Area Formula applied with $\dist(\cdot, E_i)$ we have 
$$
|\tilde E_i^N \setminus E_i| = \int_0^\infty \cH^1\big((\tilde E_i^N\setminus E_i)\cap \{\dist(\cdot,E_i) =t\}\big)dt 
$$
for any $i.$ From this, \eqref{volume_get} and a diagonal argument, there exists a not relabelled subsequence $\{\tilde B_N\}$ for which 
$$
\lim\limits_{N\to\infty} \cH^1\big((\tilde E_i^N\setminus E_i)\cap \{\dist(\cdot,E_i) =t\}\big) =0
$$ 
for any $i$ and a.e.\ $t>0.$ Thus, we can choose $t_s\searrow0$ for which $$
\lim\limits_{N\to\infty}  \cH^1\big((\tilde E_i^N\setminus E_i)\cap \{\dist(\cdot,E_i) =t_s\}\big) =0
$$ 
for any $s\in\N$ and $i,$ and thus, by a diagonal argument we find a further subsequence $\{\tilde B_{N_s}\}_{s\in\N}$ such that 
\begin{equation}\label{hstezr}
\cH^1\big((\tilde E_i^{N_s} \setminus E_i)\cap \{\dist(\cdot,E_i) =t_s\}\big)<2^{-is} 
 \end{equation}
for any $i$ and $s$.
Let 
$
\zeta_i^s:=(\tilde E_i^{N_s}\setminus E_i)\cap \{\dist(\cdot,E_i) =t_s\},
$ 
and let 
$$
\hat B_s:=\tilde B_{N_s}\setminus \zeta_s,
$$ 
where 
$
\zeta_s:=\bigcup\limits_{i} \zeta_i^s.
$ Note that $\zeta_s$ is $\cH^1$-rectifiable and by \eqref{hstezr}, $\cH^1(\zeta_s) \le 2^{1-s}.$
Denote by $\hat E_i^s$ the connected component  of $\hat B_s$ satisfying $\hat E_i^s\subset \tilde E_i^{N_s}$ and $\hat E_i^s\cap E_i \ne\emptyset.$ 
By construction, $\sup\limits_{x\in \hat E_i^s\setminus E_i} \dist(x, E_i)\le t_s,$ thus,
\begin{equation}\label{hsgtare}
\limsup\limits_{s \to\infty}\sup\limits_{x\in \hat E_i^s\setminus E_i} \dist(x, E_i)=0,   
\end{equation}
and since  $\p A_{k_{n_{N_s}}^{N_s}}\subset \p \tilde B_{N_s}\subset\p\hat B_s,$
and
$$
\limsup\limits_{s\to\infty} \cH^1(\p \hat B_s\setminus \p A_{k_{n_{N_s}}^{N_s}}) \le \limsup\limits_{s\to\infty} \Big(\cH^1(\zeta_s) +\cH^1(\p \tilde B_{N_s}\setminus \p A_{k_{n_{N_s}}^{N_s}})\Big)=0. 
$$
Moreover, since $\p\Omega\cap (\p \hat E_i^s\setminus \p E_i)\subset \{0<\dist(\cdot,E_i)<t_s\}$ for any $s$ and $i,$ we have  
\begin{equation}\label{sgehr}
\limsup\limits_{s\to\infty} \cH^1(\p\Omega\cap (\p \hat E_i^s\setminus \p E_i)) \le \lim\limits_{s\to\infty} \cH^1(\p\Omega\cap \{0<\dist(\cdot,E_i)<t_s\} ) =0 
\end{equation}
for any $i$ since $t_s\searrow 0.$ If $D\strictlyincluded E_i,$ then $D\strictlyincluded \hat E_i^s$ provided  that $s$ is large. This, and the relations $\R^2\setminus \cl{\tilde B_{N_s}} =\R^2\setminus \cl{\hat B_s}$ and $\Int{\hat B_s}\subset \Int{\tilde B_{N_s}}$ imply the local uniform convergence of $\sdist(\cdot,\p \hat B_s)$ to $\sdist(\cdot,\p A)$ in $\R^2.$ 
Thus, $\{\hat B_s\}$ and $\{A_{k_{n_{N_s}}^{N_s}}\}$ satisfy (a)-(e).

{\it Step 3: Construction of $\{B_l\}$ and $\{A_{k_l}\}$ satisfying (a)-(f).} Consider  $C_s:=\Int{\hat B_s}\setminus \bigcup_i\hat E_i^s.$ Since $|E_i^s\Delta E_i|\to0$ and $|\Int{\hat B_s}\Delta\Int{A}|\to0$ as $s\to\infty,$ we have $|C_s|\to0.$ 
Therefore, applying the Area Formula with $\dist(\cdot,\substrate),$ we have 
$$
|C_s| = \int_0^\infty \cH^1(C_s\cap \{\dist(\cdot,\substrate)=t\})dt
$$
so that, passing to further not relabelled subsequence if necessary, we can choose $t_s'\in(0,d_0/4)$  such that
$\lim\limits_{s\to\infty} \cH^1(C_s\cap \{\dist(\cdot,\substrate)=t_s'\})=0,$
where $d_0$ is the minimal distance between connected components of $\substrate.$ Now  the sequence
$$
B_s:=\hat B_s\setminus (C_s\cap \{\dist(\cdot,\substrate)=t_s'\}) 
$$
and the subsequence $\{A_{k_{n_{N_s}}^{N_s}}\}$ satisfy all assertions of the proposition.
\end{proof}

\begin{proposition}\label{prop:yo_cheksiz_yo_finite}
Let $\anyset\subset\R^n$ be a connected open set and $\{u_k\}\subset H^1_\loc(\anyset;\R^n)$ be  such that 
\begin{equation}\label{finite_elassddff}
\sup\limits_k \int_\anyset |\str{u_k}|^2dx<+\infty. 
\end{equation}
Then either $|u_k|\to\infty$ a.e.\ in $\anyset$ or there exist $u\in H_\loc^1(\anyset ;\R^n)\cap GSBD^2(\anyset ;\R^n)$ and a subsequence $\{u_{k_l}\}$ such that $u_{k_l} \wk u$ in $H^1_\loc(\anyset;\R^n),$ and hence, $u_{k_l}\to u$ a.e.\ in $\anyset.$ 
\end{proposition}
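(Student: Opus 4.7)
The plan is to reduce to a finite-dimensional dichotomy by subtracting the Korn rigid displacements on an exhausting family of subdomains. Concretely, I fix a nested sequence $Q_1 \strictlyincluded Q_2 \strictlyincluded \ldots$ of connected bounded Lipschitz open sets with $\anyset = \bigcup_j Q_j$. Korn's inequality on each $Q_j$ provides, for every $k$, a rigid displacement $a_{k,j}(x) = M_{k,j}x + b_{k,j}$ (with $M_{k,j}$ skew-symmetric) such that
\[
\|u_k - a_{k,j}\|_{H^1(Q_j;\R^n)} \le C_j\,\|\str{u_k}\|_{L^2(Q_j)},
\]
and the right-hand side is uniformly bounded in $k$ by \eqref{finite_elassddff}. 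Denoting by $V$ the finite-dimensional vector space of rigid displacements, the dichotomy is: either (A) some subsequence of $\{a_{k,j_0}\}_k$ stays bounded in $V$ for some $j_0$, or (B) $\|a_{k,j}\|_V \to \infty$ as $k\to\infty$ for every $j$.

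In case (A), the triangle inequality combined with the Korn bounds on $Q_{j_0}$ and $Q_{j_0+1}$ gives $\|a_{k,j_0+1} - a_{k,j_0}\|_{L^2(Q_{j_0})} \le 2C_{j_0+1}$, and since $V$ is finite-dimensional and the restriction map $V \to L^2(Q_{j_0})$ is an injection, the $V$-boundedness propagates inductively to all $j \ge j_0$. A diagonal extraction together with weak compactness in $H^1(Q_j)$ then yields a subsequence $\{u_{k_l}\}$ converging weakly in $H^1_{\loc}(\anyset;\R^n)$ to some $u$; lower semicontinuity gives $\str{u} \in L^2(\anyset)$, and since $u$ has empty jump set, $u \in H^1_{\loc}(\anyset;\R^n) \cap GSBD^2(\anyset;\R^n)$. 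Rellich yields the a.e.\ convergence, so we are in the second alternative.

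In case (B), I set $\lambda_{k,j} := \|a_{k,j}\|_V \to \infty$ and normalize $\tilde a_{k,j} := a_{k,j}/\lambda_{k,j}$ onto the unit sphere of $V$. Compactness of this sphere and a diagonal extraction yield $\tilde a_{k_l, j} \to \tilde a_j$ in $V$ with $\|\tilde a_j\|_V = 1$ for every $j$; since a nonzero affine map vanishes only on an affine subspace of positive codimension, $\tilde a_j(x)\ne 0$ for a.e.\ $x \in Q_j$, so $|a_{k_l, j}(x)| = \lambda_{k_l, j}\,|\tilde a_{k_l, j}(x)| \to \infty$ for a.e.\ $x \in Q_j$. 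The uniform $L^2(Q_j)$-bound on $u_{k_l} - a_{k_l, j}$ combined with Rellich provides a further subsequence along which $u_{k_l} - a_{k_l, j}$ converges a.e.\ in $Q_j$; the triangle inequality then forces $|u_{k_l}(x)| \to \infty$ for a.e.\ $x \in Q_j$, and a final diagonal over $j$ produces the first alternative.

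The main obstacle I expect is the bookkeeping required to propagate boundedness in case (A) (or divergence in case (B)) simultaneously to every subdomain $Q_j$ through a single diagonal subsequence, which in both cases hinges on the equivalence of norms on the finite-dimensional space $V$ and on the fact that nonzero rigid motions have zero set of Lebesgue measure zero.
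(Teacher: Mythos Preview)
Your approach differs from the paper's and is more elementary. The paper invokes the GSBD compactness theorem of Chambolle--Crismale \cite{ChC:2018jems} as a black box: along a subsequence it produces a finite-perimeter set $F=\{|u_k|\to\infty\}$ and a limit $u$ on $\anyset\setminus F$; since $J_{u_k}=\emptyset$ forces $P(F,\anyset)=0$ and $\anyset$ is connected, $F\in\{\emptyset,\anyset\}$, and the starting hypothesis ``some subsequence converges a.e.\ on a set of positive measure'' rules out $F=\anyset$. You instead work directly with Korn on an exhaustion, the finite-dimensionality of rigid displacements, and Rellich, which avoids the heavy external input. Your propagation argument in case (A) (equivalence of norms on $V$ via the injection $V\hookrightarrow L^2(Q_{j_0})$) is correct and is exactly what the paper does in its final paragraph with the sets $D_j$.

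There is, however, a genuine gap in case (B). What your diagonal argument yields is only a \emph{subsequence} along which $|u_{k_l}|\to\infty$ a.e., whereas the first alternative in the statement concerns the full sequence. To close this within your framework, argue by contradiction: if case (B) holds but $\{x:\liminf_k|u_k(x)|<\infty\}$ has positive measure, then for some $N_0$ and $\delta>0$ there is a subsequence with $\big|\{x\in Q_{j_0}:|u_{k_l}(x)|<N_0\}\big|\ge\delta$ for every $l$ (write the bad set as $\bigcup_N\{x:\liminf|u_k(x)|<N\}$ and integrate $\sum_k\chi_{\{|u_k|<N_0\}}$ over it). Your case-(B) machinery applied to this particular subsequence --- Rellich on $u_{k_l}-a_{k_l,j_0}$ together with $|a_{k_l,j_0}|\to\infty$ a.e.\ --- produces a further subsequence with $|u_{k_{l_m}}|\to\infty$ a.e.\ in $Q_{j_0}$, and dominated convergence then forces $\big|\{|u_{k_{l_m}}|<N_0\}\big|\to0$, contradicting the lower bound $\delta$. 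With this addition your proof is complete. (The paper's own write-up is, incidentally, equally silent on why the negation of its starting hypothesis implies the full-sequence first alternative.)
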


\begin{proof}
Indeed, suppose that there exists 
a ball $B_\epsilon\strictlyincluded \anyset ,$ a measurable function $\tilde u: B_\epsilon\to\R^n$ and a not relabelled subsequence $\{u_k\}$ such that $u_k\to \tilde u$ a.e.\ in some subset $E$ of $B_\epsilon$ with positive measure. Since $u_k\in H^1(B_\epsilon;\R^n),$ by the Poincar\'e-Korn inequality, there exists a rigid displacement $a_k:\R^n\to\R^n$ such that 
$$
\|u_k + a_k\|_{H^1(B_\epsilon)}^2 \le C\int_{B_\epsilon} |\str{u_k}|^2dx
$$
for some $C>1$ independent of $k.$ In particular, by the Rellich-Kondrachov Theorem, there exists $v\in H^1(B_\epsilon;\R^n)$ such that  $u_k +a_k \wk v$  in $H^1(B_\epsilon;\R^n)$ (up to a subsequence) and a.e.\ in $B_\epsilon.$ Since $u_k\to \tilde u$ a.e.\ in $E,$ $a_k\to  v - \tilde u$ a.e.\ in $E$ as $k\to \infty.$ Thus, $v - \tilde u$ is a restriction in $E$ of some rigid displacement $a:\R^n\to\R^n.$ By linearity of rigid displacements, $a_k\to a$ pointwise in $\R^n.$   Therefore, $u_k \wk v-a$  in $H^1(B_\epsilon;\R^n),$ hence  a.e.\ in $B_\epsilon.$
In view of \eqref{finite_elassddff}, $\{u_k\}\subset GSBD^2(\anyset ;\R^n)$ with $J_{u_k}=\emptyset.$ Hence, by \cite[Theorem 1.1]{ChC:2018jems}, there exist  a  further not relabelled subsequence $\{u_k\}$  for which the set 
$$
F:= \{x\in \anyset :\,\,|u_k(x)|\to \infty\}
$$
has a finite perimeter in $\Omega$ and $u\in GSBD^2(\anyset ;\R^n)$ such that 
$
u_k\to u 
$ 
a.e.\ in $\anyset \setminus F$ and 
\begin{equation}\label{adafwe}
\cH^{n-1}(J_u\setminus \p^*F) + \cH^{n-1}(\anyset \cap\p^*F) \le \liminf\limits_{k\to \infty} \cH^1(J_{u_k}) =0. 
\end{equation}
Thus, $P(F,\anyset )=0,$ i.e., either $F=\emptyset$ or $F=\anyset .$  
Since $u_k\to u=v-a$ a.e.\ in $B_\epsilon\subset \anyset ,$ the case $F=\anyset $ is not possible. Thus, $F=\emptyset.$
By \eqref{adafwe}, $\cH^1(J_u)=0.$ 

Now we show that $u_k\wk u$  in $H^1_\loc(\anyset ;\R^n)$ and $u\in H^1_\loc(\anyset ;\R^n).$
Let $D_1\strictlyincluded D_2\strictlyincluded \ldots$ be an increasing sequence of connected Lipschitz open sets such that $D_1:=B_\epsilon$ and $\anyset =\cup_j D_j.$ Applying Poincar\'e-Korn inequality $D_j$ we find a rigid displacement $a_k^j$ such that 
$$
\|u_k + a_k^j\|_{H^1(D_j)}^2 \le c_j\int_{D_j} |\str{u_k}|^2dx,
$$
where $c_j$ is independent on $k.$ Then by the Rellich-Kondrachov Theorem, every subsequence $\{u_{k_l}\}$ admits further not relabelled subsequence such that $u_{k_l} + a_{k_l}^j \wk v$  in $H^1(D_j;\R^n)$ and a.e.\ in $D_j$ for some $v\in H^1(D_j;\R^n).$ Since $u_{k_l} \to u$ a.e.\ in $D_j,$ it follows that $a_{k_l}^j \to v-u$ a.e.\ in $D_j$ and hence, $v-u$ is also a rigid displacement. Since a.e.-convergence of linear functions implies the local strong $H^1$-convergence,  $u_{k_l}\wk u$ in $H^1(D_j;\R^n),$ and thus, $u\in H^1(D_j;\R^n).$ Since the subsequence $\{u_{k_l}\}$ is arbitrary, $u_k\wk u$  in $H^1(D_j;\R^n).$ By the choice of $D_j,$ $u_k\wk u$  in $H^1_\loc(\anyset ;\R^n)$ and $u\in H^1_\loc(\anyset ;\R^n).$
%
\end{proof}


The following corollary of Proposition \ref{prop:yo_cheksiz_yo_finite} is used in the proof of Theorem \ref{teo:compactness_Ym}.

\begin{corollary}\label{cor:add_rig_converge}
Let $\openset,\openset_k\subset\R^n$ be  connected bounded open sets such that for any $G\strictlyincluded \openset$ there exists $k_G$ such that $G\strictlyincluded \openset_k$ for all $k>k_G,$ and let $u_k\in H^1_\loc(\openset_k;\R^n)$ be  such that 
\begin{equation}\label{finite_elassddff111}
\sup\limits_k \int_{\openset_k} |\str{u_k}|^2dx<\infty. 
\end{equation}
Then there exist $u\in H^1_\loc(\openset;\R^n)\cap GSBD^2(\openset;\R^n),$ a subsequence $\{(\openset_{k_l},u_{k_l})\}$ and  a sequence $\{\red b_l^P\black \}$ of rigid displacements such that  $u_{k_l} +\red b_l^P\black \to u$ a.e.\ in $\openset.$ 
\end{corollary}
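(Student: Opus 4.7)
The plan is to reduce to Proposition \ref{prop:yo_cheksiz_yo_finite} via a single Poincar\'e-Korn shift on a reference ball and then a diagonal extraction on an exhaustion of $\openset$. First, I fix an increasing exhaustion $D_1\strictlyincluded D_2\strictlyincluded\cdots$ of $\openset$ by connected bounded Lipschitz open sets with $\openset=\bigcup_j D_j$, and choose a ball $B\strictlyincluded D_1$. By the hypothesis on $\{\openset_k\}$, there exist integers $\kappa_j$ such that $D_j\strictlyincluded\openset_k$ for every $k>\kappa_j$; in particular $B\subset\openset_k$ for all sufficiently large $k$.

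Next, applying the Poincar\'e-Korn inequality on $B$, for every $k$ large enough I pick a rigid displacement $b_k^\openset$ with
$$
\|u_k+b_k^\openset\|_{H^1(B;\R^n)}^2\le C\int_B|\str{u_k}|^2\,dx,
$$
which is uniformly bounded by \eqref{finite_elassddff111}. Rellich-Kondrachov yields, along a not relabelled subsequence, $u_k+b_k^\openset\to\tilde u$ a.e.\ in $B$ for some $\tilde u\in H^1(B;\R^n)$. Setting $w_k:=u_k+b_k^\openset$, we have $\str{w_k}=\str{u_k}$ uniformly bounded in $L^2$ on every $D_j$, so the sequence $\{w_k\}_{k>\kappa_j}\subset H^1_\loc(D_j;\R^n)$ satisfies the hypotheses of Proposition \ref{prop:yo_cheksiz_yo_finite} on $D_j$. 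Because $w_k\to\tilde u$ a.e.\ on $B\subset D_j$, the blow-up alternative $|w_k|\to\infty$ is excluded, and a further subsequence converges weakly in $H^1_\loc(D_j;\R^n)$ to some $u^{(j)}\in H^1_\loc(D_j;\R^n)\cap GSBD^2(D_j;\R^n)$.

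A standard diagonal argument in $j$ then produces a single subsequence $\{w_{k_l}\}$ that converges weakly in $H^1_\loc(D_j;\R^n)$ for every $j$. The limits $u^{(j)}$ are forced to be consistent on overlaps, so they glue into a function $u\in H^1_\loc(\openset;\R^n)$, and Rellich-Kondrachov upgrades the convergence to $u_{k_l}+b_{k_l}^\openset\to u$ a.e.\ in $\openset$. Finally, $u\in GSBD^2(\openset;\R^n)$ follows from $J_u=\emptyset$ (by $H^1_\loc$-regularity) together with $\str{u}\in L^2(\openset;\mtwo)$, the latter by weak lower semicontinuity of $\|\str{\cdot}\|_{L^2}$. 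The main subtlety I anticipate is justifying that \emph{one} shift $b_k^\openset$ is enough across all scales $D_j$; this is resolved precisely because, once a.e.\ convergence is seeded on $B$, Proposition \ref{prop:yo_cheksiz_yo_finite} propagates it to every $D_j$ without demanding new Poincar\'e-Korn corrections.
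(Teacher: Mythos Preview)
Your proposal is correct and follows essentially the same approach as the paper: fix a reference ball in $\openset$, apply Poincar\'e--Korn there to obtain a single sequence of rigid shifts, extract a subsequence converging a.e.\ on the ball via Rellich--Kondrachov, and then invoke Proposition~\ref{prop:yo_cheksiz_yo_finite} on an exhaustion $\{D_j\}$ of $\openset$ to propagate the convergence (the blow-up alternative being excluded by the seeded convergence on the ball). The paper's proof compresses the exhaustion/diagonal step into a single sentence, while you spell out the diagonalization and the justification that $u\in GSBD^2(\openset)$ more explicitly; otherwise the arguments coincide.
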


\begin{proof}
Let $B_\epsilon\strictlyincluded \openset$ be any ball. 
By assumption, $B_\epsilon\strictlyincluded \openset_k$ for all large $k.$
By the Poincar\'e-Korn inequality, for all such $k$ there exists a rigid displacement $ \red b_k^\epsilon \black $ such that 
$$
\|u_k +  \red b_k^\epsilon \black \|_{H^1(B_\epsilon)}^2 \le C_\epsilon  \int_{B_\epsilon} |\str{u_k}|^2dx.
$$ 
This, \eqref{finite_elassddff111} and the Rellich-Kondrachov Theorem imply that there exist  a not relabelled subsequence $\{u_k+ \red b_k^\epsilon \black \}$ and $v\in H^1(B_\epsilon;\R^n)$ such that $u_k+ \red b_k^\epsilon \black  \wk v$ \red as $k\to\infty$ \black in $H^1(B_\epsilon;\R^n),$ hence, a.e.\ in $B_\epsilon.$  Now applying 
Proposition \ref{prop:yo_cheksiz_yo_finite} with an increasing sequence $\{G_i\}$ of connected open sets satisfying $G_1=B_\epsilon,$ $G_i\strictlyincluded \openset$ and $\openset=\cup_i G_i$ we find $u\in H_\loc^1(\openset;\R^n)\cap GSBD^2(\openset;\R^n)$ with $u=v$ in $B_\epsilon$ and a not relabelled subsequence $\{u_k+ \red b_k^\epsilon \black \}$ such that $u_k+ \red b_k^\epsilon \black  \to u$ \red as $k\to\infty$ \black a.e.\ in $\openset.$
\end{proof}

\begin{proposition}\label{prop:jump_estimate}
Assume (H1)-(H2) and let $x_0\in\Sigma,$ $\delta\in(0,\frac12)$ and $r\in(0,1)$ be such that $\nu_0:=\nu_\Sigma(x_0)$ exists,   
\begin{equation}
|\varphi(y,\xi)- \varphi(x_0,\xi)|<\delta \label{phi_contin} 
\end{equation}
for any $y\in U_{r,\nu_0}(x_0)$ and $\xi\in \S^1,$
$U_{r,\nu_0}(x_0) \cap \Sigma$ is a graph of a Lipschitz function over tangent line $U_{r,\nu_0}(x_0) \cap T_{x_0}$ in direction $\nu_0$ 
and 
\begin{equation}
 \int_{U_{r,\nu_0}(x_0) \cap \Sigma} |\beta(y) -\beta(x_0)|d\cH^1<\delta\cH^1(U_{r,\nu_0}(x_0)\cap \Sigma). \label{beta_continu}
\end{equation}
Let $A\in\fA_m$ be such that $x_0\in \Sigma\cap \p^*A,$  $U_{r,\nu_0}(x_0)\cap \{\dist(\cdot,T_{x_0})\ge \delta r  \}\subset \Int{A}\cup\substrate,$ 
and let  $\{(A_k,u_k)\}\subset\admissible_m$ and  $u\in H_\loc^1(\Int{A};\R^2)$ be such that 
$A_k\overset{\tau_\fA}\to A$ and 
$$ 
\sup\limits_{k} \int_{U_{r,\nu_0}(x_0) \cap (A_k\cup\substrate)} |\str{u_k}|^2dx + \cH^1(U_{r,\nu_0}(x_0) \cap \p A_k) <\infty  
$$  
and  $u_k\to u$ a.e.\ in $U_{r,\nu_0}(x_0) \cap \Int{A}$ and $|u_k|\to+\infty$ a.e.\ in $\substrate\cap U_{r,\nu_0}(x_0).$ 
Then  there exists $k_\delta>1$ for which 
\begin{align} \label{before_blow_up}
& \int_{U_{r,\nu_0}(x_0)\cap \Omega \cap\p^*A_k} \varphi(x,\nu_{A_k})d\cH^1  
+2\int_{U_{r,\nu_0}(x_0) \cap \Omega  \cap (A_k^{(0)} \cup A_k^{(1)}) \cap\p A_k}
\varphi(x,\nu_{A_k})d\cH^1\nonumber \\  
&+ \int_{U_{r,\nu_0}(x_0) \cap \Sigma \cap A_k^{(0)}\cap \p A_k} \big(\varphi(x,\nu_\Sigma)
+ \beta \big)d\cH^1\nonumber \\
& +  \int_{U_{r,\nu_0}(x_0)\cap \Sigma \cap \p^*A_k\setminus J_{u_k}} \beta d\cH^1  +
\int_{U_{r,\nu_0}(x_0) \cap J_{u_k}} \varphi(x,\nu_\Sigma)\,d\cH^1 \no\\ 
&\ge \frac{1}{1+\frac{\delta}{c_2}}
\int_{U_{r,\nu_0}(x_0) \cap \Sigma\cap \p^*A} \varphi(x,\nu_\Sigma) \,d\cH^1 - \delta\,\int_{U_{r,\nu_0}(x_0) \cap \Sigma}\varphi(x,\nu_\Sigma)d\cH^1. 
\end{align} 
for any $k>k_\delta.$
\end{proposition}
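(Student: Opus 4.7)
The plan is to extend $u_k$ by zero outside $A_k\cup S$ to the entire cylinder $U_{r,\nu_0}(x_0)$, reduce the problem to a GSBD compactness statement, and then apply an anisotropic lower-semicontinuity result of Chambolle--Crismale type. The divergence of $u_k$ on $S$ forces a ``jump'' of the extended function across $\Sigma$ in the limit, which must be paid for by the various surface-energy contributions on the left-hand side of \eqref{before_blow_up}.

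First, using \eqref{phi_contin} together with (H1), for $|\xi|=1$ and $x\in U_{r,\nu_0}(x_0)$ one has $\varphi(x,\xi) \le (1+\delta/c_2)\varphi(x_0,\xi)$ (since $\varphi(x_0,\xi)\ge c_1$ and $|\varphi(x,\xi)-\varphi(x_0,\xi)|<\delta$), which produces the announced factor $(1+\delta/c_2)^{-1}$ when passing between $\varphi(x,\cdot)$ and $\varphi(x_0,\cdot)$. Similarly, \eqref{beta_continu} together with (H2) lets one replace $\beta(x)$ by $\beta(x_0)$ up to an error absorbed into $-\delta\int_{U_{r,\nu_0}(x_0)\cap\Sigma}\varphi(x,\nu_\Sigma)\,d\cH^1$. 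Next, set
\begin{equation*}
w_k := u_k\,\chi_{(A_k\cup S)\cap U_{r,\nu_0}(x_0)}\quad\text{and}\quad w_k:=0 \ \text{on}\ U_{r,\nu_0}(x_0)\setminus (A_k\cup S).
\end{equation*}
Then $w_k\in GSBD^2(U_{r,\nu_0}(x_0);\R^2)$ with $\|\str{w_k}\|_{L^2}$ uniformly bounded, and up to $\cH^1$-null sets
\begin{equation*}
J_{w_k}\cap U_{r,\nu_0}(x_0) \subseteq (\Omega\cap\p^*A_k) \cup (\Sigma\cap A_k^{(0)}\cap\p A_k) \cup J_{u_k}.
\end{equation*}

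Applying \cite[Theorem 1.1]{ChC:2018jems} to $\{w_k\}$ yields, along a subsequence, a set $F$ of finite perimeter in $U_{r,\nu_0}(x_0)$ and $w\in GSBD^2(U_{r,\nu_0}(x_0)\setminus F;\R^2)$ with $|w_k|\to\infty$ a.e.\ on $F$ and $w_k\to w$ a.e.\ on the complement. The hypotheses force $F=S\cap U_{r,\nu_0}(x_0)$ modulo $\cL^2$-null, so $U_{r,\nu_0}(x_0)\cap\p^* F = U_{r,\nu_0}(x_0)\cap\Sigma$ modulo $\cH^1$-null. The anisotropic analogue of the Chambolle--Crismale lower semicontinuity (see \cite[Lemma 3.8]{ADT:2017}) then gives
\begin{equation*}
\int_{U_{r,\nu_0}(x_0)\cap\Sigma}\varphi(x_0,\nu_\Sigma)\,d\cH^1 \le \liminf_k \int_{J_{w_k}}\varphi(x_0,\nu_{w_k})\,d\cH^1.
\end{equation*}
To relate this back to the LHS of \eqref{before_blow_up}, bound $\int_{J_{w_k}}\varphi(x_0,\cdot)\,d\cH^1$ from above: the $\Omega\cap\p^*A_k$ and $J_{u_k}$ pieces are directly controlled by LHS terms (i) and (v) after the freezing of Step~1, while on the wetting piece $\Sigma\cap A_k^{(0)}\cap\p A_k$ one uses (H2) to write $\varphi \le (\varphi+\beta)+|\beta| \le (\varphi+\beta)+\varphi$, absorbing the extra $\int_{\Sigma\cap A^{(0)}}\varphi$ into the decomposition $\int_{U\cap\Sigma}\varphi = \int_{U\cap\Sigma\cap\p^*A}\varphi + \int_{U\cap\Sigma\cap A^{(0)}}\varphi$ so that only $\int_{U\cap\Sigma\cap\p^*A}\varphi$ survives on the right of \eqref{before_blow_up}.

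The principal obstacle is the contact term (iv) $= \int_{\Sigma\cap\p^*A_k\setminus J_{u_k}} \beta\,d\cH^1$, which can be as negative as $-\int_{U\cap\Sigma}\varphi$ and would otherwise spoil the estimate. The way to handle it is to prove
\begin{equation*}
\cH^1\bigl(U_{r,\nu_0}(x_0)\cap\Sigma\cap\p^*A_k\setminus J_{u_k}\bigr) \;\longrightarrow\; 0 \quad\text{as } k\to\infty,
\end{equation*}
by a Korn--Poincar\'e/trace argument on $S$: writing $u_k = (u_k-b_k^S)+b_k^S$ with $b_k^S$ a rigid displacement and $\|u_k-b_k^S\|_{H^1(S)}\lesssim \|\str{u_k}\|_{L^2(S)}$ uniformly bounded, the assumption $|u_k|\to\infty$ a.e.\ on $S$ forces $|b_k^S|\to\infty$; on the $A_k$-side the convergence $u_k\to u$ on $\Int A$ together with Korn--Poincar\'e on subdomains of $\Int A$ (as in Proposition \ref{prop:yo_cheksiz_yo_finite} and Corollary \ref{cor:add_rig_converge}) keeps the corresponding rigid displacement bounded, so matching traces on any set of positive $\cH^1$-measure on $\Sigma\cap\p^*A_k\setminus J_{u_k}$ become incompatible for large $k$. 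Consequently (iv) is $o(1)$, and combining all the above bounds yields \eqref{before_blow_up} for all $k\ge k_\delta$.
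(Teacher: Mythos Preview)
Your approach has a genuine gap: the claim in the last paragraph that
\[
\cH^1\bigl(U_{r,\nu_0}(x_0)\cap\Sigma\cap\p^*A_k\setminus J_{u_k}\bigr)\to 0
\]
is false in general. Take $\Omega$ the upper half-square, $S$ the lower half-square, $A=\Omega$, and $A_k=\Omega\setminus L_k$ where $L_k$ is a horizontal segment at height $1/k$. Define $u_k=0$ on the bulk part of $A_k$ above $L_k$, and $u_k=k\,{\bf e_1}$ on the thin strip $(-1,1)\times(0,1/k)$ and on $S$. Then $(A_k,u_k)\in\admissible_m$, $A_k\overset{\tau_\fA}\to A$, $u_k\to 0$ a.e.\ on $\Int A=\Omega$, $|u_k|\to\infty$ on $S$, and $\str{u_k}\equiv 0$; yet the traces of $u_k$ match across $\Sigma$ for every $k$, so $J_{u_k}=\emptyset$ and $\Sigma\cap\p^*A_k\setminus J_{u_k}=\Sigma$ has full measure. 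Your Korn--Poincar\'e argument breaks because the rigid displacement on the $A_k$-side is controlled only on compact subsets of $\Int A$, not on the thin slivers of $A_k$ near $\Sigma$ that vanish in the limit; there $u_k$ can diverge in lockstep with $u_k|_S$.

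The paper does \emph{not} try to make the contact set small. Instead, via Lemma~\ref{lem:jump_estaaa} it proves that for large $k$
\[
\int_{U_r\cap\Omega\cap\p^*A_k}\phi(\nu_{A_k})\,d\cH^1+2\int_{U_r\cap\Omega\cap A_k^{(1)}\cap\p A_k}\phi(\nu_{A_k})\,d\cH^1\;\ge\;2\int_{U_r\cap\Sigma\cap(\p^*A_k\setminus J_{u_k})}\phi(\nu_\Sigma)\,d\cH^1-\epsilon,
\]
i.e.\ the \emph{interior} boundary energy already dominates \emph{twice} the $\phi$-length of the contact set. The factor $2$ is obtained by thickening the cracks and voids into open sets so that almost every line in a generic direction crosses the resulting jump set at least twice, and then running the slicing argument of \cite{ChC:2018jems}. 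Combined with $|\beta|\le\varphi$ this gives $2\varphi+\beta\ge\varphi$ on $\Sigma\cap\p^*A_k\setminus J_{u_k}$ and the estimate follows. Your single application of the GSBD lower semicontinuity to $w_k$ only yields factor $1$ on $J_{w_k}$, which is not enough to absorb the potentially negative $\beta$-term; the missing ingredient is precisely this ``doubling'' lemma.
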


We postpone the proof after  the following lemma.
\begin{lemma}\label{lem:jump_estaaa}
Let $\phi$ be a norm in $\R^2,$  $A\in \fA_m$ be such that $0\in \Sigma\cap \p^*A,$  $U_r \cap \{\dist(\cdot,\{x_2=0\})\ge \frac{r}{2} \}\subset \Int{A}\cup\substrate,$  and  $\{(A_k,u_k)\}\subset\admissible_m,$ and $u\in H^1_\loc(\Int{A};\R^2)$ be such that
\begin{equation}\label{jumapsoa}
\sup\limits_{k} \int_{U_r  \cap A_k} |\str{u_k}|^2dx + \cH^1(U_r  \cap \p A_k) <\infty 
\end{equation}
and $A_k\overset{\tau_\fA}\to A$ and  $u_k\to u$ a.e.\ in $U_r \cap \Int{A}$ and $|u_k|\to+\infty$ a.e.\ in $\substrate\cap U_r.$  Then for every $\epsilon>0$  there exists $k_\epsilon >0$ such that for any $k>k_\epsilon ,$
\begin{align}\label{alfa_1nin_ulch}
\int_{U_r \cap \Omega \cap\p^*A_k} \phi(\nu_{A_k})d\cH^1 
+2\int_{U_r \cap \Omega  \cap A_k^{(1)}  \cap\p A_k}
\phi(\nu_{A_k})d\cH^1  \no\\
\ge 2\int_{U_r \cap \Sigma\cap (\p^* A_k \setminus J_{u_k})} \phi(\nu_\Sigma)d\cH^1 -\epsilon .
\end{align}
\end{lemma}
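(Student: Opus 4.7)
\textbf{Proof plan of Lemma \ref{lem:jump_estaaa}.}  My idea is to isolate the portion $B_k$ of $A_k\cap U_r$ on which $u_k$ blows up, using the fact that $u_k$ is $H_\loc^1$-Sobolev in $\Int A_k$, and then bound the weighted perimeter of $B_k$ from below by $2L_k\phi(\nu_\Sigma)$, where $L_k:=\cH^1(\Sigma\cap(\p^*A_k\setminus J_{u_k})\cap U_r)$.  The factor $2$ on the right-hand side will arise in two complementary ways: from the explicit doubling of cracks in the left-hand side, or from the fact that a ``free-boundary'' separation between $B_k$ and the ``good'' region above it requires two distinct pieces of $\p^*A_k$ (the top of $B_k$ and the bottom of the good region, sandwiching a gap in $\R^2\setminus A_k$).

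\emph{Step 1 (dichotomy and definition of $B_k$).}  I apply Proposition \ref{prop:yo_cheksiz_yo_finite} to each connected component $P$ of $\Int A_k$, exploiting the energy bound \eqref{jumapsoa}: either $|u_k|\to\infty$ a.e.\ on $P$ (call such $P$ ``bad''), or up to a rigid displacement $u_k$ converges in $H_\loc^1(P;\R^2)$ (``good''). Since $A_k\overset{\tau_\fA}\to A$, for $k$ large we have $U_r\cap\{x_2>r/2\}\subset \Int A_k$; combined with the hypothesis $u_k\to u$ (bounded) a.e.\ on $U_r\cap\{x_2>r/2\}\subset\Int A$, every component of $\Int A_k$ meeting $U_r\cap\{x_2>r/2\}$ is good.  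Let $B_k\subset A_k\cap U_r$ be the union of the bad components of $\Int A_k$. Since $B_k$ is clopen in $\Int A_k$, we have $\p B_k\cap U_r\subset \p A_k\cap U_r$ and $B_k\cap U_r\cap\{x_2>r/2\}=\emptyset$. Moreover, as $|u_k|\to\infty$ a.e.\ on $\substrate\cap U_r$, by Fubini the $\substrate$-side trace of $u_k$ on $\Sigma\cap U_r$ diverges $\cH^1$-a.e.; on $\Sigma\cap(\p^*A_k\setminus J_{u_k})$ the $\substrate$- and $A_k$-side traces coincide, forcing the $A_k$-trace to diverge and hence the adjacent component of $\Int A_k$ to be bad.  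Consequently $\Sigma\cap(\p^*A_k\setminus J_{u_k})\cap U_r\subset \overline{B_k}\cap\Sigma$ up to $\cH^1$-null sets, so the contact $\p B_k\cap\Sigma$ has $\cH^1$-measure $\ge L_k$.

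\emph{Step 2 (Finsler isoperimetric doubling).}  The key analytic ingredient is the identity $\int_\gamma \phi(\nu_\gamma)\,d\cH^1=\int_\gamma \phi^\#(\tau_\gamma)\,d\cH^1$ with $\phi^\#(v):=\phi(v^\perp)$, which together with the fact that $\phi^\#$ is a (translation-invariant) norm yields the chord bound
$$
\int_\gamma \phi(\nu_\gamma)\,d\cH^1 \ge \phi^\#(b-a,0)=(b-a)\,\phi(\nu_\Sigma)
$$
for any rectifiable arc $\gamma\subset\overline\Omega$ joining $(a,0)\in\Sigma$ to $(b,0)\in\Sigma$.  Decomposing the contact $\p B_k\cap \Sigma\cap U_r$ into its maximal intervals $I_j=[a_j,b_j]\times\{0\}$ and applying the bound to the non-contact arc of $\p B_k$ over each $I_j$ (whose endpoints are $(a_j,0)$ and $(b_j,0)$ by construction), summation yields
$$
\int_{\p B_k\cap\Omega\cap U_r} \phi(\nu_{A_k})\,d\cH^1\ge L_k\,\phi(\nu_\Sigma).
$$
To obtain the factor $2$ we split the non-contact part of $\p B_k$ into its crack-part (in $A_k^{(1)}\cap\p A_k$) and its free-boundary part (in $\p^*A_k$): (i) the crack-part enters the LHS with weight $2$, contributing  $\ge 2\times$(its chord length)$\cdot\phi(\nu_\Sigma)$; (ii) above the free-boundary part, the good component of $\Int A_k$ and $B_k$ are separated by a gap in $\R^2\setminus A_k$, and the upper boundary of this gap (which lies in $\p^*A_k$, is disjoint from $\p B_k$, and projects onto $\Sigma$ over a length $\ge$ that of the corresponding free-boundary part) adds another $\ge$(chord length)$\cdot\phi(\nu_\Sigma)$ to the LHS.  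Summing contributions yields  LHS $\ge 2L_k\phi(\nu_\Sigma)-\epsilon$, as required.

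\emph{Main obstacles.} The principal technical difficulty lies in Step 1, where Proposition \ref{prop:yo_cheksiz_yo_finite} must be applied to the $k$-dependent components of $\Int A_k$; this will require a diagonal extraction combined with Corollary \ref{cor:add_rig_converge} on an exhausting family of relatively compact connected open subsets inside each component of $\Int A$.  A secondary subtlety in Step 2 is the treatment of gaps with complicated topology and of components of $B_k$ touching $\Sigma$ in multiple intervals; these will be handled by decomposing the gaps and the contact set into connected/interval components and summing.  The error $\epsilon$ absorbs boundary effects near $\p U_r$, short ``vertical'' pieces of the gap, and deviations of $\nu_{A_k}$ from $\pm\nu_\Sigma$, all of which are controlled by the $\tau_\fA$-convergence $A_k\to A$ combined with the continuity \eqref{phi_contin}-type properties of $\phi$.
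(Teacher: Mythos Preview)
Your Step~1 contains a genuine gap that the diagonalisation remedy does not fix. The dichotomy you quote from Proposition~\ref{prop:yo_cheksiz_yo_finite} concerns a \emph{sequence} of functions on a \emph{fixed} connected open set, whereas you apply it to the single function $u_k$ on a $k$-dependent component $P\subset\Int{A_k}$; the phrase ``$|u_k|\to\infty$ a.e.\ on $P$'' is therefore not meaningful, and $B_k$ is undefined. The following configuration shows that no reasonable reading rescues the argument. Take $A_k\cap U_r=(U_r\cap\Omega)\setminus\Gamma_k$ with $\Gamma_k=\{x_2=h_k,\ -r<x_1<r-\delta_k\}$ a horizontal slit, $h_k\downarrow0$; set $u_k\approx u$ above the slit, $u_k\approx c_k\to\infty$ on the strip $\{0<x_2<h_k\}$ and on $S$, with the transition confined to the connecting gap $\{r-\delta_k<x_1<r\}$ (the elastic energy stays bounded provided $\delta_k\lesssim c_k^{-2}$). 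All hypotheses of the lemma hold, yet $\Int{A_k}$ is \emph{connected} and meets $\{x_2>r/2\}$, hence ``good'' by any version of your criterion, so $B_k=\emptyset$ while $\Sigma\cap(\p^*A_k\setminus J_{u_k})=\Sigma\cap U_r$ has full length $2r$. Your Step~2 then gives only the trivial bound. The lemma is nevertheless true here because the slit, lying in $A_k^{(1)}\cap\p A_k$, contributes $2(2r-\delta_k)\phi({\bf e_2})$ to the left-hand side---but your decomposition never sees it, since it is keyed to the contact set of the (empty) $B_k$.

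The paper sidesteps any good/bad splitting. It thickens the rectifiable set $(A_k^{(1)}\cap\p A_k)\cup J_{u_k}$ into thin Lipschitz open neighbourhoods $V_i^k$ (absorbing also the voids $V_0^k=U_r\setminus\overline{\Int{A_k}\cup S}$) and sets $w_k:=u_k\chi_{U_r\setminus\cup_iV_i^k}+\xi\chi_{\cup_iV_i^k}\in GSBD^2(U_r)$, so that $J_{w_k}=\cup_i\p V_i^k$. Then $w_k\to u$ on $\Int{A}$, $w_k\to\xi$ on the voids, and $|w_k|\to\infty$ on $S$; a one-dimensional slicing argument \`a la \cite{ChC:2018jems} exploits that, $J_{w_k}$ being a union of boundaries of open sets, almost every line crosses it an \emph{even} number of times, which yields $2\int_{U_r\cap\Sigma}\phi(\nu_\Sigma)\,d\cH^1\le\liminf_k\int_{J_{w_k}}\phi(\nu_{J_{w_k}})\,d\cH^1$, and \eqref{alfa_1nin_ulch} follows by unwinding the thickening. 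The factor $2$ is thus purely topological and requires no identification of where $u_k$ blows up inside $A_k$.
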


\begin{proof}
Since $(A_k^{(1)}\cap\p A_k)\cup J_{u_k}$ is $\cH^1$-rectifiable, by \cite[pp. 80]{AFP:2000} there exists at most countably many $C^1$-curves $\{\Gamma_i^k\}_{i\ge1}$ such that 
$$
\cH^1\Big(((A_k^{(1)}\cap\p A_k)\cup J_{u_k})\setminus \bigcup\limits_{i\ge1} \Gamma_i^k\Big) =0.
$$
Selecting closed arcs inside curves if necessary, we suppose that $\Gamma_i^k\subset U_r $ and 
$$
\int_{U_r \cap A_k^{(1)}\cap\p A_k} \phi(\nu_{A_k})d\cH^1 + \int_{U_r \cap J_{u_k}} \phi(\nu_{A_k})d\cH^1 +\epsilon > \sum\limits_{i\ge1} \int_{\Gamma_i^k} \phi(\nu_{\Gamma_i^k})d\cH^1
$$
for any $k.$ Since each $\Gamma_i^k$ is $C^1,$ we can choose a Lipschitz open set $V_i^k\subset U_r $ such that $\Gamma_i^k\subset \cl{V_i^k},$  $|V_i^k|\le 2^{-i-1-k},$ 
$$
\int_{\p V_i^k} \phi(\nu_{V_i^k})d\cH^1 < 2\int_{\Gamma_i^k} \phi(\nu_{\Gamma_i^k})d\cH^1 + \frac{\epsilon }{2^{i+1}}
$$
and $\dist_\cH(\Gamma_i^k,\p V_i^k)<2^{-k},$ where $\dist_\cH$ is the Hausdorff distance (see e.g., \eqref{haus_dista} for the definition). Let $V_0^k:=U_r \setminus \cl{\Int{A_k}\cup\substrate}$ be the ``voids''. 
By the definition of $\{V_i^k\},$
\begin{align}\label{mucci_bollarni_yaxshi_kuraman}
\int_{U_r \cap \Omega \cap\p^*A_k} \phi(\nu_{A_k})d\cH^1 + \int_{U_r \cap (\Sigma \setminus \p^*A_k)} \phi(\nu_\Sigma) d\cH^1 + 2 \int_{U_r \cap J_{u_k}} \phi(\nu_{A_k})d\cH^1 \no \\
+ 2 \int_{U_r \cap \Omega  \cap A_k^{(1)}  \cap\p A_k} \phi(\nu_{A_k})d\cH^1  
\ge \sum\limits_{i\ge0} \int_{\p V_i^k} \phi(\nu_{\p V_i^k}) d\cH^1 - \frac{\epsilon }{2}. 
\end{align}
In particular, by \eqref{jumapsoa}, $\sup \limits_k \sum\limits_{i\ge0} \cH^1(\p V_i^k) <\infty,$ and hence, by \cite[Proposition 2.6]{Ma:2012}, there exists $\xi\in\R^2$  such that 
the set $\big\{x\in \bigcup_i\p V_i:\,\,\tr_{U_r \setminus \cup V_i^k} (u)(x) = \xi\big\}$ is $\cH^1$-negligible.
Define 
$$
w_k:= u_k\chi_{U_r \setminus \bigcup_i V_i^k} + \xi \chi_{\bigcup_i V_i^k}.
$$
Then $w_k\in GSBD^2(U_r ;\R^2),$ $J_{w_k}=\bigcup_i\p V_i^k$ and by \eqref{jumapsoa},
$$
\sup\limits_k \int_{U_r } |\str{w_k}|^2dx + \cH^1(J_{w_k}) <\infty.
$$
Since $\sum\limits_{i\ge1}|V_i^k| \le 2^{-k},$ by assumption on $\{u_k\}$ and $\{A_k\},$
$$
w_k\to 
\begin{cases}
u & \text{a.e.\ in $U_r\cap \Int{A},$ }\\
\xi & \text{a.e.\ in $\Omega\cap U_r\cap \setminus\cl{A}$}
\end{cases}
$$
and $|w_k|\to+\infty$ a.e.\ in $U_r\cap \substrate.$ 

We show that 
\begin{equation}\label{liminf_katta_sasa}
2\int_{U_r \cap \Sigma} \phi(\nu_\Sigma)d\cH^1 \le \liminf\limits_{k\to\infty} \int_{J_{w_1}} \phi(\nu_{J_{w_1}})d\cH^1. 
\end{equation}
By assumption, $U_r \cap \Sigma \subset (-1,1)\times(-\epsilon ,\epsilon )$ and $U_r \cap \p A \subset (-1,1)\times(-\epsilon ,\epsilon ),$ thus, by the convergence $A_k\overset{\tau_\fA}{\to} A$ and Remark \ref{rem:kuratiwski_and_sdistance}, $U_r \cap \p A_k\subset (-1,1)\times(-\epsilon ,\epsilon )$ for all large $k.$ In particular, for such $k,$ $J_{w_k}\subset (-1,1)\times(-\epsilon ,\epsilon ).$

Under the notation of \cite{ChC:2018jems}, given $\xi\in\S^1$ let $\pi_\xi$ be the orthogonal projection onto the line $\Pi_\xi:=\{\eta\in\R^2:\,\,\xi\cdot\eta=0\},$ perpendicular to $\xi;$ given a Borel set $F\subset\R^2$ and $y\in \Pi_\xi,$ let $F_y^\xi:=\{t\in\R:\,\, y+t\xi \in F\}$ be the one-dimensional slice of $F,$   and given $u\in GSBD(U_r ;\R^2)$ and $y\in \Pi_\xi,$ let $\hat u_y^\xi(t) = u(y+t\xi)\cdot \xi$ be the one-dimensional slice of $u.$
Since $w_h\to w$ a.e.\ in $U_r\setminus\substrate ,$ by \cite[Eq. 3.23]{ChC:2018jems}, for any $\epsilon>0$ and Borel set $F\subset U_r ,$
\begin{equation}\label{mucci_bolam_mening1}
\cH^0((F\setminus \substrate)_y^\xi\cap J_{\hat w_y^\xi}) + \cH^0((U_r \cap \Sigma)_y^\xi) \le\liminf\limits_{k\to\infty} \Big(\cH^0(F_y^\xi\cap J_{(\hat w_k)_y^\xi}) + \epsilon f_y^\xi(w_k)\Big)  
\end{equation}
for a.e.\ $\xi\in\S^1$ and a.e.\ $y\in \Pi_\xi,$ where the integral of $f_y^\xi(w_k)$ over $\Pi_\xi$ is uniformly bounded independent on $\xi$ and $k$ (see also \eqref{residiea} below).

Let 
$$
\hat A:=\Big\{y\in \pi_\xi(U_r \cap \Sigma):\,\, \liminf\limits_{k\to\infty} \cH^0(J_{(\widehat w_k)_y^\xi})=0\Big\}\subset \Pi_\xi. 
$$
Then $F:=U_r \cap \pi_\xi^{-1}(\hat A)$ is Borel and, thus, integrating \eqref{mucci_bolam_mening1} over $\hat A$ and using the definition of $\hat A$ and Fatou's Lemma we get 
$$
\cH^1\big(\pi_\xi(U_r \cap \Sigma) \cap \hat A\big) \le \liminf\limits_{k\to\infty}  \epsilon \int_{\hat A}f_y^\xi(v_k)dy\le M\epsilon  
$$
for some $M>0$ independent of $\epsilon.$ Thus, letting $\epsilon\to0$ we get $\cH^1(\hat A)=0.$ In particular, 
\begin{equation}\label{proj_covers_all1}
\limsup\limits_{k\to\infty} \cH^1\big(\pi_\xi(U_r \cap\Sigma)\setminus \pi_\xi(J_{w_k})\big)=0.  
\end{equation}
Note that by construction, $J_{w_k}$ is a union of open sets, thus, for a.e.\ $y\in \pi_\xi(J_{w_k}),$ the line $\pi_\xi^{-1}(y)$ passing through $y$ and parallel to $\xi$  crosses $J_{w_k}$ at least at two points.  Thus,
\begin{equation}\label{ahsssaaa1}
\cH^0\big(J_{(\widehat w_k)_y^\xi}\big)\ge 2= 2\cH^0\big((U_r \cap \Sigma)_y^\xi \big) 
\end{equation}
for $\cH^1$-a.e.\ $y\in \pi_\xi(J_{w_k})\cap \pi_\xi(U_r \cap\Sigma),$ where $o(1)\to0$ as $k\to\infty.$ Now we  choose arbitrary pairwise disjoint open sets $F_1,F_2,\ldots\strictlyincluded U_r $ and repeating the same argument of Step 1 in the proof of Proposition \ref{prop:anisotropic_chambolle} (by using \eqref{ahsssaaa1} in place of \eqref{chambolledan_lavha}
and using \eqref{proj_covers_all1}) we obtain \eqref{liminf_katta_sasa}. 

From \eqref{liminf_katta_sasa} and \eqref{mucci_bollarni_yaxshi_kuraman} it follows that there exists $k_\epsilon >0$ such that 
\begin{align}\label{alfa_1n_ch1}
\int_{U_r \cap \Omega \cap\p^*A_k} \phi(\nu_{A_k})d\cH^1 + \int_{U_r \cap (\Sigma \setminus \p^*A_k)} \phi(\nu_\Sigma) d\cH^1 + 2 \int_{U_r \cap J_{u_k}} \phi(\nu_{A_k})d\cH^1 \no \\
+ 2 \int_{U_r \cap \Omega  \cap A_k^{(1)}  \cap\p A_k} \phi(\nu_{A_k})d\cH^1  
\ge 2\int_{U_r \cap \Sigma} \phi(\nu_\Sigma)d\cH^1 - \epsilon 
\end{align}
for any $k>k_\epsilon .$ Now \eqref{alfa_1nin_ulch} follows from \eqref{alfa_1n_ch1}. 
\end{proof}

We anticipate here that in Lemma \ref{lem:creation_of_delamination} below we establish a similar result.

\begin{proof}[Proof of Proposition \ref{prop:jump_estimate}]
For simplicity, assume that $x_0=0,$ $\nu={\bf e_2}$  and $\phi(\xi) = \varphi(0,\xi).$  
Denote the left-hand side of \eqref{before_blow_up}  by $\alpha_k.$ 
By \eqref{phi_contin} and \eqref{hyp:bound_anis},
\begin{align} \label{alpha_es}
\alpha_k\ge & \hat \alpha_k - 2\delta\cH^1(U_r\cap \Omega\cap \p A_k), 
\end{align} 
where
\begin{align*}
\hat \alpha_k:=&\int_{U_r \cap \Omega \cap\p^*A_k} \phi(\nu_{A_k})d\cH^1 
+2\int_{U_r \cap \Omega  \cap (A_k^{(0)}\cup A_k^{(1)}) \cap\p A_k}
\phi(\nu_{A_k})d\cH^1\nonumber \\  
&  +  \int_{U_r \cap \Sigma \cap \p^*A_k\setminus J_{u_k}} \beta d\cH^1+
\int_{U_r \cap J_{u_k}} \varphi(x,\nu_\Sigma)\,d\cH^1. 
\end{align*}
By Lemma \ref{lem:jump_estaaa} applied with $\phi$ and $\epsilon:=\delta \int_{U_r\cap \Sigma}\varphi(x,\nu_\Sigma)d\cH^1$, there exists $k_\delta$ such that  
$$
\hat \alpha_k\ge 
2\int_{U_r \cap \Sigma\cap (\p^* A_k \setminus J_{u_k})} \phi(\nu_\Sigma)d\cH^1
+  \int_{U_r \cap \Sigma \cap \p^*A_k\setminus J_{u_k}} \beta d\cH^1+
\int_{U_r \cap J_{u_k}} \phi(\nu_\Sigma)\,d\cH^1  -\epsilon   
$$
for all $k>k_\delta.$ Then by \eqref{phi_contin}
$$
\begin{aligned}
\int_{U_r \cap \Sigma\cap (\p^* A_k \setminus J_{u_k})} \phi(\nu_\Sigma)d\cH^1 \ge 
\int_{U_r \cap \Sigma\cap (\p^* A_k \setminus J_{u_k})} \varphi(x,\nu_\Sigma)d\cH^1\\
-
\delta\cH^1(U_r \cap \Sigma\cap (\p^* A_k \setminus J_{u_k})),
\end{aligned}
$$
and therefore, 
$$
\begin{aligned}
\hat \alpha_k\ge 
\int_{U_r \cap \Sigma\cap (\p^* A_k \setminus J_{u_k})} (2\varphi(x,\nu_\Sigma) 
+  \beta) d\cH^1 + \int_{U_r \cap J_{u_k}} \phi(\nu_\Sigma)\,d\cH^1  \\
-\epsilon -  
\delta\cH^1(U_r \cap \Sigma\cap (\p^* A_k \setminus J_{u_k}))
\end{aligned}
$$
Applying \eqref{hyp:bound_anis} in the first integral we get  
$$
\begin{aligned}
\hat \alpha_k\ge 
\int_{U_r \cap \Sigma\cap \p^* A_k} \varphi(x,\nu_\Sigma) d\cH^1
-\epsilon -  \delta\cH^1(U_r \cap \Sigma\cap \p^* A_k)
\end{aligned}
$$
so that 
\begin{equation}\label{asasdada} 
\alpha_k \ge \int_{U_r \cap \Sigma\cap \p^* A_k} \varphi(x,\nu_\Sigma) d\cH^1
- \epsilon -  2\delta\cH^1(U_r \cap \p A_k). 
\end{equation}
By \eqref{finsler_norm} and \eqref{hyp:bound_anis}
$$
c_1 \cH^1(U_r \cap \p A_k) \le \alpha_k+ \int_{U_1\cap \Sigma}\varphi(x,\nu_\Sigma)d\cH^1,
$$
and hence, \eqref{asasdada}  and the definition of $\epsilon$ imply
$$
\Big(1+ \frac{\delta}{c_1}\Big)\,\alpha_k \ge \int_{U_r \cap \Sigma\cap \p^* A_k} \varphi(x,\nu_\Sigma) d\cH^1 - \delta\Big(1+ \frac{\delta}{c_1}\Big)\int_{U_r\cap \Sigma}\varphi(x,\nu_\Sigma)d\cH^1. 
$$
and \eqref{before_blow_up} follows.
\end{proof}

Finally we prove compactness of $\admissible_m.$

\begin{proof}[Proof of Theorem \ref{teo:compactness_Ym}]

Let $R:=\sup\limits_k \cF(A_k,u_k)$ and, by passing to a further not relabelled subsequence if necessary, we assume that 
$$
\liminf\limits_{k\to\infty} \cF(A_k,u_k) = \lim\limits_{k\to\infty} \cF(A_k,u_k). 
$$
By (H1)-(H3) we have 
$$
\sup\limits_k \Big(c_1\cH^1(\Omega\cap\p A_k) + 2c_3\int_{A_k\cup\substrate} |\str{u_k}|^2dx\Big) \le R + \int_\Sigma|\beta|d\cH^1 
$$
and hence,
\begin{equation}\label{ajsjakda}
\cH^1(\p A_k) \le \cH^1(\Omega\cap\p A_k) + \cH^1(\p\Omega\cap \p A_k) \le \frac{R +\int_\Sigma |\beta|d\cH^1}{c_1} +\cH^1(\p\Omega) 
\end{equation}
and 
\begin{equation}\label{dafefafa}
\int_{A_k\cup\substrate} |\str{u_k}|^2dx \le  \frac{R +\int_\Sigma |\beta|d\cH^1}{2c_3}  
\end{equation}
for any $k\ge1.$ In view of \eqref{ajsjakda} and Proposition \ref{prop:compactness_A_m}, there exists $A\in \fA_m$ with $\cH^1(\p A)<\infty$ and a not relabelled subsequence $\{A_k\}$ such that
$\sdist(\cdot,\p A_k)\to\sdist(\cdot,\p A)$ locally uniformly in $\R^2.$ 
Now we construct the sequence $\{(B_n,v_n)\}$ in three steps. In the first step we apply  Proposition \ref{prop:sets_changhe} and Corollary 
\ref{cor:add_rig_converge} to obtain a (not relabelled) subsequence and  to construct a sequence $\{B_k\}\subset \fA_m$ with associated piecewise rigid displacements $\{a_k\}$ 
such that both $B_k\overset{\tau_\fA}{\to} A$ and $u_k+a_k\to u$ a.e. in $\Int{A} \cup\substrate$ for some $u\in H^1_\loc(\Int{A}\cup \substrate,\R^2)\cap GSBD^2(\Ins{A};\R^2).$ In the second step we take care of the fact that adding different rigid motions in $B_k$ and in $\substrate$ can create extra jump at $\Sigma$ making difficult to satisfy \eqref{liminfga_ut_eshmat}. More precisely, 
by Proposition \ref{prop:jump_estimate} we modify $\{B_k\}$ and $\{u_k\}$ so that the modified sequence  $\{(B_k^\delta,u_k^\delta)\}\subset\admissible_m$ satisfies \eqref{liminfga_ut_eshmat} with some small error of order $\delta>0.$ Finally, in Step 3 we construct the sequence $\{(D_n,v_n)\}\subset\admissible_m$ by means of $\{(B_k^\delta,u_k^\delta)\}$ and a diagonal argument.

{\it Step 1: Defining a first modification $\{B_k\}$ of $\{A_k\}$.} By Proposition \ref{prop:sets_changhe} there exist a not relabelled subsequence $\{A_k\}$ and a sequence $\{B_k\}\subset \fA_m$ such that 
\begin{itemize}
\item[(a1)] $\p A_k\subset \p B_k$ and $\lim\limits_{k\to \infty} \cH^1(\p B_k\setminus \p A_k)=0;$\\

\item[(a2)] $B_k\overset{\tau_\fA}{\to} A$ as $k\to \infty;$\\

\item[(a3)] if $\{E_i\}_{i\in I}$ is all connected components of $\Int{A},$ there exists connected components of $\Int{B_k}$ enumerated as  $\{E_i^k\}_{i\in I}$  such that for any $i$ and $G\strictlyincluded E_i$ one has $G\strictlyincluded E_i^k$ for all large $k$ (depending only on $i$ and $G$);\\

\item[(a4)] $\sum\limits_i \cH^1(\p\Omega\cap (\p E_i^k\setminus \p E_i))\to0$ as $k\to\infty;$\\

\item[(a5)] $|B_k|=|A_k|$ for all $k\ge1;$\\

\item[(a6)] the boundary of every connected component of $\Int{B_k}\setminus\bigcup_i E_i^k$ intersects the boundary of at most one connected component of $\substrate.$
\end{itemize}

Notice that by  condition (a1),
$$
\lim\limits_{k\to\infty}|\cS(A_k,u_k) - \cS(B_k,u_k)| =0
$$
and 
$$
\cW(A_k,u_k)=\red \cW\black (B_k,u_k).
$$
Thus,
\begin{equation}\label{first_attempt_b}
 \lim\limits_{k\to\infty}|\cF(A_k,u_k)- \cF(B_k,u_k)|=0. 
\end{equation}

Now we define the piecewise rigid displacements $a_k$ associated to $B_k.$  Let $\{S_j\}_{j\in Y}$ be the set of connected components of $\substrate$ for some index set $Y.$ 
\red We define the index sets $I_n\subset I$ and $Y_n\subset Y$ inductively on $n$ in such a way that  
Corollary \ref{cor:add_rig_converge} holds with $\openset_k=E_i^k$ and $\openset=E_i$ and also with $\openset_k=\openset=S_j$ for every $i\in I_n$ and $j\in Y_n$ with the same rigid displacements $a_k^n$ independent of $i$ and $j.$ 
%

More precisely, let $I_0:=Y_0:=\emptyset,$ and given the sets $I_1,\ldots, I_{n-1}$ and $Y_1,\ldots,Y_{n-1}$ for $n\ge1,$ we define $I_n$ and $Y_n$ as follows.  By Corollary \ref{cor:add_rig_converge} applied with $\openset_k=\openset=S_{j_n}$ with $j_n$ the smallest element  of $Y\setminus \bigcup\limits_{l=1}^{n-1} Y_l,$ \black  we find a not relabelled subsequence $\{(B_k,u_k)\},$ a sequence $\{a_k^n\}$ of rigid displacements and $w_n\in  H^1_\loc(S_{j_n};\R^2)$ such that $u_k+a_k^n\to w_n$ a.e.\ in $S_{j_n}.$  
Let $I_n$ and $Y_n$ be \red the sets \black such that there exists a not relabelled subsequence $\{(B_k,u_k)\}$ such that  the sequence $(u_k+a_k^n)\chi_{E_i^k}$    converges a.e.\ in $E_i$ for $i\in I_n$ 
and the sequence $(u_k+a_k^n)\chi_{S_j}$  converges a.e. in $S_j$ for $j\in Y_n.$ Recall that $j_n\in Y_n.$
Let 
$$
F_n^k:=\Big(\bigcup\limits_{i\in I_n} E_i^k\Big)\cup \Big(\bigcup\limits_{j\in Y_n} S_j\Big)\qquad\text{and}\qquad 
F_n:=\Big(\bigcup\limits_{i\in I_n} E_i\Big) \cup \Big(\bigcup\limits_{j\in Y_n} S_j\Big).
$$
By the definition of $I_n$ and $Y_n,$ \red and by diagonalization \black 
the sequence $(u_k+a_k^n)\chi_{F_n^k}$ converges \red as $k\to\infty$ \black  a.e.\  in $F_n$ to some function in $H^1_\loc(F_n;\R^2),$ which we  still denote by $w_n.$ 

Note that for large $n,$ $Y_n$ is empty since $Y$ is finite by assumption.
Notice also that by definition of $I_n$ and $Y_n,$ and Proposition \ref{prop:yo_cheksiz_yo_finite} applied in connected open sets $P\strictlyincluded E_i\cup S_j,$ we have $|u_k+a_k^n|\to +\infty$ a.e in $E_i\cup S_j$ for every $i\in I\setminus  I_n$ and  $j\in Y\setminus  Y_n,$  

We now define the rigid displacements in $E_i^k$ for $i\in I\setminus \bigcup\limits_n I_n.$ By  a diagonal argument and by Corollary \ref{cor:add_rig_converge} applied with $\openset_k=E_i^k$ and $\openset=E_i$ for any  $i\in I\setminus \bigcup\limits_n I_n,$ we find a further not relabelled sequence $\{B_k,u_k\},$ sequence $\{\tilde a_k^i\}$ of rigid displacements and $w^i\in H^1_\loc(E_i;\R^2)$  such that $(u_k+ \tilde a_k^i)\chi_{E_i^k} \to w^i$ a.e.\ $E_i$ as $k\to\infty.$

Finally, we define rigid displacements in connected components $C_i^k$ of $B_k\setminus\bigcup_i E_i^k$ whose interior in the limit  becomes empty, i.e., $C_i^k$ turns into an external filament. Recall that $|C_i^k|\to0$ as $k\to\infty.$  If   $\cH^1(\p C_i^k\cap\Sigma)=0,$ we define null-rigid displacement in $C_i^k.$  If $\cH^1(\p C_i^k\cap\Sigma)>0,$ then by condition (a6), $\p C_i^k$ intersects the boundary of unique $S_{j_i},$ in which we have defined rigid displacement $a_k^{j_i}.$ In this case we define the same $a_k^{j_i}$ in $C_i^k$ so that $\bigcup_i \p C_i^k\cap J_{u_k+a_k^{j_i}} \subset J_{u_k},$ i.e., we do not create extra jump set.

Let 
$$
a_k:= \sum\limits_{n} a_k^n \sum\limits_{i\in I_n,j\in Y_n} \chi_{E_i^k\cup S_j}^{} + \sum\limits_{i\in I\setminus \cup_n I_n} \tilde  a_k^i\chi_{E_i^k}^{} + \sum_{i,\,\cH^1(\p C_i^k\cap\Sigma)>0} a_k^{j_i}\chi_{C_i^k}
$$
and 
$$
u:= \sum\limits_{n} w_n \sum\limits_{i\in I_n,j\in Y_n} \chi_{E_i\cup S_j}^{} + \sum\limits_{i\in I\setminus \cup_n I_n} w^i\chi_{E_i}^{}.
$$
By construction, $a_k$ is a piecewise rigid displacement associated to $B_k,$ $u\in H_\loc^1(\Int{A}\cup\substrate;\R^2)$ and $u_k+ a_k\to u$ a.e.\ in $\Int{A}\cup\substrate.$ 
Note that $\str{u_k+a_k} = \str{u_k}.$ Hence, by convergence $A_k\overset{\tau_\fA}\to A$ and by \eqref{dafefafa},  for any Lipschitz open set $D\strictlyincluded \Int{A}\cup \substrate,$
$$
\int_D |\str{u_k+a_k}|^2< \frac{R+\int_\Sigma|\beta|d\cH^1}{2c_3} 
$$
for all large $k$ (depending only $D$). Since $u_k+a_k\to u$ a.e.\ in $D,$ by the Poincar\'e-Korn inequality, 
$\str{u_k+a_k} \wk \str{u}$ weakly in $L^2(D;\mtwo).$ Then by the convexity of $v\mapsto \int_D |\str{v}|^2dx,$ we get 
$$
\int_D|\str{u}|^2dx \le\liminf\limits_{k\to\infty} \int_D |\str{u_k+a_k}|^2 \le \frac{R+\int_\Sigma|\beta|d\cH^1}{2c_3}. 
$$
Hence, letting $D\nearrow \Int{A}\cup \substrate$ we get $u\in GSBD^2(\Ins{A};\R^2).$ 
Consequently, $(A,u)\in \admissible_m$ and $(B_k,u_k+a_k) \overset{\tau_\admissible}\to (A,u)$ as $k\to\infty.$ 

We observe that if $\substrate=\emptyset,$ the terms of the surface energy $\cS(A_k,u_k)$ related to $\Sigma$ disappears, and hence, using $\str{u_k +a_k} = \str{u_k}$ and property (a1),
$$
\cF(A_k,u_k) =\cF(B_k,u_k+a_k) + o(1), 
$$
where $o(1)\to0$ as $n\to\infty,$ and so we can define $D_n=B_n.$

{\it Step 2: Further modification of $\{B_k\}$.} Without loss of generality we assume $\fm<|\Omega|.$ It remains to control $J_{u_k+a_k}$ at $\Sigma$ since, as mentioned above, adding different rigid displacements to $u_k$ in  connected components of the substrate and  the free crystal whose closures intersect can result in a larger jump set $J_{u_k+a_k}$ than $J_{u_k}.$ Recall that by condition (a4) and (a6), 
\begin{equation}\label{outside_jump}
\lim\limits_{k\to\infty} \cH^1(J_{u_k+a_k}\setminus \p^*A) =0. 
\end{equation}
Hence, we need only to control $J_{u_k}\cap \p^*A.$ The idea here is to remove a ``small'' subset $R_k$ of $B_k$ containing almost all points $x\in \Sigma\cap \p^*A\cap (\p^*A_k\setminus J_{u_k})$ which in the limit becomes jump for $u$. In order to keep volume constraint, 
we will insert a square $U_k$ of volume $|R_k|$ in $\Omega\setminus A_k.$ This is possible since $|R_k|\to0$ and $|A_k|\le \fm$ for all $k.$ 

More precisely, we prove that for any $\delta\in(0,1/16),$ there exist $k_\delta>0$ and $(B_k^\delta,u_k^\delta)\in \admissible_m$ with $|B_k|=|B_k^\delta|$ such that
$(B_k^\delta,u_k^\delta)$ and 
\begin{equation}\label{energy_estimare}
\cF(B_k,u_k) +  4\delta(1+c_2)\cH^1(\Sigma) + 4c_2 \delta>  \cF(B_k^\delta,u_k+a_k) 
\end{equation}
for any $k>k_\delta.$ 

We divide the proof into four steps.

{\it Substep 2.1.}
By assumptions of the theorem, conditions (a2) and (a5) and Lemma \ref{lem:f_convergence} (b), 
$|A|\le \fm.$ Hence, we can choose a square $U\strictlyincluded \Omega\setminus\cl{A}.$ 
By (a2) and the definition of $\tau_\fA$-convergence, there is no loss of generality in assuming
$U\strictlyincluded \Omega\setminus {\cl{B_k}}$ for any $k.$ 
Let 
\begin{equation}\label{volume_sq}
\epsilon_0:=\sqrt{|U|}. 
\end{equation}
Without loss of generality, we assume $\epsilon_0\in(0,\frac12).$

First we observe that for any $\delta\in(0,1):$  
\begin{itemize}
\item[(b1)] since $\Sigma$ is Lipschitz, for $\cH^1$-a.e.\ $x\in\Sigma,$ there exist a unit normal $\nu_\Sigma(x)$ to $\Sigma$ and $\overline{r_x}>0$ such that for any $r\in(0,\overline{r_x}),$ $U_{r,\nu_\Sigma(x)}(x)\cap \Sigma$ can be represented as a graph of a Lipschitz function over tangent line $U_{r,\nu_\Sigma(x)}(x)\cap T_x$ at $x$ in the direction $\nu_\Sigma(x);$

\item[(b2)] since $\varphi$ is uniformly continuous, for any $x\in \cl{\Omega}$ there exists $\overline{r_x^\delta}>0$ such that for any $y\in U_{r_x^\delta,\nu_\Sigma(x)}(x)$ and $\xi\in \S^1,$
$$
|\varphi(y,\xi) - \varphi(x,\xi)|<\delta;
$$ 

\item[(b3)] since $\cH^1$-a.e.\ $x\in\Sigma$ is the Lebesgue point of $\beta,$ there exists $\overline{r_x^\delta}>0$ such that for any $r\in(0,\overline{r_x^\delta}),$
$$
\int_{U_{r,\nu_\Sigma(x)}(x)\cap\Sigma} |\beta(y) - \beta(x)|d\cH^1(y) <\delta\cH^1(U_{r,\nu_\Sigma(x)}(x)\cap\Sigma);
$$

\item[(b4)] for $\cH^1$-a.e.\ $x\in \Sigma\cap\p^*A$ one has 
$$
\theta^*(\Sigma,x) = \theta_*(\Sigma,x)=\theta^*(\Sigma\cap \p^*A,x) = \theta_*(\Sigma\cap \p^*A,x)=1,
$$ 
thus, there exists $r_x>0$ such that for any $r\in (0,r_x),$ 
$$
\cH^1(U_{r,\nu_\Sigma(x)}(x) \cap (\Sigma\setminus \p^*A )) <2\delta r;
$$

\item[(b5)] by Proposition \ref{prop:tangent_line_conv} applied with a connected component of $\p A,$ for a.e.\ $x\in \Sigma\cap\p^*A,$ $
\cl{U_{1,\nu_\Sigma(x)}(x)}\cap\sigma_{\rho,x}(\p A)  \overset {K}{\longrightarrow} \cl{U_{1,\nu_\Sigma(x)}(0)}\cap (T_x-x)$  as $\rho\to0,$ where $\sigma_{\rho,x}(y):=\frac{y-x}{\rho}$ is the blow up map and $T_x-x$ is the straight line passing through the origin and parallel to the tangent line $T_x$ of $\p^*A$ (and of $\Sigma$) at $x$.   Thus, there exists $\overline{r_x^\delta}>0$ such that for any $r\in(0,\overline{r_x^\delta}),$ $U_{r,\nu_\Sigma(x)}(x)\cap \{\dist(\cdot, T_x)>\delta r\}\subset \Int{A}\cup\substrate.$
\end{itemize}
By  \eqref{outside_jump}, there exists  $\bar k_\delta>0$ such that 
\begin{equation}\label{tiklanish}
\cH^1(J_{u_k+a_k}\setminus \p^*A)<\delta 
\end{equation}
for any $k>\bar k_\delta.$ 

Fix $\delta\in(0,\frac{1}{16})$ and let $t_\delta>0$ be such that 
\begin{equation}\label{epsilon00}
|\{x\in\Omega:\,\,\dist(x,\Sigma)<t_\delta\}|< \delta^2\epsilon_0^2, 
\end{equation}
where $\epsilon_0$ is given in \eqref{volume_sq}.
 
We now consider connected components $E_i$ and $S_j$ such that the associated rigid displacements are different. 
Let $I'$ be the set of all $i\in I$ such that $\cH^1(\p^*E_i\cap\p \substrate)>0$ and $\liminf\limits_{k\to\infty} \cH^1(\p E_i^k\cap\p \substrate)>0,$ and there exists a connected component $S_j$ of $\substrate$ such that $u_k+b_k^i\to u$ a.e.\ in $E_i$ and $|u_k+b_k^i|\to+\infty$ a.e.\ in $S_j$ for the associated sequence $\{b_k^i\}$ of rigid displacements in $E_i.$ 

Set 
$$
L:=\Sigma\cap \bigcup\limits_{i\in I'} \p^*E_i.
$$
Note that $\cl{L}\subset \Sigma$ and by (b1)-(b5),
for a.e.\ $x\in \cl{L}$ for which $\nu_\Sigma(x)$ and $\nu_A(x)$ exist and $\nu_\Sigma(x)=\nu_A(x)$ there is 
\begin{equation}\label{def_ar_x}
r_x:=r_x^\delta\in(0,\frac{t_\delta}{8}) 
\end{equation}
such that  properties (b1)-(b5) holds with $x$ and $r=r_x.$ Note that for any such $x:$
\begin{itemize}
\item[(c1)] since $B_k\overset{\tau_\fA}{\to} A,$ by property (b5), there  exists $\overline{k_x^\delta}>\bar k_\delta$ such that 
$U_{r,\nu_\Sigma(x)}(x)\cap \{\dist(\cdot, T_x)>\delta r\}\subset \Int{B_k}\cup\substrate$ for any $k>\overline{k_x^\delta};$
 
\item[(c2)] by Proposition \ref{prop:jump_estimate}  applied with $u_k+a_k^i,$ there exists $\overline{k_x^\delta}>\bar k_\delta$ such that
\begin{align} \label{before_blow_up12}
& \int_{U_{r,\nu_0}(x)\cap \Omega \cap\p^*A_k} \varphi(y,\nu_{A_k})d\cH^1  
+2\int_{U_{r,\nu_0}(x) \cap \Omega  \cap (A_k^{(0)} \cup A_k^{(1)}) \cap\p A_k}
\varphi(y,\nu_{A_k})d\cH^1\nonumber \\  
&+ \int_{U_{r,\nu_0}(x) \cap \Sigma \cap A_k^{(0)}\cap \p A_k} \big(\varphi(y,\nu_\Sigma)
+ \beta \big)d\cH^1\nonumber \\
& +  \int_{U_{r,\nu_0}(x)\cap \Sigma \cap \p^*A_k\setminus J_{u_k}} \beta d\cH^1  +
\int_{U_{r,\nu_0}(x) \cap J_{u_k}} \varphi(y,\nu_\Sigma)\,d\cH^1 \no\\ 
&\ge \frac{1}{1+\frac{\delta}{c_2}}
\int_{U_{r,\nu_0}(x) \cap \Sigma\cap \p^*A} \varphi(y,\nu_\Sigma) \,d\cH^1 - \delta\,\int_{U_{r,\nu_0}(x) \cap \Sigma}\varphi(y,\nu_\Sigma)d\cH^1. 
\end{align} 
for any $k>\overline{k_x^\delta},$ where $r:=r_x$ and $\nu_0:=\nu_\Sigma(x).$
\end{itemize}

{\it Substep 2.2.}
Let $x\in L$ be with properties (c1)-(c2) and let $U_r:=U_{r,\nu_\Sigma(x)}(x)$ and $Q_x\subset \Omega\cap $ be the open set whose boundary consists of  $\Gamma_1:=U_r\cap \Sigma,$ two segments  $\Gamma_2,\Gamma_3\subset \p U_r$ of length at most $2\delta r,$ parallel to $\nu_\Sigma(x),$ and the segment $\Gamma_4:= \Omega\cap U_r \cap \{\dist(\cdot,T_x)=\delta\}$ of length $2r.$ Given $k>k_x^\delta$ let 
$$
\hat B_k^\delta:=B_k\setminus Q_x)\cup (\cl{\Omega}\cap \p U_r).
$$ 
Clearly, $(\hat B_k^\delta,u_k+a_k)\in\fA_m.$ We claim that 
\begin{equation}\label{shakar_bollarda_bu}
\cS(B_k,u_k) \ge \cS(\hat B_k^\delta,u_k) -4\delta(1+c_2)\cH^1(U_r\cap \Sigma). 
\end{equation}
Indeed, without loss of generality, we assume that $x=0$ and $\nu_\Sigma(x)={\bf e_2}.$  By the anisotropic minimality of segments, 
\begin{equation}\label{asedadds}
\int_{\Gamma_1} \varphi(0,\nu_\Sigma) d\cH^1 + \int_{\Gamma_2\cup \Gamma_3} \varphi(0,{\bf e_2}) d\cH^1 \ge \int_{\Gamma_4} \varphi(0,{\bf e_2})d\cH^1. 
\end{equation}
Since $\cH^1(\Gamma_1)\ge 2r=\cH^1(\Gamma_4)$ and $\cH^1(\Gamma_2),\cH^1(\Gamma_3)\le 2\delta r$ and, also by property (b2),  we have 
\begin{equation}\label{adafaev0}
\int_{\Gamma_1} \varphi(0,\nu_\Sigma) d\cH^1 \le \int_{\Gamma_1} \varphi(y,\nu_\Sigma) d\cH^1 + \delta\cH^1(\Gamma_1), 
\end{equation}
and 
\begin{equation}\label{adafaev}
\int_{\Gamma_2\cup \Gamma_3} \varphi(0,{\bf e_2}) d\cH^1 \le 2\delta \varphi(0,{\bf e_2})  \cH^1(\Gamma_1)  
\end{equation}
and 
\begin{equation}\label{adafaev2}
\int_{\Gamma_4} \varphi(0,{\bf e_2})d\cH^1 \ge \int_{\Gamma_4} \varphi(y,{\bf e_2})d\cH^1 - \delta  \cH^1(\Gamma_1), 
\end{equation}
hence, using \eqref{finsler_norm} in \eqref{adafaev}, from \eqref{asedadds} and \eqref{adafaev0}-\eqref{adafaev2} we obtain
\begin{equation}\label{sigma_vs_bound}
 \int_{\Gamma_4} \varphi(y,{\bf e_2})d\cH^1 \le  \int_{\Gamma_1} \varphi(y,\nu_\Sigma) d\cH^1 + 2\delta (1+c_2) \cH^1(\Gamma_1). 
\end{equation}
Denoting by $\alpha_k$ the left-hand side of \eqref{before_blow_up12}, by condition (a1) and the definition of $\hat B_k^\delta,$ we have 
\begin{align*}
\cS(B_k,u_k) - \cS(\hat B_k^\delta,u_k) \ge& \alpha_k -2\int_{\p B_k\setminus\p A_k} \varphi(y,\nu_{B_k})d\cH^1  \\
- & \int_{\Gamma_2\cup\Gamma_3\cup\Gamma_4} \phi(y,\nu_{B_k^\delta})d\cH^1 - 
\int_{(B_k^\delta)^{(0)}\cap (\Gamma_2\cup\Gamma_3)} \phi(y,\nu_{B_k^\delta})d\cH^1, 
\end{align*}
thus, using $\Gamma_1=U_r\cap\Sigma,$ from \eqref{before_blow_up12}, \eqref{sigma_vs_bound}, \eqref{adafaev}, \eqref{tiklanish} and \eqref{finsler_norm} 
we obtain
\begin{align*}
\cS(B_k,u_k) - \cS(\hat B_k^\delta,u_k) \ge  \frac{1}{1+\frac{\delta}{c_2}} 
\int_{U_r\cap\Sigma\cap\p^*A} \varphi(y,\nu_\Sigma)d\cH^1
- \int_{U_r\cap\Sigma} \varphi(y,\nu_\Sigma) d\cH^1\\
- 3\delta (1+c_2) \cH^1(U_r\cap\Sigma).
\end{align*}
In the last inequality using $\frac{1}{1+\frac{\delta}{c_2}}\ge 1 - \frac{\delta}{c_2}$ and  inequality \eqref{finsler_norm} once more we deduce 
\begin{equation}\label{agshetr}
\cS(B_k,u_k) \ge\cS(\hat B_k^\delta,u_k) 
- \int_{U_r\cap(\Sigma\setminus \p^*A)} \varphi(y,\nu_\Sigma)d\cH^1
- \delta (4+3c_2) \cH^1(U_r\cap\Sigma). 
\end{equation}
Now condition (b4) and \eqref{finsler_norm} and  the inequality $ \cH^1(\Gamma_1)\ge 2r $ imply 
$$
\int_{U_r\cap(\Sigma\setminus \p^*A)} \varphi(y,\nu_\Sigma)d\cH^1\le c_2\cH^1(U_r\cap(\Sigma\setminus \p^*A)) \le 2\delta c_2 r \le \delta c_2 \cH^1(U_r\cap \Sigma).
$$
Inserting this in \eqref{agshetr} we get  \eqref{shakar_bollarda_bu}. 

{\it Substep 2.3.} Now we choose finitely many points $x_1\ldots,x_N\in \cl{L}$ with corresponding $r_1,\ldots,r_N$ satisfying (b1)-(b5) and (c1)-(c2) such that the squares $\{U_{r_j,\nu_\Sigma(x_j)}(x_j)\}_{j=1}^N$ are pairwise disjoint and 
\begin{equation}\label{asgard}
\cH^1\Big(L \setminus \bigcup\limits_{j=1}^N U_{r_j,\nu_\Sigma(x_j)}(x_j)\Big) <\delta.
\end{equation}
Recalling the definition of $k_\delta^x$ in condition (c2) and the definition $\bar k_\delta$ in \eqref{tiklanish}, let $k_\delta:=\max\{\bar k_\delta,\overline{k_\delta^{x_1}},\ldots,\overline{k_\delta^{x_N}}\}$ and let $Q_{x_j}\subset \Omega\cap U_{r_j,\nu_\Sigma(x_j)}(x_j)$ be as in Substep 2.2. Set 
$$
\tilde B_k^\delta: = \Big(B_k\setminus \bigcup\limits_{j=1}^N Q_{x_j}\Big) \cup \bigcup\limits_{j=1}^N(\cl{\Omega}\cap \p U_{r_j,\nu_\Sigma(x_j)}(x_j)). 
$$
Then, as in the proof of \eqref{shakar_bollarda_bu},
$$
\cS(B_k,u_k) - \cS(\tilde B_k^\delta,u_k) \ge -4\delta(1+c_2)\sum\limits_{j=1}^N \cH^1(U_{r_j,\nu_\Sigma(x_j)}(x_j)\cap \Sigma) 
$$
so that by the pairwise disjointness of $\{ U_{r_j,\nu_\Sigma(x_j)}(x_j)\},$ 
\begin{equation}\label{surface_estimare}
\cS(B_k,u_k) - \cS(\tilde B_k^\delta,u_k) \ge -4\delta(1+c_2)\cH^1(\Sigma).  
\end{equation}
Recalling \eqref{asgard} and \eqref{tiklanish} and using \eqref{finsler_norm} we estimate
\begin{align*}
\cS(\tilde B_k^\delta,u_k+a_k) &- \cS(\tilde B_k^\delta,u_k)\\ 
\le &\int_{L\setminus \bigcup_j U_{r_j,\nu_\Sigma(x_j)}(x_j)} \varphi(y,\nu_\Sigma)d\cH^1 + \int_{J_{u_k+a_k}\setminus \p^*A} \varphi(y,\nu_\Sigma)d\cH^1<2c_2\delta,
\end{align*}
hence, from \eqref{surface_estimare} we get 
\begin{equation}\label{surface_estimare1}
\cS(B_k,u_k) - \cS(\tilde B_k^\delta,u_k+a_k) \ge -4\delta(1+c_2)\cH^1(\Sigma) - 2c_2\delta.  
\end{equation}
On the other hand, since $\Int{\tilde B_k^\delta} \subset \Int{B_k}$ and $\str{u_k} = \str{u_k+a_k},$
\begin{equation}\label{elastic_estimare1}
\cW(B_k,u_k) \ge  \cW(\tilde B_k^\delta,u_k+a_k).  
\end{equation}
From \eqref{surface_estimare1} and \eqref{elastic_estimare1} we deduce 
$$
\cF(B_k,u_k) \ge \cF(\tilde B_k^\delta,u_k+a_k) -4\delta(1+c_2)\cH^1(\Sigma) - 2c_2\delta.  
$$
However, by construction, $|B_k|\ge |\tilde B_k^\delta|$ since $\tilde B_k^\delta\subset B_k\cup \bigcup\limits_{j=1}^N(\cl{\Omega}\cap \p U_{r_j,\nu_\Sigma(x_j)}(x_j)).$ Thus, $R_k:=B_k\setminus \tilde B_k^\delta$ satisfies $|R_k|=|B_k\Delta \tilde B_k^\delta|.$
Since $\bigcup_jQ_{x_j} \subset \Omega\cap \{\dist(\cdot,\Sigma)<\frac{t_\delta}{8}\},$ thus, by \eqref{epsilon00},
$|R_k|<\delta^2\epsilon_0.$ Hence, we choose a square $U_k\subset U$ (see \eqref{volume_sq}) such that $|U_k|=|R_k|.$  For $k>k_\delta$ set 
$$
B_k^\delta:= \tilde B_k^\delta \cup U_k.
$$
In order not to increase the number of connected components of $B_k^\delta$, we translate $U_k$ in $\Omega\setminus{\cl{B_k}}$ until it touches to $\p \tilde B_k^\delta.$ 
Define 
$$
u_k^\delta:=u_k\chi_{\tilde B_k^\delta} + u_0\chi_{U_k}. 
$$
Then $\{(B_k^\delta,u_k^\delta)\}\subset\admissible_m$ and for any $k>k_\delta$ by \eqref{surface_estimare1} and \eqref{elastic_estimare1} 
$$
\cF(B_k,u_k)\ge \cF(B_k^\delta,u_k^\delta) - \cS(U_k,u_0)  -4\delta(1+c_2)\cH^1(\Sigma) - 2c_2\delta.   
$$
By the choice of $U_k,$ its sidelength is less that $\delta\epsilon_0,$ hence, using $\epsilon_0<\frac12$ and \eqref{finsler_norm}, 
$
\cS(U_k,u_0) \le 2c_2\delta 
$
so that 
$$
\cF(B_k,u_k)\ge \cF(B_k^\delta,u_k^\delta)  -4\delta(1+c_2)\cH^1(\Sigma) - 4c_2 \delta.   
$$

{\it Step 3: Construction of $(D_n,v_n).$} Notice that the sequence $\{(B_k^\delta,u_k^\delta)\}$ in general does not need to satisfy $B_k^\delta\overset{\tau_\fA}\to A,$ since we removed ``something'' from $B_k$ and added a square $U_k.$ To overcome this problem, we take $\delta=\delta_n:=\frac{1}{16n}$ and 
$(D_n,v_n):=(B_{k_n}^{\delta_n},u_{k_n}^{\delta_n}),$ where $k_n:=k_{{\delta_n}}+1,$ and there is no loss of generality in assuming $n\mapsto k_n$ is increasing. Denote $r_j^n:=r_{x_j}^{\delta_n},$ where the latter is defined in Substep 2.3 and notice that by \eqref{epsilon00} and \eqref{def_ar_x}  $r_j^n\to 0$ as $n\to\infty$ In particular, $\p D_n\overset{K}\to \p A$ as $n\to\infty.$ Thus, $D_n\overset{\tau_\fA}{\to}A.$ Since $|D_n\Delta A|\to0,$ $v_n\to u$ a.e.\ in $A\cup\substrate.$
By \eqref{energy_estimare}
\begin{equation}\label{hsgtea}
\cF(B_{k_n},u_{k_n})  + \frac{(1+c_2)\cH^1(\Sigma) + c_2}{4n} \ge \cF(D_n,u_n), 
\end{equation}
thus \eqref{liminfga_ut_eshmat} follows from \eqref{hsgtea} and \eqref{first_attempt_b}.
\end{proof}

\black

\section{Lower semicontinuity}\label{sec:lsc_results}

In this section we consider more general surface energies.
For every $A\in \fA$ and  
$J_A\in \cJ_A,$ where
$$
\cJ_A:= \big\{J\subseteq\Sigma\cap \overline{\p^* A}:\,\,\text{$J$ 
is $\cH^1$-measurable} \big\}
$$ 
is the collection of all possible delaminations on $\Sigma,$
we define 
\begin{align*}
\cS(A,J_A; \varphi, g):= \int_{\Omega \cap\p^*A} \varphi(x,\nu_A)d\cH^1 
+2\int_{\Omega \cap (A^{(1)}\cup A^{(0)})\cap\p A} \varphi(x,\nu_A)d\cH^1
\nonumber\\  
 +  \int_{\Sigma\setminus \p A} g(x,0)d\cH^1 
+ \int_{\Sigma\cap A^{(0)}\cap \p A} \big(\varphi(x,\nu_\Sigma)
+ g(x,1)\big)d\cH^1\nonumber\\
+ \int_{\Sigma\cap \p^*A\setminus J_A} g(x,1)d\cH^1 +
\int_{J_A} \big(\varphi(x,\nu_\Sigma) + g(x,0)  \big)\,d\cH^1,
\end{align*}
where $g:\Sigma\times \{0,1\}\to\R$ is a Borel function. 
We remark  that
$\cS(A,u) = \cS(A,J_u;\varphi,g)$ with 
$g(x,s)=\beta(x)s$ and $J_A = J_u.$

The main result of this section is the following.

\begin{proposition}[\textbf{Lower-semicontinuity of $\cS$}]
\label{prop:lsc_surface_energy}
Suppose that 
$g:\Sigma\times \{0,1\}\to\R$ is a Borel function 
such that $g(\cdot,s)\in L^1(\Sigma)$ for $s=0,1$ and
\begin{equation}\label{beta_condition}
|g(x,1) - g(x,0)| \le \varphi(x,\nu_\Omega (x))  
\end{equation}
for $\cH^1$-a.e.\ $x\in\Sigma.$
Let $A_k\in \fA_m,$ $J_{A_k}\in\cJ_{A_k},$
$A\in\fA_m$ and $J_A\in \cJ_A$ be such that 
\begin{itemize}
\item[\rm (a)] $A_k\overset{\tau_{\fA}}{\to} A$  as $k\to\infty;$

\item[\rm (b)] $\cH^1$-a.e.\ $x\in J_A$ there exist  
$r=r_x>0,$ $w,w_k\in GSBD^2(B_r(x);\R^2)$  
and relatively open subset $L_k$ of $\Sigma$  with $\cH^1(L_k)<1/k$
for which
\begin{equation}\label{condition_J_A}
\begin{cases}
\text{$J_{w_k}\subset B_r(x)\cap (J_{A_k}\cup (\Omega\cap \p A_k)\cup L_k)$
and $J_A\subset J_w;$}\\[1mm]
\text{$w_k\to w$ a.e.\ in $B_r(x)$ 
as $k\to\infty;$}\\[1mm]
\sup\limits_{k\ge1} \int_{B_r(x)}|\str{w_k}|^2dx<\infty.
\end{cases} 
\end{equation}
\end{itemize}
Then 
\begin{equation}\label{final_call_lsc}
\liminf\limits_{k\to\infty} \cS(A_k,J_{A_k}; \varphi, g) 
\ge \cS(A,J_A; \varphi, g). 
\end{equation}
\end{proposition}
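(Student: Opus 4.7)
The plan is to apply the blow-up method within a Radon-measure framework, working on each of the six pieces appearing in the definition of $\cS(A,J_A;\varphi,g)$ separately. First I associate to each $(A_k,J_{A_k})$ the signed Radon measure
\begin{align*}
\mu_k:=\,&\varphi(\cdot,\nu_{A_k})\,\cH^1\res(\Omega\cap\partial^*A_k)+2\varphi(\cdot,\nu_{A_k})\,\cH^1\res(\Omega\cap(A_k^{(0)}\cup A_k^{(1)})\cap\partial A_k)\\
&+g(\cdot,0)\,\cH^1\res(\Sigma\setminus\partial A_k)+g(\cdot,1)\,\cH^1\res(\Sigma\cap\partial^*A_k\setminus J_{A_k})\\
&+(\varphi(\cdot,\nu_\Sigma)+g(\cdot,1))\,\cH^1\res(\Sigma\cap A_k^{(0)}\cap\partial A_k)+(\varphi(\cdot,\nu_\Sigma)+g(\cdot,0))\,\cH^1\res J_{A_k},
\end{align*}
so that $\mu_k(\R^2)=\cS(A_k,J_{A_k};\varphi,g)$, and set $\mu_\Sigma:=\varphi(\cdot,\nu_\Sigma)\,\cH^1\res\Sigma$. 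By \eqref{beta_condition} and \eqref{finsler_norm} the measures $\mu_k+\mu_\Sigma$ are non-negative, and their total masses are uniformly bounded since $\sup_k\cH^1(\partial A_k)<\infty$ by Definition \ref{def:film_convergence}. Extracting a subsequence I may assume $\mu_k+\mu_\Sigma\wk \mu_0$ weakly as Radon measures on $\R^2$ for some non-negative $\mu_0$, and then $\liminf_k\cS(A_k,J_{A_k};\varphi,g)\ge \mu_0(\R^2)-\mu_\Sigma(\R^2)$ follows from the lower-semicontinuity of the total mass. Denoting by $\mu_A$ the analogous measure associated to $(A,J_A)$, whose total mass equals $\cS(A,J_A;\varphi,g)$, the inequality \eqref{final_call_lsc} will then follow once $\mu_0\ge \mu_A+\mu_\Sigma$ is established as Borel measures.

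By the Radon--Nikodym theorem this reduces to a pointwise blow-up estimate of $d\mu_0/d(\cH^1\res E)$ from below on each of the six disjoint pieces $\Omega\cap\partial^*A$, $\Omega\cap A^{(1)}\cap\partial A$, $\Omega\cap A^{(0)}\cap\partial A$, $\Sigma\cap\partial^*A\setminus J_A$, $\Sigma\cap A^{(0)}\cap\partial A$ and $J_A$, together with $\mu_0\ge \mu_\Sigma$ on the residual $\Sigma\setminus\partial A$. On $\Omega\cap\partial^*A$ and $\Sigma\cap\partial^*A\setminus J_A$ the bound follows from \cite[Lemma 3.8]{ADT:2017}, applied after noting that $A_k\overset{\tau_\fA}{\to}A$ and Remark \ref{rem:kuratiwski_and_sdistance} give Kuratowski convergence of $\partial A_k$ to $\partial A$. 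For the interior-crack term on $\Omega\cap A^{(1)}\cap\partial A$ and the exterior-filament term on $\Omega\cap A^{(0)}\cap\partial A$ I would reduce, by rectifiability and a Besicovitch-type covering argument together with Proposition \ref{prop:tangent_line_conv}, to the case where the crack or filament sits on a flat segment, and then adapt the slicing argument of \cite[Theorem 2.1]{Gi:2002}: for $\cH^1$-a.e.\ line orthogonal to the segment, the line must meet $\partial A_k$ in at least two points for all sufficiently large $k$, producing the factor $2$ in the density bound. The wetting-layer bound on $\Sigma\cap A^{(0)}\cap\partial A$ is obtained by the same flat-slicing approach, with extra bookkeeping because $\Sigma$ is only Lipschitz and because of the extra $g(\cdot,1)$ contribution.

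The main obstacle, and the reason for hypothesis (b), is the estimate on the delamination region $J_A$. At $\cH^1$-a.e.\ $x\in J_A$ the functions $w_k\to w$ in $GSBD^2(B_r(x);\R^2)$ supplied by \eqref{condition_J_A} allow me to run, at the blow-up scale, the same slicing and projection argument used in the proof of Lemma \ref{lem:jump_estaaa}, based on the $GSBD$ lower-semicontinuity of \cite[Theorem 1.1]{ChC:2018jems}: since $J_A\subset J_w$, $J_{w_k}\subset J_{A_k}\cup(\Omega\cap\partial A_k)\cup L_k$ with $\cH^1(L_k)\to 0$, and $\sup_k\int_{B_r(x)}|\str{w_k}|^2\,dx<\infty$, $\cH^1$-almost every slice transverse to $J_A$ meets $J_{w_k}$ in at least two points for large $k$, which forces
$$\frac{d\mu_0}{d(\cH^1\res J_A)}(x)\ \ge\ 2\varphi(x,\nu_\Sigma)+g(x,0),$$
matching the density of $\mu_A+\mu_\Sigma$ on $J_A$; here the doubling arises from the simultaneous contribution of $J_{A_k}$ and of the free boundary $\Omega\cap\partial^*A_k$ collapsing onto $\Sigma$, weighted by $\varphi(\cdot,\nu_{A_k})\to\varphi(\cdot,\nu_\Sigma)$. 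Combining the six blow-up bounds with the trivial inequality $\mu_0\ge \mu_\Sigma$ on $\Sigma\setminus\partial A$ yields $\mu_0\ge \mu_A+\mu_\Sigma$, and therefore \eqref{final_call_lsc}.
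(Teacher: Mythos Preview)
Your overall framework---associate Radon measures to $\cS(A_k,J_{A_k};\varphi,g)$, shift by $\mu_\Sigma$ to make them non-negative, pass to a weak* limit $\mu_0$, and establish $\mu_0\ge\mu_A+\mu_\Sigma$ via blow-up on each piece of $\partial A$---is exactly the paper's strategy (the paper encodes the shift by replacing $g$ with $g_+(x,s):=g(x,s)-g(x,0)+\varphi(x,\nu_\Sigma)$, which is the same thing). The estimates on $\Omega\cap\partial^*A$, $\Sigma\cap\partial^*A$, and $\Sigma\setminus\partial A$ are fine.

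There is, however, a genuine gap in your delamination argument. Your claim that ``$\cH^1$-almost every slice transverse to $J_A$ meets $J_{w_k}$ in at least two points'' is not justified and is in general false: $J_{w_k}$ may well be a single arc on $\Sigma$, intersected only once by a transverse line. The doubling you need does \emph{not} come from $J_{w_k}$ alone. The paper's mechanism (Lemma~\ref{lem:creation_of_delamination}) is different: one first replaces the full jump-and-boundary set $K_k=(\Omega_k\cap\partial E_k)\cup J_{E_k}\cup C_k$ by the union of closed convex hulls of its connected pieces, obtaining a set $\bigcup_i V_i^k$ whose boundary is crossed at least twice by almost every transverse line simply because each $V_i^k$ is convex with non-empty interior. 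One then redefines $v_k$ to be constant inside each $V_i^k$, so that $J_{v_k}=\bigcup_i\partial V_i^k$, and applies the $GSBD$ slicing machinery to $v_k$ (not to $w_k$). The convex-hull step is what converts the original energy---counted once on $\partial^*E_k$ and twice on cracks/filaments---into an honest ``boundary of open sets'' configuration where the two-point slicing is automatic; without it your argument does not close.

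A smaller point: for the exterior filaments and interior cracks in $\Omega$, your one-line claim that orthogonal slices meet $\partial A_k$ in at least two points also needs justification. The paper (Lemmas~\ref{lem:creation_filament}--\ref{lem:creation_crack}) does \emph{not} argue by transverse slicing; instead it exploits the bound $m$ on the number of connected components of $\partial A_k$, together with Lemma~\ref{lem:two_fold_connection}, to produce two disjoint paths across the blow-up square whose $\varphi$-length dominates that of the limiting segment. Your slicing heuristic can plausibly be made rigorous, but it is not immediate from Kuratowski convergence alone and the $m$-bound must enter somewhere.
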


We prove Proposition \ref{prop:lsc_surface_energy} 
using a blow-up around the points of the boundary of 
$A$.  Given $y_o\in\R^2$ and $\rho>0,$ the blow-up map 
$\sigma_{\rho,y_o}:\R^2\to\R^2$ is defined as
\begin{equation}\label{blow_up_map}
\sigma_{\rho,y_o}(y):=\frac{y - y_o}{\rho}. 
\end{equation}
When $y_o=0$ we write $\sigma_\rho$ instead of $\sigma_{\rho,0}.$
Given $\nu\in \S^1,$ 
$
U_{\rho,\nu}(x) 
$
is an open square of sidelength $2\rho>0$ centered at $x$
whose sides are either  perpendicular or parallel to $\nu;$ 
if $\nu = {\bf e_2}$ and $x=0,$
we write $U_{\rho,\nu}(0):=U_\rho=(-\rho,\rho)^2,$
$U_\rho^+=(-\rho,\rho)\times(0,\rho),$ and 
$I_\rho:=[-\rho,\rho]\times\{0\}.$ %
Observe that $\sigma_{\rho,x}(U_{\rho,\nu}(x))=U_{1,\nu}(0)$ and 
$\sigma_{\rho,x}(\cl{U_{\rho,\nu}(x)})=
\cl{U_{1,\nu}(0)}.$
We denote by $\pi$ the projection 
onto $x_1$-axis i.e., 
\begin{equation}\label{def_of_projection}
\pi(x)=(x_1,0). 
\end{equation}
The following auxiliary results will be used in the proof 
of Proposition \ref{prop:lsc_surface_energy}.
 
\begin{lemma}\label{lem:set_shrinks_to_line}
Let $U$ be any open square, $K\subset \cl{U}$ be a nonempty closed 
set and $E_k\subset U$ be  such that 
$\sdist(\cdot,\p E_k)\overset{k\to\infty} {\longrightarrow} \dist(\cdot,K)$ 
uniformly in $\cl{U}.$ 
 Then $E_k\overset{\cK}{\to} K$ 
as $k\to\infty.$
Analogously if $\sdist(\cdot,\p E_k) \overset{k\to\infty} {\longrightarrow}
-\dist(\cdot,K)$ 
uniformly in $\cl{U},$ then $\cl{U}\setminus E_k\overset{\cK}{\to} K$ 
as $k\to\infty.$ 
\end{lemma}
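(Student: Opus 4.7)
The plan is to verify the two inclusions $\limsup E_k \subset K$ and $K \subset \liminf E_k$ that characterize Kuratowski convergence, directly from the hypothesis and the definition of the signed distance. Throughout I will use that $\sdist(\cdot,\p E_k) \le 0$ on $E_k$, $\sdist(\cdot,\p E_k) = \dist(\cdot,E_k) \ge 0$ on $\R^2 \setminus E_k$, and $|\sdist(\cdot,\p E_k)| = \dist(\cdot,\p E_k)$.

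For the first assertion, suppose $x_{k_j} \in E_{k_j}$ with $x_{k_j} \to x \in \cl{U}$; then $\sdist(x_{k_j},\p E_{k_j}) \le 0$, and the uniform convergence together with the $1$-Lipschitz continuity of the signed distance functions lets me pass to the limit to obtain $\dist(x,K) \le 0$, hence $x \in K$. Conversely, for $x \in K$ one has $\sdist(x,\p E_k) \to 0$ and therefore $\dist(x,\p E_k) \to 0$, so there is $y_k \in \p E_k$ with $y_k \to x$; since $y_k \in \cl{E_k}$, I can pick $z_k \in E_k$ with $|z_k - y_k| < 1/k$, giving a sequence $z_k \to x$ with $z_k \in E_k$ for all $k$.

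For the second assertion the same scheme applies to $F_k := \cl{U}\setminus E_k$ after noting that $\sdist(\cdot,\p E_k) \ge 0$ on $F_k$. If $x_{k_j} \in F_{k_j}$ converges to $x$, then $\sdist(x_{k_j},\p E_{k_j}) \ge 0$ passes to the limit as $-\dist(x,K) \ge 0$, forcing $x \in K$. For the $\liminf$-inclusion, given $x \in K$, I again produce $y_k \in \p E_k$ with $y_k \to x$; the only subtlety is to find nearby $z_k$ inside $\cl{U}\setminus E_k$ rather than merely inside $\R^2 \setminus E_k$. If $y_k \in \p U$ then $y_k \notin E_k$ because $E_k \subset U$ and $U$ is open, so I may take $z_k = y_k$; if instead $y_k \in U$, a small ball around $y_k$ is contained in $U$ and intersects $\R^2 \setminus E_k$ since $y_k \in \cl{\R^2 \setminus E_k}$, yielding $z_k \in U \setminus E_k$ with $|z_k - y_k| < 1/k$.

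The argument is essentially bookkeeping with the signed distance and the Kuratowski $\liminf/\limsup$; the only mild obstacle is the boundary case $K \cap \p U$ in the second assertion, resolved by combining the openness of $U$ with the inclusion $E_k \subset U$.
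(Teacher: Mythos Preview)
Your proof is correct and follows essentially the same approach as the paper's: both verify the two Kuratowski inclusions directly from the sign structure of the signed distance. The only cosmetic differences are that the paper argues the $\liminf$-inclusion by contradiction (supposing $B_r(x)\cap E_k=\emptyset$ along a subsequence and deriving $\sdist(x,\p E_k)\ge r$) rather than by your explicit sequence construction via $y_k\in\p E_k$, and that the paper dismisses the second assertion as ``the same'' whereas you spell out the boundary case $y_k\in\p U$ carefully.
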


\begin{proof}
We prove only the first assertion, the second being the same.
If  $x_k\in E_k$ is such that $x_k\to x,$ then by assumption, 
$$
\dist(x,K) = \lim\limits_{k\to\infty} \sdist(x_k,\p E_k) \le0
$$
so that $x\in K.$  On the other hand, given $x\in K$ suppose 
that there exists $r>0$ such that $B_r(x)\cap E_k=\emptyset$
for infinitely many $k.$
Then for such $k,$ $\sdist(x,\p E_k) = \dist(x,E_k)\ge r>0,$  
which contradicts to the assumption.
\end{proof}

In the next lemma we observe that the endpoints of every curve $\Gamma$ contained in the boundary of any bounded set $A$ with connected boundary  are still arcwise connected if we remove the boundary of $\Int{\cl{A}}$ belonging to $\Gamma.$ 

\begin{lemma}\label{lem:two_fold_connection}
Let $A\subset\R^2$ be a bounded set 
such that $\p A$ is connected and has finite $\cH^1$ measure.
Suppose that $x,y\in \p A$ are such that $x\ne y$ and  
$\Gamma\subset\p A$ is a curve connecting $x$
to $y.$ Then there exists a curve   
$\Gamma'\subset \overline{\p A\setminus (\Gamma\cap \p \Int{\overline{A}})}$
connecting $x$ to $y$. 
\end{lemma}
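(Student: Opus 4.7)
Set $R := \Gamma\cap\p\Int{\overline A}$ and $K := \overline{\p A\setminus R}$. The strategy is to show that $K$ contains $x$ and $y$ and is connected; the conclusion then follows because $K$ is a compact connected subset of $\p A$ with $\cH^1(K)<\infty$, and every such planar continuum is arcwise connected (by the classical theory of $\cH^1$-finite continua, already invoked in the paper via Golab-type arguments).

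For the membership $\{x,y\}\subset K$, I would reason locally at each endpoint. Either $x\notin R$, in which case $x\in\p A\setminus R\subset K$ trivially, or $x\in\p\Int{\overline A}$, which makes $x$ simultaneously a regular boundary point and an endpoint of $\Gamma$. In the latter case, $B_r(x)$ meets both $\Int{\overline A}$ and $\R^2\setminus\overline A$ for every $r>0$, so the two-sided nature of $\p\Int{\overline A}$ at $x$ combined with $\Gamma$ terminating (rather than passing through) at $x$ forces $\p A$ to contain points arbitrarily close to $x$ that are not on $\Gamma$. These points lie in $\p A\setminus R$, giving $x\in K$; the same argument handles $y$.

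The main work is the connectedness of $K$. I would argue by contradiction: if $K=K_1\sqcup K_2$ is a separation with $x\in K_1$ and $y\in K_2$, select a bounded open $V\supset K_1$ with $\overline V\cap K_2=\emptyset$ and $\p V\cap K=\emptyset$. Then $\p V\cap \p A\subset R\subset \Gamma$, and since $\Gamma$ joins $x\in V$ to $y\notin V$, there is a crossing point $p\in\Gamma\cap \p V\subset R$. Since $p\in\p\Int{\overline A}$, the open complement $\R^2\setminus\overline A$ has a connected component $W_p$ whose boundary contains $p$, with $\p W_p\subset \p A$. The idea is that $\p W_p$, being the boundary of a planar open region, is locally two-sided at $p$; the ``other side'' of $\p W_p$ near $p$ furnishes points of $\p A\setminus\Gamma\subset K$ on both sides of $p$ along $\p V$, producing a continuum in $K$ that crosses $\p V$ and contradicts $\p V\cap K=\emptyset$.

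The delicate point is the last step: extracting from the two-sided geometry of $\p\Int{\overline A}$ at $p$ an actual continuum in $K$ that crosses $\p V$. The plan is to choose $V$ so that $\p V$ is a smooth Jordan curve meeting $\p A$ in finitely many points (using $\cH^1(\p A)<\infty$), and then exploit the Jordan curve theorem to argue that any neighborhood of $p$ within $\overline{W_p}$ contains arcs of $\p W_p$ reaching $\p V$ while avoiding $R\subset\Gamma$, thereby linking $K_1$ to $K_2$ across $\p V$ through $K$.
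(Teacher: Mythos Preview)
Your overall plan coincides with the paper's: reduce to showing that $x$ and $y$ lie in the same connected component of $K=\overline{\partial A\setminus R}$ (with $R=\Gamma\cap\partial G$, $G=\Int{\overline A}$), then invoke arcwise connectedness of planar continua of finite length. Your argument that $x,y\in K$ is a detail the paper leaves implicit, and your reasoning there is essentially correct.

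The genuine gap is in your contradiction step. Once you fix the separation $K=K_1\sqcup K_2$ and the open set $V$ with $\partial V\cap K=\emptyset$, you correctly observe that $\partial V\cap\partial A\subset R$. But this forces much more: since $\partial A\setminus R\subset K$, in the whole open region $\R^2\setminus K$ one has $\partial A=\Gamma=\partial G=R$ (all four sets coincide there). In particular, every point of $\partial W_p$ near $\partial V$ lies in $R$. So there are no ``arcs of $\partial W_p$ reaching $\partial V$ while avoiding $R$''; your proposed continuum in $K$ crossing $\partial V$ cannot exist by construction, not because of a contradiction, but because the premise fails. The local picture at a single crossing point $p$ simply does not furnish points of $K$ near $\partial V$.

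The paper replaces this local attempt by a global parity argument. With two open sets $P\supset K_1$, $Q\supset K_2$ having disjoint closures, the identity $\partial G\setminus\overline{P\cup Q}=\Gamma\setminus\overline{P\cup Q}$ holds. The connected components $L_1,\dots,L_n$ of $\Gamma\setminus\overline{P\cup Q}$ that join $\overline P$ to $\overline Q$ are finitely many (each has length at least $\dist(\overline P,\overline Q)>0$) and, because $\Gamma$ is a Jordan arc from $x\in P$ to $y\in Q$, their number $n$ is odd. Each $L_i\subset\partial G$ is genuinely two-sided (one side in $\Int A$, the other in $\R^2\setminus\overline A$), and with $n$ odd this is incompatible with $\Int A$ being bounded: following the two-coloring of the regions of $(\R^2\setminus(\overline P\cup\overline Q))\setminus\Gamma$ around $\partial P$, one must switch colors an odd number of times along a closed loop, which is impossible. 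That parity obstruction is what your single-point argument misses.
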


\begin{proof} 
Without loss of generality we assume  $G:=\Int{\overline{A}}\ne \emptyset,$ otherwise we simply take $\Gamma'=\Gamma.$   Note that 
\begin{equation}\label{ftarets}
\p G = \big\{x\in \p A:\,\, B_r(x)\cap \Int{A},B_r(x)\setminus 
\overline{A} \ne \emptyset\,\,\text{for every $r>0$}\big\}. 
\end{equation}
Since connected compact sets of finite length are arcwise connected (see Proposition \ref{prop:rectifiable_sets}),
it suffices to show that $x$ and $y$ belong to the same connected 
component of $\overline{\p A\setminus (\Gamma\cap \p G)}.$  \black
Suppose that there exist  two open sets $P,Q\subset\R^2$
with disjoint closures such that 
\begin{equation}\label{eq:bhsdf}
\overline{\p A\setminus (\Gamma\cap \p G)} = 
(P\cap \overline{\p A\setminus (\Gamma\cap \p G)}) \cup 
(Q\cap \overline{\p A\setminus (\Gamma\cap \p G)}), 
\end{equation}
where $x\in P\cap \overline{\p A\setminus (\Gamma\cap \p G)}$
and $y\in Q\cap \overline{\p A\setminus (\Gamma\cap \p G)}.$
Then $\Gamma\setminus \overline{P\cup Q}\ne\emptyset$ and
\begin{equation}\label{sgerta}
\Gamma\setminus \overline{P\cup Q} = \p G \setminus \overline{P\cup Q}.
\end{equation}
Since $\overline{P}\cap \overline{Q}=\emptyset$ and $\cH^1(\Gamma)<\infty,$ the number of connected components $\{L_i\}_{i=1}^n$ of $\Gamma\setminus\cl{P\cup Q}$ connecting both $P$ and $Q$ is at most finite. Moreover, since $\Gamma$ has no self-intersections (see Subsection \ref{subsec:rectifiable} for the definition of the curve in our setting) and the endpoints of $\Gamma$ belong to $P$ and $Q,$ respectively, $n$ must be odd. However, by \eqref{sgerta}  $L_i\subset \p G,$ and hence, by \eqref{ftarets},
every neighborhood of $L_i$ contains points belonging to both $\Int{A}$ and $\R^2\setminus \cl{A}.$ We reached a contradiction since in this case $\Int{A}$ would be unbounded. 
\end{proof}

Notice that if $A\in \fA_m,$ then $\overline{\p^* A} = \p A^{(1)} =\p \Int{\overline{A}}.$

\begin{lemma}[\textbf{Creation of external filament energy}]
\label{lem:creation_filament}
Let $\phi$ be a norm in $\R^2$ satisfying
\begin{equation}\label{phi_cond1}
c_1\le \phi(\nu) \le c_2,\qquad \nu\in\S^1, 
\end{equation}
for some $c_2\ge c_1>0,$
and
let $\{E_k\}$ be a sequence of subsets of $U_1$ 
such that 
\begin{itemize}
\item[\rm (a)] $E_k \overset{\cK}\to I_1$
as $k\to\infty;$

\item[\rm (b)] there exists $m_o\in\N_0$ such that the number 
of connected components
of $\p E_k$ lying strictly inside $U_1$ does not exceed $m_o.$
\end{itemize}
Then for every $\delta\in (0,1)$ 
there exists $k_\delta>1$
such that for any $k>k_\delta,$
\begin{align}
 \label{step1ning_claimi}
\int_{U_1\cap \p^* E_k} \phi(\nu_{E_k})\,d\cH^1 
+2 \int_{U_1\cap (E_k^{(1)}\cup E_k^{(0)})\cap \p E_k}   \phi(\nu_{E_k})\,d\cH^1 
\ge 
2\int_{I_1}\phi({\bf e_2})\,d\cH^1- \delta. 
\end{align}

\end{lemma}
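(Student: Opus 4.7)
The plan is to follow the template of the proof of Lemma~\ref{lem:jump_estaaa}: cover the ``defective'' part of $\p E_k$ by $C^1$-arcs, thicken them into thin Lipschitz neighbourhoods, build an auxiliary $GSBD^2$ function with a controlled jump set, and apply the anisotropic Mucci--Chambolle slicing inequality in the vertical direction ${\bf e_2}$ to extract the factor $2$ appearing on the right-hand side of \eqref{step1ning_claimi}.

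First I would use assumption (a) together with Lemma~\ref{lem:set_shrinks_to_line} to confine $E_k$ to an arbitrarily thin horizontal strip around $I_1$: for every $\eta>0$ one has $E_k \subset (-1,1)\times(-\eta,\eta)$ for all $k$ sufficiently large, and the Kuratowski convergence also yields $|\pi_1(E_k)| \to 2$, where $\pi_1(x_1,x_2) := x_1$. Exploiting the $\cH^1$-rectifiability of $\p E_k$, I would cover $(E_k^{(1)} \cup E_k^{(0)}) \cap \p E_k$ by countably many $C^1$-arcs $\{\Gamma_i^k\}_i \subset U_1$ with $\sum_i \int_{\Gamma_i^k} \phi(\nu)\,d\cH^1$ approximating the filament/crack contribution to within $\delta/4$, and enclose each $\Gamma_i^k$ in a thin Lipschitz open set $V_i^k$ satisfying $|V_i^k| \le 2^{-i-k}$ and $\int_{\p V_i^k} \phi(\nu)\,d\cH^1 \le 2 \int_{\Gamma_i^k} \phi(\nu)\,d\cH^1 + \delta\, 2^{-i-2}$, using that the boundary of a thin tube around an arc has $\phi$-length asymptotic to twice the $\phi$-length of the arc itself. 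Next, define the piecewise-constant auxiliary $w_k := \zeta_0\, \chi_{U_1 \setminus \bigcup_i V_i^k} + \zeta_1\, \chi_{\bigcup_i V_i^k}$ with distinct constants $\zeta_0,\zeta_1 \in \R^2$ chosen so that $w_k \in GSBD^2(U_1;\R^2)$ with $J_{w_k} = \bigl(\p^* E_k \cup \bigcup_i \p V_i^k\bigr)$ up to $\cH^1$-negligible corrections; by design the left-hand side of \eqref{step1ning_claimi} dominates $\int_{J_{w_k}} \phi(\nu)\,d\cH^1$ up to an error of order $\delta$.

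The second step is to apply the anisotropic slicing lower-semicontinuity inequality (the same Mucci--Chambolle tool invoked in the proof of Lemma~\ref{lem:jump_estaaa}) to $w_k$ in the direction $\xi = {\bf e_2}$: for $\cH^1$-a.e.\ $x_1 \in \pi_1(E_k)$, the vertical slice $(J_{w_k})_{x_1}$ contains at least two points, since the vertical line $\ell_{x_1}$ must both enter and leave the thin tube $\bigcup_i V_i^k$ that surrounds $I_1$. Integrating in $x_1$ over $\pi_1(E_k)$ and passing to the $\liminf$, this yields $\liminf_k \int_{J_{w_k}} \phi(\nu)\,d\cH^1 \ge 2 \int_{I_1} \phi({\bf e_2})\,d\cH^1$, and combined with the preceding approximations gives \eqref{step1ning_claimi} for every $k$ larger than some threshold $k_\delta$.

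The main obstacle is the anisotropic slicing step itself: the pointwise inequality $\phi(\nu) \ge \phi({\bf e_2})\,|\nu\cdot{\bf e_2}|$ fails for a general norm $\phi$, so one cannot naively weigh slice crossings by $|\nu\cdot{\bf e_2}|$ and then pull $\phi({\bf e_2})$ out. Instead, the anisotropic version of the Mucci--Chambolle inequality (as used in \cite{ChC:2018jems}) has to be applied directly to $w_k$, with the $\phi$-weight handled internally through the approximate-tangent structure of $J_{w_k}$. The bounded-components hypothesis (b) is crucial here: it guarantees via a Golab-type argument that $\p E_k \cap U_1$ Hausdorff-converges to $I_1$ without developing fractal-like limits, and it also ensures that the covering by $C^1$-arcs can be arranged so that the factor of $2$ arising from thickening into $V_i^k$ aligns cleanly with the factor of $2$ on the right-hand side of \eqref{step1ning_claimi}.
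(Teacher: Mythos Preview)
Your construction of $w_k$ does not do what you claim. You set $w_k := \zeta_0\,\chi_{U_1\setminus \bigcup V_i^k} + \zeta_1\,\chi_{\bigcup V_i^k}$, where the $V_i^k$ are tubes around arcs covering only $(E_k^{(1)}\cup E_k^{(0)})\cap\p E_k$. This function does not see $E_k$ at all, so $J_{w_k}=\bigcup_i\p V_i^k$ up to a null set, not $\p^*E_k\cup\bigcup_i\p V_i^k$. When $E_k$ is, say, the thin rectangle $(-1,1)\times(-\epsilon_k,\epsilon_k)$, there are no cracks or filaments, $\bigcup_i V_i^k=\emptyset$, $w_k$ is constant, and your argument yields nothing---yet the left-hand side of \eqref{step1ning_claimi} is carried entirely by $\int_{\p^*E_k}\phi$. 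The assertion that the tubes $\bigcup_i V_i^k$ ``surround $I_1$'' is simply false in general.

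More seriously, your claim that Kuratowski convergence gives $|\pi_1(E_k)|\to 2$ is wrong, and this is precisely where hypothesis (b) is indispensable. Take $E_k$ to be the union of $2k{+}1$ disjoint discs of radius $k^{-2}$ centred at $(j/(k{+}1),0)$, $j=-k,\dots,k$. Then $E_k\overset{\cK}{\to}I_1$, but $\cH^1(\p E_k)\to 0$ and the left-hand side of \eqref{step1ning_claimi} tends to $0$, so the lemma fails. This example violates (b); but nowhere in your argument is (b) actually used---the appeals to a ``Golab-type argument'' and to ``aligning the factor of $2$'' are not uses of the component bound. Even if you repaired $w_k$ to be constant on $E_k\cup\bigcup_i V_i^k$, slicing would only give control by $|\pi_1(E_k\cup\bigcup_i V_i^k)|$, and showing this tends to $2$ needs exactly the finite-component structure.

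The paper's proof is completely elementary and avoids $GSBD$ and slicing altogether. It splits into two cases. If some connected component $K^1$ of $\p E_k$ meets both lateral sides $\{x_1=\pm1\}$, Lemma~\ref{lem:two_fold_connection} produces a second path $K^2\subset\overline{\p E_k\setminus(K^1\cap\p\Int{\overline{E_k}})}$ also joining the two sides; since $K^1\cap\p^*E_k$ and $K^2\cap\p^*E_k$ are $\cH^1$-essentially disjoint, the anisotropic minimality of segments gives $\int_{K^j}\phi(\nu)\ge\int_{I_1}\phi({\bf e_2})$ for each $j$, yielding the factor $2$ directly. If no component spans both sides, hypothesis (b) bounds the number of interior components by $m_o$; their projections onto the $x_1$-axis are at most $m_o{+}2$ intervals which, by (a), cover $I_1$ up to length $\delta/(8c_2m_o)$, and the previous spanning argument applies on each projected interval.
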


\begin{proof}
Let us denote the left hand side of \eqref{step1ning_claimi}
by $\alpha_k.$ We may suppose $\sup_k\alpha_k<\infty$.
By assumption (a), for every $\delta\in (0,1)$
there exists $k_{1,\delta}>1$ such that
$$
E_k\subset [-1,1]\times \Big(-\frac{\delta}{16c_2m_o},
\frac{\delta}{16c_2m_o}\Big)
$$ 
for all  $k>k_{1,\delta}.$

{\it Step 1.} Assume that for some  $k > k_{1,\delta},$  $\p E_k$ has a connected component $K^1$ intersecting both $\{x_1=1\}$ and $\{x_1=-1\}.$  In this case 
by Lemma \ref{lem:two_fold_connection},
$\overline{\p E_k\setminus (K^1\cap \p\Int{\overline{A}})}$ is also connected 
and contains a path $K^2$ connecting  $\{x_1=1\}$ to $\{x_1=-1\}.$ Note that $K^1$ and $K^2$ may coincide on $(E_k^{(1)}\cup E_k^{(0)})\cap\p E_k.$
Let $R_i^1$ and $R_i^2,$ $i=1,2,$ be the segments along 
the vertical lines $\{x_1=\pm1\}$
connecting the endpoints
of $K^1$ and $K^2$ to $(\pm 1,0),$ respectively. 
Since $K^1\cap \p^*E_k$ and $K^2\cap \p^*E_k$ are disjoint up to a $\cH^1$-negligible set 
\begin{align}\label{sFFFFFff}
\alpha_k\ge  \sum\limits_{j=1}^2\Big(\int_{K^j\cap \p^*E_k}  
\phi(\nu_{E_k})\,d\cH^1 +  \int_{{K^j\setminus \p^*E_k}}
\phi(\nu_{E_k})\,d\cH^1\Big)\nonumber \\
 =   \sum\limits_{j=1}^2 \int_{\gamma_j}  
\phi(\nu_{\gamma_j})\,d\cH^1 
- \sum\limits_{i,j=1}^2 \int_{R_i^j}  
\phi({\bf e_1})\,d\cH^1,
\end{align}
where $\gamma_j:=R_1^j\cup K^j\cup  R_2^j$   is the curve connecting 
$(-1,0)$ to $(1,0).$ By the (anisotropic) minimality of   segments \cite[Lemma 6.2]{FFLM:2011}, 
\begin{equation}\label{dafwgw}
\int_{\gamma_j} \phi(\nu_{\gamma_j})\,d\cH^1\ge 
\int_{I_1} \phi({\bf e_2})\,d\cH^1  
\end{equation}
Moreover, since $\cH^1(R_i^j) \le \frac{\delta}{16c_2m_o}$ for any 
$i,j=1,2,$ by \eqref{sFFFFFff}, \eqref{dafwgw} and
\eqref{finsler_norm} we obtain
\begin{equation}\label{one_two_three}
\alpha_k  \ge 2 \int_{I_1} \phi({\bf e_2})\,d\cH^1 
- \frac{4c_2\delta}{16c_2m_o} = 
2 \int_{I_1} \phi({\bf e_2})\,d\cH^1 
- \frac{\delta}{4 m_o}, 
\end{equation}
which implies \eqref{step1ning_claimi}.
{\it Step 2.} Assume now that every connected component of  
$U_1\cap \p E_k$  intersects at most one of
$\{x_1=1\}$ and $\{x_1=-1\}.$
In this case, let $K^{1},\ldots,K^{m_k}$ stand for the 
connected components of $\p E_k$ lying strictly
inside of $U_1$ (i.e., not intersecting $\{x_1=\pm1\}$); 
by (b), $m_k\le m_o.$ 
Since $\alpha_k<\infty,$  the connected components $\{L^i\}$
of $\overline{U_1}\cap \p E_k$ 
intersecting $\{x_1=\pm1\}$ is at most countable. 
If $\{L^i:\,L^i\cap\{x_1=1\}\ne\emptyset\}=\emptyset$, we set 
$K^{m_k+1}=\emptyset$ otherwise
let $K^{m_k+1}$ 
be such that $\pi(K^{m_k+1})$ contains all 
$\pi(L^i)$ with $L^i\cap\{x_1=1\}\ne\emptyset,$ where $\pi$
is given by \eqref{def_of_projection}.
Analogously, we define $K^{m_k+2}\in 
\{L^i:\,L^i\cap\{x_1=-1\}\ne\emptyset\}\cup\{\emptyset\}.$
By the connectedness of $K^j,$ 
for each $j=1,\ldots,m_k+2,$  
$\pi(K^j)$ (if non-empty)  is a segment
$[a_k^i,b_k^i]\times\{0\}.$ 
Then assumption  (a) and the bound $m_k\le m_o$ imply that
$$
\lim\limits_{k\to\infty} 
\cH^1\Big(I_1\setminus  
\bigcup\limits_{j=1}^{m_k+2} \pi(K^j)\Big)=0.
$$
Hence there exists $k_{2,\delta}>k_{1,\delta}$ such that 
\begin{equation}\label{small_cover}
\cH^1\Big(I_1  \setminus \bigcup\limits_{i=1}^{m_k+2} \pi(K^j)\Big) 
< \frac{\delta}{8c_2m_o} 
\end{equation}
for any $k> k_{2,\delta}.$  Then repeating the proof 
of \eqref{one_two_three} with $K^j$ in
$(a_j,b_j)\times(-1,1)$, for every $j=1,\ldots,m_k+2$   
we find
\begin{align*}
\int_{K^j\cap \p^*E_k} \phi(\nu_{E_k})d\cH^1 
+2 \int_{K^j\cap (E_k^{(1)}\cup E_k^{(0)})\cap\p E_k} \phi(\nu_{E_k})d\cH^1 
\ge 2 \int_{\pi(K^j)} \phi({\bf e_2})\,d\cH^1 
-  \frac{\delta}{4m_o}.
\end{align*}
Therefore, by  \eqref{small_cover} and \eqref{finsler_norm},
\begin{align*}
\alpha_k \ge  & \sum\limits_{j=1}^{m_k+2} 
\int_{K^j\cap \p^*E_k} \phi(\nu_{E_k})d\cH^1 
+2 \int_{K^j\cap (E_k^{(1)} \cup E_k^{(0)})\cap\p E_k} \phi(\nu_{E_k})d\cH^1 \\
\ge & \sum\limits_{j=1}^{m_k+2}  
\Big(2 \int_{\pi(K^j)} \phi({\bf e_2})\,d\cH^1 
- \frac{\delta}{4m_o}\Big)\ge 
2 \int_{\bigcup\pi(K^j)} \phi({\bf e_2})\,d\cH^1 
- \frac{(m_k+2)\delta}{4m_o} \\ 
\ge &  2\int_{I_1} \phi({\bf e_2})\,d\cH^1
- 2c_2\,\frac{\delta}{8c_2m_o}- \frac{(m_o+2)\delta}{4m_o} \\
= & 2\int_{I_1} \phi({\bf e_2})\,d\cH^1  - 
\frac{(m_o+3)\delta}{4m_o}.
\end{align*}
Since $m_o\ge1,$ this implies \eqref{step1ning_claimi}.
\end{proof}

\begin{lemma}[\textbf{Creation of internal crack energy}]
\label{lem:creation_crack}
Let $\phi$ be as in Lemma \ref{lem:creation_filament}   and
let $\{E_k\}$ be a sequence of subsets of $U_1$ 
such that 
\begin{itemize}
\item[\rm (a)] $U_1\setminus E_k \overset{\cK}\to I_1=[-1,1]\times\{0\}$
as $k\to\infty;$

\item[\rm (b)] there exists $m_o\in\N_0$ such that the number 
of connected components
of each $\p E_k$ lying strictly inside $U_1$ does not exceed $m_o.$
\end{itemize}
Then for every $\delta\in (0,1)$ 
there exists $k_\delta>1$
such that for any $k>k_\delta,$
$$
\int_{U_1\cap \p^* E_k} \phi(\nu_{E_k})\,d\cH^1 
+2 \int_{U_1\cap (E_k^{(0)}\cup E_k^{(1)})\cap \p E_k}   \phi(\nu_{E_k})\,d\cH^1
\ge 
2\int_{I_1}\phi({\bf e_2})\,d\cH^1- \delta.
$$
\end{lemma}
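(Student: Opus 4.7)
The proof proceeds along the same lines as that of Lemma \ref{lem:creation_filament}, based on the observation that the topological boundary is unchanged under complementation inside $U_1,$ i.e.\ $U_1\cap\p E_k = U_1\cap \p(U_1\setminus E_k).$ Consequently, although assumption (a) describes the complement $U_1\setminus E_k$ rather than $E_k$ itself, the geometry of $\p E_k$ inside $U_1$ is identical to that analyzed in Lemma \ref{lem:creation_filament}. Let $\alpha_k$ denote the left-hand side of the claimed inequality; we may assume $\sup_k\alpha_k<\infty.$ The Kuratowski convergence $U_1\setminus E_k\overset{\cK}\to I_1$ yields, for every $\delta\in(0,1),$ an integer $k_{1,\delta}>1$ such that
\begin{equation*}
U_1\cap\p E_k\subset [-1,1]\times\Big(-\tfrac{\delta}{16c_2 m_o},\tfrac{\delta}{16c_2 m_o}\Big)\qquad\text{for all } k>k_{1,\delta}.
\end{equation*}

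The argument now splits into the same two cases as in Lemma \ref{lem:creation_filament}. In the first case, some connected component $K^1$ of $\p E_k$ meets both vertical sides $\{x_1=\pm 1\};$ Lemma \ref{lem:two_fold_connection}, applied to $E_k,$ provides a second curve $K^2\subset\overline{\p E_k\setminus(K^1\cap \p\Int{\cl{E_k}})}$ with the same endpoints. Since $K^1\cap\p^*E_k$ and $K^2\cap\p^*E_k$ are $\cH^1$-disjoint while any overlap of $K^1$ and $K^2$ lies in the density-$0$ or density-$1$ portions that are weighted by $2$ in $\alpha_k,$ one obtains $\alpha_k\ge\sum_{j=1}^2\int_{K^j}\phi(\nu_{K^j})\,d\cH^1.$ Adjoining four short vertical segments $R_i^j$ of total length at most $\delta/(4c_2m_o)$ to produce curves $\gamma_j$ from $(-1,0)$ to $(1,0)$ and invoking the anisotropic minimality of segments yields
\begin{equation*}
\alpha_k\ge 2\int_{I_1}\phi({\bf e_2})\,d\cH^1 - \frac{\delta}{4m_o}.
\end{equation*}

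In the second case, every connected component of $\p E_k$ meets at most one of $\{x_1=\pm 1\}.$ Enumerating the (at most $m_o$) components lying strictly inside $U_1$ as $K^1,\ldots,K^{m_k}$ and aggregating the remaining components into two objects $K^{m_k+1},K^{m_k+2}$ touching $\{x_1=1\}$ and $\{x_1=-1\}$ respectively, exactly as in Step 2 of Lemma \ref{lem:creation_filament}, the horizontal projections $\pi(K^j)=[a_k^j,b_k^j]\times\{0\}$ cover $I_1$ with $\cH^1$-error less than $\delta/(8c_2 m_o)$ for all sufficiently large $k.$ Applying the first-case argument componentwise inside each strip $(a_k^j,b_k^j)\times(-1,1)$ and summing produces $\alpha_k\ge 2\int_{I_1}\phi({\bf e_2})\,d\cH^1-\delta,$ as required. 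The only point requiring care is that Lemma \ref{lem:two_fold_connection} must be applied when $E_k$ is a large set with a thin cavity rather than a thin set itself; this causes no obstacle, as that lemma is purely topological and invariant under complementation inside $U_1.$
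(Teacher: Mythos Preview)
Your proof is correct and rests on the same observation as the paper, namely that the functional on the left is unchanged when $E_k$ is replaced by its complement in $U_1$ (since $\p E_k$, $\p^*E_k$, $E_k^{(0)}\cup E_k^{(1)}$ are invariant and $\phi$ is even). The paper, however, exploits this more directly: rather than re-running the two-case argument of Lemma~\ref{lem:creation_filament}, it simply applies that lemma verbatim to the sets $F_k:=U_1\setminus E_k$, which by assumption (a) satisfy $F_k\overset{\cK}\to I_1$ and by assumption (b) inherit the bound on boundary components. This one-line reduction sidesteps your closing concern about applying Lemma~\ref{lem:two_fold_connection} to a ``large set with a thin cavity'': since the lemma is being invoked for $F_k$, which is itself the thin set, the situation is exactly that of Lemma~\ref{lem:creation_filament} and no additional justification is needed.
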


\begin{proof}
The assertion  follows from applying Lemma 
\ref{lem:creation_crack} to $U_1\setminus E_k.$
\end{proof}

The following result extends  the lower semicontinuity result of 
\cite[Theorem 1.1]{ChC:2018jems} to the anisotropic case.

\begin{proposition}\label{prop:anisotropic_chambolle} 
Let $D\subset\R^d$ be a bounded  open set and
let $\phi\in C(\overline{D}\times\R^d;[0,+\infty)$ be a Finsler norm in $\R^d,$ $d\ge2,$
satisfying
\begin{equation}\label{finsler_norm9}
c_1\le \phi(x,\nu) \le c_2,\qquad (x,\nu)\in\overline{D}\times \S^{d-1}, 
\end{equation}
for some $c_2\ge c_1>0.$
Consider $\{w_h\}\subset GSBD^2(D;\R^d)$ 
such that 
\begin{equation}\label{chambolenergy0000}
\sup\limits_{k\ge1} \int_D|\str{w_h}|^2dx +\cH^1(J_{w_h})\le M  
\end{equation}
for some $M>0$ and 
the set
$$
E:=\{x\in D:\,\,\liminf\limits_{h\to\infty} |w_h(x)| = +\infty\}
$$
has finite perimeter. Suppose that  $w_h\to w$ 
a.e.\ in $D\setminus E$ as  $h\to\infty$ 
(so that by \cite[Theorem 1.1]{ChC:2018jems} $w\in GSBD^2(D\setminus E;\R^d)$).
Then 
\begin{equation}\label{lsc_jumpillo}
\int_{J_w\cup \p^*E} \phi(x,\nu_{J_w\cup\p E}^{})d\cH^{d-1} \le 
\liminf\limits_{h\to\infty} \int_{J_{w_h}} \phi(x,\nu_{J_{w_h}}^{})d\cH^{d-1}.
\end{equation}

\end{proposition}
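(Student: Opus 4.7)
The plan is to reduce \eqref{lsc_jumpillo} to the isotropic/slicing form of \cite[Theorem 1.1]{ChC:2018jems} by expressing the Finsler norm $\phi$ as a countable supremum of elementary integrands. Since $\phi \in C(\cl{D} \times \R^d)$ is a Finsler norm satisfying \eqref{finsler_norm9}, standard convex-duality arguments (see, e.g., \cite{BFM:1998}) produce continuous nonnegative functions $g_n \in C(\cl{D})$ and directions $\xi_n \in \S^{d-1}$ such that
$$
\phi(x,\nu) = \sup_{n \ge 1} g_n(x)\,|\nu \cdot \xi_n|.
$$

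For each fixed $n,$ I would combine the slicing identity
$$
\int_{J_v} g_n(x)\,|\nu_v \cdot \xi_n|\, d\cH^{d-1} = \int_{\Pi_{\xi_n}} \sum_{t \in J_{\hat{v}_y^{\xi_n}}} g_n(y + t\xi_n)\, d\cH^{d-1}(y)
$$
with the one-dimensional Chambolle--Crismale estimate on slices in direction $\xi_n,$ namely
$$
\cH^0\big(J_{\hat{w}_y^{\xi_n}} \cap (D \setminus E)_y^{\xi_n}\big) + \cH^0\big(\p^* E_y^{\xi_n} \cap D_y^{\xi_n}\big) \le \liminf_{h \to \infty} \cH^0\big(J_{\hat{(w_h)}_y^{\xi_n}} \cap D_y^{\xi_n}\big),
$$
valid for $\cH^{d-1}$-a.e.\ $y \in \Pi_{\xi_n}$ by arguing as in the proof of \cite[Theorem 1.1]{ChC:2018jems}. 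Applying Fatou's lemma in $y$ would then give, for every $n,$
$$
\int_{J_w \cup \p^* E} g_n(x)\,|\nu \cdot \xi_n|\, d\cH^{d-1} \le \liminf_{h \to \infty} \int_{J_{w_h}} g_n(x)\,|\nu_{J_{w_h}} \cdot \xi_n|\, d\cH^{d-1} \le \liminf_{h \to \infty} \int_{J_{w_h}} \phi(x,\nu_{J_{w_h}})\, d\cH^{d-1}.
$$

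To promote this one-index estimate to the full anisotropy inside the left-hand side, I would use a standard measurable-partition argument \`a la Ambrosio--Dal Maso: given $\epsilon > 0$ and $N \ge 1,$ I partition $J_w \cup \p^* E$ into finitely many Borel sets $\{B_j^{N,\epsilon}\}$ on each of which a single index $n_j \le N$ realizes $\sup_{n \le N} g_n(x)\,|\nu \cdot \xi_n|$ up to an error $\epsilon,$ apply the elementary estimate separately on each $B_j^{N,\epsilon},$ sum over $j,$ and then let $N \to \infty$ and $\epsilon \to 0$ using continuity of the $g_n$ and finiteness of $\cH^{d-1}(J_w \cup \p^* E)$ to recover \eqref{lsc_jumpillo}.

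I expect the main obstacle to be the slice-wise Chambolle--Crismale inequality above: when $y \in \pi_{\xi_n}(E),$ the one-dimensional slices $\hat{(w_h)}_y^{\xi_n}$ may diverge to $\infty$ on entire subintervals, so that the 1D jump count on the left-hand side must be supplemented by the contribution of $\p^* E_y^{\xi_n}$ to capture the mass escaping to infinity. Invoking this requires a careful bookkeeping of the slicing/escape structure of $E,$ exploiting the hypothesis that $E$ has finite perimeter. A secondary subtlety is that the measurable-partition step has to be compatible with the fact that $\nu_{J_{w_h}}$ is not uniform along $J_{w_h}$; this is resolved by refining the partition and using the continuity of the weights $g_n.$
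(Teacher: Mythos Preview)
Your strategy is close in spirit to the paper's, but there is a genuine gap in the step where you pass from the slice-wise Chambolle--Crismale count inequality to the \emph{weighted} lower semicontinuity
\[
\int_{J_w\cup\p^*E} g_n(x)\,|\nu\cdot\xi_n|\,d\cH^{d-1}\le \liminf_{h\to\infty} \int_{J_{w_h}} g_n(x)\,|\nu\cdot\xi_n|\,d\cH^{d-1}.
\]
The one-dimensional estimate from \cite{ChC:2018jems} controls only the \emph{number} of jump points on each slice (up to an $\epsilon f_y^\xi(w_h)$ remainder that you have omitted), not their positions; Fatou in $y$ therefore recovers $\int|\nu\cdot\xi_n|$ on an open set, not $\int g_n(x)|\nu\cdot\xi_n|$ with a spatially varying weight. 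Since the jump points of $(\hat w_h)_y^{\xi_n}$ and of $\hat w_y^{\xi_n}$ sit at different locations along the slice, you cannot simply carry $g_n(y+t\xi_n)$ through the inequality. The paper sidesteps this by a two-step argument: Step~1 proves \eqref{lsc_jumpillo} for spatially \emph{homogeneous} $\phi(\nu)$ via the localized count estimate \cite[Eq.~3.23]{ChC:2018jems} together with De~Giorgi's supremum formula over disjoint open subsets of $D$; Step~2 then handles the $x$-dependence by a blow-up/Besicovitch derivation argument, using uniform continuity of $\phi$ to freeze the first variable on small balls. Your one-shot route can be repaired by inserting exactly this localization---freeze $g_n$ on small disjoint open sets and apply the unweighted slicing there---but that is essentially the content of the paper's two-step structure.

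A related issue is your ``measurable partition of $J_w\cup\p^*E$'' for the supremum step. The localization must be to disjoint open subsets of the ambient $D$, not a Borel partition of the limit jump set: the right-hand side involves $J_{w_h}$, which bears no a priori relation to $J_w\cup\p^*E$, and only an open-set localization is simultaneously compatible with both $J_w$ and $J_{w_h}$ so that summing over pieces does not overcount on the right. This is precisely why the paper invokes the De~Giorgi--Buttazzo--Dal~Maso representation \cite{DBD:1983} in Step~1.
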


\begin{proof}
We divide the proof into two steps.

{\it Step 1.} First we prove the \eqref{lsc_jumpillo} assuming that $\phi$ is independent on $x\in D,$ i.e., $\phi(\nu)=\phi(x,\nu)$ for any $x\in \cl{D}\times\R^d.$

Let $W=\{\phi^\circ\le1\}$ be the Wulff shape of $\phi$, i.e., 
the unit ball for the dual norm 
$$
\phi^\circ(\xi) = \max\limits_{\phi(\eta)=1}\,\, |\xi\cdot\eta|.
$$
Note that $\phi^{\circ\circ} = \phi$ and by \eqref{finsler_norm9},
\begin{equation}\label{dual_norm_bound}
\frac{1}{c_2}\,|\xi|\le \phi^\circ(\xi)\le \frac{1}{c_1}\,|\xi|
\end{equation}
for any $\xi\in\R^2.$
Let  
$\{\xi_n\}\subset\p W$ be a countable dense set. 
Then since 
$$
\phi(\nu) = \sup\limits_{n\in\N} |\xi_n\cdot \nu| 
$$
from \cite[Lemma 6]{DBD:1983} it follows that 
for every bounded open set $G$ and $u\in GSBD^2(G;\R^d),$
\begin{equation}\label{de_giorgi_lemma}
\int_{G\cap J_u} \phi(\nu_{J_u}^{})d\cH^{d-1} =  
\sup\,\sum\limits_{n=1}^N \int_{F_n\cap J_u} |\xi_n\cdot\nu_{J_u}^{}|
d\cH^{d-1}, 
\end{equation}
where $\sup$ is taken over finite disjoint open sets $\{F_n\}_{n=1}^N$
whose closures are contained in $G.$

Now we prove \eqref{lsc_jumpillo}.
Under the notation of \cite{BCD:1998,ChC:2018jems},  
for any $\epsilon\in(0,1),$ open set 
$F\subset D$ with $\overline{F}\subset D$
and for $\cH^1$-a.e.\ $\xi\in \p W$  we have 

\begin{align}\label{chambolledan_lavha}
|\xi| \int_{\Pi_\xi}  
\Big[\cH^0(F_y^\xi&\cap  J_{\widehat{w}_y^\xi}\cap  (F\setminus E)_y^\xi)  
+ \cH^0(F_y^\xi\cap \p E_y^\xi)\Big]d\cH^{d-1} \nonumber \\
&\le |\xi| \liminf\limits_{h\to\infty} \int_{\Pi_\xi}
\Big[\cH^0(F_y^\xi\cap J_{(\widehat w_h)_y^\xi}) + 
c_2^{-1} \epsilon f_y^\xi(w_h) \Big]d\cH^{d-1},
\end{align}
where 
$
\Pi_\xi: = \{y\in\R^d:\,\, y\cdot\xi = 0\},
$
is the hyperplane passing through the origin and  orthogonal to $\xi,$
given $y\in\R^d,$
$
F_y^\xi:=\{t\in\R:\,\, y+t\xi\in F\}
$
is the section of the straight line passing 
through $y\in\R^d$ and parallel to $\xi,$
given $u:F\to \R^d$ and $y\in\R^d,$
$\widehat u_y^\xi:F_y^\xi\to\R$ is defined as 
$\widehat u_y^\xi(t): = u(y+t\xi)\cdot\xi,$
and 
\begin{equation}\label{residiea}
f_y^\xi(w_h) = I_y^\xi(w_h) + II_y^\xi(w_h),  
\end{equation}
with
$$
\int_{\Pi_\xi} I_y^\xi(w_h) d\cH^1\le \int_F |\str{w_h(x)}|^2dx,
\qquad h\ge1,
$$
and
$$
\int_{\Pi_\xi} II_y^\xi(w_h) d\cH^1\le 
|D_\xi(\tau(w_h\cdot\xi))|(F), \qquad h\ge1,
$$
for $\tau(t):= \tanh(t)$
(see \cite[Eq.s 3.10 and 3.11]{ChC:2018jems} applied with $F$ 
in place of $\Omega $).

By  \cite[Theorem 4.10]{ACD:1997} and \eqref{dual_norm_bound},
\eqref{chambolledan_lavha} can be rewritten as 
\begin{align}\label{kadikasss}
\int_{F\cap (J_w\cup \p^*E)} |\nu_{J_w\cup\p E}^{}\cdot \xi| d\cH^{d-1}
\le   \liminf\limits_{h\to\infty} \int_{F\cap J_{w_h}}
|\nu_{J_{w_h}}^{}\cdot \xi|d\cH^{d-1}\nonumber \\
  + \epsilon\int_F |\str{w_h(x)}|^2dx + \epsilon|D_\xi(\tau(w_h\cdot\xi))|(F). 
\end{align} 
Fix any finite family $\{F_n\}_{n=1}^N$ of pairwise 
disjoint open sets whose  closures are contained in $D.$ 
Since \eqref{kadikasss}  holds for $\cH^1$-a.e.\ $\xi\in\p W,$
we can extract a countable dense set $\{\xi_n\}\subset \p W$
satisfying \eqref{kadikasss} with $\xi=\xi_i$ and $F=F_j$
for all $i,j.$ Now 
taking $F=F_n$ and $\xi=\xi_n$ in \eqref{kadikasss} 
and summing over $n=1,\ldots,N,$ we get 
\begin{align*}
\sum\limits_{n=1}^N \int_{F_n\cap (J_w\cup \p^*E)} 
|\nu_{J_w\cup\p E}^{}\cdot \xi_n| d\cH^{d-1}
 \le  \liminf\limits_{h\to\infty} \sum\limits_{n=1}^N \int_{F_n\cap J_{w_h}}
|\nu_{J_{w_h}}^{}\cdot \xi_n|d\cH^{d-1}\\
 +\epsilon\int_{\bigcup\limits_{n=1}^NF_n} |\str{w_h(x)}|^2dx + 
\epsilon|D_{\xi_n}(\tau(w_h\cdot\xi_n))|\Big(\bigcup\limits_{n=1}^NF_n\Big).
\end{align*}
Recall that by \eqref{de_giorgi_lemma},
$$
\sum\limits_{n=1}^N \int_{F_n\cap J_{w_h}}
|\nu_{J_{w_h}}^{}\cdot \xi_n|d\cH^1 \le 
\int_{J_{w_h}} \phi(\nu_{J_{w_h}}^{})\cH^{d-1},
$$
and by \eqref{chambolenergy0000},
$$
\int_{\bigcup\limits_{n=1}^NF_n} |\str{w_h(x)}|^2dx \le 
\int_{D} |\str{w_h(x)}|^2dx \le M
$$
and 
$$
|D_{\xi_n}(\tau(w_h\cdot\xi_n))|\Big(\bigcup\limits_{n=1}^NF_n\Big)
\le |D_{\xi_n}(\tau(w_h\cdot\xi_n))|(D) \le M 
$$
for any $h\ge1.$
Therefore, 
\begin{align*}
\sum\limits_{n=1}^N \int_{F_n\cap (J_w\cup \p^*E)} 
|\nu_{J_w\cup\p E}^{}\cdot \xi_n| d\cH^{d-1}
 \le 2M\epsilon + \liminf\limits_{h\to\infty}   
 \int_{J_{w_h}} \phi(\nu_{J_{w_h}}^{})\cH^{d-1}.
\end{align*}
Now taking $\sup$ over $\{F_n\}$ and letting $\epsilon\to0$
we obtain \eqref{lsc_jumpillo}.

{\it Step 2.} Now we prove \eqref{lsc_jumpillo} in general case. 
Without loss of generality we suppose that the $liminf$ in \eqref{lsc_jumpillo} is a finite limit.
Consider the sequence $\{\mu_h\}_{h\ge0}$ of positive Radon measures in $D$ defined at Borel subsets of $B\subseteq D$   as 
$$
\mu_h(B) = \int_{B\cap J_{w_h}} \phi(x,\nu_{J_{w_h}}^{})d\cH^{d-1},\qquad h\ge1,
$$
and 
$$
\mu_0(B) = \int_{B\cap J_w} \phi(x,\nu_{J_w}^{})d\cH^{d-1}. 
$$
Since $\sup_h  \mu_h(D)<\infty,$ by compactness, there exist a positive Radon measure $\mu$ and a not relbelled subsequence $\{\mu_h\}_{h\ge1}$ such that $\mu_h\wk*\mu$ as $h\to\infty.$ 
We prove that 
\begin{equation}\label{asdgh}
\mu\ge \mu_0, 
\end{equation}
in particular from $\mu(D) \ge\mu_0(D)$ \eqref{lsc_jumpillo} follows.
Since $\mu_0$ is absolutely continuous with respect to $\cH^{d-1}\res J_w,$ to prove \eqref{asdgh} we need only to show 
\begin{equation}\label{maucci_bola}
\frac{d\mu}{d\cH^{d-1}\res J_w}(x) \ge \phi(x,\nu_{J_w})\qquad\text{for $\cH^{d-1}$-a.e.\ $x\in J_w.$} 
\end{equation}
For this aim fix  $\epsilon\in(0,c_1).$ By the uniform continuity of $\phi,$ 
there exists $r_\epsilon>0$ such that 
\begin{equation}\label{ahjadas}
|\phi(x,\nu) - \phi(y,\nu)|\le \epsilon 
\end{equation}
for any $\nu\in\S^{d-1}$ and $x,y\in D$ with $|x-y|<r_\epsilon.$ In particular, given $x\in J_w$ and for a.e.\ $r\in (0,r_\epsilon),$
\begin{align*}
\mu(B_r(x_0)) = & \lim\limits_{h\to\infty} \mu_h(B_r(x_0)) \\
\ge& \liminf\limits_{h\to\infty} \int_{B_r(x_0)\cap J_{w_h}} \phi(x_0,\nu_{J_{w_h}})d\cH^{d-1} - \epsilon \limsup\limits_{h\to\infty} \cH^{d-1}(B_r(x_0)\cap J_{w_h}),  
\end{align*}
where in the equality we use the weak* convergence of $\{\mu_h\}$ and in the inequality \eqref{ahjadas} with $y=x_0$ and $x\in B_r(x_0)\cap J_{w_h}$  
By Proposition \ref{prop:anisotropic_chambolle} applied with $\phi(x_0,\cdot),$
\begin{align*}
 \liminf\limits_{h\to\infty} \int_{B_r(x_0)\cap J_{w_h}} \phi(x_0,\nu_{J_{w_h}})d\cH^{d-1} \ge& \int_{B_r(x_0)\cap J_w} \phi(x_0,\nu_{J_w})d\cH^{d-1}\\
 \ge & \mu_0(B_r(x_0)) - \epsilon\cH^{d-1}(B_r(x_0)\cap J_w),  
\end{align*}
where in the second equality we again used \eqref{ahjadas}.
Moreover, by \eqref{finsler_norm9},
$$
\limsup\limits_{h\to\infty} \cH^{d-1}(B_r(x_0)\cap J_{w_h})  \le \frac{1}{c_1}\,\limsup\limits_{h\to\infty} \mu_h(B_r(x_0))= \frac{\mu(B_r(x_0))}{c_1}
$$
and 
$$
\cH^{d-1}(B_r(x_0)\cap J_w) \le \frac{\mu_0(B_r(x_0))}{c_1},
$$
thus, 
$$
\mu(B_r(x_0)) \ge \frac{c_1-\epsilon}{c_1+\epsilon}\,\,\mu_0(B_r(x_0)).
$$
Since $\epsilon$ and $r\in (0,r_\epsilon)$ are arbitrary, 
\eqref{maucci_bola} follows from the Besicovitch Derivation Theorem. \black 
\end{proof}

\begin{lemma}[\textbf{Creation of delamination energy}]
\label{lem:creation_of_delamination}
Let $\phi$ be as in Lemma \ref{lem:creation_filament}
and suppose that $\Omega_k\subset U_4$ is a sequence of 
Lipschitz sets,  $E_k\subset \Omega_k,$ 
$J_{E_k}\in\cJ_{E_k},$ and
$g_0,g_1\in[0,+\infty),$
$u_k\in GSBD^2(U_4;\R^2)$
and $u^\pm\in\R^2$ with $u^+\ne u^-$ are such that  
\begin{itemize}
\item[\rm (a)] $\sdist(\cdot, U_4 \cap \p\Omega_k)\to \sdist(\cdot,\p U_4^+)$ uniformly in $U_{3/2}$;

\item[\rm (b)] $\Sigma_k:=U_4\cap \p \Omega_k$ is a graph of a Lipschitz function  $l_k:{I_4}\to\R$ such that $l_k(0)=0$ and  
$|l_k'|\le \frac1k;$

\item[\rm (c)] $\sdist(\cdot, U_4 \cap \p E_k)\to \sdist(\cdot,\p U_4^+)$ uniformly in $U_{3/2}$;

\item[\rm (d)]   there exists $m_k\in\N_0$ such that the number 
of connected components of each $\p E_k$ lying strictly inside $\Omega_k$ does not exceed $m_k;$ 

\item[\rm (e)] $|g_1 - g_0|\le \phi({\bf e_2});$ 

\item[\rm (f)] $J_{u_k}\subset (\Omega_k\cap \p E_k)\cup J_{E_k}\cup L_k,$ where $L_k\subset\Sigma_k$ is a relatively open subset of $\Sigma_k$ with $\cH^1(L_k)<1/k;$ 

\item[\rm (g)] $\sup\limits_{k\ge1} \int_{U_4}|\str{u_k}|^2dx < \infty;$

\item[\rm (h)] $u_k\to u^+$ a.e.\ in $U_1^+$ and 
$u_k\to u^-$ a.e.\ in $U_1\setminus \overline{U_1^+}.$
\end{itemize}
Then for every $\delta\in (0,1)$  there exists $k_\delta>1$ for which 
\begin{align} \label{jump_estimate_blow_up}
 \int_{U_1\cap \Omega_{k} \cap\p^*E_{k}} \phi(\nu_{E_{k}})d\cH^1 
&+2\int_{U_1\cap \Omega_{k} \cap (E_{k}^{(1)}\cup E_k^{(0)}) \cap\p E_{k}}
\phi(\nu_{E_{k}})d\cH^1\nonumber \\  
& + \int_{U_1\cap \Sigma_{k}\setminus \p E_{k}} g_0 d\cH^1 
+ \int_{U_1\cap \Sigma_{k}\cap E_{k}^{(0)}\cap \p E_{k}} \big(\phi(\nu_\Sigma)
+ g_1\big)d\cH^1\nonumber \\
& +  \int_{U_1\cap \Sigma_{k}\cap \p^*E_{k}\setminus J_{E_{k}}} g_1d\cH^1 +
\int_{U_1\cap J_{E_{k}}} \big(\phi(\nu_\Sigma) + g_0  \big)\,d\cH^1\no\\ 
&\ge 
\int_{I_1}\big(\phi({\bf e_2}) + g_0\big)\,d\cH^1- \delta 
\end{align} 
for any $k>k_\delta.$
\end{lemma}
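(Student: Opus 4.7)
The plan is to apply Proposition~\ref{prop:anisotropic_chambolle} to $\{u_k\}$ on $U_1$, to relate the resulting jump-energy bound to the LHS $\alpha_k$ of \eqref{jump_estimate_blow_up} via (f), and finally to control a residual adhesion correction by trace convergence. Setting $u:=u^+\chi_{U_1^+}+u^-\chi_{U_1\setminus\overline{U_1^+}}$, by (g), (h) the blow-up set $E$ in Proposition~\ref{prop:anisotropic_chambolle} is empty; hence $u\in GSBD^2(U_1;\R^2)$ with $J_u=I_1$ and constant normal ${\bf e_2}$, and Proposition~\ref{prop:anisotropic_chambolle} yields
\[
2\phi({\bf e_2})=\int_{I_1}\phi({\bf e_2})\,d\cH^1\le\liminf_{k\to\infty}\int_{J_{u_k}\cap U_1}\phi(\nu_{J_{u_k}})\,d\cH^1.
\]
Using (f), $\cH^1(L_k)<1/k$, and the evenness $\phi(-\xi)=\phi(\xi)$ (so that $\phi(\nu_{J_{u_k}})=\phi(\nu_{E_k})$ on $\Omega_k\cap\p E_k$ and $\phi(\nu_{J_{u_k}})=\phi(\nu_\Sigma)$ on $J_{E_k}\subset\Sigma_k$), this translates to
\begin{equation}\label{plan:star}
\liminf_{k\to\infty}\Bigl[\alpha_k^{int}+\int_{J_{E_k}\cap U_1}\phi(\nu_\Sigma)\,d\cH^1\Bigr]\ge 2\phi({\bf e_2}),
\end{equation}
where $\alpha_k^{int}$ denotes the sum of the first two (``interior'') terms of the LHS of \eqref{jump_estimate_blow_up}.

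Next, with $W_k:=\Sigma_k\cap E_k^{(0)}\cap\p E_k$ (wetting) and $C_k:=\Sigma_k\cap\p^*E_k\setminus J_{E_k}$ (contact), writing $g_1=g_0+(g_1-g_0)$ on $W_k\cup C_k$ and using $\Sigma_k=(\Sigma_k\setminus\p E_k)\cup W_k\cup C_k\cup J_{E_k}$ (disjoint up to $\cH^1$-null sets), one rewrites
\[
\alpha_k=\Bigl[\alpha_k^{int}+\int_{J_{E_k}\cap U_1}\phi(\nu_\Sigma)\Bigr]+g_0\cH^1(\Sigma_k\cap U_1)+\int_{W_k\cap U_1}\phi(\nu_\Sigma)+(g_1-g_0)\cH^1((W_k\cup C_k)\cap U_1).
\]
By \eqref{plan:star}, the first bracket has liminf $\ge 2\phi({\bf e_2})$; by (b), $\cH^1(\Sigma_k\cap U_1)\to 2$, so the second summand tends to $2g_0$. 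Combining (e), $|g_1-g_0|\le\phi({\bf e_2})$, with the uniform convergence $\phi(\nu_\Sigma)\to\phi({\bf e_2})$ on $\Sigma_k\cap U_1$ (from (b)), the wetting integral absorbs $(g_1-g_0)\cH^1(W_k)$ up to an $o(1)\cH^1(W_k)$ error. Hence \eqref{jump_estimate_blow_up} reduces to proving
\begin{equation}\label{plan:contact}
\lim_{k\to\infty}\cH^1(C_k\cap U_1)=0.
\end{equation}

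The main obstacle is \eqref{plan:contact}, which I would establish via a trace convergence argument. By (g), the Poincar\'e-Korn inequality applied on the Lipschitz sets $\Omega_k\cap U_2$ and $U_2\setminus\cl{\Omega_k}$ (whose geometry is uniformly controlled by (b)) together with (h), furnishes one-sided traces $\tau_k^\pm$ of $u_k$ on $\Sigma_k\cap U_1$ satisfying $\tau_k^+\to u^+$ and $\tau_k^-\to u^-$ in $L^1(\Sigma_k\cap U_1)$ (any absorbing rigid motion is forced to zero since the a.e.\ limits $u^\pm$ are constants). On the other hand, by (f), $J_{u_k}\cap C_k\subset L_k$, so $\tau_k^+=\tau_k^-$ $\cH^1$-a.e.\ on $C_k\setminus L_k$; hence
\[
|u^+-u^-|\,\cH^1((C_k\setminus L_k)\cap U_1)\le\int_{\Sigma_k\cap U_1}\bigl(|\tau_k^+-u^+|+|\tau_k^--u^-|\bigr)\,d\cH^1\to 0.
\]
Since $u^+\neq u^-$, this forces $\cH^1(C_k\cap U_1)\le\cH^1((C_k\setminus L_k)\cap U_1)+\cH^1(L_k)\to 0$, proving \eqref{plan:contact} and completing the argument for \eqref{jump_estimate_blow_up}.
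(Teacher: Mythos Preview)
Your reduction to \eqref{plan:contact} is where the argument breaks: the claim $\cH^1(C_k\cap U_1)\to 0$ is false in general, and the trace argument you invoke to prove it cannot work. Consider $\Omega_k=U_4^+$, $\Sigma_k=I_4$, and $E_k=U_4^+\setminus\bigl(\{x_2=1/k\}\cap U_2\bigr)$ (an interior crack at height $1/k$), with $J_{E_k}=\emptyset$ and $u_k=u^-\chi_{\{x_2<1/k\}}+u^+\chi_{\{x_2>1/k\}}$. Then all hypotheses (a)--(h) hold (with $e(u_k)\equiv 0$ and $J_{u_k}\subset\Omega_k\cap\p E_k$), yet $C_k\cap U_1=I_1$ for every $k$. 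In this example the one-sided trace $\tau_k^+$ of $u_k$ from the $\Omega_k$-side equals $u^-$, not $u^+$; your assertion $\tau_k^+\to u^+$ fails because $u_k$ is \emph{not} in $H^1(\Omega_k\cap U_2)$---it jumps across $\Omega_k\cap\p E_k$---so the classical Poincar\'e--Korn inequality on $\Omega_k\cap U_2$ does not apply, and no uniform trace estimate follows. Consequently, when $g_1<g_0$ the negative term $(g_1-g_0)\cH^1(C_k)$ cannot be absorbed by your lower bound $\alpha_k^{int}+\int_{J_{E_k}}\phi\ge 2\phi({\bf e_2})$, which is too weak by exactly a factor of two.

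The paper's proof addresses precisely this obstruction: rather than showing $\cH^1(C_k)\to 0$, it establishes in Step~1 that the interior contribution already dominates $2\int_{\Sigma_k\setminus(J_{E_k}\cup C_k)}\phi+\int_{C_k}\phi$. The extra $\int_{C_k}\phi$ is what compensates, via (e), for the possibly negative $(g_1-g_0)\cH^1(C_k)$. This factor of two is obtained not from a direct application of Proposition~\ref{prop:anisotropic_chambolle} to $u_k$, but by first enclosing each connected piece of $K_k=(\Omega_k\cap\p E_k)\cup J_{E_k}\cup C_k$ in the boundary of a closed convex set $V_i^k$, defining $v_k:=u_k\chi_{U_1\setminus\cup V_i^k}+\eta\chi_{\cup V_i^k}$, and exploiting that almost every slicing line meets $J_{v_k}=\bigcup\p V_i^k$ in at least two points, yielding $\liminf\int_{J_{v_k}}\phi\ge 2\int_{I_1}\phi$. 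This geometric doubling is the missing ingredient in your approach.
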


\begin{proof}
Denote the left-hand-side of \eqref{jump_estimate_blow_up} by $\alpha_k.$ We suppose that $\sup_k |\alpha_k|<\infty$ so that by  \eqref{phi_cond1} 
\begin{equation}\label{shdge}
\sup\limits_k \cH^1(\p E_k)\le M 
\end{equation}
for some  $M>0.$ Moreover, passing to a not relabelled subsequence if necessary, we assume that 
$$
\liminf\limits_{k\to\infty} \alpha_k = \lim\limits_{k\to\infty} \alpha_k.
$$
By assumption (b), $\Sigma_k$ is ``very close'' $I_2,$ hence, by the area formula \cite[Theorem 2.91]{AFP:2000} for any $K_k\subset \Sigma_k$ one has  
$$
\begin{aligned}
&\limsup\limits_{k\to\infty} 
\Big|\int_{\pi(K_k)}\phi({\bf e_2})d\cH^1 - \int_{K_k}
\phi(\nu_{\Sigma_k})d\cH^1 \Big|\\
&\le 
\limsup\limits_{k\to\infty} 
 \int_{\pi(K_k)}|\phi({\bf e_2}) -\phi(l_k',1)| d\cH^1 \le \limsup\limits_{k\to\infty} 
 \int_{\pi(K_k)}|\phi({\bf e_2} -(l_k',1))| d\cH^1 \\
 & = \limsup\limits_{k\to\infty} 
 \int_{\pi(K_k)}\phi(0,1)\,|l_k'| d\cH^1 =0,
\end{aligned}
$$
where in the last inequality and in the first equality we used that $\phi$ is a norm, and the last equality follows from $|l_k'|\le \frac1k.$
Hence,
\begin{equation}\label{projection_close_toProjected}
\lim\limits_{k\to\infty} 
\Big|\int_{\pi(K_k)}\phi({\bf e_2})d\cH^1 - \int_{K_k}
\phi(\nu_{\Sigma_k})d\cH^1 \Big| = 0.   
\end{equation}

We divide the proof into two steps.

{\it Step 1.} For shortness, let  $J_k:=J_{E_k}$ and $C_k:=\Sigma_k\setminus\overline{\p^*E_k}.$ We claim that for any $\delta\in(0,1)$ there exists $k_\delta^1>0$  such that 
for any $k>k_\delta^1,$ 
\begin{align}\label{c_k_1ning_ulchovi}
 \int_{U_1\cap \Omega_k \cap\p^*E_k} & \phi(\nu_{E_k})d\cH^1
+2\int_{U_1\cap \Omega_k \cap (E_{k}^{(1)}\cup E_k^{(0)}) \cap\p E_k}
\phi(\nu_{E_k})d\cH^1 \no \\
&\ge 
2 \int_{U_1\cap \Sigma_k\setminus (J_k\cup C_k)} \phi(\nu_{\Sigma_k})d\cH^1
+\int_{U_1\cap C_k} \phi(\nu_{\Sigma_k})d\cH^1- \delta.
\end{align}

Indeed, by adding to both sides of \eqref{c_k_1ning_ulchovi} the quantity 
$2\int_{U_1\cap J_k} \phi(\nu_{\Sigma_k})d\cH^1
+\int_{U_1\cap C_k} \phi(\nu_{\Sigma_k})d\cH^1, $ \eqref{c_k_1ning_ulchovi} is equivalent to
\begin{align}\label{c_k_1ning_ulchovi1}
\int_{U_1\cap ((\Omega_k \cap\p^*E_k) \cup C_k)}  \phi(\nu_{\p E_k\cup C_k})d\cH^1
&+2\int_{U_1\cap \Omega_k \cap (((E_{k}^{(1)}\cup E_k^{(0)}) \cap\p E_k)\cup J_k)}
\phi(\nu_{E_k})d\cH^1 \no \\
&\ge 
2 \int_{U_1\cap \Sigma_k} \phi(\nu_{\Sigma_k})d\cH^1- \delta,
\end{align}
and hence, we will prove \eqref{c_k_1ning_ulchovi1}. 

Note that since $J_k\subset\Sigma_k$ is $\cH^1$-rectifiable, given $\delta\in(0,1)$ there exists a finite union $R_k$ of intervals of $\Sigma_k$ such that 
\begin{equation}\label{recall_bu}
J_k\cup L_k\subset R_k\qquad\text{and}\qquad \cH^1(R_k\setminus (J_k\cup L_k))<\frac{\delta}{5c_2}, 
\end{equation}
where $c_2>0$ is given in \eqref{phi_cond1}.
Possibly slightly modifying $u_k$ around the (approximate) continuity points of $R_k$ and around the boundary of the voids $U_2\setminus E_k$ we assume that $J_k:=R_k,$ $L_k=\emptyset$ and  $J_{u_k}=(\Omega_k\cap \p E_k)\cup C_k\cup J_k$ (up to a $\cH^1$-negligible set).

%

Let $K_k=U_1\cap (\overline{\Omega_k\cap  \partial E_k}\cup \overline{J_k\cup C_k}).$ By relative openness of $C_k=\Sigma_k\setminus \cl{\p^*E_k}$ and $J_k$ in $\Sigma_k$ and assumption (d), $K_k$ is a union $\bigcup_{j} K_k^j$ of at most countably many  pairwise disjoint connected rectifiable sets $K_k^j$ relatively closed in $U_1.$ 

Let $\co{K_k^j}$ denote the closed convex hull of $K_k^j.$  Observe that if $K_k^j$ is not a segment, then the interior of $\co{K_k^j}$ is non-empty and 
\begin{align}\label{karna_tuni}
 \int_{K_k^j\cap (\p^*E_k\cup C_k)} \phi(\nu_{K_k})d\cH^1 
+ 2\int_{K_k^j\cap (((E_{k}^{(1)}\cup E_k^{(0)}) \cap\p E_k)\cup J_k)}
\phi(\nu_{K_k})d\cH^1 \nonumber\\ 
\ge \int_{\p \co{K_k^j}}
\phi(\nu_{\co{K_k^j}})d\cH^1. 
\end{align}

Now we define the minimal union of disjoint closed convex sets containing $K_k$ as follows.
For every $k\ge1$ let us define the sequences $\{D_i^k\}_i$ of pairwise disjoint subsets of $\N$ and $\{V_i^k\}_i$ of pairwise disjoint closed convex subsets of $\overline{U_1}$ as follows.  
Let $D_0^k:=\{1\}$ and $V_0^k:=\emptyset.$ Suppose that for some $i\ge1$ the sets $D_0^k,\ldots,D_{i-1}^k$ and $V_0^k,\ldots,V_{i-1}^k$ are defined and
let $j_o$ be the smallest element of $\N\setminus \bigcup\limits_{j=0}^{i-1}D_j^k.$ Define 
$$
D_i^k:=\Big\{h\in \N\setminus \bigcup\limits_{j=0}^{i-1}D_j^k:\, \co{K_k^{j_o}}\cap \co{K_k^h}\ne\emptyset\Big\} 
$$
and 
$$
V_i^k:=\co{\cup_{h\in D_i^k}\, K_k^h}.
$$
Note that $j_o\in D_i^k.$
As in \eqref{karna_tuni} we observe that  
\begin{align}\label{basme_basme}
\sum\limits_{h\in D_i^k} \int_{K_k^h\cap (\p^*E_k\cup C_k)} \phi(\nu_{K_k})d\cH^1 
+ 2\int_{K_k^h\cap (((E_{k}^{(1)}\cup E_k^{(0)}) \cap\p E_k)\cup J_k)}  
\phi(\nu_{K_k})d\cH^1 \nonumber \\
 \ge \int_{\p V_i^k}
\phi(\nu_{V_i^k}^{})d\cH^1.
\end{align}
Then $K_k\subset \bigcup\limits_i V_i^k$ and by \eqref{basme_basme} 
\begin{align*}
\int_{K\cap (\p^*E_k\cup C_k)} \phi(\nu_{K_k})d\cH^1 
+ &2\int_{K_k\cap (((E_{k}^{(1)}\cup E_k^{(0)}) \cap\p E_k)\cup J_k)}
\phi(\nu_{K_k})d\cH^1\\
&\ge \sum\limits_{i\notin T}\int_{\p V_i^k}
\phi(\nu_{V_i^k}^{})d\cH^1 + 2\sum\limits_{i\in T}\int_{\p V_i^k} \phi(\nu_{V_i^k}^{})d\cH^1, 
\end{align*}
where $T$ is the set of all indices $i$ for which $V_i^k$ is a line segment. For every $i\in T$ we replace the segment $V_i^k$ with a closed rectangle $Q_i$ containing $V_i^k$ and not intersecting any $V_j^k,$ $j\ne i,$ such that 
$$
2\int_{V_i^k}
\phi(\nu_{V_i^k}^{})d\cH^1\ge \int_{\p Q_i}
\phi(\nu_{Q_i})d\cH^1 -\frac{\delta}{10\cdot2^i}
$$
Therefore, redefining  $V_i^k:=Q_i$ we obtain 
\begin{align}\label{hohohasd}
\int_{K_k\cap (\p^*E_k\cup C_k)} \phi(\nu_{K_k})d\cH^1 
+ 2\int_{K_k\cap (((E_{k}^{(1)}\cup E_k^{(0)}) \cap\p E_k)\cup J_k)}
\phi(\nu_{K_k})d\cH^1 \nonumber\\ 
\ge \sum\limits_{i}\int_{\p V_i^k}
\phi(\nu_{V_i^k}^{})d\cH^1 -\frac{\delta}{5}. 
\end{align}
Note that $U_1\setminus \bigcup\limits_i V_i^k$ is a Lipschitz open set and $J_{u_k}\cap (U_1\setminus \bigcup\limits_i V_i^k) =\emptyset,$ and hence, by the Poincar\'e-Korn inequality, $u_k\in H^1(U_1\setminus \bigcup\limits_i V_i^k).$ Moreover,   \eqref{shdge}, \eqref{phi_cond1} and \eqref{hohohasd} imply   
$c_1\sum\limits_i \cH^1(\p V_i^k)<M+1,$
thus, there exists $\eta\in\R^2$ such that the set 
$$
\{x\in U_1\cap \bigcup\limits_i \p V_i^k:\,\text{trace of $u_k\big|_{U_1\setminus \cup V_i^k}$ is equal to $\eta$}\}
$$
is $\cH^1$-negligible (see \cite[Proposition 2.6]{Ma:2012}). Therefore, $v_k:=u_k\chi_{U_1\setminus \cup V_i^k}^{} +\eta \chi_{\cup V_i^k}^{}$ belongs to $GSBD^2(U_1;\R^2),$ $J_{v_k}= U_1\cap \cup_i \p V_i^k.$ By assumptions (a), (c) and (h), $v_k\to v:=u^+\chi_{U_1^+} +u^-\chi_{U_1\setminus U_1^+},$ and by assumption (g) and inequalities \eqref{phi_cond1},   and \eqref{shdge},
$$
\sup\limits_k \int_{U_1}|\str{v_k}|^2dx + \cH^1(J_{v_k}) \le 
\sup\limits_k \int_{U_1}|\str{u_k}|^2dx + \frac{M+1}{c_1}<\infty. 
$$
Repeating the same arguments of the proof of \eqref{liminf_katta_sasa} we obtain 
\begin{equation}\label{kerakli_nuqta}
2\int_{I_1}\phi({\bf e_2})d\cH^1 \le  \liminf\limits_{k\to\infty} \int_{J_{v_k}}\phi(\nu_{J_{v_k}})d\cH^1.
\end{equation}
Note that the direct application of Proposition \ref{prop:anisotropic_chambolle} would not be enough since 
we would obtain the estimate:
$$
\int_{I_1}\phi({\bf e_2})d\cH^1 \le  \liminf\limits_{k\to\infty} \int_{J_{v_k}}\phi(\nu_{J_{v_k}})d\cH^1 
$$
without coefficient $2$ on the left. 

From \eqref{hohohasd} and \eqref{kerakli_nuqta} it follows that there exists $k_\delta^1>0$ such that  
\begin{align}\label{ahteyrea}
\int_{U_1\cap ((\Omega_k\cap \p^*E_k)\cup C_k)} \phi(\nu_{\p E_k\cup C_k}^{})d\cH^1 
+ & 2\int_{U_1\cap (((E_{k}^{(1)}\cup E_k^{(0)}) \cap\p E_k)\cup J_k)}
\phi(\nu_{E_k}^{})d\cH^1\no \\
&\ge 2 \int_{I_1}\phi({\bf e_2})d\cH^1 - \frac{2\delta}{5} 
\end{align} 
for any $k\ge k_\delta^1.$
By \eqref{projection_close_toProjected} we may suppose that for such $k,$ 
$$
\int_{I_1}\phi({\bf e_2})d\cH^1 \ge \int_{U_1\cap \Sigma_k} \phi(\nu_{\Sigma_k})d\cH^1 -\frac{\delta}{5},
$$
thus, in view of \eqref{recall_bu}, from  \eqref{ahteyrea} we get \eqref{c_k_1ning_ulchovi1}.


{\it Step 2.} Finally we prove \eqref{jump_estimate_blow_up}. Let $k_{\delta/2}^1$ be given by Step 1 with $\delta/2$ in place of $\delta.$ From \eqref{c_k_1ning_ulchovi} for $k>k_{\delta/2}^1$  we have 
\begin{align}\label{raja_or_rani}  
\int_{U_1\cap \Omega_k \cap\p^*E_k} \phi(\nu_{E_k})d\cH^1 
+2\int_{U_1\cap \Omega_k \cap (E_{k}^{(1)}\cup E_k^{(0)}) \cap\p E_k}
\phi(\nu_{E_k})d\cH^1\nonumber \\  
 + \int_{U_1\cap \Sigma_k\setminus \p E_k} g_0 d\cH^1 
+ \int_{U_1\cap \Sigma_k\cap E_k^{(0)}\cap \p E_k} \big(\phi(\nu_{\Sigma_k})
+ g_1\big)d\cH^1\nonumber \\
 +  \int_{U_1\cap \Sigma_k\cap \p^*E_k\setminus J_{E_k}} g_1d\cH^1 +
\int_{U_1\cap J_{E_k}} \big(\phi(\nu_{\Sigma_k}) + g_0  \big)\,d\cH^1\no\\ 
\ge  \int_{U_1\cap \Sigma_k\setminus \p E_k} \big(\phi(\nu_{\Sigma_k}) +g_0\big) d\cH^1 
+ \int_{U_1\cap \Sigma_k\cap E_k^{(0)}\cap \p E_k} \big(2\phi(\nu_{\Sigma_k})
+ g_1\big)d\cH^1\nonumber \\
  +  \int_{U_1\cap \Sigma_k\cap \p^*E_k\setminus J_{E_k}} \big(2\phi(\nu_{\Sigma_k})+g_1\big)d\cH^1 +
\int_{U_1\cap J_{E_k}} \big(\phi(\nu_{\Sigma_k}) + g_0  \big)\,d\cH^1- \frac{\delta}{2}.
\end{align} 
Thus, \eqref{projection_close_toProjected} implies that there exists $k_\delta>k_{\delta/2}^1$ such that 
\begin{align*}
&\int_{U_1\cap \Sigma_k\setminus \p E_k} \big(\phi(\nu_{\Sigma_k}) +g_0\big) d\cH^1 
+ \int_{U_1\cap \Sigma_k\cap E_k^{(0)}\cap \p E_k} \big(2\phi(\nu_{\Sigma_k})
+ g_1\big)d\cH^1\nonumber \\
& +  \int_{U_1\cap \Sigma_k\cap \p^*E_k\setminus J_{E_k}} \big(2\phi(\nu_{\Sigma_k})+g_1\big)d\cH^1 +
\int_{U_1\cap J_{E_k}} \big(\phi(\nu_{\Sigma_k}) + g_0  \big)\,d\cH^1\\
& \ge \int_{I_1\cap \pi(\Sigma_k\setminus \p E_k)} \big(\phi({\bf e_2}) +g_0\big) d\cH^1 
+ \int_{I_1\cap \pi(\Sigma_k\cap E_k^{(0)}\cap \p E_k)} \big(2\phi({\bf e_2})
+ g_1\big)d\cH^1\nonumber \\
& +  \int_{I_1\cap \pi(\Sigma_k\cap \p^*E_k\setminus J_{E_k})} \big(2\phi({\bf e_2})+g_1\big)d\cH^1 +
\int_{I_1\cap \pi(J_{E_k})} \big(\phi({\bf e_2}) + g_0  \big)\,d\cH^1-\frac{\delta}{2}\\
&\ge \int_{I_1} \big(\phi({\bf e_2}) +g_0\big) d\cH^1 -\frac{\delta}{2}, 
\end{align*}
where we used also that $g_0$ and $g_1$ are constants. Now \eqref{jump_estimate_blow_up} follows from  \eqref{raja_or_rani} and the last inequality.
\end{proof}
\black 

\begin{proof}[Proof of Proposition \ref{prop:lsc_surface_energy}]
Without loss of generality, we suppose that the 
limit in the left-hand side of \eqref{final_call_lsc} is reached and finite. 
Define
$$
g_+(x,s) = g(x,s)  - g(x,0) + \varphi(x,\nu_\Sigma(x)).
$$
Then $g_+$ is Borel, $g_+(\cdot,s)\in L^1(\Sigma)$ for $s=0,1,$ 
and by \eqref{beta_condition}, $g_+\ge0$ and 
\begin{equation}\label{g_qushuv_condition}
|g_+(x,1) - g_+(x,0)| \le \varphi(x,\nu_\Omega (x)) 
\end{equation}
for $\cH^1$-a.e.\ $x\in\Sigma.$ 
Consider the sequence $\mu_k$ of Radon measures  in $\R^2,$
associated to $\cS(A_k,J_{A_k};\varphi,g),$ defined
at Borel sets $B\subset\R^2$ by
\begin{align*}
\mu_k(B):=& \int_{B\cap \Omega \cap\p^*A_k} \varphi(x,\nu_{A_k})d\cH^1 
+2\int_{B\cap \Omega \cap (A_k^{(1)}\cup A_k^{(0)})\cap\p A_k}
\varphi(x,\nu_{A_k})d\cH^1\nonumber \\  
 + &\int_{B\cap \Sigma\setminus \p A_k} g_+(x,0)d\cH^1 
+ \int_{B\cap \Sigma\cap A_k^{(0)}\cap \p A_k} \big(\varphi(x,\nu_\Sigma)
+ g_+(x,1)\big)d\cH^1\nonumber \\
 + &\int_{B\cap \Sigma\cap \p^*A_k\setminus J_{A_k}} g_+(x,1)d\cH^1 +
\int_{B\cap J_{A_k}} \big(\varphi(x,\nu_\Sigma) + g_+(x,0)  \big)\,d\cH^1. 
\end{align*}
Analogously, we define the positive Radon measure $\mu$ in $\R^2$
associated to $\cS(A,J_A;\varphi,g),$ writing $A$ in place of 
$A_k$ in the definition of $\mu_k.$
By \eqref{finsler_norm},  assumption $A_k\overset{\tau_\fA}{\to} A$
and the nonnegativity of $g_+,$  
$$
\sup\limits_{k\ge1} \mu_k(\R^2) \le 2c_2 \sup\limits_{k\ge1}
\cH^1(\p A_k) + 
\sum\limits_{s=0}^1 \int_\Sigma g_+(x,s)d\cH^1<\infty. 
$$
Thus, by compactness  there exists a 
(not relabelled) subsequence $\{\mu_k\}$
and a non-negative bounded Radon measure $\mu_0$ in $\R^2$ such 
that $\mu_k\overset{*}\wk \mu_0$ as $k\to\infty.$ 
We claim that
\begin{equation}\label{mu_0_katta_mu}
 \mu_0\ge \mu,
\end{equation}
which implies the assertion of the proposition.
In fact, \eqref{final_call_lsc} follows from \eqref{mu_0_katta_mu}, 
the weak*-convergence of $\mu_k,$
and the equalities
$$
\mu_k(\R^2) = \cS(A_k,J_{A_k};\varphi,g) + 
\int_{\Sigma} \big(\varphi(x,\nu_\Sigma) -
g(x,0)\big)d\cH^1
$$
and 
$$
\mu(\R^2) = \cS(A,J_{A};\varphi,g) + 
\int_{\Sigma} \big(\varphi(x,\nu_\Sigma) -
g(x,0)\big)d\cH^1.
$$ 
Since $\mu_0$ and $\mu$ are 
non-negative, and $\mu<<\cH^1\res (\p A\cup\Sigma),$ 
by Remark \ref{rem:structure_of_pA} 
to prove \eqref{mu_0_katta_mu} it suffices
to establish the following lower-bound estimates for densities
of $\mu_0$ with respect to $\cH^1$ restricted to various parts of $\p A:$
\begin{subequations}\label{eq:litdiff}
\begin{align}
& \frac{d \mu_0}{d \cH^1\res(\Omega \cap \p^*A)}\,(x) 
\ge \varphi(x,\nu_A(x))\qquad\text{for 
$\cH^1$-a.e.\ $x\in \Omega \cap \p^*A,$} 
\label{eq:at_reduced_boundary}\\ 
& \frac{d \mu_0}{d \cH^1\res(A^{(0)}\cap \p A)}\,(x)\ge 2\varphi(x,\nu_A(x)) 
\qquad \text{for $\cH^1$-a.e.\ $x\in \Omega \cap  A^{(0)}\cap \p A,$}
\label{eq:at_external_filament}\\
& \frac{d \mu_0}{d \cH^1\res(A^{(1)}\cap \p A)}\,(x)\ge 2\varphi(x,\nu_A(x)) 
\qquad \text{for $\cH^1$-a.e.\ $x\in \Omega \cap  A^{(1)}\cap \p A,$}
\label{eq:at_internal_crack}\\
& \frac{d \mu_0}{d \cH^1\res (\Sigma\setminus \p A)}\,(x) = 
g_+(x,0) \qquad  
\text{for $\cH^1$-a.e.\ $x\in \Sigma\setminus \p A,$ }
\label{eq:at_not_boundary_of_A}\\
& \frac{d \mu_0}{d \cH^1\res (\Sigma\cap A^{(0)}\cap \p A)}\,(x) \ge 
\varphi(x,\nu_{\Sigma}(x)) + g_+(x,1)\nonumber \\
&\hspace{6cm}\text{for $\cH^1$-a.e.\ $x\in \Sigma\cap A^{(0)}\cap \p A,$ }
\label{eq:filament_on_Sigma}\\
& \frac{d\mu_0}{d\cH^1\res(\Sigma\cap \p^*A)} \ge
g_+(x,1)\qquad 
\text{for $\cH^1$-a.e.\ $x\in \Sigma\cap\p^*A,$ }
\label{eq:contact_of_A}\\
& \frac{d \mu_0}{d \cH^1\res J_A}\,(x) \ge 
\varphi(x,\nu_\Omega (x)) + g_+(x,0) \qquad 
\text{for $\cH^1$-a.e.\ $x\in J_A.$}
\label{eq:at_delaminations}
\end{align}
\end{subequations}

We separately outline below the proofs of
\eqref{eq:at_reduced_boundary}-\eqref{eq:at_delaminations}. 

{\it Proof of \eqref{eq:at_reduced_boundary}.} Consider points 
$x\in \Omega\cap \p^*A$ such that 
\begin{itemize}
\item[(a1)] $\nu_A(x)$ exists;

\item[(a2)] $x$ is a Lebesgue point of 
$y\in \p^*A \mapsto \varphi(y,\nu_A(y)),$ i.e.,
$$
\lim\limits_{r\to0} \frac{1}{2r}\int_{U_r\cap\p^*A} |\varphi(y,\nu_A(y))- \varphi(x,\nu_A(x))|d\cH^1(y)=0; 
$$

\item[(a3)] $\frac{d\mu_0}{d\cH^1\res(\Omega\cap\p^*A)}(x)$ exists and is finite.
\end{itemize}
By the definition of $\p^*A,$ continuity of $\phi,$ the Borel regularity of $y\in \p^*A \mapsto \varphi(y,\nu_A(y)),$
and the Besicovitch Derivation Theorem, the set of points $x\in \Omega\cap \p^*A$ not satisfying these conditions is $\cH^1$-negligible, hence we prove \eqref{eq:at_reduced_boundary}
for $x\in\Omega\cap\p^*A$ satisfying (a1)-(a3). Without loss of generality we suppose $x=0$ and $\nu_A(x) = {\bf e_2}.$
By Lemma \ref{lem:f_convergence}, $A_k\to A$ in $L^1(\R^2),$  therefore, $D\chi_{A_k} \overset{*}\wk  D\chi_A,$ and hence, by the Besicovitch Derivation Theorem \cite[Theorem 2.22]{AFP:2000} and the definition  \eqref{essential_boundary00} of the reduced boundary, 
$$
\nu_{A_k}\cH^1\res(\Omega \cap \p^*A_k) \overset{*}\wk  
\nu_A\cH^1\res(\Omega \cap \p^*A).
$$
Then for a.e.\ $r>0$ such that $U_r\strictlyincluded \Omega$
and $\cH^1(\p U_r\cap \p A)=0,$ the Reshetnyak Lower-semicontinuity Theorem  \cite[Theorem 2.38]{AFP:2000}  implies 
\begin{align*}
\mu_0(U_r) = & \liminf\limits_{k\to\infty} \mu_k(U_r) 
\ge \liminf\limits_{k\to\infty} \int_{U_r\cap \p^*A_k} 
\varphi(y,\nu_{A_k})\,d\cH^1 
\ge  \int_{U_r\cap \p^*A} \varphi(y,\nu_A)\,d\cH^1
\end{align*}

Therefore, by \cite[Theorem 1.153]{FL:2007} and assumption (a2),
\begin{align*}
\frac{d\mu_0}{d\cH^1\res(\Omega\cap\p^*A)}\,(0) = 
\lim\limits_{r\to 0 }  \frac{\mu_0(U_r)}{2r}    
\ge  \liminf\limits_{r\to0} \frac{1}{2r} 
\int_{U_r\cap \p^*A} \varphi(y,\nu_A)\,d\cH^1
= \varphi(0,{\bf e_2}).
\end{align*}
\smallskip

{\it Proof of \eqref{eq:at_external_filament}.}
Consider points $x\in \Omega\cap A^{(0)}\cap\p A$ such that 
\begin{itemize}
\item[(b1)] $\theta^*(\p A,x) = \theta_*(\p A,x)=1;$

\item[(b2)] $\nu_A(x)$ exists;

\item[(b3)] $\cl{U_1}\cap \sigma_{\rho,x}(\p A)\overset{\cK}{\to} \cl{U_1}\cap T_x,$
where $T_x$ is the approximate tangent line to $\p A$
and $\sigma_{\rho,x}$ is given by \eqref{blow_up_map};

\item[(b4)] $\frac{d\mu_0}{d\cH^1\res(A^{(0)}\cap\p A)}$ exists and finite.
\end{itemize}
By the $\cH^1$-rectifiability of $\p A,$ Proposition \ref{prop:tangent_line_conv} (applied with the closed connected component $K$ of $\p A$ containing $x$) and the Besicovitch Derivation Theorem,  the set of points  $x\in A^{(0)}\cap \p A$ not satisfying these conditions is $\cH^1$-negligible, hence we prove \eqref{eq:at_external_filament} for $x\in A^{(0)}\cap \p A$ satisfying (b1)-(b4).  Without loss of generality we assume $x=0$, $\nu_A(x) = {\bf e_2}$ and $T_x = T_0$ is the $x_1$-axis.

Let us choose a sequence $\rho_n\searrow 0$ such that 
\begin{equation}\label{som_conditi}
\mu_0(\p U_{\rho_n})=0\qquad\text{and}\qquad  \lim\limits_{k\to\infty} 
\mu_k(\cl{U_{\rho_n}}) = \mu_0(U_{\rho_n}) 
\end{equation}
and 
\begin{equation}\label{hosila_filament}
\frac{d\mu_0}{d\cH^1\res(A^{(0)}\cap \p A)}(0) = \lim\limits_{n\to\infty}
\frac{\mu_0(U_{\rho_n})}{2\rho_n}.
\end{equation}
By Proposition \ref{prop:set_shrinks} (a), (b2) and (b3) imply that 
$\sdist(\cdot,\sigma_{\rho_n}(\p A))\to \dist(\cdot,T_0)$ uniformly in $\cl{U_1}.$ 
Since for any  $n>1,$ $\sdist(\cdot,\p A_k)\to \sdist(\cdot,\p A)$ uniformly in 
$\cl{U_{\rho_n}}$ as $k\to\infty,$  
by a  diagonal argument,   
we find a subsequence $\{A_{k_n}\}$ such that 
$$
\sdist(\cdot,\sigma_{\rho_n} (\p A_{k_n}))\to \dist(\cdot,T_0)\quad\text{uniformly in $\cl{U_1},$} 
$$
as $n\to\infty$ and 
\begin{equation}\label{shutter_island}
\mu_{k_n}(\cl{U_{\rho_n}}) \le \mu_0(U_{\rho_n}) + \rho_n^2  
\end{equation}
for any $n.$
By Lemma \ref{lem:set_shrinks_to_line},
$\cl{U_1}\cap \sigma_{\rho_n}(A_{k_n}) \overset{\cK}\to I_1:=\cl{U_1}\cap T_0.$

From \eqref{finsler_norm},  \eqref{shutter_island}, 
the definition of $\mu_k,$ \eqref{hosila_filament} and (b4) it follows that 
\begin{equation}\label{aviator}
\begin{aligned}
\limsup\limits_{n\to\infty}\frac{\cH^1(\cl{U_\rho} \cap \p A_{k_n})}{2\rho_n} 
\le & 
\limsup\limits_{n\to\infty} \frac{\mu_{k_n}(\cl{U_{\rho_n}})}{2c_1\rho_n}  
\le c_1^{-1}\,\frac{d\mu_0}{\cH^1\res(A^{(0)}\cap \p A)}(0).
\end{aligned}
\end{equation}
By the uniform continuity of $\varphi,$ for every $\epsilon>0$
there exists $n_\epsilon>0$ such that 
\begin{equation}\label{titanik}
\varphi(y,\xi) \ge \varphi(0,\xi) - \epsilon  
\end{equation}
for every $y\in U_{\rho_{n_\epsilon}}.$
Moreover, 
since $\{A_k\}\subset \fA_m,$ the number of
connected components of $\p \sigma_{\rho_n}(A_{k_n})$ 
lying strictly inside $U_1,$ does 
not exceed from $m.$ Hence, applying Lemma \ref{lem:creation_filament}
with $\phi=\varphi(0,\cdot),$ $m_o=m$ and $\delta=\epsilon,$ 
we find $n_\epsilon'>n_\epsilon$
such that for any $n>n_\epsilon',$
\begin{align*}
\int_{U_1\cap \p^*\sigma_{\rho_n}(A_{k_n})} &
\varphi(0,\nu_{\sigma_{\rho_n}(A_{k_n}) })\,d\cH^1\\
&+ 
2\int_{U_1\cap \big((\sigma_{\rho_n}(A_{k_n}))^{(0)}\cup (\sigma_{\rho_n}(A_{k_n}))^{(1)}\big)\cap \p \sigma_{\rho_n}(A_{k_n})} 
\varphi(0,\nu_{\sigma_{\rho_n}(A_{k_n}) })\,d\cH^1\\
&\ge  
2 \int_{I_1}
\varphi(0,{\bf e_2})\,d\cH^1-\epsilon = 4\varphi(0,{\bf e_2})-\epsilon.
\end{align*}
Therefore, by the definition of $\mu_k,$ for such $n$ one has
\begin{align}\label{asaloyoqpyuere}
\mu_{k_n}(\cl{U_{\rho_n}})
\ge & 
\int_{\cl{U_{\rho_n}}\cap \p^* A_{k_n}} 
\varphi(y,\nu_{A_{k_n}})\,d\cH^1 
+ 
2\int_{\cl{U_{\rho_n}}\cap \big(A_k^{(0)}\cup A_k^{(1)}\big)\cap \p A_{k_n}} 
\varphi(y,\nu_{A_{k_n}})\,d\cH^1\no  \\
\ge  &
\int_{\cl{U_{\rho_n}}\cap \p^* A_{k_n}} 
\varphi(0,\nu_{A_{k_n}})\,d\cH^1 
+ 
2\int_{\cl{U_{\rho_n}}\cap \big(A_k^{(0)}\cup A_k^{(1)}\big)\cap \p A_{k_n}} 
\varphi(0,\nu_{A_{k_n}})\,d\cH^1\no \\
&- \epsilon\,\cH^1(\cl{U_{\rho_n}}\cap \p A_k)\no \\
= & \rho_n \Big(\int_{U_1\cap \p^*\sigma_{\rho_n}(A_{k_n})} 
\varphi(0,\nu_{\sigma_{\rho_n}(A_{k_n}) })\,d\cH^1\no \\
&+ 
2\int_{U_1\cap \big((\sigma_{\rho_n}(A_{k_n}))^{(0)}\cup (\sigma_{\rho_n}(A_{k_n}))^{(1)}\big)\cap \p \sigma_{\rho_n}(A_{k_n})} 
\varphi(0,\nu_{\sigma_{\rho_n}(A_{k_n}) })\,d\cH^1\Big)\no \\
& - \epsilon\,\cH^1(\cl{U_{\rho_n}}\cap \p A_{k_n}) \no \\
\ge & 4\rho_n\varphi(0,{\bf e_2}) -\epsilon\rho_n
- \epsilon\,\cH^1(\cl{U_{\rho_n}}\cap \p A_{k_n}) , 
\end{align}
and thus, by
\eqref{hosila_filament}-\eqref{aviator}, 
\begin{align*}
\frac{d\mu_0}{d\cH^1\res(A^{(0)}\cap \p A)}(0) \ge &  
\liminf\limits_{n\to\infty}  \frac{\mu_{k_n}(\cl{U_{\rho_n}})}{2\rho_n} \\
\ge & 2\varphi(0,{\bf e_2}) -\frac{\epsilon}{2} -
\epsilon\,\limsup\limits_{n\to\infty}\frac{\cH^1(\cl{U_{\rho_n}}\cap \p A_k)}
{2\rho_n}\\
\ge &2\varphi(0,{\bf e_2}) -\frac{\epsilon}{2} -
c_1^{-1}\epsilon\, \frac{d\mu_0}{d\cH^1\res(A^{(0)}\cap \p A)}(0).
\end{align*}
Now using assumption (b4) and 
letting $\epsilon\to0^+$ we obtain \eqref{eq:at_external_filament}.
\smallskip

{\it Proof of \eqref{eq:at_internal_crack}}. 
We repeat the same arguments of the proof of 
\eqref{eq:at_external_filament} using Lemma \ref{lem:creation_crack}
in place of Lemma \ref{lem:creation_filament} and Proposition \ref{prop:set_shrinks} (a) in place of Proposition \ref{prop:set_shrinks} (b).
\smallskip 

{\it Proof of \eqref{eq:at_not_boundary_of_A}.}  
Given $x\in \Sigma\setminus \p A,$ there exists $r_x>0$ such that 
$B_{r_x}(x)\cap \p A = \emptyset.$ Since $\p A_k\overset{\cK}\to\p A,$
there exists $k_x$ such that $B_{r_x/2}(x)\cap \p A_k = \emptyset$
for all $k>k_x.$ Thus, for any $r\in (0,r_x/2),$
$$
\mu_k(B_r(x)) = \int_{\Sigma\cap B_r(x)} g_+(y,0)d\cH^1
$$
so that 
$$
\frac{d \mu_0}{d \cH^1\res (\Sigma\setminus \p A)}\,(x)= 
g_+(x,0)
$$
for $\cH^1$-a.e.\ Lebesgue points $x\in \Sigma\setminus \p A$
of $g_+.$
\smallskip

{\it Proof of \eqref{eq:filament_on_Sigma}.}  
Consider points $x\in \Sigma\cap A^{(0)}\cap \p A$ such that
\begin{itemize}
\item[(e1)] $\theta^*(\Sigma\cap \p A,x) = \theta_*(\Sigma\cap\p A,x)=1;$

\item[(e2)] $\nu_\Sigma(x)$ and $\nu_A(x)$ exist (clearly, either $\nu_\Sigma(x)=\nu_A(x)$ or $\nu_\Sigma(x) = -\nu_A(x)$);

\item[(e3)] $\cl{U_1}\cap \sigma_{\rho,x}(\p A)\overset{\cK}{\to} \cl{U_1}\cap T_x,$
where $T_x$ is the approximate tangent line to $\p A;$

\item[(e4)] $x$ is a Lebesgue point of $g_+(\cdot,1),$ i.e., 
$$
\lim\limits_{\rho\to0} \,\frac{1}{2\rho}\int_{\Sigma\cap U_{\rho,\nu_\Sigma(x)}} 
|g_+(y,1)  - g_+(x,1)|d\cH^1(y) = 0;
$$

\item[(e5)] $x$ is a Lebesgue point of $y\in\Sigma\cap \varphi(y,\nu_\Sigma(y)),$
i.e., 
$$
\lim\limits_{\rho\to0} \,\frac{1}{2\rho}\int_{\Sigma\cap U_{\rho,\nu_\Sigma(x)}} 
|\varphi(y,\nu_\Sigma(y))  - \varphi(x,\nu_\Sigma(x))|d\cH^1(y) = 0;
$$

\item[(e6)] $\frac{d \mu_0}{d \cH^1\res \Sigma}\,(x)$ exists and is finite.
\end{itemize}
By the $\cH^1$-rectifiability of $\p A,$ the Lipschitz continuity of $\Sigma$, the Borel regularity of $\nu_\Sigma(\cdot),$ Proposition \ref{prop:tangent_line_conv} (applied with closed connected component $K$ of $\p A$ containing $x$), the continuity of $\varphi$, assumptions on 
$g_+$ and the Besicovitch Derivation Theorem, the set of $x\in \Sigma\cap \p A$ not  satisfying these conditions is $\cH^1$-negligible. Hence, we prove \eqref{eq:filament_on_Sigma} 
for $x$ satisfying (e1)-(e6). Without loss of generality we assume $x=0,$ $\nu_\Sigma(x) = \nu_A(x) = {\bf e_2}$ and $T_x=T_0$ is the $x_1$-axis. Let $r_n\searrow0$  be such that 
$$
\mu_0(\p U_{r_n}) = \cH^1(\p U_{r_n}\cap \Sigma) =0 
$$
and 
\begin{equation}\label{limithada}
\frac{d \mu_0}{d \cH^1\res (\Sigma\cap A^{(0)}\cap \p A)}\,(x) = 
\lim\limits_{n\to\infty}
\frac{\mu_0(U_{r_n})}{2r_n}.  
\end{equation}
By the weak*-convergence, for any $h\ge1$ we have
$$
\lim\limits_{k\to\infty} \mu_k(\overline{U_{r_n}}) = \mu_0(U_{r_n}).  
$$
By Proposition \ref{prop:set_shrinks} (b), (e2) and (e3) imply 
$\sdist(\cdot, \sigma_{r_n}(\p A)) \to \dist(\cdot, T_0)$ uniformly in $\cl{U_1}.$
Since for any $n,$
$\sdist(\cdot,\sigma_{r_n}(\p A_k)) \to \sdist(\cdot,\sigma_{r_n}(\p A))$ uniformly in $\cl{U_1}$ as $k\to\infty,$ 
by a diagonal argument, we can find a subsequence 
$\{k_n\}$ and not relabelled subsequence $\{r_n\}$ such that 
\begin{equation}\label{danflkjegeqw}
\mu_{k_n}(\overline{U_{r_n}}) \le \mu_0(U_{r_n}) + 
r_n^2  
\end{equation}
for any $n\ge1$ and $\sdist(\cdot,\sigma_{r_n}(A_k)) \to \sigma(\cdot,T_0)$ uniformly in $\cl{U_1}$ as $k\to\infty,$ 
thus, by Lemma \ref{lem:set_shrinks_to_line}, 
\begin{equation}\label{kur_konb_dafg}
U_1 \cap \sigma_{r_n}(A_{k_n})  \overset{\cK}\to I_1:=U_1\cap T_0
\end{equation}
as $n\to\infty.$ 
Notice also that by (e2) and Proposition \ref{prop:tangent_line_conv} (applied with the closed connected component $K$ of $\Sigma$),
$U_1\cap \sigma_{r_n}(\Sigma)\overset{\cK}{\to} I_1$ as $n\to\infty.$

\black 
By \eqref{g_qushuv_condition},
$$
\varphi(y,\nu_\Sigma(x))+g_+(y,0)\ge g_+(y,1)
$$
for $\cH^1$-a.e.\ on $\Sigma,$ in particular on $J_{A_k},$
hence, by  Remark \ref{rem:structure_of_pA}  and the definition of 
$\mu_k,$
\begin{align*}
\mu_{k_n}(\overline{U_{r_n}}) \ge & 
\int_{\overline{U_{r_n}} \cap \Omega \cap\p^*A_{k_n}} 
\varphi(y,\nu_{A_{k_n}})d\cH^1 
+2\int_{\overline{U_{r_n}} \cap \Omega \cap \big(A_{k_n}^{(0)} \cup A_{k_n}^{(1)}\big)\cap\p A_{k_n}} 
\varphi(y,\nu_{A_{k_n}})d\cH^1\\  
+ & \int_{U_{r_n}\cap \Sigma} g_+(y,1)d\cH^1
+ \int_{U_{r_n} \cap \Sigma\cap A_{k_n}^{(0)}\cap \p A_{k_n}} 
\varphi(y,\nu_\Sigma)d\cH^1\\
 +&\int_{U_{r_n} \cap \Sigma\setminus \p A_{k_n}} 
\big(g_+(y,0) - g_+(y,1)\big)d\cH^1
\end{align*}
Adding and subtracting 
$\int_{\overline{U_{r_n}} \cap \Sigma\cap \p^* A_{k_n}}
\phi(y,\nu_{A_{k_n}})d\cH^1$ to the right 
and using \eqref{g_qushuv_condition} once more in the 
integral over $U_{r_n} \cap \Sigma\setminus \p A_{k_n}$
we get 
\begin{align}\label{mu_nB_n}
\mu_{k_n}(\overline{U_{r_n}}) \ge &  \int_{\overline{U_{r_n}} \cap \p^*A_{k_n}}
\varphi(y,\nu_{A_{k_n}})d\cH^1 
+2\int_{\overline{U_{r_n}} \cap\big( A_{k_n}^{(0)}\cup A_{k_n}^{(1)}\big) \cap\p A_{k_n}} 
\varphi(y,\nu_{A_{k_n}})d\cH^1\nonumber \\  
& + \int_{U_{r_n}\cap \Sigma} g_{\red + \black}(y,1)d\cH^1 
- \int_{U_{r_n} \cap \Sigma} 
\varphi(y,\nu_\Sigma(y))d\cH^1 
\end{align}
By the uniform continuity of
$\varphi,$ given $\epsilon\in(0,1)$ there exists $n_\epsilon>0$ such that 
$$
|\varphi(y,\nu) -  \varphi(0,\nu)| < \epsilon
$$
for all $y\in U_{r_n},$ $\nu\in\S^1$ and $n>n_\epsilon.$
We suppose also that Lemma \ref{lem:creation_filament}
holds with $n_\epsilon$ when $\delta=\epsilon.$ 
Since the number of connected components of $\p A_{k_n}$ 
lying strictly inside $U_{r_n}$ is not greater than $m,$
in view of \eqref{kur_konb_dafg} and the non-negativity of 
$g_+,$ as in \eqref{asaloyoqpyuere} for all $n>n_\epsilon$ we obtain
\begin{align}\label{kuch_bahoas} 
\mu_{k_n}(\overline{U_{r_n}}) \ge &
4r_n \varphi(0,{\bf e_2}) - \epsilon r_n - 
\epsilon\cH^1(\overline{U_{r_n}\cap \p A_{k_n}})\no \\
& + \int_{U_{r_n}\cap \Sigma} g_{\red + \black}(y,1)d\cH^1 
- \int_{U_{r_n} \cap \Sigma} 
\varphi(y,\nu_\Sigma(y))d\cH^1. 
\end{align}
By the non-negativity of $g_+,$ \eqref{finsler_norm}  and \eqref{danflkjegeqw},
$$
\begin{aligned}
\cH^1(\overline{U_{r_n}\cap \p A_{k_n}}) \le &  \cH^1(U_{r_n}\cap\Omega\cap \cap \p A_{k_n}) + \cH^1(U_{r_n}\cap\Sigma\cap \cap \p A_{k_n})\\
\le &\frac{\mu_{k_n}(U_{r_n})}{c_1} +   \cH^1(U_{r_n}\cap\Sigma)\le \mu_0(U_{r_n}) +r_n^2 +\cH^1(U_{r_n}\cap\Sigma) 
\end{aligned}
$$
thus again using \eqref{danflkjegeqw}, also  \eqref{limithada}, 
\eqref{mu_nB_n} and \eqref{kuch_bahoas}, as well as (e1) and (e3)-(e5)  we establish
\begin{align*}
&\frac{d\mu_0}{d\cH^1\res(\Sigma\cap A^{(0)}\cap\p A)}(0)
= \lim\limits_{h\to\infty} \frac{\mu_0(U_{r_n})}{2r_n} 
\ge  \limsup\limits_{h\to\infty} 
\frac{\mu_{k_n}(U_{r_n})}{2r_n} \\
& \ge 2\varphi(0,{\bf e_2}) - \frac{3\epsilon}{2} - 
\epsilon\frac{d\mu_0}{d\cH^1\res(\Sigma\cap A^{(0)}\cap\p A)}(0)
 + g_+(0,1)d\cH^1 
- \varphi(0,\nu_\Sigma(0)).
\end{align*}
Now  letting  $\epsilon\to0$ and using $\nu_\Sigma(0)={\bf e_2}$
we obtain \eqref{eq:filament_on_Sigma}.
\smallskip

{\it Proof of \eqref{eq:contact_of_A}.}
Since  $g_+$ is non-negative and  $\chi_{A_k}\to \chi_A$ in $L^1(\R^2),$ the inequality follows from \cite[Lemma 3.8]{ADT:2017} \red applied to $u_k=\chi_{A_k},$ $u=\chi_A$ and
$$
\cF_U(v,B) := \int_{U\cap B\cap J_v\setminus \Sigma} \varphi(x,\nu_{J_v})d\cH^1 + \int_{\Sigma\cap B} g_+(x,1)v^+(x)\,d\cH^1, 
$$
where $U:=\Omega$. More precisely, by \eqref{beta_condition},
\begin{equation}\label{passage_to_dalmaso}
\mu_k(B) \ge  \cF_U(\chi_{A_k},B), 
\end{equation}
so that the total variation of $\cF_U(\chi_{A_k},\cdot)$ is uniformly bounded. Therefore,  passing to a further not relabelled subsequence if necessary, we have $\cF_U(\chi_{A_k},\cdot) \wk^* \bar\mu$ as $k\to\infty$ for some bounded positive Radon measure $\bar\mu$ in $\R^2.$ By \cite[Lemma 3.8, Eq. 3.15]{ADT:2017},
$$
\frac{d\bar\mu}{d\cH^1\res\Sigma}(x) \ge g_+(x,1)\chi_{\p^*A}\qquad\text{for $\cH^1$-a.e. $x\in\Sigma$}
$$
By \eqref{passage_to_dalmaso}, $\mu_0\ge \bar\mu,$ and thus, \eqref{eq:contact_of_A} follows.
\black 

\smallskip 

{\it Proof of \eqref{eq:at_delaminations}.} 
Consider points $x\in J_A$ for which  
\begin{itemize}
\item[(g1)] $\theta^*(J_A,x) = \theta_*(J_A,x) =
\theta^*(\p A,x) = \theta_*(\p A,x) =\theta^*(\Sigma,x) = \theta_*(\Sigma,x) =1;$
\smallskip

\item[(g2)] assumption (b) of Proposition \ref{prop:lsc_surface_energy} 
holds with some $r=r_x>0$;
\smallskip

\item[(g3)] $\Sigma$ is differentiable at $x$ and 
$\nu_\Sigma(x)$ exists;
\smallskip 

\item[(g4)] one-sided  traces 
$w^+(x)\ne w^-(x)$ 
of 
$w,$ given by assumption (b) of 
Proposition \ref{prop:lsc_surface_energy}, exists;
\smallskip

\item[(g5)] $x$ is a Lebesgue point of $g_+(\cdot,s)$ and 
$|g_+(x,0) - g_+(x,1)|\le \phi(\nu_\Sigma(x));$
\smallskip

\item[(g6)] $\frac{d\mu_0}{d\cH^1\res J_A}(x)$ exists and finite.
\end{itemize}
 By the  $\cH^1$-rectifiability of $J_A,$ $\p A$ and $\Sigma,$
assumption (b) of Proposition \ref{prop:lsc_surface_energy} (recall that $J_A\subset J_w$), the definition of the jump set of $GSBD$-functions,  \eqref{g_qushuv_condition},  
and the Besicovitch Derivation Theorem, 
the set of points  $x\in J_A$ not satisfying these conditions is
$\cH^1$-negligible. Hence we prove \eqref{eq:at_delaminations}
for $x\in J_A$ satisfying (g1)-(g6).  Without loss of generality, we assume $x=0$ and $\nu_\Sigma(x) = {\bf e_2}.$ Let $r_0=r_x$ and $w_k\in GSBD^2(B_{r_0}(0);\R^2)$ be given by 
assumption (b) of Proposition \ref{prop:lsc_surface_energy}. Note that by the weak*-convergence of $\mu_k,$
\begin{equation}\label{eq_weak_nmeasdas}
\lim\limits_{k\to\infty} \mu_k(\overline{U_r}) = \mu_0(U_r). 
\end{equation}
for a.e.\ $r\in(0,r_0),$ and by (g1), (g3), and Proposition \ref{prop:tangent_line_conv} (applied with connected components of $\Sigma$ and $\p A$ intersecting at $x$) and also by the definition of blow-up,   
\begin{equation}\label{sigma_blow}
U_4 \cap \sigma_{r}(\Sigma)\overset{\cK}{\to} I_4 
\qquad\text{and}\qquad
\cH^1\res(U_4 \cap \sigma_{r}(\Sigma))\overset{*}{\wk} \cH^1\res I_4, 
\end{equation}
and
\begin{equation}\label{A_blow_up}
U_4 \cap \sigma_{r}(\p A)\overset{\cK}{\to} I_4 
\qquad \text{and }\qquad \cH^1\res (U_4 \cap \sigma_{r_h}(\p A))\overset{*}{\wk} \cH^1\res I_4
\end{equation}
as $r\to0.$ Since $J_A\subset\Sigma,$ in view of \eqref{sigma_blow},
\begin{equation}\label{jump_blow_up1}
U_4 \cap \sigma_{r}(J_A)\overset{\cK}{\to} I_4.
\end{equation}
Moreover, since  $J_A$ has a generalized normal at $x=0,$
\begin{equation}\label{jump_blow_up2}
\cH^1\res(U_4 \cap \sigma_{r}(J_A))\overset{*}{\wk} \cH^1\res I_4  
\end{equation}
as $r\to0.$ In particular, from \eqref{sigma_blow} and \eqref{A_blow_up},
\begin{equation}\label{omega_r_to_half}
\sdist(\cdot, U_4 \cap \sigma_r(\p \Omega))\to \sdist(\cdot,\p U_4^+) 
\end{equation}
and
\begin{equation}\label{A_r_to_half}
\sdist(\cdot, U_4 \cap \sigma_r(\p A))\to \sdist(\cdot,\p U_4^+)
\end{equation}
locally uniformly in $U_{3/2}$ as $r\to0.$

Letting $\phi=\varphi(0,\cdot),$ \emph{we claim} that there exist sequences $r_h\searrow 0$ and $k_h\nearrow\infty$ such that the sets 
$$
\Omega_h:= U_4 \cap \sigma_{r_h}(\Omega),\quad
\Sigma_h:= U_4 \cap \sigma_{r_h}(\Sigma),\quad 
$$
and 
$$
E_h:= U_4 \cap \sigma_{r_h}(A_{k_h}),\qquad 
J_{E_h} =  U_4 \cap \sigma_{r_h}(J_{A_{k_h}}),
$$ 
the functions $u_h(x) = w_{k_h}(r_hx)\in GSBD^2( U_4; \R^2),$  the numbers $g_s = g_+(0,s)\in[0,+\infty)$ and the vectors $u^\pm=w^\pm(0)$ satisfy assumptions (a)-(h) of Lemma \ref{lem:creation_of_delamination}.

Indeed, let $\tau$  be any homeomorphism between $\R^2$ and a bounded subset of $\R^2;$  for example, one can take $\tau(x_1,x_2) = (\tanh(x_1),\tanh(x_2)).$ By \eqref{condition_J_A}, $w_k(rx)\to w(rx)$ as $k\to\infty$ for a.e.\ $x\in U_4 $ and for any $r\in(0,r_0/4),$ so that by the Dominated Convergence Theorem, 
\begin{equation}\label{w_k_to_w}
\lim\limits_{k\to\infty} \int_{U_1} |\tau(w_k(rx)) - \tau(w(rx))|dx=0. 
\end{equation}
Moreover, by (g4), the definition \cite[Definition 2.4]{D:2013} of the (approximate) jump of the  function $w$ and \cite[Remark 2.2]{D:2013},  
\begin{equation}\label{w_r_to_w_0}
\lim\limits_{r\to0}\int_{U_1} |\tau(w(rx)) - (\tau(u(x)) |dx=0,
\end{equation} 
where $u(x):=w^+(0)\chi_{U_1^+}(x) + w^-(0)\chi_{U_1 \setminus U_1^+}(x).$  We use \eqref{w_k_to_w} and \eqref{w_r_to_w_0} to extract sequences $k_h\to\infty$ and $r_h\to0$ such that $w_{k_h}(r_hx) \to u(x)$ a.e.\ in $U_1.$  By assumption $A_k\overset{\tau_\fA}{\to}A$ and the relations  \eqref{A_r_to_half}, \eqref{eq_weak_nmeasdas}, \eqref{w_r_to_w_0} and \eqref{w_k_to_w}, there exist  $k_h^1>1$  and a decreasing sequence  $r_h\in(0,\frac1h)$ such that for any $h>1$ and $k>k_h^1,$
\begin{subequations}\label{pass_k_h_and_r_h}
\begin{align}
& \|\sdist(\cdot,\sigma_{r_h}( U_{4r_h} \cap \p A_k)) - 
\sdist(\cdot,\sigma_{r_h}( U_{4r_h} \cap \p A))
\|_{L^\infty( U_{3/2})}^{} < \frac 1h   \label{A_k_blow_conver} \\[1mm]
& \|\sdist(\cdot,\sigma_{r_h}( U_{4r_h} \cap \p A)) - \sdist(\cdot,\p U_4^+ )
\|_{L^\infty(U_{3/2})}^{} < \frac 1h   \label{A_blow_conver} \\[1mm]
& \mu_k(\overline{U_{r_h}}) <  \mu_0(U_{r_h}) +r_h^2  \label{measure_conver}
\\[1mm]
& \int_{U_1} |\tau(w(r_hx)) - \tau(u(x)) |dx <\frac1h  \label{w_r_h_to_u} \\[1mm]
&
\int_{U_1} |\tau(w_k(r_hx)) - \tau(w(r_hx)) |dx <\frac1h.\label{w_k_to_w_0_a} 
\end{align}
\end{subequations}
For every $h\ge1,$ we choose $k_h>k_h^1$ such that 
\begin{equation}\label{k_h_ni_top}
\frac{1}{k_hr_h} <\frac{1}{h}. 
\end{equation}
Now $u_h(x): = w_{k_h}(r_hx)\in GSBD^2(U_2;\R^2),$  and:  
\begin{itemize}[leftmargin = \dimexpr+4mm]
\item[--] by \eqref{omega_r_to_half}, 
$\sdist(\cdot, U_4\cap  \p \Omega_h)\to 
\sdist(\cdot,\p U_4^+)$  locally uniformly in $U_{3/2}$ as $h\to\infty;$

\item[--] by assumption (g3) and the Lipschitz property of $\Sigma,$ $U_{4r_h}\cap \Sigma$ is a graph of a $L$-Lipschitz function $l:[-4r_h,4r_h]\to\R$ so that $\Sigma_h= U_4 \cap \p\Omega_h$ is the graph of $l_h(t):=l(r_ht),$ where $t\in [-4,4],$ so that $l_h(0)=0$ and $|l_h'|\le \frac{L}{h}$ by choice \eqref{k_h_ni_top} of $r_h;$

\item[--] by \eqref{A_k_blow_conver} and \eqref{A_blow_conver},
$\sdist(\cdot,U_4\cap  \p E_h)\to \sdist(\cdot,\p U_4^+)$ as $h\to+\infty;$

\item[--] by assumption $A_k\in\fA_m,$ the number of connected components of $\p E_h$ lying strictly inside $U_4$ does not exceed $m;$

\item[--] by (g5), $|g_1-g_0|\le \phi({\bf e_2});$

\item[--] by \eqref{condition_J_A}, $J_{u_h}\subset 
(\Omega_h \cap \p E_h) \cup J_{E_h}\cup \hat L_h,$ where by \eqref{k_h_ni_top}, $\hat L_h:=\sigma_{r_h}({U_{4r_h}}\cap L_{k_h})$ satisfies $\cH^1(\hat L_h)<\frac1h;$
 
\item[--]  since
$$
\int_{U_4}|\str{u_h}|^2 dx \le \int_{U_{4r_h}}|\str{w_{k_h}}|^2dx \le 
\int_{B_{r_0}(0)} |\str{w_{k_h}}|^2dx, 
$$
by \eqref{condition_J_A}, we have
$
\sup\limits_{h\ge1} |\str{u_h}|^2dx<\infty;
$
\item[--] by \eqref{w_r_h_to_u}-\eqref{w_k_to_w_0_a},
$$
\lim\limits_{h\to\infty} \int_{U_1} |\tau(u_h(x)) - \tau(u(x)) |dx =0,
$$
thus, possibly passing to further not relabelled subsequence,
$u_h\to u=u^+\chi_{U_1^+}+u^-\chi_{U_1\setminus U_1^+}$
a.e.\ in $U_1.$
\end{itemize}
This implies the claim.

Now we prove \eqref{eq:at_delaminations}.  Given $\delta\in(0,1),$  by
the continuity of 
$\varphi,$ (g5) and (g6), 
there exists $h_\delta^1>1$ such that 
\begin{equation}\label{sadadadaad}
\mu_{k_h}(\overline{U_{r_h}}) \ge \hat\mu_{k_h}(\overline{U_{r_h}}) -
2\delta \cH^1(U_{r_h}\cap (\p A\cup\Sigma))  
\end{equation}
for all  $h>h_\delta^1,$
where $\hat \mu_k$ is defined exactly the same as $\mu_k$
with $\phi$ and $g_s$ in place of $\varphi$ and $g_+(x,s),$
here we used (g5) as 
$$
\int_{U_{r_h}\cap\Sigma} |g_+(x,s) -g_+(0,s)|d\cH^1 \le\frac{\delta}{4}\,\cH^1(U_{r_h}\cap\Sigma) 
$$
provided \red that \black $h$ is large enough.
By Lemma \ref{lem:creation_of_delamination}, there exists $h_\delta^2>h_\delta^1$ such that 
for any $h>h_\delta^2,$
$$
\frac{1}{r_h}\,\hat\mu_{k_h}(\overline{U_{r_h}}) \ge 2\phi({\bf e_2}) + 2g_0 -2 \delta.
$$
Moreover, by \eqref{finsler_norm} and non-negativity of $g_+,$ 
$$
\cH^1(U_{r_h}\cap (\p A\cup\Sigma) = \cH^1(U_{r_h}\cap\Omega\cap \p A) + \cH^1(U_{r_h}\cap \Sigma\cap \p A) \le \frac{\mu_{k_h}(\overline{U_{r_h}})}{c_1} + \cH^1(U_{r_h}\cap \Sigma).
$$
Thus, by \eqref{sadadadaad} for any $h>h_\delta^2,$ 
$$
\Big(1+ \frac{\delta}{c_1} \Big)\,\frac{\mu_{k_h}(\overline{U_{r_h}})}{2r_h} + 2\delta\,\frac{\cH^1(U_{r_h}\cap\Sigma)}{2r_h} \ge 
\phi({\bf e_2}) + g_0 -  \delta.
$$
From here and \eqref{measure_conver} we get 
$$
\phi({\bf e_2}) + g_0 \le   \delta  +  
\Big(1+ \frac{\delta}{c_1} \Big)\,\Big(\frac{\mu_0(U_{r_h})}{2r_h} +\frac{r_h}{2}\Big)+2 \delta\,\frac{\cH^1(U_{r_h}\cap\Sigma)}{2r_h},
$$
therefore, first letting $h\to\infty,$ then $\delta\to0$, 
and using (g6) we obtain \eqref{eq:at_delaminations}. 
\black 
\end{proof}

Now we address the lower semicontinuity  of $\cF.$
We start with the following auxiliary extension result.

\begin{lemma}\label{lem:extension_desparate}
Let $\openset\subset\R^n$ and $\anyset\supset \openset$ be  non-empty bounded connected Lipschitz open sets and let $\cE:H^1(\openset;\R^2)\to H^1(\anyset;\R^n)$ be the Sobolev extension map, i.e., a bounded linear operator such that for any $v\in H^1(\openset;\R^n),$ $Ev=v$ a.e.\ in $\openset$ and 
there exists $C_\openset>0$ such that $\|\cE v\|_{H^1(\anyset)}\le C_P\|v\|_{H^1(\openset)}.$  
Consider any $\{u_k\}\subset H^1(\openset;\R^n)$ such that 
\begin{equation}\label{unif_b_has}
\sup\limits_{k} \int_\openset |\str{u_k}|^2dx <\infty 
\end{equation}
and $u_k\to u$ a.e.\ in $\openset$ for some function $u:\openset\to\R^n.$ 
Then there exist a subsequence $\{u_{k_l}\}_l$ and $v\in H^1(Q;\R^n)$ such that $v=u$ a.e.\ in $\openset$ and  $\cE u_{k_l}\to v$ in $L^2(\anyset)$ 
and 
$$
\sup\limits_{l}  \|\cE u_{k_l}\|_{H^1(\anyset)}<\infty. 
$$
\end{lemma}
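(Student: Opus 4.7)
The plan is to use the Poincar\'e--Korn inequality to subtract suitable rigid displacements from $u_k$, pass to the limit in the resulting $H^1$-bounded sequence, and then show that the compensating rigid displacements themselves converge, so that $u_k$ is actually bounded in $H^1(\openset;\R^n)$ to begin with; once this is established, boundedness of the extension operator and Rellich--Kondrachov finish the argument.

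First, I would invoke the Poincar\'e--Korn inequality on the connected Lipschitz set $\openset$: for each $k$ there is a rigid displacement $a_k(x)=M_kx+b_k$ such that
\[
\|u_k-a_k\|_{H^1(\openset)}^2 \le C_\openset \int_\openset |\str{u_k}|^2\,dx,
\]
where $C_\openset$ depends only on $\openset$. By \eqref{unif_b_has} the sequence $\{u_k-a_k\}$ is bounded in $H^1(\openset;\R^n)$, so by Rellich--Kondrachov, up to a not relabelled subsequence, $u_k-a_k\rightharpoonup w$ in $H^1(\openset;\R^n)$ and $u_k-a_k\to w$ a.e.\ in $\openset$ for some $w\in H^1(\openset;\R^n)$.

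The key step is to show that the rigid displacements $a_k$ themselves converge. Since $u_k\to u$ and $u_k-a_k\to w$ a.e.\ in $\openset$, we deduce $a_k\to u-w$ a.e.\ in $\openset$. Because rigid displacements form a finite-dimensional space and are determined by their values at $n+1$ affinely independent points, pointwise a.e.\ convergence of the affine functions $a_k$ on the positive-measure set $\openset$ forces the coefficients $(M_k,b_k)$ to converge; hence there is a rigid displacement $a(x)=Mx+b$ such that $a_k\to a$ locally uniformly on $\R^n$, and in particular $a_k\to a$ in $H^1(\anyset;\R^n)$. This yields $u=w+a$ a.e.\ in $\openset$, so $u\in H^1(\openset;\R^n)$, and
\[
\sup_k \|u_k\|_{H^1(\openset)}^{} \le \sup_k \|u_k-a_k\|_{H^1(\openset)}^{}+\sup_k\|a_k\|_{H^1(\openset)}^{}<\infty.
\]

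Finally, by the boundedness of $\cE\colon H^1(\openset;\R^n)\to H^1(\anyset;\R^n)$,
\[
\sup_k \|\cE u_k\|_{H^1(\anyset)}^{}\le C_\openset \sup_k \|u_k\|_{H^1(\openset)}^{}<\infty,
\]
so a further subsequence $\{\cE u_{k_l}\}$ converges weakly in $H^1(\anyset;\R^n)$ and strongly in $L^2(\anyset;\R^n)$ to some $v\in H^1(\anyset;\R^n)$. Since $\cE u_{k_l}=u_{k_l}\to u$ a.e.\ on $\openset$, uniqueness of the $L^2$-limit gives $v=u$ a.e.\ on $\openset$, as required. The main technical point, and the only non-routine step, is the identification of the limit of the rigid displacements $a_k$; once this is done, everything else follows from standard $H^1$-compactness and the linearity and boundedness of $\cE$.
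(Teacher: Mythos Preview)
Your proof is correct and follows essentially the same route as the paper's: apply Poincar\'e--Korn to get $u_k-a_k$ bounded in $H^1(\openset)$, use Rellich--Kondrachov and the a.e.\ convergence of $u_k$ to deduce that the rigid displacements $a_k$ converge (via finite-dimensionality of the space of rigid motions), conclude that $\{u_k\}$ itself is $H^1$-bounded, and finish with the boundedness of $\cE$ and Rellich--Kondrachov on $\anyset$. The paper packages the identification of the limit of $a_k$ by referring back to the argument of Proposition~\ref{prop:yo_cheksiz_yo_finite}, but the content is the same as what you wrote out directly.
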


\begin{proof}
By Proposition \ref{prop:yo_cheksiz_yo_finite},  $u\in H_\loc^1(\openset;\R^n)\cap GSBD^2(\openset;\R^n).$  By  Poincar\'e-Korn inequality, there exist $c_\openset>0$ and  a sequence $\{a_k\}$ of rigid displacements  such that 
\begin{equation}\label{poincare_korn}
\|u_k+a_k\|_{H^1(\openset)} \le c_\openset\|\str{u_k}\|_{L^2(\openset)} 
\end{equation}
for any $k.$ Since $u_k\to u$ a.e.\ in $\openset$, reasoning as in the proof of Proposition \ref{prop:yo_cheksiz_yo_finite} (with $\openset$ in place of $B_\epsilon$), up to a not relabelled subsequence, $a_k\to a$ a.e.\ in $\R^n$ for some rigid displacement $a:\R^n\to\R^n.$ In particular, $H^1(\openset;\R^n)$-norm of $a_k$ is uniformly bounded independently of $k,$ hence, 
\begin{equation}\label{a_k_hhsjs}
\sup_k \|\cE a_k\|_{H^1(\anyset)}<\infty. 
\end{equation}
Since
\begin{equation}
\|\cE(u_k + a_k)\|_{H^1(\anyset)}\le C_\openset \|u_k+a_k\|_{H^1(\openset)} \le C_\openset c_\openset\|\str{u_k}\|_{L^2(\openset)},
\end{equation}
by \eqref{unif_b_has}, the linearity of $E$ and \eqref{a_k_hhsjs}, 
$$
\sup_k \|\cE u_k\|_{H^1(\anyset)}<\infty.
$$
Thus, by the Rellich-Kondrachov Theorem, there exists a not relabelled subsequence $\{u_k\}$ and $v\in H^1(\anyset;\R^n)$ such that $\cE u_k\to v$ in $L^2(\anyset)$ and a.e.\ in $\anyset.$%
\end{proof}
\black 

\begin{proof}[Proof of Theorems \ref{teo:lsemico_film}]
Without loss of generality,
we assume that 
\begin{equation}\label{bounded_energy_iofe}
\sup\limits_{k\ge1} \int_{A_k\cup  \substrate} |\str{u_k}|^2dx + 
\cH^1(\p A_k)<\infty, 
\end{equation}
The lower semicontinuity of the elastic-energy part can be shown by using convexity $W(x,\cdot)$. Indeed, let $D\strictlyincluded \Int{A}.$ Then by $\tau_\fA$-convergence of $A_k,$ $D\strictlyincluded \Int{A_k}$ for all large $k.$ Since $u_k\to u$ a.e. in $A\cup \substrate,$  by \eqref{bounded_energy_iofe} and the weak-compactness of $L^2(D\cup\substrate)$, $\str{u_k}\wk\str{u}$ in $L^2(D\cup\substrate).$ Therefore, from the convexity of $\cW(D,\cdot)$ it follows that 
$$
\cW(D,u) \le 
\liminf\limits_{k\to\infty} \cW(D,u_k) \le  \liminf\limits_{k\to\infty} \cW(A_k,u_k). 
$$
Now letting $D\nearrow A\cup\substrate$ we get 
$$
\cW(A,u) \le \liminf\limits_{k\to\infty} \cW(A_k,u_k). 
$$
\black 

Since 
$\cS(E,v) = \cS(E,J_v;\varphi,g)$ 
with $J_E=J_v$ and 
$g(x,s) = \beta(x)s,$
the lower semicontinuity of 
of the surface part, follows from Proposition \ref{prop:lsc_surface_energy}
provided  that for 
$\cH^1$-a.e.\ $x\in J_u$ there exists $r_x>0$, 
$w_k\in GSBD(B_{r_x}(x);\R^2)$ and relatively open sets 
$L_k$ of $\Sigma$ with $\cH^1(L_k)<1/k$ 
such that  
\eqref{condition_J_A} holds.
Let
$$
r_0^x:=\frac14\,\min\{
\dist(x,\p\Omega \setminus \Sigma),
\dist(x,\p  \substrate \setminus \Sigma)\}
$$
so that $B_{r_0^x}(x)\strictlyincluded \Omega \cup\Sigma\cup  \substrate,$ 
and choose  $r=r_x\in(0,r_0^x)$ such that 
$$
\begin{aligned}
\cH^1(\p B_r(x)\cap \p A_k) = &\cH^1(\p B_r(x)\cap J_{u_k}) \\
= &
\cH^1(\p B_r(x)\cap \p A) = \cH^1(\p B_r(x)\cap J_u) = 0 
\end{aligned}
$$
(see \cite[Proposition 2.6]{Ma:2012}) and $B_r(x)\cap \substrate$ is connected.
We construct $\{w_k\}$ by extending
$\{u_k\}$ in $B_r(x)\setminus (A_k\cup \substrate)$ without creating extra jumps at the interface on the exposed surface of the substrate.  More precisely, we apply Lemma \ref{lem:extension_desparate} with
$\anyset:= B_r(x),$  $\openset:=B_r(x)\cap \substrate,$ and $u_k\big|_\openset.$ Since $u_k\to u$ a.e.\ in $\openset,$  by Lemma \ref{lem:extension_desparate}, there exist $v\in H^1(\anyset;\R^2)$ and a not relabelled subsequence $\{u_k\}$ such that the  Sobolev extension $\cE u_k$ of $u_k\big|_\openset$ to $\anyset$ converges to $v$ a.e.\ in $\anyset.$ Define 
$$
w_k:=u_k\chi_{B_r(x)\cap (A_k\cup \substrate)} + \cE u_k\chi_{B_r(x)\setminus (A_k\cup \substrate)}.
$$
Perturbing $w_k$ slightly if necessary, we can assume $J_{w_k}=\Gamma:=B_r\cap (J_{u_k} \cup (\Omega\cup \p^*A_k) \cup (A_k^{(1)}\cap \p A_k))$ up to a $\cH^1$-negligible set. In fact, by \cite[Proposition 2.6]{Ma:2012} there exist $\xi\in\R^2$ with arbitrarily small $|\xi|>0$ for which $\cH^1(\{y\in \Gamma:\,\, [u_k](y)=\xi\})=0$ (with $[u_k](x)$ the size of the jump of $u_k$), and hence, we can perturb $u_k$ with a $W^{1,\infty}(B_r(x)\setminus \Gamma)$-function with arbitrarily small norm, which is equal to $\xi$ on an arbitrarily large subset of $\Gamma.$  
By construction, 
$$
w_k\to w:=u\chi_{B_r(x)\cap (A\cup\substrate)} + v\chi_{B_r(x)\cap (A\cup\substrate)},
$$ 
thus, by \cite[Theorem 1.1]{ChC:2018jems}, $w\in GSBD^2(B_r(x);\R^2).$ Notice also that $J_u\subset J_w$ since $w=u$ a.e.\ in $B_r(x)\cap (A\cup \substrate).$   
\black 
Thus $w_k$ and $w$ satisfy \eqref{condition_J_A}.
\end{proof}

We conclude this section 
by proving a lower semicontinuity property of $\cF'$
with respect to $\tau_\admissible'$.
Observe that if
$(A_k,u_k)\overset{\tau_\admissible'}{\to} (A,u),$ then
$A =\Int{A}$ 
so that the weak convergence of $u_k$ to $u$ in
$H_\loc^1(A\cup \substrate;\R^2)$ is well-defined.
However, notice that 
$\admissible_m'$ 
is not closed with respect to $\tau_{\admissible}'$-convergence.

\begin{proposition}[\textbf{Lower 
semicontinuity of $\cF'$}]\label{prop:lsemico_film_voids}
Assume {\rm (H1)-(H3)}.
If $(A_k,u_k)\in\admissible_m'$ and $(A,u)\in\admissible$ are such that 
$(A_k,u_k)\overset{\tau_{\admissible}'}{\to} (A,u),$ 
then 
\begin{equation}\label{laasjkadbfaj}
\liminf\limits_{k\to\infty} \cF'(A_k,u_k) \ge \cF'(A,u). 
\end{equation}
\end{proposition}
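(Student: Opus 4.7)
The plan is to adapt the proof of Theorem \ref{teo:lsemico_film} --- which proceeds via the Radon-measure/blow-up scheme of Proposition \ref{prop:lsc_surface_energy} --- by simply omitting the contributions that come from external filaments on $\Omega$ and from the wetting layer on $\Sigma$. These are precisely the parts of the surface energy whose lower-semicontinuity in Proposition \ref{prop:lsc_surface_energy} relied on the full Kuratowski convergence $\p A_k\overset{\cK}{\to}\p A$ (via Lemma \ref{lem:creation_filament} and the wetting-layer blow-up leading to \eqref{eq:filament_on_Sigma}); dropping them is exactly what allows the conclusion under the weaker topology $\tau_\admissible'$, which provides only the one-sided convergence $\R^2\setminus A_k\overset{\cK}{\to}\R^2\setminus A$. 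After extracting a subsequence realising the $\liminf$, the elastic part is handled as in Theorem \ref{teo:lsemico_film}: for every Lipschitz open $D\strictlyincluded \Int{A}\cup \substrate$, the convergence of complements forces $D\strictlyincluded A_k\cup \substrate$ for large $k$, and combining the uniform $L^2$-bound on $\str{u_k}$, the Poincar\'e--Korn inequality, the pointwise convergence $u_k\to u$, and the convexity of $M\mapsto W(x,M)$ yields $\cW(A,u)\le \liminf_k \cW(A_k,u_k)$.

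For the surface part, introduce
\begin{align*}
\mu_k(B):=&\int_{B\cap \Omega\cap \p^*A_k}\varphi(x,\nu_{A_k})\,d\cH^1+2\int_{B\cap \Omega\cap A_k^{(1)}\cap \p A_k}\varphi(x,\nu_{A_k})\,d\cH^1\\
&+\int_{B\cap \Sigma\cap \p^*A_k\setminus J_{u_k}}\beta\,d\cH^1+\int_{B\cap J_{u_k}}\varphi(x,\nu_\Sigma)\,d\cH^1,
\end{align*}
so that $\mu_k(\R^2)$ equals the surface part of $\cF'(A_k,u_k)$ up to the additive constant $\int_\Sigma \beta\,d\cH^1$, and add the non-negative measure $\mu_\Sigma(B):=\int_{B\cap\Sigma}(\varphi(x,\nu_\Sigma)-\beta(x))\,d\cH^1$ (positive by \eqref{hyp:bound_anis}). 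Then $\mu_k+\mu_\Sigma\ge 0$ has uniformly bounded total mass thanks to the perimeter bound and (H1)--(H2); extract a weak-$*$ limit $\mu_0+\mu_\Sigma$. It then suffices to establish, separately, lower bounds on the Radon--Nikodym derivative of $\mu_0+\mu_\Sigma$ with respect to $\cH^1$ restricted to each of (i) $\Omega\cap \p^*A$, (ii) $\Omega\cap A^{(1)}\cap \p A$, (iii) $\Sigma\cap \p^*A\setminus J_u$, and (iv) $J_u$, by the corresponding integrands of the surface part of $\cF'$; crucially, no density estimate on $\Omega\cap A^{(0)}\cap \p A$ or on $\Sigma\cap A^{(0)}\cap \p A$ is needed.

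Bound (i) follows from Reshetnyak lower semicontinuity, since the $L^1$-convergence $A_k\to A$ (deduced from the Kuratowski convergence of complements, the perimeter bound, and $|\p A|=0$) gives $\nu_{A_k}\cH^1\res \p^*A_k\overset{*}{\wk}\nu_A\cH^1\res \p^*A$; bound (iii) follows from \cite[Lemma 3.8]{ADT:2017} applied to $v=\chi_A$, exactly as in the proof of \eqref{eq:contact_of_A}. Bound (ii) is obtained by the crack blow-up of Lemma \ref{lem:creation_crack}: a diagonal argument based on Proposition \ref{prop:set_shrinks}(a) produces $\rho_n\searrow 0$ and a subsequence $k_n$ with $\cl{U_1}\setminus\sigma_{\rho_n}(A_{k_n})\overset{\cK}{\to}I_1$, and the bound on the number of components of $\p A_{k_n}$ strictly inside $U_1$ is supplied by $A_k\cup\Sigma\in \fA_m$. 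Bound (iv) requires extending each $u_k$ from $A_k\cup \substrate$ to a small ball $B_r(x)$ around a generic delamination point $x$ via Lemma \ref{lem:extension_desparate}, producing $w_k\in GSBD^2(B_r(x);\R^2)$ with $J_{w_k}\subset B_r(x)\cap(J_{u_k}\cup (\Omega\cap\p A_k))$ and $w_k\to w$ a.e., which supplies the hypothesis \eqref{condition_J_A} needed to invoke Lemma \ref{lem:creation_of_delamination}. The main obstacle is (iv): the convergence $\sdist(\cdot,\p A_k)\to\sdist(\cdot,\p A)$ is \emph{not} available, so the blow-up must rely solely on the one-sided Kuratowski convergence of the complements, which is however precisely the hypothesis under which Lemma \ref{lem:creation_of_delamination} was formulated.
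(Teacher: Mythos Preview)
Your plan has a genuine gap. The delamination bound (iv) hinges on Lemma \ref{lem:creation_of_delamination}, but that lemma is \emph{not} formulated under one-sided Kuratowski convergence of complements: hypothesis (c) there explicitly demands $\sdist(\cdot,U_4\cap\p E_k)\to\sdist(\cdot,\p U_4^+)$, i.e.\ full $\sdist$-convergence of the blown-up boundaries. In the paper's proof of \eqref{eq:at_delaminations}, this is obtained from \eqref{A_k_blow_conver}--\eqref{A_blow_conver}, which in turn rely on (i) the full $\tau_\fA$-convergence $A_k\overset{\tau_\fA}\to A$ and (ii) the Kuratowski convergence of the blow-up $\sigma_\rho(\p A)\overset{\cK}\to T_x$ supplied by Proposition \ref{prop:tangent_line_conv}. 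Under $\tau_\admissible'$ you have neither: only $\R^2\setminus A_k\overset{\cK}\to\R^2\setminus A$ is assumed, and the limit $A$ is merely in $\fA$, so Proposition \ref{prop:tangent_line_conv} (stated for connected compact sets) need not apply to $\p A$. The same difficulty affects your crack estimate (ii): invoking Proposition \ref{prop:set_shrinks}(a) presupposes the Kuratowski convergence of $\sigma_\rho(\p A)$ to $T_x$, which again comes from Proposition \ref{prop:tangent_line_conv} and the component structure of $\p A$ that you do not control.

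The paper circumvents all of this with a short reduction. It observes the identity $\cF'(G,u)=\cF(G\cup\Sigma,u)-\int_\Sigma\beta\,d\cH^1$ for $G\in\fA_m'$, sets $E_k:=A_k\cup\Sigma\in\fA_m$, and uses the compactness of $\fA_m$ (Proposition \ref{prop:compactness_A_m}) to extract a subsequence with \emph{full} $\tau_\fA$-convergence $E_k\to E\in\fA_m$. The one-sided hypothesis $\R^2\setminus A_k\overset{\cK}\to\R^2\setminus A$ then identifies $A=\Int E$, and the already-proven $\tau_\admissible$-lower semicontinuity (Theorem \ref{teo:lsemico_film}, in its obvious variant for $\tilde\cF=\cF-\text{wetting term}$) yields the conclusion directly. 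In short, rather than weakening the blow-up machinery to match $\tau_\admissible'$, the paper \emph{upgrades} the convergence via compactness and reuses Theorem \ref{teo:lsemico_film} wholesale; this is both much shorter and avoids the regularity issues for the limit set that your approach runs into.
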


\begin{proof}
Consider the auxiliary functional $\tilde \cF:\admissible\to\R$ defined as 
$$
\tilde \cF(A,u) = \cF(A,u) - \int_{\Sigma \cap A^{(0)} \cap \p A} \big(\phi(x,\nu_A) 
+ \beta\big) d\cH^1.
$$
Since $\tilde \cF$ does not see wetting layer energy,
\begin{equation}\label{f_prime_dan_f_ga}
\cF'(G,u) = \tilde \cF(G,u) - \int_\Sigma\beta d\cH^1  =
\cF(G\cup\Sigma,u) - \int_\Sigma\beta d\cH^1 
\end{equation}
for any $G\in\fA_m':=\{A\in\fA:\,\,A\cup\Sigma\in\fA_m\}.$
Repeating the proof of
Theorem \ref{teo:lsemico_film} one can readily show that $\tilde \cF$
is also $\tau_\admissible$-lower semicontinuous.

Now we prove \eqref{laasjkadbfaj}.
Without loss of generality we suppose that 
{\it liminf} is a finite limit.
Let $E_k:=A_k\cup\Sigma.$ By the definition 
of $\fA_m'$ and $\tau_\admissible'$-convergence,
$\{E_k\}\subset\fA_m$ and 
$\sup\cH^1(\p E_k)<\infty,$ therefore by Proposition
\ref{prop:compactness_A_m}, there exist a (not relabelled)
subsequence and $E\in \fA_m$
such that $E_k\overset{\tau_\admissible}{\to} E.$
By  Remark \ref{rem:kuratiwski_and_sdistance},
$A = \Int{E},$ thus, by \eqref{f_prime_dan_f_ga}, 
\begin{align*}
\lim\limits_{k\to\infty} \cF'(A_k,u_k) &= 
\lim\limits_{k\to\infty} \tilde \cF(A_k\cup\Sigma,u_k) - \int_\Sigma\beta d\cH^1 \\
&\ge \tilde \cF(E,u)  - \int_\Sigma\beta d\cH^1 
\ge \cF'(A,u).  
\end{align*}
\end{proof} 

\section{Existence}\label{sec:min_config}

In this section we prove  Theorems \ref{teo:existence} and 
\ref{teo:existence_thin_void}.


\begin{proof}[Proof of Theorem \ref{teo:existence}]
We start by showing the existence of solutions of
problems \eqref{minimum_prob_1} and \eqref{minimum_prob_2}.

For the constrained minimum problem, 
let $\{(A_k,u_k)\}\subset \admissible_m$ be arbitrary minimizing sequence 
such that 
$$
\sup\limits_{k\ge1} \cF(A_k,u_k) <\infty.
$$
By Theorem \ref{teo:compactness_Ym}, there exist $(A,u)\in \admissible_m$, 
a not relabelled subsequence $\{(A_k,u_k)\}$ and a sequence $\{D_k,v_k\}\subset\admissible_m$ 
such that $(D_k,v_k)\overset{\tau_\admissible}{\to} (A,u)$ and $|B_k|=|A_k|=\fm$ 
and 
$$
\liminf \limits_{k\to \infty}  \cF(A_k,u_k) \ge \liminf \limits_{k\to \infty}  \cF(D_k,v_k). 
$$
By Lemma \ref{lem:f_convergence} (b), $D_n\to A$ in $L^1(\R^2)$
so that $|A| = \fm.$ Now by Theorem \ref{teo:lsemico_film}  
$$
\inf\limits_{(V,v)\in \admissible_m,\,\,|V| = \fm} \cF(V,v) = 
\liminf\limits_{k\to\infty} \cF(D_k,v_k)\ge \cF(A,u) 
$$ 
so that $(A,u)$ is a minimizer.
The case of the unconstrained problem is analogous.

Now we prove \eqref{zur_tenglik}. Observe that in general 
\begin{equation}\label{compar_CP_UP}
\inf\limits_{(A,u)\in \admissible} \cF^\lambda(A,u)\le 
\inf\limits_{(A,u)\in \admissible,\,\,|A| = \fm} \cF(A,u) 
\end{equation}
and the same inequality still holds if we replace $\admissible$
with $\admissible_m.$ Moreover, any solution $(A,u)\in \admissible_m$
of \eqref{minimum_prob_2} satisfying $|A| = \fm$  solves also \eqref{minimum_prob_1}. By Proposition \ref{prop:fusco_extension},
there exists a universal constant $\lambda_0>0$  with the following property: $(A,u)\in \admissible_m$ is a solution of \eqref{minimum_prob_1} if and only if it solves  \eqref{minimum_prob_2} for some (and hence for all) $\lambda\ge \lambda_0$. Thus,
\begin{equation}\label{compar_CP_UP2}
\inf\limits_{(A,u)\in \admissible_m} \cF^\lambda(A,u)=
\inf\limits_{(A,u)\in \admissible_m,\,\,|A| = \fm} \cF(A,u)  
\end{equation}
for any $m\ge1$ and $\lambda\ge\lambda_0.$ %
Since $\admissible_m\subset \admissible_{m+1}\subset \admissible,$ 
the map 
$$
m\in\N \mapsto \inf\limits_{(A,u)\in \admissible_m,\,\,|A|=\fm} \cF(A,u)
$$
is nonincreasing, 
and  
$$
\inf\limits_{(A,u)\in \admissible,\,\,|A|=\fm} \cF(A,u) 
\le \inf\limits_{(A,u)\in \admissible_m,\,\,|A|=\fm} \cF(A,u), 
$$
so that 
\begin{equation}\label{zur_tenglikning_chapi}
\inf\limits_{(A,u)\in \admissible,\,\,|A|=\fm} \cF(A,u) 
\le\lim\limits_{m\to\infty} \inf\limits_{(A,u)\in 
\admissible_m,\,\,|A|=\fm} \cF(A,u).  
\end{equation}
In view of \eqref{compar_CP_UP} and \eqref{zur_tenglikning_chapi} 
to conclude the proof of \eqref{zur_tenglik}
it suffices to show that for any $\epsilon\in(0,1)$
and $\lambda>\lambda_0,$
there exist $n\ge1$ and $(E,v)\in 
\admissible_n$ such that 
\begin{equation}\label{qioajdeofla}
\inf\limits_{(A,u)\in \admissible} \cF^\lambda(A,u) +\epsilon\ge 
\cF^\lambda(E,v)
\end{equation}
Indeed, 
by \eqref{qioajdeofla} and \eqref{compar_CP_UP2}, given $\epsilon\in(0,1)$  
\begin{align*}
\inf\limits_{(A,u)\in \admissible} \cF^\lambda(A,u) +\epsilon\ge  &
\cF^\lambda(E,v)\\ 
\ge&
\inf\limits_{(A,u)\in \admissible_n} 
\cF^\lambda(A,u)\\
= &\inf\limits_{(A,u)\in \admissible_n,\,|A|=\fm} 
\cF(A,u)\\
\ge & \lim\limits_{m\to\infty} \inf\limits_{(A,u)\in \admissible_m,\,\,|A|=\fm} 
\cF(A,u). 
\end{align*}   
Now letting $\epsilon\to0$ and using \eqref{compar_CP_UP2} and 
\eqref{zur_tenglikning_chapi} 
we get \eqref{zur_tenglik}.

We construct $(E,v)\in \admissible_n$ satisfying \eqref{qioajdeofla} as follows.
Fix $\epsilon\in(0,1)$ and $\lambda>\lambda_0,$ and choose 
$(A,u)\in\admissible$ such that 
\begin{equation}\label{asdsa}
\inf  \cF^\lambda +\frac{\epsilon}{4} > \cF^\lambda(A,u). 
\end{equation}
Notice that:
\begin{itemize}
\item[--] removing the exterior filaments decreases the energy, i.e., 
$\cF(A,u) \ge \cF(\Int{A}, u),$ thus, we assume that $A=\Int{A}$ so that $A$ is open;\\

\item[--] let $\{A_i\}_{i\in I}$ be all open connected components of $A.$ Since
$$
\cF(A,u) = \sum\limits_{i\in I} \cF(A_j, u),
$$
by the finiteness of $\cF(A,u)$ and $|A|,$ we can choose a finite set $I'\subset I$ such that 
$$
|A| \le \sum\limits_{j\in I'} \red|A_j| \black + \frac{\epsilon}{8\lambda}.
$$
Thus, setting $A':=\bigcup_{j\in I'} A_j$ and $u':=u\big|_{A'}$ we get that $(A',u')\in \admissible$ and
\begin{equation}\label{asdsa1}
\cF^\lambda(A,u) + \frac{\epsilon}{8} \ge \cF(A',u');
\end{equation}

\item[--] let $\{F_j\}_{j\in J}$  be all open connected components of $\Omega\setminus \cl{A'}.$ Since $\p F_j\subset \p A'\cup \p\Omega$ and $|\Omega| + \cH^1(\p A') + \cH^1(\p\Omega)<\infty,$ there exists a finite set $J'\subset J$ such that 
$$
\sum\limits_{j\in J\setminus J'} \cS(F_j;u_0) <\frac{\epsilon}{16} 
$$
and 
$$
\sum\limits_{j\in J\setminus J'} |F_j| <\frac{\epsilon}{16\lambda}. 
$$
Hence, setting $A'':= A\cup \bigcup_{j\in J\setminus J'} F_j$ and $u'':=u'\chi_{A'} +u_0\chi_{\bigcup_{j\in J\setminus J'} F_j}$ we get that $(A',u')\in \admissible$ and
\begin{equation}\label{asdsa2}
\cF^\lambda(A',u')+\frac{\epsilon}{4} \ge \cF^\lambda(A'',u'').
\end{equation}
Notice that for $j\in J\setminus J',$ the set $\p A'\cap \p F_j$ becomes the internal crack for $A'',$ and there is no elastic energy contribution in $F_j;$
\end{itemize}
see  Figure \ref{step1existence}. 
\begin{figure}[htp] 
\begin{center}
\includegraphics[scale=0.3]{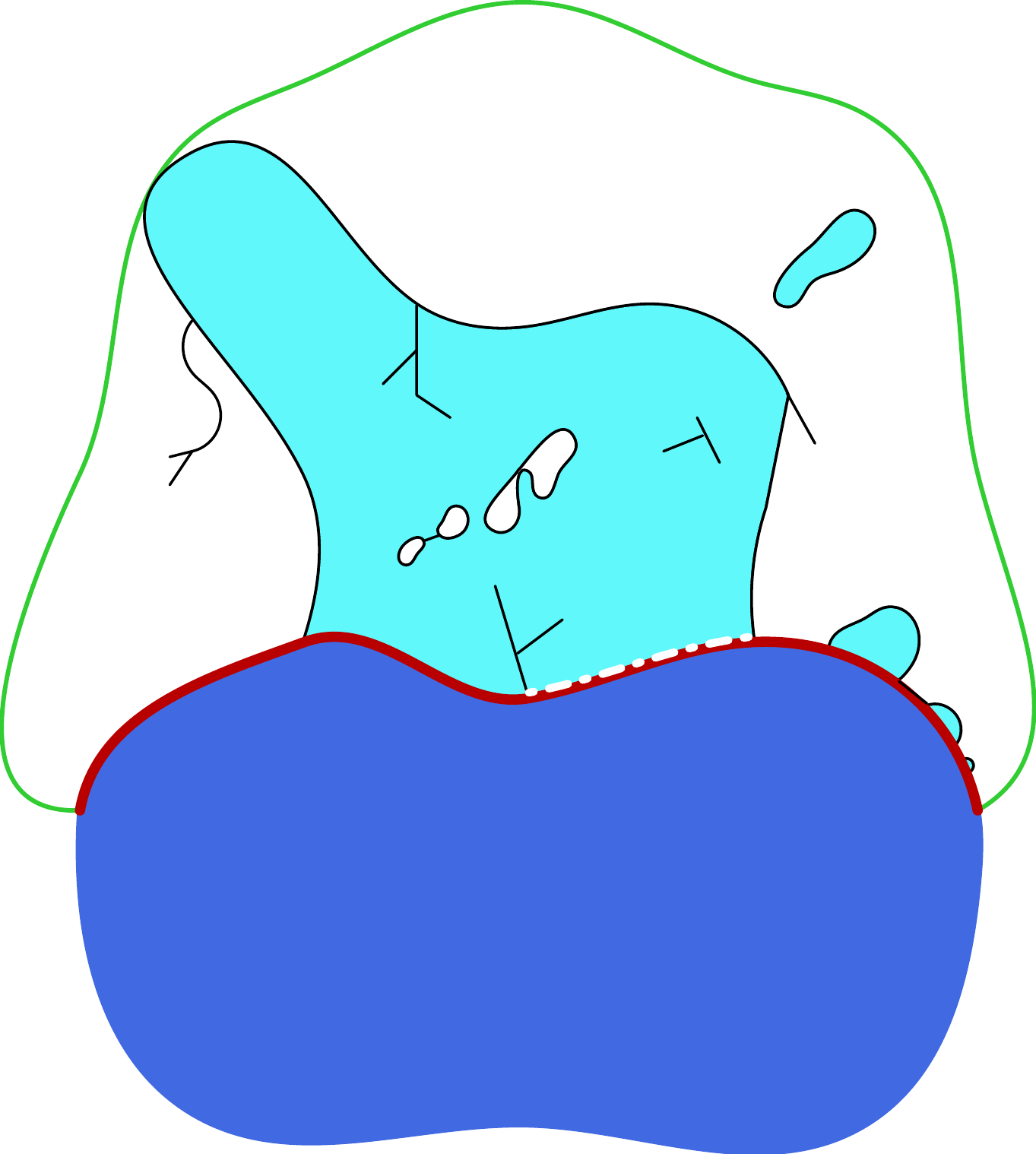} 
\includegraphics[scale=0.3]{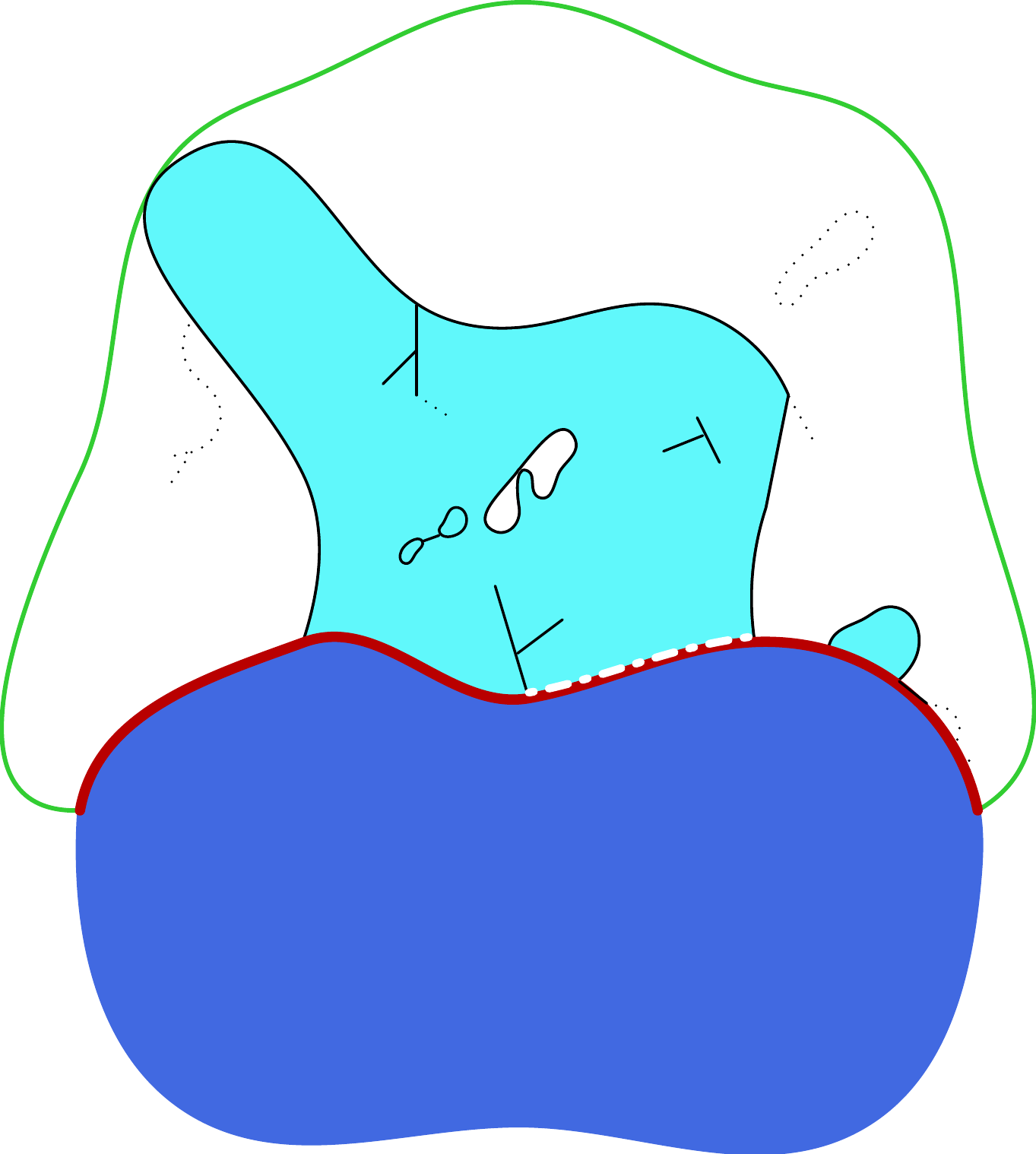} 
\includegraphics[scale=0.3]{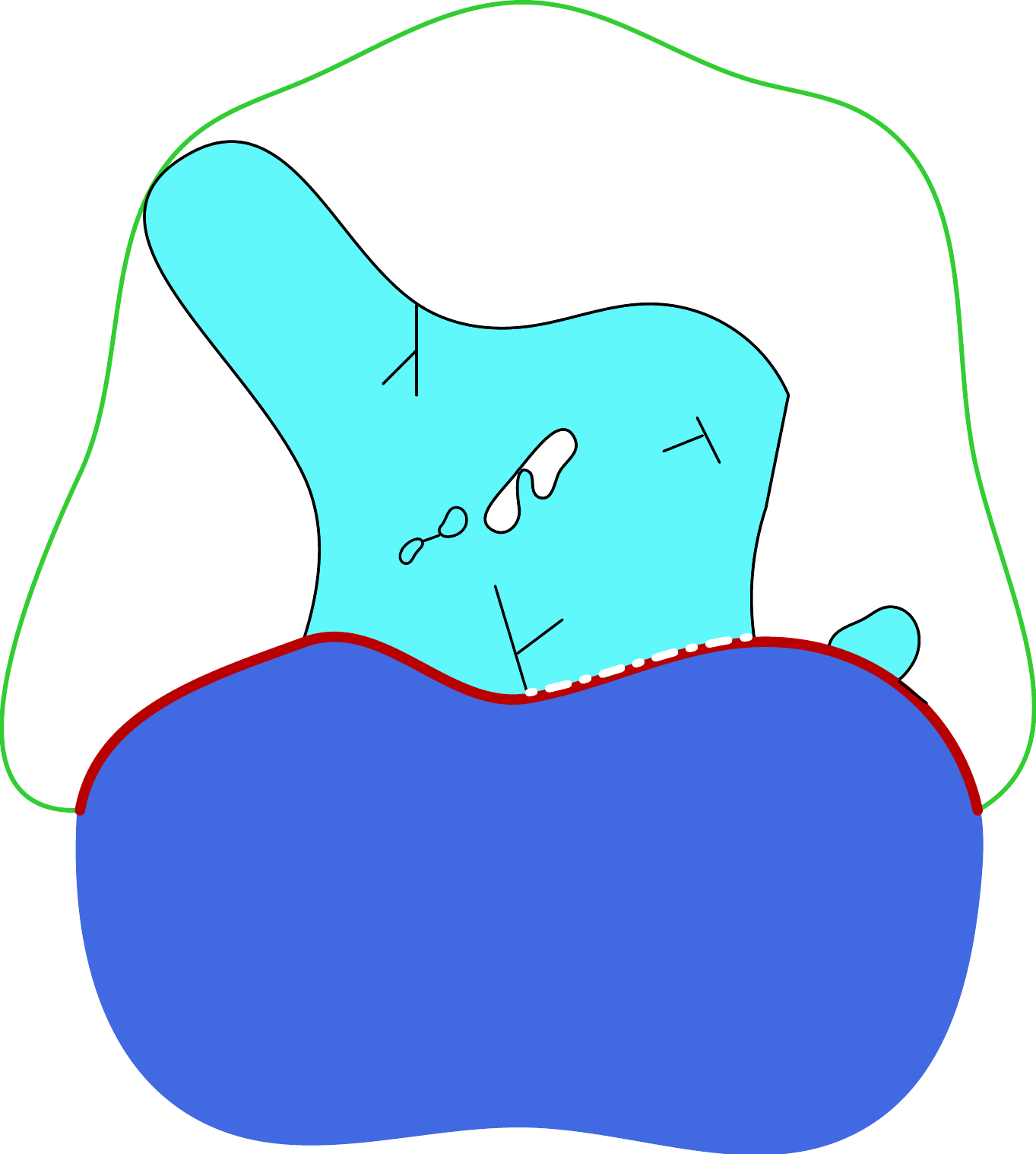} 
\caption{We pass from the set $A$ represented on the 
left  to the set $A''$ on the right by eliminating the external filaments, removing sufficiently small connected components of $A$ 
and filling in sufficiently small holes. }
\label{step1existence}
\end{center}
\end{figure}   

Hence, $A''$ is a union of finitely many connected open sets with finitely many ``holes'' inside so that $\p \cl{A''} = \overline{\p^*A''}$ consists of finitely many connected sets with finite length. 
Moreover, by \eqref{asdsa}, \eqref{asdsa1} and \eqref{asdsa2},
\begin{equation}\label{asdsa3}
\inf \cF^\lambda+\frac{\epsilon}{2} \ge \cF^\lambda(A'',u'').
\end{equation}

In view of \eqref{asdsa} and  \eqref{asdsa3} it remains to show that there exists $m\ge1$ and $(E,v)\in\admissible_m$ such that 
\begin{equation}\label{step2_sgsgs}
\cF^\lambda(A'',u'') +  \frac{\epsilon}{2} > \cF^\lambda(E,v). 
\end{equation}

Let  $G:=\Int{\overline{ A'' }}$ so that $G$ is open and  $\p G =\overline{\p^*G}.$ Since $\Sigma$ is a $1$-dimensional Lipschitz manifold, by the outer regularity of $\cH^1\res\Sigma$ there exists a finite union $I$ of subintervals of $\Sigma$ such that 
$$
J_{ u'' }\subseteq I
$$
and 
\begin{equation}\label{shjsjh}
\cH^1( I \setminus J_{ u'' }) < \frac{\epsilon}{64c_2}. 
\end{equation}
Since $\p \Omega$ is Lipschitz and $\varphi$, there exists a Lipschitz open set $V\subset\Omega$ such that  $\p V\cap \Sigma = I$ and 
\begin{equation}\label{jump_ni_gumdon}
\int_{\Omega\cap \p^* V} \varphi(x,\nu_U)d\cH^1 \le  \int_{I} \varphi(x,\nu_\Sigma)d\cH^1  + \frac{\epsilon}{64} 
\end{equation}
and 
\begin{equation}\label{volume_ni_gumdon}
|V| < \frac{\epsilon}{32\lambda}; 
\end{equation}
since $\varphi$ is uniformly continuous, basically, $V$ is obtained slightly translating $I$ inside $A.$

Let us consider $(B,w)\in\admissible$ with $B:= A'' \setminus V$ and $w= u'' \chi_{ A'' \setminus V}^{}.$ Since $ B \subset A'',$
$$
\cW( B , w ) \le \cW( A'' , u'' );
$$
since $J_{ w }=\emptyset,$  by \eqref{shjsjh} and \eqref{jump_ni_gumdon} 
\begin{equation}\label{tragefs}
\cS( B , w ) \le \cS( A'' , u'' ) +\frac{\epsilon}{16}, 
\end{equation}
and by \eqref{volume_ni_gumdon} 
$$
\lambda\big|| B | - \fm\big| \le \lambda\big|| A'' | - \fm\big| +\frac{\epsilon}{16}.
$$
Thus,
\begin{equation}\label{hauezrjk}
\cF^\lambda( B , w ) \le \cF^\lambda( A'' , u'' ) +\frac{\epsilon}{8}. 
\end{equation}
Let $\tilde w\in GSBD^2(\Int{\cl{ B \cup \substrate}};\R^2)$ be such that $\tilde w= w $ a.e. in $\cl{ B \cup \substrate}.$ Notice that $\Sigma\cap J_{ \tilde w }=\emptyset$ and  $J_{ \tilde w }\subseteq { B }^{(1)}\cap \p  B .$  Perturbing approximate continuity points of $ w $ along $ B ^{(1)}\cap \p  B $ (as has been done in the proof of Theorem \ref{teo:lsemico_film}), we may suppose that $ B ^{(1)}\cap \p  B $ is a jump set for $ \tilde w .$ Hence,  using the Vitali class of covering squares for $J_{ \tilde w }$ contained in $\Omega$ in the proof of \cite[Theorem 1.1]{ChC:2017arma}  we find $\tilde v\in SBV^2(\Ins{ B )};\R^2)\cap W^{1,\infty}(\Ins{ B };\R^2)$ such that $J_{\tilde v}$ is contained in a union of finitely many closed connected curves in $\cl{ B }$ (see \cite[pp. 1353 and 1359]{ChC:2017arma}) and
\begin{subequations}\label{chambolle_approximate}
\begin{align}
&\int_{ B  \cup \substrate}|\str{\tilde v} -\str{ \tilde w  }|^2dx <\frac{\epsilon^2}
{512(\cW( B , \tilde w ) + 1)(\|\C\|_\infty + 1)},
\label{elastic_yaqinlar}\\[1mm]
&\cH^1(J_{\tilde v}\Delta J_{ \tilde w }) <\frac{\epsilon}{32c_2}.
\label{surface_yaqinlar}
\end{align}
\end{subequations}
Notice that we do not need to control the boundary trace of $\tilde w$  that's why we can use the approximation result \cite[Theorem 1.1]{ChC:2017arma} only inside $B\cup\Sigma\cup  \substrate.$ Moreover, since $J_{\tilde w}\subset \Int{\cl{B}}$ and we use Vitali class of covering cubes only inside $\Omega$  by the formula \cite[page 1359]{ChC:2017arma} for the jump of the approximating sequence, it follows that $J_{\tilde v}\subset \cl{B}.$ In particular, ${\tilde v}\in H^1(\substrate;\R^2).$   
 
By the convexity of the elastic energy and  
the Cauchy-Schwarz inequality for nonnegative quadratic forms,  
\begin{align}\label{adhsfabjhb}
&\int_{ B \cup \substrate} W(x,\str{\tilde v} - E_0)dx \le   \int_{ B \cup \substrate} W(x,\str{  \tilde w }- E_0)dx\no \\
&+2\int_{ B \cup \substrate} \no \C(x)[\str{\tilde v}- E_0]:[\str{\tilde v} - \str{ \tilde w  }]dx\\
&\no \le  \int_{ B \cup \substrate} W(x,\str{ \tilde w }- E_0)dx\\
& +2\sqrt{\int_{ B \cup \substrate} W(x,\str{\tilde v}- E_0)dx}
\sqrt{\int_{ B \cup \substrate} W(x,\str{\tilde v} - \str{ \tilde w  })dx}.
\end{align}
Since  
$$
\int_{ B \cup \substrate} W(x,\str{\tilde v} - \str{ \tilde w  })dx\le 
\|\C\|_\infty \int_{ B \cup \substrate} |\str{\tilde v} - \str{ \tilde w  }|^2dx,
$$
and 
$$
\begin{aligned}
&\int_{ B \cup \substrate}   W(x,\str{\tilde v}- E_0)dx \\
& \le  
2\int_{ B \cup \substrate} W(x,\str{\tilde v}- \str{ \tilde w  })dx  + 
2\int_{ B  \cup \substrate} W(x,\str{ \tilde w  }-E_0)dx\\
&\le   2\|\C\|_\infty\int_{ B \cup \substrate} |\str{\tilde v}- \str{ \tilde w  }|^2dx  +2 
\cW( B , \tilde w  ) 
\le  2 \cW( B , \tilde w  )+ 2, 
\end{aligned}
$$
by \eqref{elastic_yaqinlar} and \eqref{adhsfabjhb},
\begin{equation}\label{nakhalss}
\int_{ B  \cup \substrate} W(x,\str{\tilde v}- E_0)dx \le  
\int_{ B   \cup \substrate} W(x,\str{ \tilde w  } - E_0)dx + \frac{\epsilon}{4}. 
\end{equation}
As $J_v$ is contained in at most finitely many closed $C^1$-curves,
we can find finitely many arcs of those curves 
whose union $\Gamma\subset\cl{ B }$ 
still contains $J_{\tilde v}$ and satisfies
\begin{equation}\label{gamma_ksjs}
\cH^1(\Gamma\setminus J_{\tilde v})<\frac{\epsilon}{32c_2}. 
\end{equation}

Set $E: = \Int{\cl{ B }}\setminus \Gamma$ and $v:=\tilde v\big|_{E}.$ We show that $(E,v)$ satisfies \eqref{step2_sgsgs}. Note that $J_v\cap (E\cup\substrate)=\emptyset,$  thus, $v\in H_\loc^1(E\cup\substrate;\R^2)\cap GSBD^2(\Ins{E};\R^2).$  Moreover, by construction,  $\cl{\p^*A''},$ $\Gamma$  and $\p V$ consist  of finitely many connected components, therefore, there exists $m\ge1$ such that $(E,v)\in \admissible_m.$ Notice that by the definition of $E,$
\begin{equation}\label{volume_constanat}
|E| = |\Int{\cl{ B }}| = | B |,  
\end{equation}
by the definition of $v,$ $\tilde w$ and \eqref{nakhalss},
\begin{equation}\label{elasddd}
\cW(E, v) \le  \cW( B , \tilde w  )  + \frac{\epsilon}{4}=\cW( B, w  )  + \frac{\epsilon}{4},  
\end{equation}
and by  $\p^*E=\p^*B,$  \eqref{gamma_ksjs} and \eqref{surface_yaqinlar} as well as \eqref{finsler_norm} and \eqref{tragefs},
\begin{align}\label{surfaceeess}
 \cS(E, v)  \le &    \cS( B , w ) + \frac{\epsilon}{8}.
\end{align}
From \eqref{volume_constanat}-\eqref{surfaceeess} we get 
\begin{equation*}
\cF^\lambda(E,v) \le \cF^\lambda(B,w) +\frac{3\epsilon}{8}. 
\end{equation*}
Combining this with \eqref{hauezrjk} we obtain \eqref{step2_sgsgs}.
\end{proof}

\begin{proof}[Proof of Theorem \ref{teo:existence_thin_void}]
In view of  Proposition \ref{prop:lsemico_film_voids} the assertion 
follows from the direct methods of the Calculus of Variations.
\end{proof}

\appendix

\section{ }\label{sec:kuratow}
\addtocontents{toc}{\protect\setcounter{tocdepth}{0}}

In this section we recall some results from the literature for the 
reader's convenience.

\subsection{Kuratowski convergence}
Let $\{E_k\}$ be a  sequence of subsets of $\R^2.$ 
A set $E\subset\R^2$ is the \emph{$\cK$-lower limit} of $\{E_k\}$ if for every 
$x\in E$ and $\rho>0$ there exists $n>0$ such that 
$B_\rho(x)\cap E_k \ne \emptyset$ for all $k>n.$ A set 
$E\subset\R^2$ is the \emph{$\cK$-upper limit} of $\{E_k\}$ if for every 
$x\in E$ and $\rho>0$ and $n\in\N$ there exists $k>n$ such that
$B_\rho(x)\cap E_k \ne \emptyset.$ 

The $\cK$-lower and $\cK$-upper limits of $\{E_k\}$ are always exist and 
respectively denoted as
$$
\text{$\cK$-}\liminf\limits_{k\to\infty} E_k 
\qquad
\text{and}
\qquad
\text{$\cK$-}\limsup\limits_{k\to\infty} E_k.
$$
It is clear that both sets are closed sets and 
\begin{equation*}
\text{$\cK$-}\liminf\limits_{k\to\infty} E_k 
\subseteq 
\text{$\cK$-}\limsup\limits_{k\to\infty} E_k;  
\end{equation*}
in case of equality, we say $E_k$ converges to 
$E = \text{$\cK$-}\liminf\limits_{k\to\infty} E_k 
=
\text{$\cK$-}\limsup\limits_{k\to\infty} E_k$ in 
the Kuratowski sense and write
$$
E= \text{$\cK$-}\lim\limits_{k\to\infty} E_k\qquad\text{or} \qquad 
E_k\overset{\cK}{\to}E.
$$
Observe that 
by the definition of $\cK$-convergence, 
$E_k$ and $\cl{E_k}$ have the same $\cK$-upper
and $\cK$-lower limits. Moreover,
Kuratowski limit is always unique. 

\begin{proposition}\label{prop:equivalent_kuratowski}
The following assertions are equivalent:
\begin{itemize}
\item[\rm (a)]  $E_k\overset{\cK}\to E;$

\item[\rm (b)] if $x_k\in E_k$ converges to some $x\in \R^2,$  
then $x\in E,$ and for every $x\in E$ there exists
a subsequence $x_{n_k}\in  E_{n_k}$ converging to $x;$

\item[\rm (c)] $\dist(\cdot,E_k) \to \dist(\cdot, E)$ locally uniformly in $\R^2;$ 

\item[\rm (d)] if, in addition, $\{E_k\}$ is uniformly bounded, 
$E_k\to E$ with respect to Hausdorff distance $\dist_{\cH},$ where
\begin{equation}\label{haus_dista}
\dist_{\cH}(A,B):=
\begin{cases}
0 & \text{if $A=B=\emptyset,$}\\
\max\Big\{\sup\limits_{x\in A} \dist(x, B), \,
\sup\limits_{x\in B} \dist(x, A)\Big\} &\text{otherwise}.
\end{cases}
\end{equation}

\end{itemize}
\end{proposition}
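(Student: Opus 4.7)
My plan is to prove the cycle (a)$\Rightarrow$(b)$\Rightarrow$(c)$\Rightarrow$(a) and then deduce (c)$\Leftrightarrow$(d) under the uniform boundedness assumption. All of the work reduces to unwinding definitions and exploiting the $1$-Lipschitz property of distance functions; no deep tools are needed.

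For (a)$\Rightarrow$(b), first if $x_k\in E_k$ and $x_k\to x$, then any ball $B_\rho(x)$ eventually contains $x_k$, so it meets $E_k$ infinitely often and $x$ lies in the $\cK$-upper limit, which by (a) equals $E$. Conversely, if $x\in E$ then $x$ lies in the $\cK$-lower limit, so for each $m\ge 1$ the ball $B_{1/m}(x)$ meets $E_k$ for all $k$ sufficiently large; a diagonal extraction produces $x_{n_k}\in E_{n_k}$ with $x_{n_k}\to x$.

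For (b)$\Rightarrow$(c) the argument is pointwise followed by an equicontinuity upgrade. The inequality $\limsup_k \dist(y,E_k)\le \dist(y,E)$ follows by picking, for each $x\in E$, a sequence $x_k\in E_k$ with $x_k\to x$ via (b), so that $\dist(y,E_k)\le |y-x_k|\to |y-x|$, and then infimizing over $x\in E$; the case $E=\emptyset$ only requires $\dist(y,E_k)\to+\infty$, which holds because otherwise a bounded cluster point of approximate projections would, via (b), lie in $E$. For the reverse inequality, approximate projections $z_k\in E_k$ realizing $\dist(y,E_k)$ up to $1/k$ form a bounded sequence; any subsequential limit $z$ belongs to $E$ by (b), giving $\dist(y,E)\le |y-z|=\lim|y-z_{k_j}|\le \liminf_k\dist(y,E_k)$. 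Since each $\dist(\cdot,E_k)$ is $1$-Lipschitz, the Arzel\`a--Ascoli theorem promotes pointwise to locally uniform convergence.

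For (c)$\Rightarrow$(a), since $\cK$-limits are always closed one may assume $E$ is closed. A point $x$ in the $\cK$-upper limit satisfies $\liminf_k\dist(x,E_k)=0$, so (c) forces $\dist(x,E)=0$ and hence $x\in E$. Conversely, if $x\in E$ then $\dist(x,E_k)\to 0$ by (c), so every $B_\rho(x)$ meets $E_k$ for all large $k$ and $x$ lies in the $\cK$-lower limit; combined with the trivial $\cK$-$\liminf\subseteq\cK$-$\limsup$, both limits coincide with $E$. For the final equivalence (c)$\Leftrightarrow$(d) under uniform boundedness, fix a ball $B_R$ containing all $E_k$ (and hence also $E$, which is a subset of $\overline{B_R}$); then the identity $\sup_{x\in E_k}\dist(x,E)=\sup_{x\in E_k}|\dist(x,E_k)-\dist(x,E)|$ (and its symmetric counterpart for $E$) shows that $\dist_{\cH}(E_k,E)\le \sup_{\overline{B_R}}|\dist(\cdot,E_k)-\dist(\cdot,E)|$, while the converse bound is obtained by evaluating Hausdorff-closeness against an arbitrary point of $\overline{B_R}$. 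The only mildly delicate point is the $E=\emptyset$ case in (b)$\Rightarrow$(c), where the convention $\dist(\cdot,\emptyset)\equiv+\infty$ must be unpacked carefully; everything else is formal manipulation.
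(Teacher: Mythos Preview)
The paper does not prove this proposition; it is stated in the appendix without proof as a standard fact recalled from the literature (see, e.g., Dal Maso's book on $\Gamma$-convergence). Your argument is the standard textbook one and is essentially correct.

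One small point deserves a remark. In your step (b)$\Rightarrow$(c) you write ``by picking, for each $x\in E$, a sequence $x_k\in E_k$ with $x_k\to x$ via (b)'', and later ``any subsequential limit $z$ belongs to $E$ by (b)''. As literally formulated in the paper, condition (b) says only that \emph{full} convergent sequences have limit in $E$, and that each $x\in E$ is approached by a \emph{subsequence}; taken at face value this is merely $\cK\text{-}\liminf E_k\subseteq E\subseteq \cK\text{-}\limsup E_k$, which does not characterize Kuratowski convergence (consider $E_k$ alternating between $\{0\}$ and $\{1\}$ with $E=\emptyset$). The intended --- and standard --- formulation of (b) has the two clauses swapped: every subsequential limit of points $x_k\in E_k$ lies in $E$, and every $x\in E$ is the limit of a full sequence $x_k\in E_k$. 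Your proof tacitly uses this correct version, and with that reading all four implications go through exactly as you wrote them. It would be worth flagging this imprecision in the statement rather than silently correcting it.
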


\subsection{Rectifiability in $\R^2$} \label{subsec:rectifiable}
Below we recall some important regularity properties of 
compact connected subsets of finite $\cH^1$-measure of $\R^2$
most of them are taken from \cite[Chapters 2 and 3]{Fa:1985}.

The image $\Gamma$
of a continuous injection $\gamma:[a,b]\to \R^2$ is called 
{\it curve} (or {\it  Jordan curve}), and $\gamma$ is the parametrization of $\Gamma.$ 
Clearly,  any curve is compact and connected set, 
hence it is $\cH^1$-measurable. 
The length of a curve $\Gamma$ is defined as 
$$
\sup s(\gamma, P),
$$
where $P=\{t_0,t_1,\ldots,t_N\}$ is a partition of $[a,b],$  i.e.  
$a=t_0<t_1<\ldots<t_N=b,$ 
$$
s(\gamma, P):= \sum\limits_{i=1}^N |\gamma(t_{i-1}) - \gamma(t_i)|,
$$
and $\sup$ is taken over all partitions $P$ of $[a,b].$
By \cite[Lemma 3.2]{Fa:1985}, the length of curve $\Gamma$
is equal to $\cH^1(\Gamma).$

Any curve $\Gamma$ with finite length
admits so-called {\it arclength} parametrization in $[0,\cH^1(\Gamma)],$ 
which is a Lipschitz parametrization $\gamma_o$ with Lipschitz constant $1.$
%
Hence, by the Rademacher Theorem
\cite[Theorem 2.14]{AFP:2000}  it is 
differentiable at a.e.\ $\ell\in (0,\cH^1(\Gamma))$
and $|\dot{\gamma_o}(\ell)|\le 1.$
Hence $\Gamma$ has an (approximate)
tangent line at $\cH^1$-a.e.\ $x\in\Gamma$ and we can define the 
approximate unit normal $\nu_\Gamma(x)$ of $\Gamma$ at $\cH^1$-a.e.\ 
$x\in\Gamma.$ 

We recall the following characteristics of 
compact connected $\cH^1$-rectifiable sets 
(see \cite[Theorem 3.14]{Fa:1985} and 
\cite[Section 2]{Gi:2002}).

\begin{proposition}[\textbf{Rectifiable connected sets}]\label{prop:rectifiable_sets}
Every connected compact set $K\subset \cl{U}$ with
$\cH^1(K)<\infty$ is arcwise connected and 
countably $\cH^1$-rectifiable, 
i.e.,  
$$
K:= N\cup \bigcup\limits_{j\ge 1} \Gamma_j,
$$ 
where $N$ is a $\cH^1$-negligible set and $\Gamma_j:=\gamma_j([0,1])$  is a curve with finite length for a parametrization $\gamma_j:[0,1]\to\R^2$  such that 
$$
\gamma_j((0,1))\cap \bigcup\limits_{i=1}^{j-1}\Gamma_i = \emptyset,\qquad j\ge2.
$$
\end{proposition}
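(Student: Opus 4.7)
\textbf{Proof plan for Proposition \ref{prop:rectifiable_sets}.} The plan is to split the argument into two parts: first establishing arcwise connectedness of $K$, and then using it to extract the decomposition into arcs with the stated disjointness property.

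For the first part, I would invoke the classical fact that a compact connected subset of $\R^2$ with finite $\cH^1$-measure is arcwise connected. Given $x,y\in K,$ one produces, for each $n\in\N,$ a finite polygonal chain of segments of length $\le 1/n$ joining $x$ to $y$ and whose vertices lie in $K$: this is possible because $K$ is connected and can be covered by finitely many balls of radius $1/n$ centered at points of $K,$ and connectedness forces such a chain to exist between any two points. Because $\cH^1(K)<\infty,$ one can arrange the total length of these polygonal paths to be uniformly bounded (by a universal multiple of $\cH^1(K)$), reparametrize them as $1$-Lipschitz maps $[0,1]\to K,$ and extract by Arzel\`a--Ascoli a uniform limit $\tilde\gamma:[0,1]\to K$ with $\tilde\gamma(0)=x$ and $\tilde\gamma(1)=y.$ Removing loops of $\tilde\gamma$ in the standard way (passing to a subarc via an order argument) one obtains an injective continuous $\gamma:[0,1]\to K$ with the same endpoints, i.e., an arc inside $K.$

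For the second part, I would build $\{\Gamma_j\}$ inductively by a greedy procedure. Pick two distinct points $x_1,y_1\in K$ (if $K$ is a singleton the statement is trivial) and let $\Gamma_1$ be an arc in $K$ connecting them, with length at least half of $\sup\{\cH^1(\Gamma):\Gamma\subset K\text{ an arc}\}.$ Assuming $\Gamma_1,\dots,\Gamma_{j-1}$ have been constructed, let $K_{j-1}:=\bigcup_{i<j}\Gamma_i.$ If $\cH^1(K\setminus K_{j-1})=0$ we stop; otherwise choose $x_j\in K\setminus K_{j-1}$ and, by arcwise connectedness, a continuous injection $\tilde\gamma:[0,1]\to K$ with $\tilde\gamma(0)=x_j$ and $\tilde\gamma(1)\in K_{j-1}.$ Since $K_{j-1}$ is closed, the set $\{t\in[0,1]:\tilde\gamma(t)\in K_{j-1}\}$ is closed and contains $1,$ so it has a minimum $t_j>0$; restricting to $[0,t_j]$ and rescaling yields an arc $\Gamma_j\subset K$ with parametrization $\gamma_j$ such that $\gamma_j((0,1))\cap K_{j-1}=\emptyset,$ which is exactly the disjointness condition. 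Among all admissible $\Gamma_j$ arising this way I would choose one with $\cH^1(\Gamma_j)\ge\tfrac12 \sup\cH^1(\Gamma),$ the supremum being taken over all arcs in $K$ satisfying the same interior-disjointness from $K_{j-1}.$

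Finally I would show that $N:=K\setminus\bigcup_{j\ge1}\Gamma_j$ is $\cH^1$-negligible. Since the $\Gamma_j$ are essentially disjoint (the interiors are disjoint by construction, so their pairwise intersections are $\cH^1$-negligible), $\sum_j\cH^1(\Gamma_j)\le \cH^1(K)<\infty,$ whence $\cH^1(\Gamma_j)\to 0$ and therefore $\sup\cH^1(\Gamma)\to 0$ for admissible arcs at stage $j.$ If $\cH^1(N)>0$ one could pick $x_\infty\in N$ and, by arcwise connectedness of $K,$ an arc inside $K$ from $x_\infty$ to $\bigcup_j\Gamma_j$ of positive length, contradicting the fact that the admissible arc lengths at stage $j$ must exceed this arc's initial portion for $j$ large. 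The main obstacle is this last step: making rigorous the claim that the residual set has zero $\cH^1$-measure. The key point is that the arcwise-connectedness argument of the first part, applied locally around a density point of $N,$ actually produces arcs of definite length lying near $N,$ violating the decay of $\sup\cH^1(\Gamma);$ a careful use of upper density estimates (via $\theta^*(N,x)\ge 1/2$ for $\cH^1$-a.e.\ $x\in N,$ which follows from the Falconer density bound \cite[Theorem 2.3]{Fa:1985}) completes the argument.
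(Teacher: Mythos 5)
The paper does not actually prove this proposition: it is recalled verbatim from the literature, with a pointer to \cite[Theorem 3.14]{Fa:1985} and \cite[Section 2]{Gi:2002}. Your plan reconstructs that classical argument, and its first two parts are sound. The $\delta$-chain/Arzela--Ascoli/loop-removal proof of arcwise connectedness is the standard one (the single nontrivial point there, which you assert rather than justify, is the uniform bound on the total length of the polygonal chains; it rests on the lower density estimate $\cH^1(K\cap B_r(x))\ge r$ for continua, which controls the number of chain points of a well-separated $\delta$-chain). The greedy extraction of arcs, each truncated at its first contact with $K_{j-1}:=\bigcup_{i<j}\Gamma_i$, is exactly the standard construction and delivers the interior-disjointness condition.

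The genuine gap is in the last step, $\cH^1(N)=0$. Your primary argument --- pick $x_\infty\in N$, take an arc in $K$ from $x_\infty$ to $\bigcup_j\Gamma_j$ of positive length, and contradict the decay of the admissible arc lengths $s_j$ --- fails: the admissible competitor at stage $j$ is only the portion of that arc up to its \emph{first} contact with $K_{j-1}$, and since the decay $s_j\to 0$ only tells you that $\bigcup_j\Gamma_j$ is dense in $K$, the point $x_\infty$ is a limit of the $\Gamma_i$ and that initial portion can shrink to zero as $j\to\infty$; no contradiction results. The density-point fallback does not close the gap either: at a point of $N$ where $\bigcup_j\Gamma_j$ has vanishing upper density one does obtain (via the boundary-bumping lemma and arcwise connectedness of the component of $K\cap\cl{B_r(x)}$) an arc $A\subset K$ with $\cH^1(A)\ge r$ and $\cH^1(A\cap\bigcup_j\Gamma_j)=o(r)$, but $A\cap K_{j-1}$ may be a Cantor-type subset of $A$ that chops $A\setminus K_{j-1}$ into infinitely many admissible subarcs, each of length at most $2s_j$; their total length stays close to $r$ while each is short, so the decay of $s_j$ is again not contradicted. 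The standard way to finish is not a density argument but Golab's lower-semicontinuity theorem: the partial unions $K_j$ are compact, connected, and converge to $K$ in Hausdorff distance (precisely because $s_j\to0$ forces $\sup_{y\in K}\dist(y,K_j)\to0$), hence $\cH^1(K)\le\liminf_j\cH^1(K_j)=\sum_i\cH^1(\Gamma_i)=\cH^1\big(\bigcup_i\Gamma_i\big)$, which immediately gives $\cH^1(N)=\cH^1(K)-\cH^1\big(\bigcup_i\Gamma_i\big)=0$. Golab's theorem is proved independently of the decomposition, so there is no circularity in invoking it here.
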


\begin{remark}[\textbf{Rectifiable curve is locally Jordan}]
\label{rem:rect_sets_boundary}
Let $\Gamma$ be a rectifiable curve. Then for $\cH^1$-a.e.\ $x\in \Gamma$
there exists $r_x>0$ such that $B_{r_x}(x)\setminus \Gamma$ has exactly 
two connected components. Indeed, suppose that there exists $x\in\Gamma$
such that $\theta^*(\Gamma,x) = \theta_*(\Gamma,x)=1$ and 
$B_r(x)\setminus\Gamma$ has at least three connected components 
for every $r>0$ such that  endpoints of $\Gamma$ lie outside
$\cl{B_r(x)}.$ Then $(B_r(x)\setminus B_{r/2}(x))\cap\Gamma$
should have three connected components and as a result 
$\cH^1((B_r(x)\setminus B_{r/2}(x))\cap\Gamma)\ge3r/2$ and
$$
1 =\lim\limits_{r\to0} \frac{\cH^1(B_r(x)\cap \Gamma)}{2r} \ge  
\lim\limits_{r\to0} \frac{\cH^1(B_{r/2}(x)\cap \Gamma)}{2r} +\frac{3}{4} = \frac54,
$$
a contradiction.
\end{remark}

\begin{proposition}[\textbf{Properties of regular points  
\cite{AFP:2000,Fa:1985,Gi:2002}}]\label{prop:tangent_line_conv}
Suppose that $K\subset\R^2$ be a connected compact set with $\cH^1(K)<\infty.$
Thus, it admits a unit (measure-theoretic) normal $\nu_K(x)$ 
at $\cH^1$-a.e.\ $x\in K;$ the map $x\mapsto \nu_K(x)$
is Borel measurable and if $L$ is any connected subset of $K$ 
then $\nu_L(x) = \nu_K(x)$ for any $x\in L$ for which 
the unit normal $\nu_K(x)$ to $K$ exists.
Moreover, $\cH^1\res \sigma_{\rho,x}(K)\wk^* \cH^1\res T_x$
and $\cl{U_{1,\nu_K}(x)}\cap \sigma_{\rho,x}(K)
\overset{\cK}{\to} \cl{U_{1,\nu_K}(x)}\cap T_x$ 
as $\rho\to0^+,$ where $T_x$ is the generalized tangent line to $K$ at $x.$  
\end{proposition}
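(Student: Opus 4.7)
The plan is to reduce the statement to the countable rectifiability provided by Proposition~\ref{prop:rectifiable_sets} and then invoke classical blow-up results for $\cH^1$-rectifiable sets. First, write $K=N\cup\bigcup_{j\ge 1}\Gamma_j$ with $\cH^1(N)=0$ and each $\Gamma_j=\gamma_j([0,1])$ a rectifiable curve (Jordan arc after restricting to $(0,1)$ away from prior arcs). Parametrize each $\Gamma_j$ by arclength $\gamma_j^o:[0,\cH^1(\Gamma_j)]\to\R^2;$ since $\gamma_j^o$ is $1$-Lipschitz, Rademacher's theorem gives $\dot\gamma_j^o(\ell)$ for a.e.\ $\ell$, and at every such differentiability point $x=\gamma_j^o(\ell)$ one defines the unit normal $\nu_{\Gamma_j}(x)$ as a $90^\circ$ rotation of $\dot\gamma_j^o(\ell)/|\dot\gamma_j^o(\ell)|$ (which exists with $|\dot\gamma_j^o|=1$ at $\cH^1$-a.e.\ $x\in\Gamma_j$, cf.~\cite[Lemma 3.7]{Fa:1985}). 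Set $\nu_K(x):=\nu_{\Gamma_j}(x)$ whenever $x\in\Gamma_j$ is a regular point of $K$, i.e.\ $\theta^*(K,x)=\theta_*(K,x)=1$ and $x\notin N\cup\bigcup_{i\ne j}\Gamma_i\cap\Gamma_j$. By Falconer, the non-regular set is $\cH^1$-negligible, so $\nu_K$ is defined $\cH^1$-a.e.; Borel measurability is inherited from the countable construction (each $\nu_{\Gamma_j}$ is Borel on $\Gamma_j$).

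Next I would settle the compatibility $\nu_L(x)=\nu_K(x)$ for connected $L\subset K$. At every regular point $x$, the tangent line $T_x$ is characterized intrinsically by the $\cH^1$-density condition and by the classical blow-up statement (see~\cite[Section 2]{Gi:2002} and \cite[Theorem 2.83]{AFP:2000}): $\cH^1\res\sigma_{\rho,x}(K)\wk^*\cH^1\res T_x$ as $\rho\to 0$. Since $L\subset K$, one has $\theta^*(L,x)\le\theta^*(K,x)=1$; but because $L$ is connected and compact with $\cH^1(L)<\infty$, Proposition~\ref{prop:rectifiable_sets} applies to $L$ and at $\cH^1$-a.e.\ $x\in L$ one also has $\theta^*(L,x)=1$. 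At such $x$ both $K$ and $L$ have tangent lines, and comparing the two blow-up limits using $\sigma_{\rho,x}(L)\subset\sigma_{\rho,x}(K)$ together with the density equality forces the two tangent lines (hence the two normals up to sign) to coincide. The uniform choice of orientation is irrelevant for the measure-theoretic statement.

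The two convergence assertions as $\rho\to 0$ follow for $\cH^1$-a.e.\ $x\in K$ from the classical Besicovitch/Preiss blow-up theory for $\cH^1$-rectifiable sets: the weak$^*$ convergence $\cH^1\res\sigma_{\rho,x}(K)\wk^*\cH^1\res T_x$ on $\R^2$ is exactly the tangent-measure characterization of rectifiability (cf.~\cite[Theorem 2.83]{AFP:2000}). From this one deduces both $\sigma_{\rho,x}(K)\cap\overline{U_{1,\nu_K(x)}(0)}\supset$ subsets whose limit points cover $T_x\cap\overline{U_{1,\nu_K(x)}(0)}$, and, from $\theta^*(K,x)=1$ (which bounds $\cH^1(\sigma_{\rho,x}(K)\cap B_r)$ by $(2+o(1))r$), that no extra points accumulate outside $T_x$. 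Combining the lower and upper $\cK$-limit descriptions yields $\overline{U_{1,\nu_K(x)}(x)}\cap\sigma_{\rho,x}(K)\overset{\cK}{\to}\overline{U_{1,\nu_K(x)}(0)}\cap T_x$.

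The delicate point I expect to be the main obstacle is the Kuratowski convergence on the \emph{closed} square: one must show that no point of $T_x\cap\overline{U_1}$ is missed in the $\cK$-lower limit, including endpoints on $\p U_1$. This is where connectedness of $K$ enters crucially: Proposition~\ref{prop:rectifiable_sets} says $K$ is arcwise connected, so any curve $\Gamma\subset K$ passing through $x$ with tangent $T_x$ has, after rescaling by $\sigma_{\rho,x}$, length at least $2$ in any direction along $T_x$ for small $\rho$, preventing the rescaled set from retracting away from the boundary of $U_1$. This observation, combined with the already-established weak$^*$ convergence and the upper bound on $\cH^1$-density, closes the argument.
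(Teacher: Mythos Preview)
The paper does not give its own proof of this proposition: it is stated in the appendix as a recalled fact from \cite{AFP:2000,Fa:1985,Gi:2002}, with no argument following. So there is no ``paper's proof'' to compare against; your sketch follows the natural route those references take (rectifiable decomposition, arclength parametrization for the normal, tangent-measure blow-up for the weak$^*$ convergence, and connectedness for the Kuratowski statement).

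There is, however, a genuine gap in your upper $\cK$-limit argument. You write that the density bound $\theta^*(K,x)=1$ alone prevents points of $\sigma_{\rho,x}(K)\cap\overline{U_1}$ from accumulating outside $T_x$. This is false without connectedness. Consider
\[
K=\big([-1,1]\times\{0\}\big)\cup\big\{(1/n,1/n):n\ge 1\big\}\cup\{(0,0)\}.
\]
At $x=0$ the isolated points carry no $\cH^1$-measure, so $\theta^*(K,0)=\theta_*(K,0)=1$ and the tangent line is the $x_1$-axis; yet $\sigma_{1/n}(K)$ contains $(1,1)$ for every $n$, so the $\cK$-upper limit contains $(1,1)\notin T_0$. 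Connectedness is needed for the \emph{upper} limit just as much as for the lower: if some $y_\rho\in\sigma_{\rho,x}(K)\cap\overline{U_1}$ stayed at positive distance from $T_x$, arcwise connectedness of $K$ would produce a path in $K$ from $x$ to $x+\rho y_\rho$, and the portion of that path lying in the annulus where it deviates from the tangent direction contributes at least $c\rho$ of extra length not accounted for by $T_x$, contradicting $\theta^*(K,x)=1$. This is precisely the mechanism in \cite[Section~2]{Gi:2002}. You correctly flagged connectedness as the crucial ingredient but deployed it only at the lower-limit/endpoint step; it must also drive the upper-limit containment.
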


Note that $U_{1,\nu_K}(x)$ in Proposition \ref{prop:tangent_line_conv} can be replaced by arbitrary $U_{R,\nu_K}(x)$. 

\begin{proposition}\label{prop:set_shrinks}
Let $A\in \fA$ and $x\in \p A$ be such that the measure-theoretic unit normal $\nu_A(x)$ to $\p A$ exists 
and $\cl{U_{R,\nu_A(x)}(x)}\cap \sigma_{\rho,x}(\p A)
\overset{\cK}{\to} \cl{U_{R,\nu_A(x)}(x)}\cap T_x$ for any $R>0$
as $\rho\to0^+.$ Then:
\begin{itemize}
\item[(a)] if $x\in A^{(1)}\cap \p A,$ then $\sdist(\cdot,\sigma_{\rho,x}(\p A))\to-\dist(\cdot,T_x)$ uniformly in $\overline{U_{1,\nu_A(x)}};$

\item[(b)] if $x\in A^{(0)}\cap \p A,$ then $\sdist(\cdot,\sigma_{\rho,x}(\p A))\to \dist(\cdot,T_x)$ uniformly in $\overline{U_{1,\nu_A(x)}}.$
\end{itemize}
\end{proposition}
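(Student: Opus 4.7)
The plan is to combine the Kuratowski convergence hypothesis with a density argument that pins down the sign of the subsequential limits of the (equi-Lipschitz) signed-distance functions. Write $\nu:=\nu_A(x)$, $\sigma_\rho:=\sigma_{\rho,x}$, and set $f_\rho(y):=\sdist(y,\sigma_\rho(\p A))$ on $\overline{U_{1,\nu}(0)}$; note that $f_\rho<0$ on $\sigma_\rho(A)$ and $f_\rho>0$ on its complement, by the definition of $\sdist$.

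First I would upgrade the truncated Kuratowski hypothesis to the statement $\dist(\cdot,\sigma_\rho(\p A))\to\dist(\cdot,T_x)$ uniformly on $\overline{U_{1,\nu}(0)}$. By Proposition \ref{prop:equivalent_kuratowski} the hypothesis yields uniform convergence on compacts of $\dist(\cdot,\sigma_\rho(\p A)\cap \overline{U_{R,\nu}(0)})$ to $\dist(\cdot,T_x\cap \overline{U_{R,\nu}(0)})$ for every $R>0$. Choosing $R=3$ and observing that for $y\in \overline{U_{1,\nu}(0)}$ the nearest point of $T_x$, and for $\rho$ small the nearest point of $\sigma_\rho(\p A)$ as well, lies inside $\overline{U_{3,\nu}(0)}$ (a short triangle-inequality argument using $\dist(y,T_x)\le 1$ and $\dist(y,\sigma_\rho(\p A))\le 1+o(1)$), one strips the truncation and concludes $|f_\rho|\to\dist(\cdot,T_x)$ uniformly.

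Since the family $\{f_\rho\}$ is equi-$1$-Lipschitz and uniformly bounded on the compact set $\overline{U_{1,\nu}(0)}$, Arzel\`a--Ascoli yields that every sequence $\rho_n\to 0$ admits a subsequence (not relabelled) such that $f_{\rho_n}\to f$ uniformly, with $f$ continuous and $|f|=\dist(\cdot,T_x)$. The zero set of $f$ is therefore exactly $T_x\cap\overline{U_{1,\nu}(0)}$, so on each of the two open half-squares $U^\pm$ of $U_{1,\nu}(0)\setminus T_x$ the continuous function $f$ has constant sign.

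The main step is to fix this sign using the density hypothesis on $A$ at $x$. In case (a), the inclusions $B_\rho(x)\subset U_{\rho,\nu}(x)\subset B_{\rho\sqrt{2}}(x)$ sandwich the standard ball-density to give $|A\cap U_{\rho,\nu}(x)|/|U_{\rho,\nu}(x)|\to 1$, hence $|\sigma_\rho(A)\cap U_{1,\nu}(0)|\to |U_{1,\nu}(0)|$, and therefore $|\{f_\rho>0\}\cap U_{1,\nu}(0)|\to 0$. If $f$ were positive on some nonempty open $V\subset U_{1,\nu}(0)$, I would pick a compact $K\subset V$ with $f\ge\varepsilon>0$, forcing $f_{\rho_n}\ge\varepsilon/2$ on $K$ for $n$ large, whence $|K|\le |\{f_{\rho_n}>0\}\cap U_{1,\nu}(0)|\to 0$, a contradiction. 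Hence $f\le 0$ on $\overline{U_{1,\nu}(0)}$, and combined with $|f|=\dist(\cdot,T_x)$ this forces $f=-\dist(\cdot,T_x)$; uniqueness of the subsequential limit upgrades this to convergence of the full family. Case (b) is entirely analogous: the density-$0$ property yields $|\{f_\rho<0\}\cap U_{1,\nu}(0)|\to 0$, forcing $f\ge 0$ and hence $f=\dist(\cdot,T_x)$. The only mild obstacle is the bookkeeping in the first step, needed to remove the truncation $\cap \overline{U_{R,\nu}(0)}$ from the Kuratowski hypothesis; the remainder is a standard density-plus-Arzel\`a--Ascoli argument.
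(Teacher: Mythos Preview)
Your proof is correct and follows essentially the same strategy as the paper: both use Arzel\`a--Ascoli on the equi-Lipschitz family of signed distances, identify $|f|=\dist(\cdot,T_x)$ via the Kuratowski hypothesis (working in a larger square $U_{R,\nu}$ to strip the truncation), and then invoke the density of $A$ at $x$ to fix the sign of the subsequential limit. The only cosmetic difference is in this last step: the paper traps $\sigma_\rho(\partial A)$ in a thin strip around $T_x$ so that the signed distance has constant sign on each half-square and then derives a density contradiction, while you argue directly that $|\{f_\rho>0\}\cap U_{1,\nu}(0)|\to 0$ and reach the same contradiction.
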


\begin{proof}
We prove only (a); the proof of (b) is analogous. Let $x\in A^{(1)}\cap \p A$ be such that 
\begin{itemize}
\item[(x1)] $\nu_A(x)$ exists;
\item[(x2)] $\cl{U_{R,\nu_A(x)}(x)}\cap \sigma_{\rho,x}(\p A)
\overset{\cK}{\to} \cl{U_{R,\nu_A(x)}(x)}\cap T_x$ for any $R>0$
as $\rho\to0^+.$  
\end{itemize}
Without loss of generality we assume $x=0,$ $\nu_A(x)={\bf e_2}$ and $T_x=T_0$ is the $x_1$-axis. 
For shortness we write $A_\rho$ and $(\p A)_\rho$ in places of 
$\sigma_{\rho,x}(A)$ and $\sigma_{\rho,x}(\p A),$ respectively. Let $\{\rho_k\}\subset(0,1)$ be arbitrary sequence converging to $0.$ Consider $f_k:=\sdist(\cdot,(\p A)_{\rho_k})\big|_{U_4}\in W^{1,\infty}(U_4).$ For any $k\ge1,$ $f_k$ is $1$-Lipschitz and $f_k(0)=0,$ therefore, by the Arzela-Ascoli Theorem, there exist $f\in W^{1,\infty}(U_4)$ and not relabelled subsequence $\{f_k\}$ such that $f_k\to f$ uniformly in $U_4.$
By (x2) applied with $R>4$, $|f_k| = \dist(\cdot,(\p A)_{\rho_k})\to \dist(\cdot,T_0)$ uniformly in $U_4,$ therefore, $|f(x)| = \dist(x, T_0)$ for any $x\in U_4.$ 
Thus, it suffices to prove that $f\le 0.$

Assume by contradiction that $f>0$ in $U_4^+:= U_4\cap \{x_2>0\}.$
In addition, by (x2) for any $\delta\in(0,1)$ there exists $k_\delta>0$ such that  $U_4\cap (\p A)_{\rho_k}\subset T_0\times (-\delta,\delta)$ for any $k>k_\delta.$ Therefore,  
$\sdist(\cdot,(\p A)_{\rho_k})>0$ in $U_{4,\delta}^+:=U_4\cap \{x_2>\delta\},$ and hence,
$A\cap \rho_k U_{4,\delta}^+ = \emptyset,$ where $r D = \{rx:\,x\in D\}.$ 
Since $0\in A^{(1)}\cap\p A,$ this implies
$$
1=\lim\limits_{k\to\infty} \frac{|A\cap U_{4\rho_k}|}{|U_{4\rho_k}|} =
\lim\limits_{k\to\infty} \frac{|(A\cap U_{4\rho_k})\setminus \rho_kU_{4,\delta}^+|}{|U_{4\rho_k}|} < 1,
$$
a contradiction. 
 
Analogous contradiction is obtained assuming $f>0$ in $U_4\cap\{x_2<0\}.$
\end{proof}

\subsection{Minimizers of volume-constraint and unconstraint problems}

The following proposition is an  extension of \cite[Theorem 1.1]{EF:2011}.

\begin{proposition}\label{prop:fusco_extension}
Assume {\red \rm(H1)-(H3)}. Then there exists $\lambda_0>0$ (possibly depending on $c_1,$ $c_2$ and $\Omega$) with the following property: given $m\ge1,$ $(A,u)\in \admissible_m$ is a solution of \eqref{minimum_prob_1} if and only if $(A,u)$ is also a solution to \eqref{minimum_prob_2} for all $\lambda\ge \lambda_0.$ 
\end{proposition}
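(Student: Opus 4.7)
The ``if'' direction is immediate: when $(A,u)\in\admissible_m$ solves \eqref{minimum_prob_2} and satisfies $|A|=\fm$, the penalty vanishes on every volume-$\fm$ configuration, so $\cF(A,u)=\cF^\lambda(A,u)\le \cF^\lambda(A',u')=\cF(A',u')$ for every CP-competitor $(A',u')$. The content therefore splits into two nontrivial statements: (a) every CP-solution solves UP for $\lambda\ge\lambda_0$, and (b) every UP-solution with $\lambda\ge\lambda_0$ satisfies $|A|=\fm$.

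Both (a) and (b) follow from a single \emph{volume-fixing} estimate in the spirit of \cite[Theorem 1.1]{EF:2011}, which I would establish in the form: there is $C>0$, depending only on $c_1,c_2,\Omega,\C$ and $u_0$, such that every $(A',u')\in\admissible_m$ admits a competitor $(\tilde A,\tilde u)\in\admissible_m$ with $|\tilde A|=\fm$ and
$$
\cF(\tilde A,\tilde u)\le \cF(A',u')+C\bigl||A'|-\fm\bigr|.
$$
Taking $\lambda_0:=C$, both implications follow by one-line comparisons: for (a), given a CP-solution $(A,u)$ and any UP-competitor $(A',u')$, CP-optimality applied to $(\tilde A,\tilde u)$ gives $\cF^\lambda(A,u)=\cF(A,u)\le\cF(\tilde A,\tilde u)\le\cF(A',u')+C\bigl||A'|-\fm\bigr|\le\cF^\lambda(A',u')$; for (b), if a hypothetical UP-solution $(A,u)$ had $|A|\ne\fm$, applying the estimate to $(A,u)$ itself would yield $\cF^\lambda(\tilde A,\tilde u)=\cF(\tilde A,\tilde u)\le \cF(A,u)+C\bigl||A|-\fm\bigr|<\cF^\lambda(A,u)$ whenever $\lambda>C$, contradicting UP-optimality.

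To construct $(\tilde A,\tilde u)$, I would push $A'$ forward under a bi-Lipschitz self-map of $\cl{\Omega}$ of the form $\Phi_t={\rm id}+tX$, with $X$ a smooth vector field compactly supported in $\Omega\setminus\cl{\substrate}$, and set $\tilde A:=\Phi_t(A')$ and $\tilde u:=u'\circ\Phi_t^{-1}$ on $\tilde A\cup\substrate$. Standard change-of-variable formulas for rectifiable $\cH^1$-measures and for quadratic elastic densities produce $|\cF(\tilde A,\tilde u)-\cF(A',u')|\le C_0|t|(1+\cF(A',u'))$, while Jacobian expansion gives $|\tilde A|-|A'|=t\int_{A'}\mathrm{div}\,X\,dx+O(t^2)$. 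Reducing without loss to $(A',u')$ with $\cF(A',u')\le \cF(A,u)+1$---the inequality is trivial otherwise---bounds the prefactor uniformly, so it suffices to choose $t$ with $|t|\lesssim \bigl||A'|-\fm\bigr|$. The main obstacle is securing a uniform lower bound on $\bigl|\int_{A'}\mathrm{div}\,X\,dx\bigr|$, which a priori can vanish for a single $X$; I would overcome this by working with a finite family $\{X_i\}_{i=1}^N$ of compactly supported vector fields in $\Omega\setminus\cl{\substrate}$ whose divergences are sufficiently rich that, for every $A'$ with $|A'|$ in a fixed compact sub-interval of $(0,|\Omega|)$, some index $i$ yields a volume-derivative bounded below in absolute value by a constant depending only on $\Omega$ and $\substrate$, with the extreme volume regimes (where $|A'|$ is very close to $0$ or $|\Omega|$) handled by a direct comparison to a fixed reference configuration of volume $\fm$.
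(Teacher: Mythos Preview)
Your overall strategy---reduce both implications to a single volume-fixing estimate---is sound and is exactly the philosophy behind \cite{EF:2011}. But the implementation has a genuine gap in the elastic part.

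You propose $\tilde u:=u'\circ\Phi_t^{-1}$ and claim that change of variables yields $|\cF(\tilde A,\tilde u)-\cF(A',u')|\le C_0|t|(1+\cF(A',u'))$. This is false in the linearized-elasticity setting, because the energy sees only the \emph{symmetric} gradient $\str{u'}$ while the push-forward mixes in the full gradient. Concretely, writing $\nabla\Phi_t=I+t\nabla X$ one has
\[
\str{\tilde u}\circ\Phi_t \;=\; \str{u'} - t\,\mathrm{sym}\bigl(\nabla u'\,\nabla X\bigr)+O(t^2),
\]
and the first-order correction $\mathrm{sym}(\nabla u'\,\nabla X)$ contains the skew part $\omega(u')$ of $\nabla u'$, which is \emph{not} controlled by $\int|\str{u'}|^2$. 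A uniform Korn inequality on $A'\cap\mathrm{supp}\,X$ would rescue you, but the Korn constant depends on the geometry of $A'$ (think of long thin necks), and $A'\in\fA_m$ is arbitrary. A concrete counterexample: take $u'=u_0+a$ with $a$ a large rigid rotation; then $\cF(A',u')$ is small but $\str{\tilde u}$ picks up a term of order $t|M|$ with $M$ the skew matrix of $a$, which can be made arbitrarily large.

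The paper avoids this obstacle entirely by \emph{not} transporting the displacement. It argues by contradiction with a sequence $(A_h,u_h)$, passes to an $L^1$-limit $A$ to find a ball $B_r\subset\Omega$ where the density of $A$ (hence of $A_h$) is intermediate, applies the explicit Esposito--Fusco radial map $\Phi$ inside $B_r$, and sets
\[
E_h:=\Phi(A_h)\setminus\partial B_r,\qquad v_h:=u_h\,\chi_{A_h\setminus B_r}+u_0\,\chi_{E_h\cap B_r}.
\]
Because the competitor displacement equals $u_0$ in the modified region and $E_0=\str{u_0}$ there, the elastic contribution on $E_h\cap B_r$ vanishes identically, so the elastic term is \emph{monotone} rather than merely Lipschitz in $t$; no Korn inequality is needed. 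The price is introducing $\partial B_r$ as an internal crack, which is harmless for the surface estimate. If you want to keep your direct (non-contradiction) architecture, the fix is the same: replace $u'\circ\Phi_t^{-1}$ by $u'\chi_{A'\setminus B_r}+u_0\chi_{\tilde A\cap B_r}$ on a ball where $|A'\cap B_r|$ is intermediate, and absorb $\partial B_r$ into $\partial\tilde A$.
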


\begin{proof}
Note that any minimizer $(A,u)\in\admissible_m$ of $\cF^\lambda$ with $|A|=\fm$ is also minimizer of $\cF.$ Hence, it suffices to show that  there exists $\lambda_0>0$ such that any minimizer $(A,u)$ of $\cF^\lambda$ for $\lambda>\lambda_0$ satisfies  $|A|=\fm.$ 

Assume by contradiction that there exist sequences $\{m_h\}\subset\N$ and $\{\lambda_h\}\subset\R$ with $\lambda_h\to\infty$ and a sequence $(A_h,u_h)\in \admissible_{m_h}$ minimizing $\cF^{\lambda_h}$ such that  $|A_h|\ne \fm.$ Since $\Omega$ has finitely many connected components there exists an open Lipschitz set $A_0\subset\Omega$ with $|A_0|=\fm$  such that  
$\cF^{\lambda_h}(A_h,u_h) \le \cF^{\lambda_h}(A_0,u_0)=\cF(A_0,u_0)$
for all large $h.$
Thus, by \eqref{finsler_norm} and \eqref{hyp:bound_anis},
\begin{equation}\label{gsrtaer}
\sup\limits_h \cH^1(\p A_h) \le \Lambda:=\frac{\cF(A_0,u_0) + c_2\cH^1(\Sigma) + \cH^1(\p\Omega)}{c_1}, 
\end{equation}
and 
$\lambda_h||A_h| - \fm|\le \cF(A_0,u_0) + c_2\cH^1(\Sigma)$ for any $h.$ This implies $|E_h| \to \fm$ as $h\to\infty.$ By compactness, there exists a finite perimeter set $A\subset\Omega$ and a not relabelled subsequence such that $\chi_{A_h} \to \chi_A$ a.e.\ in $\R^2.$ In particular, $|A|=\fm.$  

We suppose that $|A_h|<\fm$ for all $h;$ the case $|A_h|>\fm$ is treated analogously. As in the proof of \cite[Theorem 1.1]{EF:2011} given $\epsilon\in(0,\frac18)$, there exist small  $r>0$ and $x_r\in\Omega$ such that  $B_r(x)\strictlyincluded\Omega$ and 
$$
|A\cap B_{r/2}(x_r)|<\epsilon r^2,\qquad |A\cap B_r(x_r)|>\frac{\pi r^2}{16}.
$$
For shortness, we suppose that $x_r=0$ we write $B_r:=B_r(x_r).$  Since $A_h\to A$ in $L^1(\R^2),$ for all large $h,$
\begin{equation}\label{efredefe}
|A_h\cap B_{r/2}|<\epsilon r^2,\qquad |A_h\cap B_r|>\frac{\pi r^2}{16}. 
\end{equation}
Let $\Phi:\R^2\to \R^2$ be the bi-Lipschitz map which takes $B_r$ into $B_r$ defined as 
$$
\Phi(x):= 
\begin{cases}
(1 - 3\sigma)x, & |x|<\frac r2,\\
x+ \sigma\Big(1 - \frac{r^2}{|x|^2}\Big)x, & \frac{r}{2} \le x < r,\\
x, & |x|\ge r 
\end{cases}
$$
for some $\sigma\in(0,\frac{1}{16}).$ 
Recall from \cite[pp. 420-422]{EF:2011} that 
the Jacobian $J\Phi$ of $\Phi$ satisfies 
$$
J\Phi(y) \ge 1+ \sigma \qquad y\in B_r\setminus B_{r/2},
$$
and 
$$
J\Phi(y) \le 1+ 8\sigma \qquad y\in B_r,
$$
and the tangential Jacobian $J_1T_x$ of $\Phi$ on the tangent space $T_x$ of $\p A_h$ satisfies 
\begin{equation}\label{tang_jacob}
J_1T_x \le 1+5\sigma,\qquad x\in B_r \cap \p A_h.  
\end{equation}
Set 
$$
E_h:= \Phi(A_h)\setminus \p B_r,\qquad v_h: =u_h\chi_{A_h}\setminus B_r + u_0\chi_{E_h\cap B_r}.
$$
Note that $|E_h|<\fm$ and since the bi-Lipschitz maps do not increase the number of connected components, $(E_h,v_h)\in \admissible_m.$ Let us estimate
\begin{align*}
\cF^{\lambda_h}(A_h,u_h) -& \cF^{\lambda_h}(E_h,v_h)\\
= & 
\int_{\overline{B_r}\cap \p A_h} \theta_{A_h}(x)\,\phi(x,\nu_{A_h})d\cH^1 - \int_{\overline{B_r}\cap \p E_h} \theta_{E_h}(x) \phi(x,\nu_{E_h})d\cH^1\\
& + \int_{B_r\cap A_h} W(x,\str{u_h} - E_0)dx - \int_{B_r\cap E_h} W(x,\str{v_h} - E_0)dx \\
& + \lambda_h \Big(|E_h| - |A_h|\Big)
:= I_1 + I_2 + I_3,
\end{align*}
where $\theta_F(x)$ is $1$ for $\cH^1$-a.e.\ on $\p^*F,$ 
is $2$ for $\cH^1$-a.e.\ on $F^{(1)}\cup F^{(0)}\cap \p F$ and 
is $0$ otherwise.
By the choice of $v_h,$
$I_2\ge0.$ Moreover, by \eqref{tang_jacob} and the area formula as well as from \eqref{finsler_norm}, \eqref{gsrtaer} and equality $\theta_{E_h}(\Phi(y)) = \theta_{A_h}(y)$ for $\cH^1$-a.e.\ $y\in \p A_h$, 
\begin{align*}
 \int_{B_r\cap \p E_h} &\theta_{E_h}(x) \phi(x,\nu_{E_h})d\cH^1 =   
  \int_{B_r\cap \p A_h} \theta_{A_h}(\Phi(y))\, \phi(\Phi(y),\nu_{A_h})J_1 T_y\,d\cH^1\\
&\le  2c_2(1+5\sigma) \cH^1(B_r\cap \p A_h) 
  \le 2c_2(1+5\sigma)\Lambda.
\end{align*}
Moreover, by \eqref{finsler_norm},
$$
 \int_{\p B_r\cap \p E_h} \theta_{E_h}(x) \phi(x,\nu_{E_h})d\cH^1 \le 2c_2\cH^1(\p B_r) \le 4\pi c_2r,
$$
thus, 
$$
I_1\ge - 2c_2(1+5\sigma)\Lambda - 4\pi c_2r.
$$
Finally, repeating the same arguments of Step 4  in the proof of \cite[Theorem 1.1]{EF:2011}, we obtain
\begin{align*}
I_3 \ge \lambda_h \sigma r^2 (1 - 7\epsilon),
\end{align*}
thus, 
\begin{equation}\label{asfgsa}
\cF^{\lambda_h}(A_h,u_h) -  \cF^{\lambda_h}(E_h,v_h) \ge  \lambda_h \sigma r^2 (1 - 7\epsilon) - 2c_2(1+5\sigma)\Lambda-4\pi c_2r. 
\end{equation}
Since the dependence of the right-side of \eqref{asfgsa} on $h$ is only through $\lambda_h,$ for sufficiently large $h$ we have 
$\cF^{\lambda_h}(A_h,u_h) >\cF^{\lambda_h}(E_h,v_h),$ which contradicts to the minimality of $(A_h,u_h).$
\end{proof}

\section*{Acknowledgments}

Sh. Kholmatov acknowledges support from the Austrian Science Fund (FWF) 
project M~2571-N32. P. Piovano acknowledges support from the Vienna 
Science and Technology Fund (WWTF), the City of Vienna, and Berndorf Privatstiftung through Project MA16-005, from the Austrian Science Fund (FWF) through project P~29681\red, and from BMBWF through the OeAD-WTZ project HR 08/2020.   
\red  Moreover, the authors are grateful to the anonymous referees for their careful reading and valuable comments. \black 



\begin{thebibliography}{99}



\bibitem{ADT:2017} {\sc Almi  S., Dal Maso  G.,  Toader  R.:} 
A lower semicontinuity result for a free 
discontinuity functional with a boundary term. 
J. Math. Pures Appl. {\bf 108} (2017), 952--990.



\bibitem{ACD:1997} {\sc Ambrosio  L., Coscia  A., Dal Maso  G.:}
Fine properties of functions with bounded deformation.
Arch. Rational Mech. Anal. {\bf 139} (1997), 201--238.

\bibitem{AFP:2000} {\sc Ambrosio  L., Fusco  N., Pallara  D.:}  
Functions of Bounded Variation and Free Discontinuity problems. 
Oxford University Press, New York, 2000.

\bibitem{AT:1972}  {\sc Asaro R., Tiller W.:} 
Interface morphology development during stress corrosion cracking: 
Part I: via surface diffusion.
 Metall. Trans. \textbf{3} (1972), 1789--1796.
 
\bibitem{Babadjian:2016}  {\sc Babadjian  J.-F., Henao D.:}
 Reduced models for linearly elastic thin films allowing for fracture, debonding or delamination.
Interface free bound.  \textbf{18} (2016), 545--578. 
 
\bibitem{BGZ:2015} {\sc Bella P., Goldman M., Zwicknagl B.:} 
Study of island formation in epitaxially strained films on unbounded domains.
Arch. Rational Mech. Anal. {\bf 218} (2015), 163--217.
 
\bibitem{BCD:1998} {\sc Bellettini G., Coscia A., Dal Maso G.:} 
Compactness and lower semicontinuity properties in 
$SBD(\Omega)$. Math. Z. {\bf228} (1998),
337--351.
 

\bibitem{BFM:1998} {\sc Bouchitt\'e G., Fonseca I., Mascarenhas L.:}
A global method for relaxation. 
Arch. Rational Mech. Anal.  {\bf 145} (1998), 51--98.

\bibitem{Bourdin:2008} {\sc  Bourdin B., Francfort G. A., Marigo J.-J.:} 
The variational approach to fracture.
J. Elasticity {\bf 91} (2008), 5--148. 

\bibitem{BChS:2007} {\sc Braides  A., Chambolle  A., Solci  M.:}
A relaxation result for energies defined on pairs set-function 
and applications. ESAIM Control Optim.
Calc. Var. {\bf 13} (2007), 717--734.






\bibitem{CM:2007} {\sc Caffarelli  L., Mellet  A.:}  
Capillary drops: contact angle hysteresis and sticking drops. 
Calc. Var. Partial Differential Equations  {\bf 29} (2007), 141--160.



\bibitem{CCF:2016} {\sc Chambolle  A., Conti  S., Francfort  G.:}   
Korn-Poincar\'e inequalities for functions with a small jump set. 
Indiana Univ. Math. J. {\bf 65} (2016), 1373--1399. 

\bibitem{CCI:2017} {\sc Chambolle  A.,  Conti  S.,  Iurlano  F.:}
Approximation of functions with small jump sets 
and existence of strong minimizers of Griffith's energy. \red J. Math. Pures Appl. {\bf 128} (2019), 119--139. \black

\bibitem{ChC:2017arma} {\sc Chambolle  A., Crismale  V.:} 
A density result in $GSBD^p$ with applications 
to the approximation of brittle fracture energies.
Arch. Rational Mech. Anal. {\bf 232} (2019), 1329--1378. 

\bibitem{ChC:2018jems} {\sc Chambolle  A., Crismale  V.:} 
Compactness and lower-semicontinuity in $GSBD$.
J. Eur. Math. Soc. (JEMS), to appear. arXiv:1802.03302 [math.FA]. 


\bibitem{CFI:2016} {\sc Conti  S., Focardi  M., Iurlano  F.:}
Existence of minimizers for the 2d stationary Griffith fracture model.
C.R. Math. {\bf 354} (2016), 1055--1059.




\bibitem{CFI:2018} {\sc Conti  S., Focardi  M.,  Iurlano  F.:}
Existence of strong minimizers for the Griffith static fracture model in dimension two.  \red Ann. I. H. Poincar\'e -- AN  {\bf 36} (2019), 455--474. \black 

\bibitem{D:1993} {\sc  Dal Maso  G.:} An Introduction to $\Gamma$-Convergence.
Birkh\"auser, Boston, 1993.

\bibitem{D:2013} {\sc  Dal Maso  G.:} Generalised functions of bounded deformation.
J. Eur. Math. Soc. {\bf 15} (2013), 1943--1997.


\bibitem{DMMS:1992} {\sc Dal Maso  G., Morel  J., Solimini  S.:}
A variational method in image segmentation: Existence and approximation results.
Acta Math. {\bf 168} (1992), 89--151.



\bibitem{D:2001} {\sc Danescu A.:}
The Asaro-Tiller-Ginfeld instability revisited.
Int. J. Solids Struct. {\bf 38} (2001), 4671--4684.

\bibitem{DP:2018_1} {\sc  Davoli E.,  Piovano P.:} 
Derivation of a heteroepitaxial thin-film model.
\red Interfaces Free Bound., {\bf 22-1} (2020), 1--26. \EEE



\bibitem{DP:2018_2} {\sc  Davoli E.,  Piovano P.:} 
Analytical validation of the Yound-Dupr\'e law 
for epitaxially-strained thin films. Math. Models Methods Appl. Sci. 
{\bf 29} (2019), 2183--2223.



\bibitem{GB-W:2004} {\sc De Gennes P.-G., Brochard-Wyart F.,  Qu\'er\'e D.:}  
Capillarity and Wetting Phenomena: Drops, Bubbles, Pearls, Waves.
 Springer, New York, 2004.

\bibitem{DBD:1983} {\sc De Giorgi E., Buttazzo G., Dal Maso G.:} 
On the lower semicontinuity of certain integral functionals.   
Atti Accad. Naz. Lin. Cl. Sc. Fis. Mat. Nat. Rend. Serie 8  
{\bf 74} (1983), 274--282.

\bibitem{DPhM:2015} {\sc De Philippis  G.,  Maggi  F.:} 
Regularity of free boundaries in anisotropic capillarity 
problems and the validity of Young's law.
Arch. Rational Mech. Anal. {\bf 216} (2015), 473--568. 



 \bibitem{Deng:1995} {\sc Deng X.:} 
Mechanics of debonding and delamination in composites: Asymptotic studies.
Compos. Eng. {\bf 5} (1995),  1299--1315.

\bibitem{EF:2011} {\sc  Esposito L.,  Fusco N.:} A remark on a free
interface problem with volume constraint. J. Convex Anal. {\bf18} (2011),
417--426.

\bibitem{Fa:1985} {\sc Falconer K.:} The Geometry of Fractal Sets.
Cambridge University Press, Cambridge, 1985.

\bibitem{FFLM:2011} {\sc  Fonseca I., Fusco N., Leoni G.,  Millot V.:} 
Material voids in elastic solids with anisotropic surface energies. 
J. Math. Pures Appl. \textbf{96} (2011), 591--639.        


\bibitem{FFLM:2007} {\sc Fonseca  I., Fusco  N., Leoni  G., 
Morini  M.:}  Equilibrium configurations of epitaxially strained crystalline 
films: existence and regularity results. Arch. Rational Mech. 
Anal.  \textbf{186} (2007), 477--537.
      
\bibitem{FL:2007} {\sc  Fonseca I., Leoni  G.:} Modern Methods in 
the Calculus of Variations: $L^p$ Spaces. Springer, New York, 2007.

\bibitem{FG:2004} {\sc Fried E., Gurtin M.:} 
A unified treatment of evolving interfaces accounting for small 
deformations and atomic transport with emphasis on grain-boundaries and epitaxy. 
Adv. Appl. Mech. \textbf{40} (2004), 1--177.

\bibitem{FJM:2018} {\sc Fusco  N., Julin  V., Morini  M.:} 
The surface diffusion flow with elasticity in the plane. 
Commun. Math. Phys. {\bf 362} (2018), 571--607. 

\bibitem{FM:2012} {\sc Fusco N., Morini M.:}  
Equilibrium configurations of epitaxially strained elastic 
films: second order minimality conditions and qualitative properties of solutions. 
Arch. Rational Mech. Anal. \textbf{203} (2012), 247--327.

\bibitem{Gi:2002} {\sc Giacomini  A.:} 
A generalization of Golab's theorem and applications to fracture 
mechanics. Math. Models Methods Appl. Sci.
{\bf 12} (2002), 1245--1267.

\bibitem{Gi:1984} {\sc Giusti  E.:}  Minimal Surfaces and 
Functions of Bounded Variation. Birkh\"auser, Boston, 1984. 

\bibitem{GZ:2014} {\sc Goldman M., Zwicknagl B.:} 
Scaling law and reduced models for epitaxially
strained crystalline films. SIAM J. Math. Anal. {\bf 46} (2014), 1--24.

\bibitem{Gr:1921} 
{\sc Griffith A.A.:} 
The phenomena of rupture and flow in solids.
Philos. Trans. R. Soc. Lond. A {\bf 221} (1921), 163--198. 

\bibitem{Gr:1993} {\sc Grinfeld M.A.:}  
The stress driven instabilities in crystals: mathematical models and 
physical manifestations.  
J. Nonlinear Sci. \textbf{3} (1993), 35--83.     
     
\bibitem{HS:1991} {\sc Hutchinson J., Suo Z.:} 
Mixed mode cracking in layered materials.
Adv. Appl. Mech. {\bf 29} (1991), 63--191.


\bibitem{KP:2019} {\sc  Kreutz L.,  Piovano P.:} 
Microscopic validation of a variational model of epitaxially 
strained crystalline films.
Submitted (2019), \red arXiv:1902.06561 [math.AP]. \black 

\bibitem{Baldelli:2014} {\sc Le\'on Baldelli A.A., Babadjian 
J.-F., Bourdin B., Henao D., Maurini C.:} 
A variational model for fracture and debonding of thin 
films under in-plane loadings.
J. Mech. Phys. Solids \textbf{70} (2014), 320--348.


\bibitem{Baldelli:2013} {\sc Le\'on Baldelli A.A.,  Bourdin B., Marigo J.-J., Maurini C.:} 
 Fracture and debonding of a thin film on a stiff substrate: analytical and 
 numerical solutions of a one-dimensional variational model.
 Cont. Mech. Thermodyn. \textbf{25} (2013), 243--268.

\bibitem{MS:2001} {\sc Maddalena  F., Solimini  S.:}
Lower semicontinuity properties of
functionals  with free discontinuities. 
Arch. Rational Mech. Anal. {\bf159} (2001), 273--294.


\bibitem{Ma:2012} {\sc Maggi  F.:}  Sets of Finite Perimeter 
and Geometric Variational Problems: An Introduction to Geometric Measure Theory.
Cambridge University Press, Cambridge, 2012.



\bibitem{SMV:2004}{\sc Siegel M., Miksis M., Voorhees P.:}  
Evolution of material voids for highly anisotropic surface energy. 
J. Mech. Phys. Solids \textbf{52} (2004), 1319--1353.  	

\bibitem{S:1999}{\sc Spencer B.J.:} 
Asymptotic derivation of the glued-wetting-layer model and the contact-angle 
condition for Stranski-Krastanow islands. 
Phys.  Rev. B \textbf{59} (1999), 2011--2017.	 	

\bibitem{WL:2003}  {\sc Wang H., Li Z.:}  
The instability of the diffusion-controlled grain-boundary void in stressed solid. 
Acta Mech. Sinica  \textbf{19} (2003),  330--339.	 

\bibitem{Xia:2000} {\sc Xia Z.C., Hutchinson J.W.:} 
Crack patterns in thin films. 
J.  Mech. Phys. Solids \textbf{48} (2000), 1107--1131.


  	

\end{thebibliography}
\end{document}